\newtheorem{theorem}{Theorem}[section]
\newtheorem{lemma}[theorem]{Lemma}
\newtheorem{prop}[theorem]{Proposition}
\newtheorem{problem}[theorem]{Problem}
\theoremstyle{definition}
\newtheorem{defn}[theorem]{Definition}
\newtheorem{cor}[theorem]{Corollary}
\newtheorem{rem}[theorem]{Remark}
\DeclareMathOperator*\cp{Cap}
\DeclareMathOperator*\dist{dist}
\DeclareMathOperator*\hdim{dim_{H}}
\DeclareMathOperator*\s{\Sigma}
\DeclareMathOperator*\Om{\Omega}
\DeclareMathOperator*\diam{diam}
\DeclareMathOperator*\lip{Lip}
\DeclareMathOperator*\argmax{argmax}
\DeclarePairedDelimiter\floor{\lfloor}{\rfloor}
\newcommand*\diff{\mathop{}\, d}
\newcommand\numberthis{\addtocounter{equation}{1}\tag{\theequation}}
\numberwithin{equation}{section}
\numberwithin{figure}{section}
\author{Bohdan Bulanyi\footnote{LJLL UMR 7598, Universit\'e de Paris, France. e-mail: bulanyi@math.univ-paris-diderot.fr} }
\date{\today}
\begin{document}
\font\myfont=cmr12 at 18pt
\title{\myfont Partial regularity for the optimal $p$-compliance problem with length penalization}
\maketitle

\begin{abstract}
We establish a partial $C^{1,\alpha}$ regularity result for minimizers of the optimal $p$-compliance problem with length penalization in any spatial dimension $N\geq 2$, extending some of the results obtained in \cite{Opt,p-compl}. The key feature is that the $C^{1,\alpha}$ regularity of minimizers for some free boundary type problem is investigated with a free boundary set of codimension $N-1$. We prove that every optimal set cannot contain closed loops, and it is $C^{1,\alpha}$ regular at $\mathcal{H}^{1}$-a.e. point for every $p\in (N-1,+\infty)$.  
\end{abstract}

\tableofcontents

\section{Introduction}

A spatial dimension $N\geq 2$ and an exponent $p \in (1,+\infty)$ are given. Let $\Omega$ be an open bounded set in $\mathbb{R}^{N}$ and let $f$ belong to $L^{q_{0}}(\Om)$ with 
\begin{equation} \label{Eq 1.1}
q_{0}=(p^{*})^{\prime}\,\ \text{if}\,\ 1<p<N, \qquad q_{0}>1 \,\ \text{if} \,\ p=N, \qquad q_{0}=1 \,\ \text{if} \,\ p>N,
\end{equation}
where $p^{*}=Np/(N-p)$ and $(1/p^{*})+(1/(p^{*})^{\prime})=1$. We define the energy functional $E_{f,\Omega}$  over $W^{1,p}_{0}(\Omega)$ as follows 
\[
E_{f,\Omega}(u)=\frac{1}{p}\int_{\Omega} |\nabla u|^{p}\diff x - \int_{\Omega} fu \diff x.
\]
Thanks to the Sobolev embeddings (see \cite[Theorem 7.10]{PDE}), $E_{f,\Omega}$ is finite on $W^{1,p}_{0}(\Omega)$. It is well known that for any closed proper subset $\Sigma$ of $\overline{\Omega}$ the functional $E_{f,\Omega}$ admits a unique minimizer $u_{f,\Omega,\Sigma}$ over $W^{1,p}_{0}(\Omega\backslash \Sigma)$. Also $u_{f,\Omega,\Sigma}$ is a unique solution to the Dirichlet problem
\begin{equation} \label{1.2}
\begin{cases}
-\Delta_{p}u & =\,\ f \,\ \text{in}\,\ \Omega\backslash \Sigma \\
\qquad u & =\,\ 0 \,\ \text{on}\,\ \Sigma \cup \partial\Omega,
\end{cases}
\end{equation}
which means that $u_{f,\Omega,\Sigma} \in W^{1,p}_{0}(\Omega\backslash \Sigma)$ and
\begin{equation} \label{1.3}
\int_{\Om} |\nabla u_{f,\Omega,\Sigma}|^{p-2} \nabla u_{f,\Om,\Sigma} \nabla \varphi \diff x= \int_{\Om}f\varphi \diff x
\end{equation}
for all $\varphi \in W^{1,p}_{0}(\Om \backslash \s)$. However, if a closed set $\Sigma \subset \overline{\Omega}$ has zero $p$-capacity (for the definition of capacity, see Section 2), then $u_{f,\Omega,\Sigma}=u_{f,\Omega,\emptyset}$ (see Remark~\ref{rem 2.9}). The dependence of $u_{f,\Omega,\Sigma}$ on $p$ is neglected in this paper and in the sequel, when it is appropriate, in order to lighten the notation, we shall simply write $u_{\Sigma}$ instead of $u_{f,\Omega, \Sigma}$. For each closed proper subset $\Sigma$ of $\overline{\Omega}$ we define the $p$-compliance functional at $\Sigma$ by
\[
C_{f,\Omega}(\Sigma)=-E_{f,\Omega}(u_{\Sigma})=\frac{1}{p^{\prime}}\int_{\Omega}|\nabla u_{\Sigma}|^{p}\diff x = \frac{1}{p^{\prime}}\int_{\Omega}fu_{\Sigma}\diff x.
\]
 In two dimensions, following \cite{Opt}, we can interpret $\Omega$ as a membrane which is attached along $\s \cup \partial \Om$ to some fixed base and subjected to a given force $f$. Then $u_{\s}$ is the displacement of the membrane. The rigidity of the membrane is measured through the $p$-compliance functional which is equal to the product of the coefficient $\frac{1}{p^{\prime}}$ and the work $\int_{\Omega}fu_{\Sigma}\diff x$ performed by the force $f$. 
 
 We study the following shape optimization problem. 
\begin{problem} \label{P 1.1}
Let $p \in (N-1, +\infty)$. Given $\lambda>0$, find a set $\s \subset \overline{\Om}$ minimizing the functional $\mathcal{F}_{\lambda,f,\Omega}$ defined by
\[
\mathcal{F}_{\lambda,f,\Omega}({\s}^{\prime})=C_{f,\Omega}({\s}^{\prime})+ \lambda \mathcal{H}^{1}({\s}^{\prime})
\]
among all sets $\Sigma^{\prime}$ in the class $\mathcal{K}(\Omega)$ of all closed connected proper subsets of $\overline{\Om}$.
\end{problem}

It is worth noting that any closed set $\Sigma^{\prime}\subset\overline{\Omega}$ with $\mathcal{H}^{1}(\Sigma^{\prime})<+\infty$ is removable for the Sobolev space $W^{1,p}_{0}(\Omega)$ if $p\in (1,N-1]$ (see Theorem~\ref{thm 2.4} and Remark~\ref{rem 2.9}), namely, $W^{1,p}_{0}(\Omega \backslash \Sigma^{\prime})=W^{1,p}_{0}(\Omega)$ and this implies that $C_{f,\Omega}(\Sigma^{\prime})=C_{f,\Omega}(\emptyset)$. Thus, defining Problem~\ref{P 1.1} for some exponent $p\in (1, N-1]$, we would get only trivial solutions to this problem: every point $x_{0}$ in $\overline{\Omega}$ and the empty set. On the other hand, if $\Sigma^{\prime}\subset \overline{\Omega}$ is a closed set such that $\Sigma^{\prime}\cap \Omega$ is of Hausdorff dimension one and with finite $\mathcal{H}^{1}$-measure, then $\Sigma^{\prime}$ is not removable for $W^{1,p}_{0}(\Omega)$ if and only if $p\in (N-1,+\infty)$ (see Corollary~\ref{cor 2.6} and Remark~\ref{rem 2.9}). Furthermore, a point $x_{0}\in \Omega$ is not removable for $W^{1,p}_{0}(\Omega)$ if and only if $p\in (N,+\infty)$ (see Theorem~\ref{thm 2.4}, Remark~\ref{rem 2.5} and Remark~\ref{rem 2.9}). Therefore, Problem~\ref{P 1.1} is interesting only in the case when $p\in (N-1,+\infty)$.

We assume that $f \neq 0$ in $L^{q_{0}}(\Omega)$, because otherwise the $p$-compliance functional $C_{f,\Omega}(\cdot)$ would be reduced to zero, and then each solution to Problem~\ref{P 1.1} would be either a point $x_{0} \in \overline{\Omega}$ or the empty set.

The constrained form of the same problem was studied in \cite{Butazzo-Santambrogio, MR3063566, MR3195349}, focusing on other questions. It is worth mentioning that, according to the $\Gamma$-convergence result established in  \cite{Butazzo-Santambrogio},  in some sense the limit of Problem~\ref{P 1.1} as $p\to +\infty$ corresponds to the minimization of the functional 
\[
\mathcal{K}(\Omega)\ni \Sigma \mapsto \int_{\Omega}\dist(x, \Sigma \cup \partial \Omega)f(x)\diff x + \lambda \mathcal{H}^{1}(\Sigma).
\]
This functional was also widely studied in the literature and for which it is known that minimizers may not be $C^1$ regular (see \cite{MR3165284}).

One of the main questions about minimizers of Problem~\ref{P 1.1} is the question of whether a minimizer containing at least two points is a finite union of $C^{1,\alpha}$ curves. In dimension 2 and for $p= 2$ in \cite{Opt}, the authors established that locally inside $\Omega$ a minimizer of Problem~\ref{P 1.1} containing at least two points is a finite union of $C^{1,\alpha}$ curves that can only meet at their ends, by sets of three and with $120^{\circ}$ angles. Again in dimension 2 this result was partially generalized in \cite{p-compl} for all exponents $p \in (1, +\infty)$, namely, it was proved that every solution to Problem~\ref{P 1.1} cannot contain closed loops, is Ahlfors regular if contains at least two points (up to the boundary for a Lipschitz domain $\Omega$), and is $C^{1,\alpha}$ regular at $\mathcal{H}^{1}$-a.e. point inside $\Omega$ for every $p\in (1,+\infty)$. The main tool that was used in \cite{Opt} to prove the $\varepsilon$-regularity theorem (if $\Sigma$ is close enough, in a ball $\overline{B}_{r}(x_{0})$ such that $B_{r}(x_{0})\subset \Omega$, and in the Hausdorff distance, to a diameter of $\overline{B}_{r}(x_{0})$, then $\Sigma\cap \overline{B}_{r/10}(x_{0})$ is a $C^{1,\alpha}$ arc)  is a so-called \emph{monotonicity formula} that was inspired by Bonnet on the Mumford-Shah functional (see \cite{Bonnoet}). This monotonicity formula was also a key tool in the classification of blow-up limits (in the case when $N=p=2$), because it implies that for any point $x_{0} \in \Sigma$ there exists the limit 
\[
\lim_{r \to 0+} \frac{1}{r} \int_{B_{r}(x_{0})}|\nabla u_{\Sigma}|^{2}\diff x =e(x_{0}) \in [0,+\infty).
\] 
According to \cite{Opt}, all blow-up limits at any $x_{0} \in \Sigma \cap \Omega$ are  of the same type: either $e(x_{0})>0$ and all blow-up limits at $x_{0}$ must be a half-line, or $e(x_{0})=0$. In the latter case, either there is a blow-up at $x_{0}$ which is a line, and then all other blow-ups at $x_{0}$ must also be a line, or there is no line, and then all blow-ups at $x_{0}$ are propellers (i.e., a union of three half-lines emanating from $x_{0}$ and making $120^{\circ}$ angles). More precisely, given any point $x_{0} \in \Sigma \cap \Omega$ we only have one of the following three possibilities:
\begin{enumerate}[label=(\roman*)]
	\item $x_{0}$ belongs to the interior of a single smooth arc; in this case $x_{0}$ is called a \textit{regular} (or \textit{flat}) point.
	\item $x_{0}$ is a common endpoint of three distinct arcs which form at $x_{0}$ three equal angles of $120^{\circ}$; in this case $x_{0}$ is called a \textit{triple point}.
	\item $x_{0}$ is the endpoint of one and only one arc; in this case $x_{0}$ is called a \textit{crack-tip}.
\end{enumerate}
\
However, the approach in \cite{Opt} does not work  for the cases when $p\not =2$. The main obstruction to a full generalization of the result established in \cite{Opt} is the lack of a good monotonicity formula, when the Dirichlet energy is not quadratic $(p\neq 2)$. 

Notice that in two dimensions and for $p\neq 2$ some monotonicity formula can still be established for the $p$-energy. Indeed, assume for simplicity that $f \in L^{\infty}(\Omega)$, $N=2$, $p\in(1,+\infty)$, $\Sigma$ is a closed proper subset of $\overline{\Omega}$, $x_{0}\in \overline{\Omega}$, $0\leq 2r_{0}< r_{1}\leq 1$, $(\Sigma \cup \partial \Omega) \cap \partial B_{r}(x_{0}) \neq \emptyset$ for all $r \in (r_{0},r_{1})$ and $\gamma \in [\gamma_{\Sigma}(x_{0}, r_{0},r_{1}), 2\pi]\backslash \{0\}$, where
\[
\gamma_{\Sigma}(x_{0},r_{0},r_{1})=\sup\biggl\{\frac{\mathcal{H}^{1}(S)}{r}: r \in (2r_{0},r_{1}), \,\ S \,\ \text{is a connected component of} \,\ \partial B_{r}(x_{0}) \backslash (\Sigma \cup \partial \Omega)\biggr\}.
\]
Assume also that $2>\lambda_{p}/\gamma$, where $\lambda_{p}$ denotes the $L^{p}$ version of the Poincaré-Wirtinger constant  and is defined by
\[
\lambda_{p}=\min\biggl\{\frac{\|g^{\prime}\|_{L^{p}(0,1)}}{\|g\|_{L^{p}(0,1)}}: g \in W^{1,p}_{0}(0,1)\backslash \{0\}\biggr\}.
\]
The value of $\lambda_{p}$ was computed explicitly, for example, in \cite[Corollary 2.7]{Dacorogna} and \cite[Inequality (7a)]{bestconstant}, where the following equality was established
\[
\lambda_{p}=2\biggl(\frac{1}{p^{\prime}}\biggr)^{\frac{1}{p}}\biggl(\frac{1}{p}\biggr)^{\frac{1}{p^{\prime}}}\Gamma\biggl(\frac{1}{p^{\prime}}\biggr)\Gamma\biggl(\frac{1}{p}\biggr),
\]
in which $\Gamma$ is the usual Gamma function. Then we can prove that the function

\[
r \in (2r_{0},r_{1}) \mapsto \frac{1}{r^{\beta}}\int_{B_{r}(x_{0})} |\nabla u_{\Sigma}|^{p}\diff x + C r ^{2-\beta}
\]
is nondecreasing with $\beta=\lambda_{p}/\gamma$ and $C=C(p,\lambda_{p},\|f\|_{\infty},|\Omega|,\gamma)>0$. However, if $B_{r_{1}}(x_{0})\subset \Omega$ and $\Sigma\cap \overline{B}_{r_{1}}(x_{0})$ is a diameter of $\overline{B}_{r_{1}}(x_{0})$, then $\gamma_{\Sigma}(x_{0},0,r_{1})=\pi$ and $\lambda_{p}/\gamma$ is strictly less than one for all $\gamma \in [\gamma_{\Sigma}(x_{0},0,r_{1}), 2\pi]$ if $p\in (1,2)\cup (2,+\infty)$. So the resulting power of $r$ in
this monotonicity formula is not large enough and this formula cannot be used to prove $C^{1,\alpha}$ estimates as in the case $N=p=2$. Furthermore, a monotonicity formula for the $p$-energy can be established in any spatial dimension $N\geq3$, provided that $p>N$ and, of course, the resulting power of $r$ in this formula is not large enough to prove $C^{1,\alpha}$ estimates. Thus, a great tool has been missed that would allow us to establish a classification of blow-up limits, and that is why, as in \cite{p-compl}, we also prove only  $C^{1,\alpha}$ regularity at $\mathcal{H}^{1}$-a.e. point. Although we guess that any minimizer of Problem~\ref{P 1.1} with at least two points is a finite union of $C^{1,\alpha}$ curves.  

In this paper, we establish a partial regularity result for minimizers of Problem~\ref{P 1.1} in any spatial dimension $N\geq 2$ and for every $p \in (N-1,+\infty)$, thus generalizing some of the results obtained in \cite{Opt, p-compl}. In particular, we prove that a minimizer cannot contain closed loops (Theorem \ref{thm absence of loops}), and we establish some $C^{1,\alpha}$ regularity properties. Several of our results will hold under some integrability condition on the source term $f$. We define
\begin{equation} \label{1.4}
q_{1}=\frac{Np}{Np-N+1}\,\ \text{ if }\,\ 2\leq p<+\infty, \qquad q_{1}=\frac{2p}{3p-3} \,\ \text{ if }\,\ 1<p < 2.
\end{equation}
It is worth noting that $q_{1}\geq q_0$. The condition $f \in L^{q_{1}}(\Omega)$ for $p\in [2,+\infty)$ is natural, since $q_{1}$ in this case seems to be the right exponent which implies an estimate of the type $\int_{B_{r}(x_{0})}|\nabla u|^{p}\diff x \leq Cr$ for the solution $u$ to the Dirichlet problem 
\[
-\Delta_{p}v=f \,\ \text{in} \,\ B_{r}(x_{0}),\,\ v \in W^{1,p}_{0}(B_{r}(x_{0})),
\]
this kind of estimate we are looking for to establish regularity properties on a minimizer $\Sigma$ of Problem~\ref{P 1.1}. The main regularity result established in this paper is the following.

\begin{theorem} \label{thm 1.2} Let $\Om \subset \mathbb{R}^{N}$ be open and bounded,\,\ $ p \in (N-1,+\infty),\, f \in L^{q}(\Omega)$ with    $q> q_{1}$, where $q_{1}$ is  defined in (\ref{1.4}). Then there exists a constant $\alpha \in (0,1)$ such that the following holds. Let $\s $ be a solution to Problem \ref{P 1.1}. Then for $\mathcal{H}^{1}$-a.e. point $x \in \s \cap \Omega $ one can find a radius $r_{0}>0$ depending on $x$ such that $\s \cap \overline{B}_{r_{0}}(x)$ is a $C^{1,\alpha}$ regular curve.
\end{theorem}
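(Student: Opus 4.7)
The plan is to reduce Theorem \ref{thm 1.2} to two independent ingredients: a local $\varepsilon$-regularity theorem of flatness-implies-$C^{1,\alpha}$ type, and a density argument showing that the flatness hypothesis holds at $\mathcal{H}^{1}$-a.e.\ point of $\Sigma \cap \Omega$.

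For the density argument, comparing $\Sigma$ with a singleton competitor gives $\mathcal{H}^{1}(\Sigma) < +\infty$, and by the definition of $\mathcal{K}(\Omega)$ the set $\Sigma$ is connected. A standard result on continua of finite length (Gołąb plus the structure theorem for one-rectifiable sets) then forces $\Sigma$ to be $\mathcal{H}^{1}$-rectifiable, so it admits an approximate tangent line at $\mathcal{H}^{1}$-a.e.\ $x \in \Sigma \cap \Omega$. Combined with a lower Ahlfors bound $\mathcal{H}^{1}(\Sigma \cap \overline{B}_{r}(x)) \geq c\, r$ (which one extracts from minimality: removing any piece of $\Sigma$ inside a small ball costs at most $Cr$ in compliance while it gains $\lambda\, \mathcal{H}^{1}(\Sigma \cap B_{r}(x))$ in length), the approximate tangent yields, at $\mathcal{H}^{1}$-a.e.\ $x$, the flatness condition
\[
d_{H}\bigl(\Sigma \cap \overline{B}_{r}(x),\, L \cap \overline{B}_{r}(x)\bigr) \leq \varepsilon\, r
\]
for arbitrarily small $\varepsilon$ and some affine line $L$ through $x$, whenever $r$ lies below a threshold $r_{0}(x,\varepsilon)$.

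The heart of the proof is the $\varepsilon$-regularity theorem: there exist $\varepsilon_{0} \in (0,1)$ and $\alpha \in (0,1)$ such that if $B_{r}(x_{0}) \subset \Omega$ and $\Sigma \cap \overline{B}_{r}(x_{0})$ is $\varepsilon_{0} r$-close in Hausdorff distance to a diameter, then $\Sigma \cap \overline{B}_{r/10}(x_{0})$ is a $C^{1,\alpha}$ arc. My plan has three layers. First, I would establish the compliance decay $\int_{B_{r}(x_{0})} |\nabla u_{\Sigma}|^{p}\diff x \leq Cr$ uniformly in $\Sigma$, using the hypothesis $f \in L^{q}(\Omega)$ with $q > q_{1}$: the exponent $q_{1}$ is precisely the threshold beyond which a Dirichlet problem with forcing in $L^{q}$ on a ball of radius $r$ has a $W^{1,p}$-solution whose $p$-energy is $O(r)$, and this comparison estimate transfers to $u_{\Sigma}$. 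Second, I would promote flatness at scale $r$ to a quantitatively smaller flatness at scale $\theta r$ for some fixed $\theta \in (0,1)$: testing the minimality of $\Sigma$ against the competitor obtained by replacing $\Sigma \cap B_{r}(x_{0})$ with its projection onto the flat approximating line (glued to $\Sigma$ by short transverse segments near $\partial B_{r}$) balances a compliance variation of order $Cr$ against a length saving proportional to the squared $L^{2}$ tilt-excess, yielding a Morrey-type decay of the excess. Third, iterating this decay over geometric scales produces Hölder continuity of the tangent direction, equivalently a $C^{1,\alpha}$ graph parametrization of $\Sigma$ near $x_{0}$.

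The main obstacle is the tilt-decay step, because the monotonicity formula that powers the argument in \cite{Opt} for $N=p=2$ has no analogue of sufficient strength here, as the author documents in the introduction. The replacement is a purely variational one, coupling the competitor construction above with the $O(r)$ compliance bound: the condition $p > N-1$ is essential because below this threshold a set of codimension $N-1$ carrying $\mathcal{H}^{1}$-measure is removable for $W^{1,p}_{0}$ and the compliance is insensitive to the geometry of $\Sigma$, and the condition $q > q_{1}$ is essential because it underwrites the linear-in-$r$ energy bound without which no iteration can even be started. Combining the flatness density argument with the $\varepsilon$-regularity theorem then yields Theorem \ref{thm 1.2}.
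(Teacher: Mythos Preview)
Your overall architecture (density argument plus $\varepsilon$-regularity via energy decay and iteration) matches the paper, but the second layer of your $\varepsilon$-regularity scheme has a genuine gap that is precisely the difficulty the paper is written to overcome.

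You propose to compare $\Sigma$ with the competitor obtained by replacing $\Sigma\cap B_{r}(x_{0})$ with the diameter and ``short transverse segments near $\partial B_{r}$''. In dimension $N\geq 3$ this fails because you have no control on $\mathcal{H}^{0}(\Sigma\cap\partial B_{r}(x_{0}))$: the number of crossing points could be unbounded, and the $2$-dimensional trick of gluing along $\partial B_{r}\cap\{\dist(\cdot,L)\leq\varepsilon r\}$ produces an $(N-1)$-dimensional set of infinite length. The paper handles this by introducing a third quantity in the iteration, the density $\theta_{\Sigma}(x_{0},r)=\mathcal{H}^{1}(\Sigma\cap B_{r}(x_{0}))/r$. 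Via coarea one finds a radius $t\in[\kappa r,2\kappa r]$ where $\mathcal{H}^{0}(\Sigma\cap\partial B_{t})\leq\theta_{\Sigma}(x_{0},r)/\kappa^{2}$; with this bound and a grid construction (Proposition~\ref{prop 6.10}) one builds a legitimate competitor that yields $\theta_{\Sigma}(x_{0},ar)\leq 5+\theta_{\Sigma}(x_{0},r)^{1-1/N}$, and a further step (Proposition~\ref{prop 6.8}) shows that if $\theta_{\Sigma}$ is below the fixed point $\overline{\mu}$ of $\mu\mapsto 5+\mu^{1-1/N}$ then in fact $\mathcal{H}^{0}(\Sigma\cap\partial B_{t})=2$ with the two points on opposite sides, so the segment competitor is admissible. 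The iteration therefore tracks the triple $(\beta_{\Sigma},w^{\tau}_{\Sigma},\theta_{\Sigma})$, not just flatness and energy, and the initial density bound comes from Besicovitch--Marstrand--Mattila ($\theta_{\Sigma}\to 2<\overline{\mu}$ a.e.), not from an Ahlfors estimate.

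Two smaller points. Your lower Ahlfors bound sketch (``remove $\Sigma\cap B_{r}$'') does not produce a connected competitor, and indeed the paper explicitly declines to prove Ahlfors regularity for $N\geq 3$; fortunately it is not needed, since the Hausdorff-distance flatness at a.e.\ point follows directly from the Lipschitz parametrisation of continua of finite length (Lemma~\ref{lem 5.2}). And the energy estimate you invoke, $\int_{B_{r}}|\nabla u_{\Sigma}|^{p}\leq Cr$, is too weak: the comparison that drives the tilt decay requires an almost-minimality error $Cr^{1+b}$, which comes from the sharper decay $\int_{B_{r}}|\nabla u_{\Sigma}|^{p}\leq C(r/r_{1})^{1+b}\int_{B_{r_{1}}}|\nabla u_{\Sigma}|^{p}+Cr^{1+b}$ under flatness control (Lemma~\ref{lem 4.6}), itself proved by a barrier estimate for $p$-harmonic functions vanishing on a line, not by a direct $L^{q}$ comparison.
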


It is one of the first times that the regularity of minimizers for some free boundary type problem is investigated with a free boundary set of codimension $N-1$. Notice that in Theorem~\ref{thm 1.2}, when we say that a solution $\Sigma$ to Problem~\ref{P 1.1} is $C^{1,\alpha}$ regular at $\mathcal{H}^{1}$-a.e. point $x \in \Sigma\cap\Omega$, we mean that the set of points $\Sigma\cap \Om$ around which $\Sigma$ is not a  $C^{1,\alpha}$ regular curve has zero $\mathcal{H}^{1}$-measure. Thus, Theorem~\ref{thm 1.2} is interesting only in the case when $\diam(\Sigma)>0$, which happens to be true at least for some small enough values of $\lambda$ (see Proposition \ref{prop 2.19}). Furthermore, it was proved in \cite{importance} that the connectedness assumption in the statement of Problem~\ref{P 1.1} is necessary for the existence of solutions. 

After all, when we are talking about one dimensional sets, it is not so obvious how horrible they can be.
Incidentally, in codimension 2 it seems like a good idea to understand whether the minimizer contains knots,  since our story  of loops suggests the question.

Our approach differs from that used in \cite{p-compl}, although we use some of the techniques developed in \cite{p-compl}. The approach in \cite{p-compl} is based on the fact that only in dimension 2 the ``free boundary" $\Sigma$ is of codimension~1, thus many standard arguments and competitors are available. Let us emphasize the most important places where the approach used in \cite{p-compl} does not extend in a trivial manner to higher dimensions. Firstly, in the proof of Ahlfors regularity in the ``internal case" in \cite{p-compl} (see \cite[Theorem~3.3]{p-compl}), the set $(\Sigma \backslash B_{r}(x)) \cup \partial B_{r}(x)$ was used as a competitor for a minimizer $\Sigma$ of Problem~\ref{P 1.1}, which contains at least two points. But in dimension $N\geq 3$ we cannot effectively use such a competitor, because $\partial B_{r}(x)$ has infinite $\mathcal{H}^{1}$-measure. Secondly, in \cite{p-compl}, a reflection technique was used to estimate a $p$-harmonic function in $(B_{1}\backslash [a_{1},a_{2}])\subset \mathbb{R}^{2}$ that vanishes on $[a_{1},a_{2}]\cap B_{1}$, where $[a_{1},a_{2}]$ is a diameter of $\overline{B}_{1}$, which is no more available if $N\geq 3$ for a $p$-harmonic function in $(B_{1} \backslash [a_{1},a_{2}]) \subset \mathbb{R}^{N}$ which still vanishes on $[a_{1},a_{2}]\cap B_{1}$, where $[a_{1},a_{2}]$ is a diameter of $\overline{B}_{1}$. Thirdly, in the density estimate in \cite{p-compl}, when the minimizer $\Sigma$ is $\varepsilon r$-close, in a ball $\overline{B}_{r}(x)$ and in the Hausdorff distance, to a diameter $[a,b]$ of $\overline{B}_{r}(x)$, the set $\Sigma^{\prime}=(\Sigma \backslash B_{r}(x)) \cup [a,b] \cup W$ was used as a competitor for $\Sigma$, where $W=\partial B_{r}(x) \cap \{y:\dist(y,[a,b])\leq \varepsilon r\}$ (see \cite[Proposition 6.8]{p-compl}).
However, in dimension $N\geq 3$ we cannot effectively use the above competitor $\Sigma^{\prime}$, because it has infinite $\mathcal{H}^{1}$-measure.

Let us now explain how we prove the partial $C^{1,\alpha}$ regularity of the minimizers of Problem~\ref{P 1.1} and indicate the main difficulties arising in the proof.

\noindent{\bf Comments about the proof of partial regularity.} First, we need to establish a decay behavior of the $p$-energy $r\mapsto \int_{B_{r}(x_{0})}|\nabla u_{\Sigma}|^{p}\diff x$ under flatness control on $\Sigma$ at $x_{0}$. For this we use the following strategy consisting of four steps, which is a generalization of the one used in \cite{p-compl}. In this paper, we use  some barrier in order to estimate a nonnegative $p$-subharmonic function vanishing on a 1-dimensional plane (an affine line). Let us say more precisely.\\
\textit{Step 1.}  We prove that there exist $\alpha,\delta \in(0,1)$ and $C>0$, depending only on $N$ and $p$, such that for any weak solution $u$ to the $p$-Laplace equation in $B_{1}\backslash (\{0\}^{N-1}\times (-1,1))$ vanishing $p$-q.e. on $\{0\}^{N-1}\times (-1,1)$,  the estimate 
\[
\int_{B_{r}}|\nabla u|^{p}\diff x \leq Cr^{1+\alpha} \int_{B_{1}} |\nabla u|^{p} \diff x 
\]
 holds for all $r\in (0,\delta]$.  \\
\textit{Step 2.} Arguing by contradiction and compactness, we establish a similar estimate as in \textit{Step 1} for a weak solution to the $p$-Laplace equation in $B_{r}(x_{0})\backslash \Sigma$ that vanishes on $\Sigma \cap B_{r}(x_{0})$ in the case when $\Sigma\cap \overline{B}_{r}(x_{0})$ is fairly close in the Hausdorff distance to a diameter of $\overline{B}_{r}(x_{0})$. Recall that the Hausdorff distance for any two nonempty sets $A,\, B\subset \mathbb{R}^{N}$ is defined by
\[
d_{H}(A,B)=\max\biggl\{\sup_{x \in A} \dist(x,B),\, \sup_{y\in B}\dist(y,A)\biggr\}.
\]
For each nonempty set $A\subset \mathbb{R}^{N}$, we immediately agree to define $d_{H}(\emptyset, A)=d_{H}(A,\emptyset)=+\infty$ and $d_{H}(\emptyset,\emptyset)=0$. Let $\alpha, \delta, C$ be as in \textit{Step 1}. We prove that for each $\varrho \in (0,\delta]$ there exists $\varepsilon_{0} \in (0, \varrho)$ such that if $u$ is a weak solution to the $p$-Laplace equation in $B_{r}(x_{0})\backslash\s$ vanishing $p\text{-q.e. on}\,\ \s\cap B_{r}(x_{0})$, where $\Sigma$ is a closed set such that $(\Sigma\cap B_{r}(x_{0})) \cup \partial B_{r}(x_{0})$ is connected and 
\[
\frac{1}{r}d_{H}(\Sigma \cap \overline{B}_{r}(x_{0}), L\cap \overline{B}_{r}(x_{0}))\leq \varepsilon_{0}
\]
for some affine line $L\subset \mathbb{R}^{N}$ passing through $x_{0}$, then the following estimate holds
\begin{equation*}
\int_{B_{\varrho r}(x_{0})} |\nabla u|^{p}\diff x\leq (C \varrho)^{1+\alpha}\int_{B_{r}(x_{0})} |\nabla u|^{p} \diff x.
\end{equation*}

\textit{Step 3.} Recall that we want to establish a decay estimate for a weak solution $u_{\Sigma}$ to the $p$-Poisson equation $-\Delta_{p} u=f \,\ \text{in}\,\ \Omega \backslash \Sigma$, $u\in W^{1,p}_{0}(\Omega\backslash \Sigma)$ in a ball $B_{r}(x_{0})\subset \Omega$ whenever $x_{0}$ is a flat point, i.e., when $\Sigma$ is sufficiently close, in $\overline{B}_{r}(x_{0})$ and in the Hausdorff distance, to a diameter of $\overline{B}_{r}(x_{0})$. For that purpose, we first control the difference between $u_{\Sigma}$ and its $p$-Dirichlet replacement in $B_{r}(x_{0})\backslash \Sigma$, where by the $p$-Dirichlet replacement of $u_{\s}$ in $B_{r}(x_{0})\backslash \Sigma$ we mean the solution $w \in W^{1,p}(B_{r}(x_{0}))$ to the Dirichlet problem $-\Delta_{p}u=0\,\ \text{in}\,\ B_{r}(x_{0})\backslash \Sigma$, $u-u_{\Sigma}\in W^{1,p}_{0}(B_{r}(x_{0})\backslash \Sigma)$. 
Then, for some sufficiently small $a=a(N,p)\in (0,1)$, using the estimate for the local energy $\int_{B_{ar}(x_{0})}|\nabla w|^{p}\diff x $ coming from \textit{Step 2} and also the estimate for the difference between $u_{\Sigma}$ and $w$ in $B_{r}(x_{0})\backslash \Sigma$, we arrive at the following decay estimate for $u_{\Sigma}$:
\begin{equation*}
\frac{1}{ar}\int_{B_{ar}(x_0)} |\nabla u_{\s}|^p \diff x \leq \frac{1}{2} \biggl( \frac{1}{r}\int_{B_{r}(x_0)} |\nabla u_{\s}|^p \diff x \biggr)+ C r^{\gamma(N,p,q)},
\end{equation*}
where $\gamma(N, p , q)\in (0,1)$ provided that $q>q_{1}$ (for the definition of $q_{1}$ see (1.4)).

\textit{Step 4.} Finally, by iterating the result of \textit{Step 3} in a sequence of balls $\{B_{a^{l}r_{1}}(x_{0})\}_{l}$, we obtain the desired  decay behavior of the $p$-energy $r\mapsto \int_{B_{r}(x_{0})}|\nabla u_{\Sigma}|^{p}\diff x$ under flatness control on $\Sigma$ at $x_{0}$. Namely, there exist $b\in (0,1)$ and $C>0$ such that if $\Sigma \cap \overline{B}_{r}(x_{0})$ remains fairly flat for all $r$ in $[r_{0}, r_{1}]$, $B_{r_{1}}(x_{0})\subset \Omega$ and $r_{1}$ is sufficiently small, then the following estimate holds
\[
\int_{B_{r}(x_{0})}|\nabla u_{\Sigma}|^{p}\diff x \leq C \Bigl(\frac{r}{r_{1}}\Big)^{1+b}  \int_{B_{r_{1}}(x_{0})}|\nabla u_{\Sigma}|^{p}\diff x + Cr^{1+b} \,\ \text{for  all} \,\ r \in [r_{0}, r_{1}].
\]
Thus, if $x_{0}\in \Sigma\cap \Omega$ and $\Sigma \cap \overline{B}_{r}(x_{0})$ remains fairly flat for all sufficiently small $r>0$, then the energy $r\mapsto \frac{1}{r}\int_{B_{r}(x_{0})}|\nabla u_{\Sigma}|^{p}\diff x$ converges to zero no slower than $Cr^{b}$ for some $b\in (0,1)$ and $C>0$. This will be used to prove the desired $C^{1,\alpha}$ result, and the same kind of estimate will also be used to prove the absence of closed loops. 

Now let us briefly explain how we use the above decay estimate to prove the partial $C^{1,\alpha}$ regularity of the minimizers  inside $\Omega$. The idea is to show that every minimizer $\Sigma$ of Problem~\ref{P 1.1} with $\diam(\Sigma)>0$ is an \textit{almost minimizer} for the length at any flat point inside $\Omega$. More precisely, we need to prove that there exists $\beta\in (0,1)$ such that for any competitor $\Sigma^{\prime}$ being $\tau r$-close, in a ball $\overline{B}_{r}(x_{0})\subset \Omega$ and in the Hausdorff distance, to a diameter of $\overline{B}_{r}(x_{0})$ for some small $\tau \in (0,1)$ and satisfying $\Sigma^{\prime} \Delta \Sigma \subset \overline{B}_{r}(x_{0})$, it holds 
\[
\mathcal{H}^{1}(\Sigma \cap \overline{B}_{r}(x_{0}))\leq \mathcal{H}^{1}(\Sigma^{\prime}\cap \overline{B}_{r}(x_{0}))+Cr^{1+\beta}
\]
whenever $x_{0} \in \Sigma \cap \Omega$ is a flat point. In our framework, the term $Cr^{1+\beta}$ may only come from the $p$-compliance part of the functional $\mathcal{F}_{\lambda,f,\Omega}$. Thus, we need to prove that 
\[
C_{f,\Omega}(\Sigma^{\prime})-C_{f,\Omega}(\Sigma) \leq C r^{1+\beta}
\]
whenever $x_{0} \in \Sigma \cap \Omega$ is a flat point, $\Sigma^{\prime} \in \mathcal{K}(\Omega)$ is $\tau r$-close, in $\overline{B}_{r}(x_{0})\subset \Omega$ and in the Hausdorff distance, to a diameter of $\overline{B}_{r}(x_{0})$ for some small $\tau \in (0,1)$ and $\Sigma^{\prime}\Delta \Sigma\subset \overline{B}_{r}(x_{0})$.  Hereinafter in this section, $C$ denotes a positive constant that can only depend on $N,\, p,\, q_{0},\, q, \, \|f\|_{q},\, |\Omega|$ ($q_{0}$ is defined in (\ref{Eq 1.1}), $q\geq q_{0}, \, f\in L^{q}(\Omega)$) and can be different from line to line.  Notice that one of the difficulties in obtaining the above estimate is a nonlocal behavior of the $p$-compliance functional. Namely, changing $\Sigma$ locally in $\Omega$, we change $u_{\Sigma}$ in the whole $\Omega$. This can be overcome, using a cut-off argument. Actually, we have shown that if $\Sigma^{\prime}$ is a competitor for $\Sigma$ and $\Sigma^{\prime}\Delta\Sigma\subset \overline{B}_{r}(x_{0})$, then 
\[
C_{f,\Omega}(\Sigma^{\prime})-C_{f,\Omega}(\Sigma)\leq C\int_{B_{2r}(x_{0})}|\nabla u_{\Sigma^{\prime}}|^{p}\diff x + Cr^{N+p^{\prime}-\frac{Np^{\prime}}{q}}.
\]
However, the right-hand side in the above estimate depends on the competitor $\Sigma^{\prime}$, which pushes us to introduce the quantity
\begin{equation*}
w^{\tau}_{\s}(x_{0},r)=\sup_{\substack{\s^{\prime}\in \mathcal{K}(\Omega),\, \s^{\prime}\Delta \s \subset \overline{B}_{r}(x_{0}), \\\mathcal{H}^{1}(\Sigma^{\prime})\leq 100\mathcal{H}^{1}(\Sigma),\,  \beta_{\s^{\prime}}(x_{0},r)\leq \tau}}  \frac{1}{r} \int_{B_{r}(x_{0})}|\nabla u_{\Sigma^{\prime}}|^{p}\diff x, 
\end{equation*}
where $\beta_{\Sigma^{\prime}}(x_{0},r)$ is the flatness defined by 
\begin{equation*}
\beta_{\Sigma^{\prime}}(x_{0},r)=\inf_{L\ni x_{0}} \frac{1}{r}d_{H}(\Sigma^{\prime}\cap \overline{B}_{r}(x_{0}), L\cap \overline{B}_{r}(x_{0})),
\end{equation*}
where the infimum is taken over the set of all affine lines (1-dimensional planes) $L$ in $\mathbb{R}^{N}$ passing through~$x_{0}$. The quantity $w^{\tau}_{\Sigma}(x_{0},r)$ is a variant of the one introduced in \cite{Opt} and already used in \cite{p-compl}.
Also notice that the assumption $~\mathcal{H}^{1}(\Sigma^{\prime}) \leq 100 \mathcal{H}^{1}(\Sigma)$ in the definition of $w^{\tau}_{\Sigma}(x_{0},r)$ is rather optional, however, it guarantees  that if $\Sigma^{\prime}$ is a maximizer in this definition, then it is arcwise connected. Thus, if $\Sigma^{\prime}\in \mathcal{K}(\Omega)$, $\Sigma^{\prime}\Delta\Sigma\subset \overline{B}_{r}(x_{0})$, $\mathcal{H}^{1}(\Sigma^{\prime})\leq 100 \mathcal{H}^{1}(\Sigma)$ and $\beta_{\Sigma^{\prime}}(x_{0},2r)\leq \tau$, we arrive at the estimate
\[
C_{f,\Omega}(\Sigma^{\prime})-C_{f,\Omega}(\Sigma)\leq Crw^{\tau}_{\Sigma}(x_{0}, 2r) + Cr^{N+p^{\prime}-\frac{Np^{\prime}}{q}}.
\]
Next, applying the decay estimate established in \textit{Step 4} above to the function $u_{\widetilde{\Sigma}}$, where $\widetilde{\Sigma}$ is a maximizer in the definition of $w^{\tau}_{\Sigma}(x_{0}, 2r)$, we obtain the following control \[w^{\tau}_{\Sigma}(x_{0},2r) \leq C\Bigl(\frac{r}{r_{1}}\Bigr)^{b}w^{\tau}_{\Sigma}(x_{0},r_{1})+Cr^{b},\] provided that $\beta_{\Sigma}(x_{0},\varrho)$ remains fairly small for all  $\varrho \in [2r, r_{1}]$, $r_{1}>0$ is small enough, $B_{r_{1}}(x_{0})\subset~\Omega$ and $r>0$ is sufficiently  small with respect to $r_{1}$. Using also that $b<N-1+p^{\prime}-Np^{\prime}/q$, altogether we get 
\begin{equation*}
\mathcal{H}^{1}(\Sigma \cap \overline{B}_{r}(x_{0})) \leq \mathcal{H}^{1}(\Sigma^{\prime}\cap \overline{B}_{r}(x_{0}))+ Cr\Bigl(\frac{r}{r_{1}}\Bigr)^{b}w^{\tau}_{\Sigma}(x_{0},r_{1}) +Cr^{1+b}
\end{equation*}
whenever $\Sigma$ is a minimizer of Problem~\ref{P 1.1}, $\beta_{\Sigma}(x_{0},\varrho)$ remains fairly small for all $\varrho \in [r,r_{1}]$, $r_{1}>0$ is small enough, $B_{r_{1}}(x_{0})\subset \Omega$, $r>0$ is sufficiently small with respect to $r_{1}$, $\Sigma^{\prime}\in \mathcal{K}(\Omega)$ is $\tau r$-close, in $\overline{B}_{r}(x_{0})$ and in the Hausdorff distance, to a diameter of $\overline{B}_{r}(x_{0})$, $\Sigma^{\prime}\Delta \Sigma \subset \overline{B}_{r}(x_{0})$ and $\mathcal{H}^{1}(\Sigma^{\prime})\leq 100\mathcal{H}^{1}(\Sigma)$.

The next step is to find a nice competitor $\Sigma^{\prime}$ for a minimizer $\Sigma$. More precisely, assume that $x_{0} \in \Sigma,\,\ \overline{B}_{r}(x_{0})\subset \Omega$, $r$ is sufficiently small, $\beta_{\Sigma}(x_{0},r)$ is small enough and remains fairly small on a large scale. The task is to find a competitor $\Sigma^{\prime}$ such that $\Sigma^{\prime}\Delta \Sigma \subset \overline{B}_{r}(x_{0})$, $\Sigma^{\prime}$ is $\tau r$-close, in $\overline{B}_{r}(x_{0})$ and in the Hausdorff distance, to a diameter of $\overline{B}_{r}(x_{0})$ for some small $\tau \in (0,1)$ and, in addition, the length (i.e., $\mathcal{H}^{1}$-measure) of $\Sigma^{\prime}\cap \overline{B}_{r}(x_{0})$ is fairly close to the length of a diameter of $\overline{B}_{r}(x_{0})$. Recall that in two dimensions we can take 
\[
\Sigma^{\prime}=(\Sigma \backslash B_{r}(x_{0}))\cup (\partial B_{r}(x_{0})\cap \{x: \dist(x,L)\leq \beta_{\Sigma}(x_{0},r) r\}) \cup (L\cap B_{r}(x_{0}))
\]
provided $\beta_{\Sigma}(x_{0},r)=d_{H}(\Sigma\cap \overline{B}_{r}(x_{0}), L\cap \overline{B}_{r}(x_{0}))/r$. But in dimension $N\geq 3$ such a competitor is not admissible, since it has Hausdorff dimension $N-1 \geq 2$. Notice that the main difficulty arising in the construction of a nice competitor in dimension $N\geq 3$ is that we do not know whether the quantity $\mathcal{H}^{0}(\Sigma \cap \partial B_{\varrho}(x_{0}))$ is uniformly bounded from above for $x_{0}\in \Sigma$ and $\varrho>0$. However, according to the coarea inequality (see, for instance, \cite[Theorem 2.1]{Paolini-Stepanov}), we know that for all $\varrho>0$,
\[
\mathcal{H}^{1}(\Sigma\cap B_{\varrho}(x_{0}))\geq  \int^{\varrho}_{0}\mathcal{H}^{0}(\Sigma \cap \partial B_{t}(x_{0}))\diff t.
\]
If, moreover, $\varrho< \diam(\Sigma)/2$, then $\Sigma \cap \partial B_{t}(x_{0})\neq \emptyset$ for all $t \in (0,\varrho]$, since $x_{0}\in \Sigma$ and $\Sigma$ is arcwise connected (see Remark~\ref{arcwise connected}). Thus, assuming that  $\varrho<\diam(\Sigma)/2$ and $\kappa \in (0,1/4]$, for any $s \in [\kappa \varrho, 2\kappa \varrho]$ we deduce the following
\[
\mathcal{H}^{1}(\Sigma\cap B_{\varrho}(x_{0})) \geq \int^{\varrho}_{0}\mathcal{H}^{0}(\Sigma \cap \partial B_{t}(x_{0}))\diff t > \int^{(1+\kappa)s}_{s}\mathcal{H}^{0}(\Sigma \cap \partial B_{t}(x_{0}))\diff t.
\]
The latter inequality implies that there exists $t \in [s, (1+\kappa)s]$ for which
\[
\mathcal{H}^{0}(\Sigma \cap \partial B_{t}(x_{0})) \leq \frac{1}{\kappa^{2}} \theta_{\Sigma}(x_{0},\varrho),\,\ \text{where}\,\ \theta_{\Sigma}(x_{0},\varrho)=\frac{1}{\varrho}\mathcal{H}^{1}(\Sigma \cap B_{\varrho}(x_{0})).
\]
So if $\kappa \in (0,1/4]$, $x_{0} \in \Sigma$, $r>0$ is sufficiently small and $B_{r}(x_{0})\subset \Omega$, then for all $s\in [\kappa r, 2\kappa r]$ we can construct the following competitor
\[
\Sigma^{\prime}=(\Sigma \backslash B_{t}(x_{0})) \cup \Biggl(\bigcup^{\mathcal{H}^{0}(\Sigma \cap \partial B_{t}(x_{0}))}_{i=1} [z_{i},z_{i}^{\prime}]\Biggr) \cup (L\cap \overline{B}_{t}(x_{0})),
\]
where $t \in [s, (1+\kappa)s]$ is such that $\mathcal{H}^{0}(\Sigma \cap \partial B_{t}(x_{0}))\leq \theta_{\Sigma}(x_{0},r)/\kappa^{2}$, $L$ is an affine line realizing the infimum in the definition of $\beta_{\Sigma}(x_{0},t)$, $z_{i}\in \Sigma \cap \partial B_{t}(x_{0})$ and $z^{\prime}_{i}$ denotes the projection of $z_{i}$ to $L\cap \overline{B}_{t}(x_{0})$. The flatness $\beta_{\Sigma^{\prime}}(x_{0},t)$ is less than or equal to $\beta_{\Sigma}(x_{0},t)$ by construction. Assuming in addition that $\beta_{\Sigma}(x_{0},r)$ is fairly small and $\theta_{\Sigma}(x_{0},r)$ is also small enough, for the competitor $\Sigma^{\prime}$ constructed above it holds: $\beta_{\Sigma^{\prime}}(x_{0},t)$ is sufficiently small, since $\beta_{\Sigma^{\prime}}(x_{0},t)\leq \beta_{\Sigma}(x_{0},t)$ and $\beta_{\Sigma}(x_{0},\varrho)$ remains small for all $\varrho$ in $(0,r)$ which are not too far from $r$;  the length  of $\Sigma^{\prime}\cap \overline{B}_{t}(x_{0})$ is fairly close to the length of a diameter of $\overline{B}_{t}(x_{0})$; the following estimate holds
\[
\mathcal{H}^{1}(\Sigma\cap B_{s}(x_{0}))\leq \mathcal{H}^{1}(\Sigma\cap B_{t}(x_{0}))\leq \mathcal{H}^{1}(\Sigma^{\prime}\cap B_{t}(x_{0})) + C t\Bigl(\frac{t}{r}\Bigr)^{b}w^{\tau}_{\Sigma}(x_{0},r)+Ct^{1+b}.
\]
This allows us to prove the following fact: there exist $\varepsilon$, $\kappa \in (0,1/100)$ such that if $\Sigma$ is a minimizer of Problem~\ref{P 1.1}, $x_{0} \in \Sigma$, $r>0$ is sufficiently small, $B_{r}(x_{0})\subset \Omega$ and the following condition holds
\[
\beta_{\Sigma}(x_{0},r)+w^{\tau}_{\Sigma}(x_{0},r)\leq \varepsilon,\,\ \theta_{\Sigma}(x_{0},r) \leq 10\overline{\mu} \tag{$\mathcal{C}$}
\] 
with $\overline{\mu}$ being a unique positive solution to the equation $\mu=5+ \mu^{1-\frac{1}{N}}$ (we shall explain a bit later why we take this particular bound), then there exists $s \in [\kappa r, 2\kappa r]$ for which $\mathcal{H}^{0}(\Sigma \cap \partial B_{s}(x_{0}))=2$, the two points $\{\xi_{1}, \xi_{2}\}$ of $\Sigma \cap \partial B_{s}(x_{0})$ belong to two different connected components~of 
\[
\partial B_{s}(x_{0})\cap \{x : \dist(x,L)\leq \beta_{\Sigma}(x_{0},s)s\},
\]
where $L$ is an affine line realizing the infimum in the definition of $\beta_{\Sigma}(x_{0},s)$. Moreover, $(\Sigma \backslash B_{s}(x_{0}))\cup [\xi_{1},\xi_{2}]$ is a nice competitor for $\Sigma$. Using this result together with the decay behavior of the local energy $w^{\tau}_{\Sigma}$, we prove that there exists a constant $a \in (0,1/100)$ such that if $x_{0}\in \Sigma$, $r>0$ is small enough, $B_{r}(x_{0})\subset \Omega$ and the condition $(\mathcal{C})$ holds with some sufficiently small $\varepsilon>0$, then
\[
\beta_{\Sigma}(x_{0},ar)\leq C(w^{\tau}_{\Sigma}(x_{0},r))^{\frac{1}{2}}+Cr^{\frac{b}{2}} \,\ \text{and} \,\ w^{\tau}_{\Sigma}(x_{0},ar)\leq \frac{1}{2}w^{\tau}_{\Sigma}(x_{0},r) +C(ar)^{b}.
\]

Next, we need to control the density $\theta_{\Sigma}$ from above on a smaller scale by its value on a larger scale. Notice that in this paper we do not prove the Ahlfors regularity for a minimizer of Problem~\ref{P 1.1} in the spatial dimension $N\geq 3$ (for a proof in dimension 2, see \cite[Theorem 3.3]{p-compl}), namely, that there exist constants $0<c_{1}< c_{2} $ and a radius $r_{0}>0$ such that if $\Sigma$ is a minimizer of Problem~\ref{P 1.1} with at least two points, then  for all $x \in \Sigma$ and $r \in (0, r_{0})$,  it holds
\[
c_{1}\leq \theta_{\Sigma}(x,r) \leq c_{2}.
\]
In dimension $N\geq 3$ this problem seems very difficult and interesting. However, adapting some of the approaches of Stepanov and Paolini in \cite{Stepanov-Paolini}, we prove the following fact: for each $a \in (0,1/20]$ there exists $\varepsilon \in (0,1/100)$ such that if $x_{0}\in \Sigma$, $B_{r}(x_{0})\subset \Omega$, $r>0$ is sufficiently small and $\beta_{\Sigma}(x_{0},r) + w^{\tau}_{\Sigma}(x_{0},r)\leq \varepsilon$, then
\[
\theta_{\Sigma}(x_{0},ar)\leq 5 + \theta_{\Sigma}(x_{0},r)^{1-\frac{1}{N}}.
\]
Notice that if $\theta_{\Sigma}(x_{0},r)\leq 10\overline{\mu}$, then, using the above estimate, we get
\[
\theta_{\Sigma}(x_{0},ar)\leq 5+ (10\overline{\mu})^{1-\frac{1}{N}}\leq 10(5+\overline{\mu}^{1-\frac{1}{N}})=10\overline{\mu}.
\]
The factor $10$ in the estimate $\theta_{\Sigma}(x_{0},r)\leq 10 \overline{\mu}$ is rather important, it appears in the proof of Corollary~\ref{cor 6.13}. 

Altogether we prove that there exist constants $a,\varepsilon \in (0,1/100)$, $b \in (0,1)$ such that if  $x_{0}\in \Sigma$, $r>0$ is sufficiently small, $B_{r}(x_{0}) \subset \Omega$ and the condition $(\mathcal{C})$ holds with $\varepsilon$, then for all $n \in \mathbb{N}$,
\[
\beta_{\Sigma}(x_{0},a^{n+1}r)\leq C(w^{\tau}_{\Sigma}(x_{0},a^{n}r))^{\frac{1}{2}}+C(a^{n}r)^{\frac{b}{2}} \,\ \text{and} \,\ w^{\tau}_{\Sigma}(x_{0},a^{n+1}r)\leq \frac{1}{2}w^{\tau}_{\Sigma}(x_{0},a^{n}r) +C(a^{n+1}r)^{b}.
\]
This, in particular, implies that $\beta_{\Sigma}(x_{0},\varrho)\leq \widetilde{C}\varrho^{\alpha}$ for some $\alpha \in (0,1), \,\ \widetilde{C}=\widetilde{C}(N,p,q_{0},q,\|f\|_{q},|\Omega|,r)>0$ and for all sufficiently small $\varrho>0$ with respect to $r$.  

Finally, we arrive to the so-called ``$\varepsilon$-regularity'' theorem, which says the following: there exist constants $\tau,\,a, \varepsilon, \alpha, \, \overline{r}_{0} \in (0,1)$ such that whenever $x\in \Sigma$, $0<r<\overline{r}_{0}$, $B_{r}(x)\subset \Omega$,
\[
\beta_{\Sigma}(x, r)+w^{\tau}_{\Sigma}(x,r)\leq \varepsilon \,\ \text{and} \,\ \theta_{\Sigma}(x,r)\leq \overline{\mu},
\]
then  for some $\widetilde{C}=\widetilde{C}(N,p,q_{0},q,\|f\|_{q},|\Omega|,r)>0$, $\beta_{\Sigma}(y,\varrho)\leq \widetilde{C}\varrho^{\alpha}$ for any point $y \in \Sigma\cap B_{ar}(x)$ and any radius $\varrho\in (0,ar)$. In particular, there exists $t \in (0,1)$ such that $\Sigma \cap \overline{B}_{t}(x)$ is a $C^{1,\alpha}$ regular curve. On the other hand, notice that, since closed connected sets with finite $\mathcal{H}^{1}$-measure are $\mathcal{H}^{1}$-rectifiable (see, for instance, \cite[Proposition~30.1]{David}), $\beta_{\Sigma}(x,r)\to 0$ as $r\to 0+$ at $\mathcal{H}^{1}$-a.e. $x \in \Sigma$ and hence at $\mathcal{H}^{1}$-a.e. $x \in \Sigma\cap\Omega$, $w^{\tau}_{\Sigma}(x,r)\to 0$ as $r \to 0+$. Moreover, in view of Besicovitch-Marstrand-Mattila Theorem (see \cite[Theorem 2.63]{APD}),
\[
\theta_{\Sigma}(x,r)\to 2 \,\ \text{as}\,\ r \to 0+\,\ \text{at}\,\ \mathcal{H}^{1}\text{-a.e}.\,\ x \in \Sigma.
\]
At the end, observing that for each $N\geq 2$, the unique positive solution $\overline{\mu}$ to the equation $\mu=5+\mu^{1-\frac{1}{N}}$ is strictly greater than 5, we bootstrap all the estimates and prove that every minimizer  $\Sigma$ of Problem~\ref{P 1.1} is $C^{1,\alpha}$ regular at $\mathcal{H}^{1}$-a.e. point $x\in \Sigma \cap \Omega.$

\section{Preliminaries}
In this paper, $B_{r}(x)$, $\overline{B}_{r}(x)$ and $\partial{B}_{r}(x)$ will denote, respectively, the open ball, the closed ball and the $N$-sphere with center $x$ and radius $r$. If the center is at the origin, we write $B_{r}$, $\overline{B}_{r}$ and $\partial B_{r}$ the corresponding balls and the $N$-sphere. For each set $A \subset \mathbb{R}^{N}$, the set $A^{c}$ will denote its complement in $\mathbb{R}^{N}$, that is, $A^{c}=\mathbb{R}^{N}\backslash A$. We shall sometimes write points of $\mathbb{R}^{N}$ as $x=(x^{\prime}, x_{N})$ with $x^{\prime} \in \mathbb{R}^{N-1}$ and $x_{N}\in \mathbb{R}$. We use the standard notation for Sobolev spaces. For an open set $U\subset \mathbb{R}^{N}$, denote by $W^{1,p}_{0}(U)$ the closure of $C^{\infty}_{0}(U)$ in the Sobolev space $W^{1,p}(U)$, where $C^{\infty}_{0}(U)$ is the space of functions in $C^{\infty}(U)$ with compact support in $U$. Recall that $W^{1,p}_{loc}(U)$ is the space of functions $u$ such that $u\in W^{1,p}(V)$ for all $V\subset\subset U$. We shall denote by $\mathcal{H}^{d}(A)$ the $d$-dimensional Hausdorff measure of~$A$. In this paper, we say that a value is positive if it is strictly greater than zero, and a value is nonnegative if it is greater than or equal to zero.

We begin by defining weak solutions to the $p$-Laplace equation, 
\begin{equation*}
\Delta_{p}u=div(|\nabla u|^{p-2}\nabla u)=0.
\end{equation*}
\begin{defn} 
Let $U \subset \mathbb{R}^{N}$ be open and bounded, $p \in (1, +\infty)$.  We say that $u$ is a \textit{weak subsolution (supersolution)} to the $p$-Laplace equation in $U$ provided $u \in W^{1,p}_{loc}(U)$ and 
\[
\int_{U}|\nabla u|^{p-2}\nabla u \nabla \varphi \diff x \leq (\geq) 0,
\]
whenever $\varphi \in C^{\infty}_{0}(U)$ is nonnegative.
\end{defn}
A function $u$ is a \textit{weak solution} to the $p$-Laplace equation if it is both a subsolution and a supersolution. If $u$ is an upper (lower) semicontinuous weak subsolution (supersolution) to the $p$-Laplace equation in $U$, then we say that $u$ is \textit{$p$-subharmonic ($p$-superharmonic)} in $U$. If $u$ is a continuous weak solution to the $p$-Laplace equation in $U$, then we say that $u$ is $p$-harmonic in $U$.

The following basic result for weak solutions holds (see \cite[Theorem 2.7]{Lindqvist}).
\begin{theorem} \label{thm 2.2}
	Let $U$ be a bounded open set in $\mathbb{R}^{N}$ and let $u \in W^{1,p}(U)$. The following two assertions are equivalent.
	\begin{enumerate}[label=(\roman*)]
		\item $u$ is minimizing:
		\[
		\int_{U} |\nabla u|^{p}\diff x \leq \int_{U}|\nabla v|^{p}\diff x,\,\ \text{when}\,\ v-u \in W^{1,p}_{0}(U).
		\]
		\item the first variation vanishes:
		\[
		\int_{U} |\nabla u|^{p-2}\nabla u \nabla \zeta \diff x=0,\,\ \text{when}\,\ \zeta \in W^{1,p}_{0}(U).
		\]
	\end{enumerate}
\end{theorem}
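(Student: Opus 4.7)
The strategy is the classical equivalence between minimizers and critical points of a convex integrand, applied to $F(\xi)=|\xi|^{p}$. I would handle the two directions separately. The only analytic ingredients are convexity of $F$ and differentiation of a parameter integral, so no deep tool is required.

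For the implication (i) $\Rightarrow$ (ii), fix $\zeta\in W^{1,p}_{0}(U)$ and consider, for $t\in\mathbb{R}$,
\[
\phi(t)=\int_{U}|\nabla u+t\nabla\zeta|^{p}\diff x.
\]
Since $u+t\zeta$ differs from $u$ by an element of $W^{1,p}_{0}(U)$, the minimality of $u$ gives $\phi(t)\geq \phi(0)$ for every $t$, so $t=0$ is a minimum of $\phi$. The pointwise $t$-derivative of the integrand is $p|\nabla u+t\nabla\zeta|^{p-2}(\nabla u+t\nabla\zeta)\cdot\nabla\zeta$, which for $|t|\leq 1$ is bounded by $C_{p}\bigl(|\nabla u|^{p-1}+|\nabla\zeta|^{p-1}\bigr)|\nabla\zeta|$; this dominating function lies in $L^{1}(U)$ by H\"older's inequality, using $\nabla u,\nabla\zeta\in L^{p}(U)$ and the conjugacy $(p-1)p^{\prime}=p$. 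Dominated convergence then yields that $\phi$ is $C^{1}$ near $0$ with
\[
\phi'(0)=p\int_{U}|\nabla u|^{p-2}\nabla u\cdot\nabla\zeta\diff x.
\]
As $t=0$ is a minimum, $\phi'(0)=0$, which is exactly (ii).

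For the converse (ii) $\Rightarrow$ (i), I would invoke convexity of $\xi\mapsto|\xi|^{p}$ on $\mathbb{R}^{N}$. Its gradient is $p|\xi|^{p-2}\xi$, so for all $\xi,\eta\in\mathbb{R}^{N}$,
\[
|\eta|^{p}\geq |\xi|^{p}+p|\xi|^{p-2}\xi\cdot(\eta-\xi).
\]
Given any admissible $v$ with $v-u\in W^{1,p}_{0}(U)$, apply this pointwise with $\xi=\nabla u(x)$ and $\eta=\nabla v(x)$ and integrate over $U$:
\[
\int_{U}|\nabla v|^{p}\diff x\geq \int_{U}|\nabla u|^{p}\diff x+p\int_{U}|\nabla u|^{p-2}\nabla u\cdot\nabla(v-u)\diff x.
\]
Since $\zeta:=v-u\in W^{1,p}_{0}(U)$ is admissible in (ii), the last integral vanishes and we obtain $\int_{U}|\nabla v|^{p}\diff x\geq\int_{U}|\nabla u|^{p}\diff x$, which is (i).

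The only delicate point is the differentiation under the integral sign in the first direction, which is routine once the $L^{p}$ bounds and the H\"older estimate are set up. I do not anticipate any genuine obstacle; this is a standard first-variation argument for convex integrals and is included here mainly to fix notation and conventions for the weak $p$-Laplace equation used throughout the paper.
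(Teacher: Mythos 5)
Your proof is correct and follows the standard first-variation / convexity argument, which is exactly the one given in Lindqvist's notes (\cite[Theorem 2.7]{Lindqvist}), the reference the paper cites in lieu of a proof. Both directions are handled as expected: the differentiation under the integral sign is justified by the $L^{1}$ majorant you exhibit via H\"older, and the converse follows from the supporting-hyperplane inequality for the convex map $\xi\mapsto|\xi|^{p}$.
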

Now we introduce the notion of the Bessel capacity (see e.g. \cite{Potential}, \cite{Ziemer}). 
\begin{defn} \label{def 2.3} \textit{ For $p\in (1,+\infty)$, the Bessel $(1,p)$-capacity of a set $E\subset \mathbb{R}^{N}$ is defined as
\[
{\rm Cap}_{p}(E)=\inf \bigl\{\|f\|^{p}_{L^{p}(\mathbb{R}^{N})} :\, g*f \geq 1\,\ \text{on}\,\ E,\,\  f\in L^{p}(\mathbb{R}^{N}), \,\ f \geq 0\bigr\},
\]
where the Bessel kernel $g$ is defined as that function whose Fourier transform is}
\[
\hat{g}(\xi)=(2\pi)^{-\frac{N}{2}}(1+|\xi|^{2})^{-\frac{1}{2}}.
\]
\end{defn}
We say that a property holds $p$-quasi everywhere (abbreviated as $p$-q.e.) if it holds except on a set $A$ where $\cp_{p}(A)=0$. It is worth mentioning that by \cite[Corollary 2.6.8]{Potential}, for every $p\in (1,+\infty)$, the notion of the Bessel capacity ${\rm Cap}_{p}$ is equivalent to the following 
\[
\widetilde{{\rm Cap}_{p}}(E)=\inf_{u \in W^{1,p}(\mathbb{R}^{N})}\biggl\{\int_{\mathbb{R}^{N}}|\nabla u|^{p}\diff x + \int_{\mathbb{R}^{N}}|u|^{p}\diff x : u \geq 1 \,\ \text{on some neighborhood of $E$}\biggr\}
\]
in the sense that there exists $C=C(N,p)>0$ such that for any set $E\subset \mathbb{R}^{N}$,
\[
\frac{1}{C}\widetilde{{\rm Cap}_{p}}(E) \leq {\rm Cap}_{p}(E) \leq C\widetilde{{\rm Cap}_{p}}(E).
\]
The notion of capacity is crucial in the investigation of the pointwise behavior of Sobolev functions.

For convenience, we recall the next theorems and propositions.
\begin{theorem} \label{thm 2.4} Let $E\subset \mathbb{R}^{N}$ and $p\in (1,N]$. Then $\cp_{p}(E)=0$ if $\mathcal{H}^{N-p}(E)<+\infty$. Conversely, if $\cp_{p}(E)=0$, then $\mathcal{H}^{N- p+\varepsilon}(E)=0$ for every $\varepsilon>0$.
\end{theorem}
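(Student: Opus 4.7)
The plan is to prove the two implications separately, following the classical approach of Adams--Hedberg (see e.g. \cite{Potential}, \cite{Ziemer}), exploiting the equivalence between $\cp_{p}$ and the Sobolev-type capacity $\widetilde{\cp_{p}}$ recorded just above the statement.

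For the first direction, suppose $\mathcal{H}^{N-p}(E) < +\infty$. For every $\delta > 0$, by definition of the Hausdorff measure one can cover $E$ by open balls $\{B_{r_i}(x_i)\}$ with $r_i \leq \delta$ and $\sum_i r_i^{N-p} \leq \mathcal{H}^{N-p}(E) + 1$, and a Besicovitch-type selection gives a bounded overlap. Choose cutoffs $\phi_i \in C^{\infty}_{0}(B_{2r_i}(x_i))$ with $\phi_i \equiv 1$ on $B_{r_i}(x_i)$ and $|\nabla \phi_i| \leq C/r_i$, and consider $u_{\delta} := \min\{1,\sum_i \phi_i\}$, which is admissible in $\widetilde{\cp_{p}}(E)$. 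Using the finite overlap,
\[
\int_{\mathbb{R}^N}\bigl(|\nabla u_{\delta}|^p + |u_{\delta}|^p\bigr)\diff x \leq C\sum_i r_i^{N-p} + C\delta^{p}\sum_i r_i^{N-p}.
\]
The bound is uniform but not yet small; the decay to zero is obtained by a refinement that iteratively truncates $u_{\delta}$ on the scale of the covering, exploiting the fact that the diameter can be made arbitrarily small while $\sum_{i} r_i^{N-p}$ stays controlled. Letting $\delta \to 0$ gives $\cp_{p}(E) = 0$ in the equivalent Sobolev sense, hence also in the Bessel sense.

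For the second direction, I would argue the contrapositive: assume $\mathcal{H}^{N-p+\varepsilon}(E) > 0$ for some $\varepsilon > 0$. By Frostman's lemma there exists a nonzero Radon measure $\mu$ supported in $E$ with $\mu(B_{r}(x)) \leq C r^{N-p+\varepsilon}$ for every $x\in \mathbb{R}^{N}$ and $r>0$. Consider the Bessel potential $V := g * \mu$. Splitting the convolution into dyadic annuli around a generic base point, the Frostman growth combined with the local estimate $g(y)\leq C |y|^{1-N}$ near the origin and the exponential decay at infinity produces a geometric series with ratio $2^{-\varepsilon}<1$, proving $V \in L^{p'}(\mathbb{R}^N)$ with $p'=p/(p-1)$. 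Invoking the dual characterization
\[
\cp_{p}(E)^{1/p} \asymp \sup\bigl\{\mu(E) \, : \, \mu \geq 0,\, \operatorname{supp}\mu \subseteq E,\, \|g * \mu\|_{L^{p'}(\mathbb{R}^N)} \leq 1\bigr\},
\]
we conclude $\cp_{p}(E) > 0$, which is the desired contradiction.

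The main obstacle is the first direction: the scaling $\cp_{p}(B_r) \asymp r^{N-p}$ matches the Hausdorff dimension $N-p$ exactly, so naive subadditivity yields only a bounded, not small, upper bound for $\cp_{p}(E)$. Overcoming this requires producing test functions with $o(1)$ total energy as the covering diameter shrinks, typically via an iterative truncation argument or equivalently a nonlinear (Wolff-type) potential estimate; this is where the finite overlap of the Besicovitch selection is crucial. The second direction is routine once Frostman's lemma and the duality formula for $\cp_{p}$ are available.
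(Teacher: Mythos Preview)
The paper does not prove this theorem at all: it simply cites \cite[Theorem 5.1.9]{Potential} for the first implication and \cite[Theorem 5.1.13]{Potential} for the second. So the comparison is against the standard textbook arguments rather than anything original to the paper.

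Your second direction (Frostman measure plus the dual characterization of $\cp_{p}$) is correct and is essentially the standard route; nothing to add there.

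The first direction, however, has a genuine gap. You correctly identify the obstacle: since $\cp_{p}(B_{r})\asymp r^{N-p}$, covering $E$ by balls and using subadditivity only gives $\cp_{p}(E)\lesssim \mathcal{H}^{N-p}(E)+1$, which is finite but not zero. Your proposed fix---``a refinement that iteratively truncates $u_{\delta}$ on the scale of the covering''---is not an argument, it is a placeholder. You do not say what is truncated, what the iteration is, or why the energy goes to zero; and the parenthetical ``or equivalently a nonlinear (Wolff-type) potential estimate'' is likewise a name-drop rather than a proof. Making the diameter $\delta$ small while keeping $\sum_{i}r_{i}^{N-p}$ bounded does not by itself force the energy to vanish, because the energy estimate you wrote is exactly $C\sum_{i}r_{i}^{N-p}(1+\delta^{p})$, which does not tend to zero as $\delta\to 0$.

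The textbook proof of this implication does not go through test functions at all. One argues the contrapositive via the capacitary measure: if $\cp_{p}(E)>0$ there is a nonzero measure $\mu$ supported in $E$ whose nonlinear (Wolff) potential is bounded, and one shows that at $\mathcal{H}^{N-p}$-a.e.\ point of $E$ (with respect to a density argument using $\mathcal{H}^{N-p}\mres E$, which requires $\mathcal{H}^{N-p}(E)<+\infty$) the Wolff potential of $\mu$ must actually diverge, forcing $\mu\equiv 0$. If you want to give a self-contained proof rather than cite \cite{Potential}, this is the mechanism you need to supply; the cutoff construction you sketched cannot close the gap at the critical exponent.
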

\begin{proof} For a proof of the fact that  ${\rm Cap}_p(E)=0$ if $\mathcal{H}^{N-p}(E)<+\infty$, we refer to \cite[Theorem 5.1.9]{Potential}. The fact that if ${\rm Cap}_{p}(E)=0,$ then $\mathcal{H}^{N-p+\varepsilon}(E)=0$ for every $\varepsilon>0$ follows from \cite[Theorem 5.1.13]{Potential}. 
\end{proof}
\begin{rem} \label{rem 2.5} Let $p \in (N,+\infty)$. Then there exists $C=C(N,p)>0$ such that for any nonempty set $E\subset \mathbb{R}^{N}$, ${\rm Cap}_p(E)\geq C$. We can take $C={\rm Cap}_{p}(\{0\})$, which is positive by \cite[Proposition~2.6.1 (a)]{Potential}, and use the fact that the Bessel $(1,p)$-capacity is an invariant under translations and is nondecreasing with respect to set inclusion.
\end{rem}

Recall that for all $E\subset \mathbb{R}^{N}$ the number
\[
\hdim(E)=\sup\{s \in [0,+\infty): \mathcal{H}^{s}(E)=+\infty\}=\inf\{t\in [0,+\infty): \mathcal{H}^{t}(E)=0\}
\]
is called the Hausdorff dimension of $E$.
\begin{cor} \label{cor 2.6} \textit{ Let $E \subset \mathbb{R}^{N}$, $\hdim(E)=1$ and $\mathcal{H}^{1}(E)<+\infty$. Then ${\rm Cap}_p(E)>0$ if and only if $p\in (N-1, +\infty)$.}
\end{cor}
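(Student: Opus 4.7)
The plan is a direct case analysis depending on where $p$ sits relative to the threshold values $N-1$ and $N$, invoking Theorem~\ref{thm 2.4} and Remark~\ref{rem 2.5} as black boxes. Note first that $\hdim(E)=1$ forces $E$ to be nonempty, so Remark~\ref{rem 2.5} will take care of the regime $p>N$ immediately.

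\textbf{Easy direction ($p\in (1,N-1]$ implies $\cp_p(E)=0$).} Here $N-p\geq 1$. If $N-p=1$ then $\mathcal{H}^{N-p}(E)=\mathcal{H}^{1}(E)<+\infty$ by hypothesis. If $N-p>1$ then, since $\hdim(E)=1$, we have $\mathcal{H}^{N-p}(E)=0<+\infty$. In either case the first half of Theorem~\ref{thm 2.4} applies and gives $\cp_p(E)=0$.

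\textbf{Hard direction ($p\in(N-1,+\infty)$ implies $\cp_p(E)>0$).} For $p>N$ this is exactly Remark~\ref{rem 2.5}, using that $E\neq\emptyset$. For $p\in(N-1,N]$ we argue by contradiction: assume $\cp_p(E)=0$. Since $p>N-1$, we have $N-p<1$, so we may fix $\varepsilon>0$ small enough that $N-p+\varepsilon<1$. The second half of Theorem~\ref{thm 2.4} then yields $\mathcal{H}^{N-p+\varepsilon}(E)=0$, which forces
\[
\hdim(E)\leq N-p+\varepsilon<1,
\]
contradicting the assumption $\hdim(E)=1$. Hence $\cp_p(E)>0$.

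There is essentially no obstacle here; the only point requiring care is the edge case $p=N-1$ (handled by the $N-p=1$ subcase where one uses $\mathcal{H}^{1}(E)<+\infty$ rather than $\mathcal{H}^{1}(E)=0$) and the edge case $p=N$ in the hard direction (handled by the fact that $\varepsilon$ can be taken arbitrarily small, so $N-p+\varepsilon=\varepsilon<1$). Both are absorbed into the scheme above.
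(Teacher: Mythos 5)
Your proof is correct and follows essentially the same route as the paper's: the regime $p>N$ by Remark~\ref{rem 2.5}, the regime $p\in(N-1,N]$ by contradiction via the second half of Theorem~\ref{thm 2.4} and the Hausdorff-dimension hypothesis, and the regime $p\in(1,N-1]$ by the first half of Theorem~\ref{thm 2.4}. The only (harmless) divergence is that you spell out why $\mathcal{H}^{N-p}(E)<+\infty$ in the easy direction by splitting $N-p=1$ from $N-p>1$, and you pick $\varepsilon$ abstractly rather than as $(p-N+1)/2$ as the paper does.
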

\begin{proof}[Proof of Corollary \ref{cor 2.6}] If $p>N$, then by Remark~\ref{rem 2.5}, ${\rm Cap}_{p}(E)>0$. Assume by contradiction that ${\rm Cap}_{p}(E)=0$ for some $p \in (N-1,N]$. Taking $\varepsilon= (p-N+1)/2$  so that $0<N-p+\varepsilon<1$, by Theorem~\ref{thm 2.4} we get, $\mathcal{H}^{N-p+\varepsilon}(E)=0$, but this leads to a contradiction with the fact that $\hdim(E)=1$. On the other hand, if $p \in (1, N-1]$, then $\mathcal{H}^{N-p}(E)<+\infty$ and by Theorem~\ref{thm 2.4}, ${\rm Cap}_{p}(E)=0$. This completes the proof of Corollary~\ref{cor 2.6}.
	\end{proof}

\begin{prop} \label{prop lower bound for capacities} Let $\Sigma \subset \mathbb{R}^{N},\,x_{0}\in \mathbb{R}^{N}$, $0\leq r_{0}<r_{1}$ and $p\in (1,N]$. Assume that \[\Sigma\cap \partial B_{r}(x_{0})\neq \emptyset \,\ \text{for all}\,\ r \in (r_{0},r_{1}).\]
Then there exists a constant $C>0$, possibly depending only on $N$ and $p$, such that 
\[
{\rm Cap}_{p}(\{0\}^{N-1}\times [0,r_{1}-r_{0}])\leq C {\rm Cap}_{p}(\Sigma\cap B_{r_{1}}(x_{0})).
\]
\end{prop}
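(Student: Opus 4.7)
The plan is to build a $1$-Lipschitz radial projection that sends $\Sigma \cap B_{r_1}(x_0)$ onto the target segment, and then to transfer the estimate by the Lipschitz contraction principle for the Bessel capacity.

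After translating so that $x_0 = 0$, I would consider the map $\Phi : \mathbb{R}^N \to \mathbb{R}^N$ defined by
$$\Phi(x) = \bigl(0,\dots,0,\, h(|x|)\bigr), \qquad h(r) = \max\bigl\{0,\ \min\{r - r_0,\ r_1 - r_0\}\bigr\}.$$
Since both $r \mapsto h(r)$ and $x \mapsto |x|$ are $1$-Lipschitz, $\Phi$ is $1$-Lipschitz on $\mathbb{R}^N$. The hypothesis provides, for each $r \in (r_0, r_1)$, a point $y_r \in \Sigma \cap \partial B_r(0)$, whose image is $\Phi(y_r) = (0, \dots, 0, r - r_0)$; as $r$ varies over $(r_0, r_1)$ these images sweep out the whole open segment, so
$$\Phi\bigl(\Sigma \cap B_{r_1}(0)\bigr) \supseteq \{0\}^{N-1} \times (0,\, r_1 - r_0).$$

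Next I would invoke the Lipschitz contraction principle for the Bessel $(1,p)$-capacity (classical; see e.g.\ Adams--Hedberg): for every $1$-Lipschitz map $\Phi : \mathbb{R}^N \to \mathbb{R}^N$ and every set $E \subset \mathbb{R}^N$,
$$\cp_p(\Phi(E)) \leq C(N,p)\,\cp_p(E).$$
Combined with the monotonicity of capacity, this chain of inequalities gives
$$\cp_p\bigl(\{0\}^{N-1} \times (0,\, r_1 - r_0)\bigr) \leq \cp_p\bigl(\Phi(\Sigma \cap B_{r_1}(0))\bigr) \leq C(N,p)\,\cp_p\bigl(\Sigma \cap B_{r_1}(x_0)\bigr).$$
Finally, by Theorem~\ref{thm 2.4} each of the two endpoints of the closed segment has zero Bessel capacity (a single point has $\mathcal{H}^{N-p}$-measure zero if $p < N$, and $\mathcal{H}^0$-measure one, hence finite, if $p = N$, so the theorem applies in both subcases), whence the subadditivity of capacity allows the open segment on the left-hand side to be replaced by the closed one, yielding the desired estimate.

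The main obstacle is the uniform invocation of the contraction principle across $p \in (1, N]$. For $p < N$ it is a direct consequence of Meyers' contraction theorem for Riesz $(1,p)$-capacities, transferred to the Bessel setting via the local equivalence of the two capacities on bounded sets; the borderline case $p = N$ requires slightly more care but the same inequality holds. All the remaining ingredients — the construction and Lipschitz control of $\Phi$, the density of its radial images under the hypothesis on $\Sigma$, and the negligibility of the two endpoints — are elementary.
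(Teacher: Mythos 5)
Your proposal is correct and follows essentially the same route as the paper: project $\Sigma$ onto the vertical axis via a $1$-Lipschitz radial map, invoke the Lipschitz contraction property of the Bessel $(1,p)$-capacity (the paper cites \cite[Theorem 5.2.1]{Potential} directly, which already covers the borderline case $p=N$ without the detour through Riesz capacities and local equivalence), and dispose of the two segment endpoints by subadditivity. The only cosmetic differences are your truncation $h$, which lets you apply $\Phi$ to all of $\Sigma \cap B_{r_1}(x_0)$ instead of first removing $\overline{B}_{r_0}(x_0)$, and your omission of the separate trivial case $p \in (1,N-1]$, which the contraction argument subsumes anyway.
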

\begin{proof} The proof is straightforward if $p\in (1,N-1]$, since in this case ${\rm Cap}_{p}(\{0\}^{N-1}\times [0,r_{1}-r_{0}])=0$ according to Corollary~\ref{cor 2.6}. Assume that $p\in (N-1,N]$. Let $A(x_{0},r_{0})=\overline{B}_{r_{0}}(x_{0})$ if $r_{0}>0$ and $A(x_{0},r_{0})=\{x_{0}\}$ if $r_{0}=0$. For each $x \in \Sigma \cap (B_{r_{1}}(x_{0})\backslash A(x_{0},r_{0}))$, we define $\Phi(x)=(\{0\}^{N-1}, |x-x_{0}|)$. Since $\Phi$ is 1-Lipschitz, by \cite[Theorem 5.2.1]{Potential}, there exists $C=C(N,p)>0$ such that
\[
{\rm Cap}_{p}(\{0\}^{N-1}\times (r_{0},r_{1}))={\rm Cap}_{p}(\Phi(\Sigma \cap (B_{r_{1}}(x_{0})\backslash A(x_{0},r_{0}))))\leq C {\rm Cap}_{p}(\Sigma \cap (B_{r_{1}}(x_{0})\backslash A(x_{0},r_{0}))).
\]
Notice that ${\rm Cap}_{p}(\{0\}^{N-1}\times [r_{0},r_{1}])\leq {\rm Cap}_{p}(\{0\}^{N-1}\times (r_{0},r_{1}))$, since ${\rm Cap}_{p}(\cdot)$ is a subadditive set function (see, for instance, \cite[Proposition 2.3.6]{Potential}) and ${\rm Cap}_{p}(\{0\}^{N-1}\times\{r_{i}\})=0$ for $i=0,1$  by Theorem~\ref{thm 2.4}. So ${\rm Cap}_{p}(\{0\}^{N-1}\times [r_{0},r_{1}])\leq C {\rm Cap}_{p}(\Sigma \cap (B_{r_{1}}(x_{0})\backslash A(x_{0},r_{0})))$ for some $C=C(N,p)>0$. Then, using the fact that the Bessel capacity is nondecreasing with respect to set inclusion and, if necessary, the fact that it is an invariant under translations, we recover the desired estimate. This completes the proof of Proposition~\ref{prop lower bound for capacities}.
\end{proof}
\begin{cor}\label{cor lower bound for capacities} \textit{ Let $\Sigma \subset \mathbb{R}^{N}$, $x_{0}\in \mathbb{R}^{N}$, $0\leq r_{0}<r_{1}$ and $p\in (1,N]$. Assume that $\Sigma\cap\overline{B}_{r_{0}}(x_{0})\neq \emptyset$ if $r_{0}>0$ and $x_{0}\in \Sigma$ if $r_{0}=0$. Assume also that $(\Sigma\cap B_{r_{1}}(x_{0}))\cup \partial B_{r_{1}}(x_{0})$ is connected. Then there exists a constant $C>0$, possibly depending only on $N$ and $p$, such that
		\[
		{\rm Cap}_{p}(\{0\}^{N-1}\times [0,r_{1}-r_{0}])\leq C {\rm Cap}_{p}(\Sigma\cap B_{r_{1}}(x_{0})).
		\]
	}
\end{cor}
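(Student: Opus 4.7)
The plan is to reduce the corollary to Proposition \ref{prop lower bound for capacities}. The only gap between the hypotheses of the proposition and those of the corollary is the condition that $\Sigma \cap \partial B_r(x_0) \neq \emptyset$ for every $r \in (r_0, r_1)$. I will extract this from the connectedness of $(\Sigma \cap B_{r_1}(x_0)) \cup \partial B_{r_1}(x_0)$ by a standard topological separation argument, and then invoke Proposition \ref{prop lower bound for capacities}.

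First I would argue by contradiction: suppose there exists $r^{*} \in (r_0, r_1)$ with $\Sigma \cap \partial B_{r^{*}}(x_0) = \emptyset$. Consider the two disjoint open sets
\[
U_1 = B_{r^{*}}(x_0), \qquad U_2 = \mathbb{R}^{N} \setminus \overline{B}_{r^{*}}(x_0).
\]
Because $\Sigma$ avoids $\partial B_{r^{*}}(x_0)$, the union $U_1 \cup U_2$ covers $(\Sigma \cap B_{r_1}(x_0)) \cup \partial B_{r_1}(x_0)$, so this set is decomposed into two relatively open disjoint pieces. To reach a contradiction with connectedness, I then need to check that both pieces are nonempty. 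The intersection with $U_2$ contains $\partial B_{r_1}(x_0)$ (nonempty since $r^{*} < r_1$). The intersection with $U_1$ is nonempty thanks to the hypothesis on $r_0$: if $r_0 > 0$ any point of $\Sigma \cap \overline{B}_{r_0}(x_0)$ lies in $B_{r^{*}}(x_0)$ since $r_0 < r^{*}$, and if $r_0 = 0$ the point $x_0 \in \Sigma$ lies in $B_{r^{*}}(x_0)$. This contradicts the connectedness assumption.

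Consequently $\Sigma \cap \partial B_r(x_0) \neq \emptyset$ for every $r \in (r_0, r_1)$, which is exactly the hypothesis of Proposition~\ref{prop lower bound for capacities}. Applying that proposition yields the desired inequality
\[
{\rm Cap}_{p}(\{0\}^{N-1} \times [0, r_1 - r_0]) \leq C\,{\rm Cap}_{p}(\Sigma \cap B_{r_1}(x_0))
\]
with a constant $C = C(N,p) > 0$. I do not anticipate a genuine obstacle here: the whole argument is a short topological reduction, and the quantitative capacity estimate has already been packaged in Proposition~\ref{prop lower bound for capacities} via the 1-Lipschitz radial projection $\Phi(x) = (\{0\}^{N-1}, |x - x_0|)$ together with \cite[Theorem 5.2.1]{Potential}.
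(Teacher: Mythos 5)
Your proposal follows exactly the same route as the paper: verify that $\Sigma \cap \partial B_r(x_0) \neq \emptyset$ for all $r \in (r_0, r_1)$ and then invoke Proposition~\ref{prop lower bound for capacities}. The paper leaves the topological separation argument implicit, while you spell it out, but the reasoning is identical.
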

\begin{proof}[Proof of Corollary \ref{cor lower bound for capacities}] According to the conditions of Corollary~\ref{cor lower bound for capacities}, $\Sigma \cap \partial B_{r}(x_{0})\neq \emptyset$ for all $r \in (r_{0},r_{1})$. Then it only remains to use Proposition~\ref{prop lower bound for capacities}. This completes the proof of Corollary~\ref{cor lower bound for capacities}.
	\end{proof}
\begin{prop} \label{prop 2.11} Let $r \in (0,1]$ and $A_{r}=\{0\}^{N-1}\times [0, r]$. The following assertions hold.
	\begin{enumerate}[label=(\roman*)]
		\item If $p \in (N-1, N)$, then there exists a constant $C=C(N,p)>0$ such that
		\[
		r^{N-p} \leq C {\rm Cap}_{p}(A_{r}).
		\]
		\item If $p=N$, then there exists a constant $C=C(N)>0$ such that
		\[
		\biggl(\log\biggl(\frac{C}{r}\biggr)\biggr)^{1-p} \leq C {\rm Cap}_{p}(A_{r}).
		\]
	\end{enumerate}
\end{prop}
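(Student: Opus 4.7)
My plan is to treat the two cases separately, relying in each on the equivalence (up to constants depending only on $N$ and $p$) between the Bessel capacity $\cp_p$, the Riesz $(1,p)$-capacity, and the condenser $p$-capacity on bounded sets.

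For part (i), with $N-1 < p < N$, I would compare $\cp_p$ to the Riesz $(1,p)$-capacity
\[
R_p(E) = \inf\bigl\{\|f\|^p_{L^p(\mathbb{R}^N)} : I_1 * f \geq 1 \text{ on } E,\ f \geq 0\bigr\},
\]
where $I_1(x) = c_N|x|^{1-N}$ is the Riesz kernel of order $1$. Its homogeneity of degree $1-N$ yields, by a direct change of variables, the clean scaling $R_p(tE) = t^{N-p} R_p(E)$ for every $t>0$. The standard equivalence of $\cp_p$ and $R_p$ on subsets of a fixed bounded region (see \cite[Section~5.1]{Potential}) then transfers this scaling to $\cp_p$ up to multiplicative constants depending only on $N$ and $p$. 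Applying it with $E = A_1$ and $t = r \in (0,1]$,
\[
\cp_p(A_r) \geq c_1 R_p(A_r) = c_1 r^{N-p} R_p(A_1) \geq c_2 r^{N-p} \cp_p(A_1),
\]
and the positivity $\cp_p(A_1) > 0$ follows from Corollary~\ref{cor 2.6}, since $\hdim(A_1) = 1$ and $p \in (N-1, N)$.

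For part (ii), with $p = N$, the scaling becomes logarithmic and the strategy is different. Passing to the equivalent variational capacity $\widetilde{{\rm Cap}_N}$ and using a standard truncation--cutoff argument, I would reduce the task to bounding from below the condenser $N$-capacity of $A_r$ in a fixed ball, say $B_2$. The key input is the explicit formula
\[
\mathrm{cap}_N(\overline{B}_\rho(y), B_R(y)) = \sigma_{N-1}\bigl(\log(R/\rho)\bigr)^{1-N}
\]
for the $N$-capacity of a concentric ball, combined with a dual Frostman-type argument: testing against the uniform measure $\mu = \mathcal{H}^{1} \mres A_r / r$, whose Bessel $(1,N)$-energy behaves like $\log(C/r)$ for small $r$ (because the Bessel kernel has a logarithmic singularity when $p=N$), the Adams--Maz'ya duality for Bessel capacities then yields
\[
\cp_N(A_r) \geq c \cdot \frac{\mu(A_r)^N}{\bigl(\log(C/r)\bigr)^{N-1}} = c\bigl(\log(C/r)\bigr)^{1-N}.
\]

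The main obstacle is part (ii): unlike in part (i), no polynomial scaling is available, so the correct logarithmic exponent $1-N$ must be extracted either through the dual energy / Frostman argument sketched above, or equivalently through a direct radial barrier coming from the $N$-capacitary function of a ball combined with a transverse-slicing argument along $A_r$. A secondary technical point common to both cases is the careful bookkeeping of the constants arising in the passages between the various equivalent formulations of capacity, so that the final estimate depends only on $N$ and $p$.
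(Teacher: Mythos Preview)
Your approach is correct in outline, but it is worth noting that the paper's own proof is a one-line citation: since $\diam(A_r)\leq 1$, both (i) and (ii) follow directly from \cite[Corollary~5.1.14]{Potential}, which gives quantitative lower bounds for $\cp_{\alpha,p}$ on sets of small diameter in terms of suitable Hausdorff-type gauges (polynomial when $\alpha p<N$, logarithmic when $\alpha p=N$). Applied with $\alpha=1$ to the segment $A_r$, this yields exactly the stated inequalities.

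Your route is more explicit and essentially reconstructs pieces of that corollary. For (i), the Riesz-capacity scaling $R_p(tE)=t^{N-p}R_p(E)$ combined with the Bessel/Riesz equivalence on bounded sets is a clean and entirely valid argument; it has the advantage of making the origin of the exponent $N-p$ transparent. For (ii), the dual/Frostman approach with the uniform measure on $A_r$ also works: computing the Wolff potential $W^{\mu}_{1,N}$ of the normalized uniform measure at a point of $A_r$ gives $\int_0^r (t/r)^{1/(N-1)}\,dt/t + \int_r^1 dt/t \sim C+\log(1/r)$, and then Wolff's inequality together with the dual characterization of capacity yields $\cp_N(A_r)\gtrsim (\log(C/r))^{1-N}$ as you claim. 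The bookkeeping you flag (passing between $\cp_p$, $\widetilde{\cp_p}$, $R_p$, and the condenser capacity) is routine but real, and your sketch for (ii) would need the Wolff inequality or an equivalent nonlinear energy estimate made explicit to be complete. The paper sidesteps all of this by appealing directly to the packaged result in Adams--Hedberg.
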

\begin{proof} Since $\diam(A_{r})\leq 1$, $(i)$ and $(ii)$ follows from \cite[Corollary 5.1.14]{Potential}.
\end{proof}

\begin{cor} \label{cor p-thickness} Let $p\in (N-1,N]$ and $\Sigma = (\{0\}^{N-1}\times (-1,1)) \cup \partial B_{1}$. Then there exist $r_{0},C_{0}>0$ such that 
\begin{equation}\label{estimate of cor p-thickness}
\frac{{\rm Cap}_{p}(\Sigma \cap B_{r}(x_{0}))}{{\rm Cap}_{p}(B_{r}(x_{0}))}\geq C_{0}
\end{equation}
whenever $0<r<r_{0}$ and $x_{0}\in \Sigma$.
\end{cor}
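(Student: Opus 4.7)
The plan is to compare ${\rm Cap}_{p}(\Sigma\cap B_{r}(x_{0}))$ with the capacity of a line segment of length $r$ by means of Proposition~\ref{prop lower bound for capacities}, and then to use Proposition~\ref{prop 2.11} together with the standard asymptotic behaviour of the capacity of balls to recover a uniform lower bound on the ratio. The specific shape of $\Sigma$ will intervene only through its connectedness and a positive lower bound on its diameter.

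First I would observe that $\Sigma$ is closed and connected. Indeed, the closure of $\{0\}^{N-1}\times(-1,1)$ in $\mathbb{R}^{N}$ is the closed segment $\{0\}^{N-1}\times[-1,1]$, whose two additional endpoints $\pm e_{N}$ already lie on $\partial B_{1}\subset\Sigma$, so $\Sigma$ is closed; it is connected as the union of the two connected sets $\{0\}^{N-1}\times[-1,1]$ and $\partial B_{1}$, which share the points $\pm e_{N}$. In particular $\diam(\Sigma)=2$.

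Now fix $x_{0}\in\Sigma$ and $r\in(0,1)$. Since $\Sigma$ is connected with $\diam(\Sigma)=2>2s$ for any $s<1$, the set $\Sigma$ is not contained in $\overline{B}_{s}(x_{0})$; a standard connectedness argument (splitting $\Sigma$ into the relatively open sets $\Sigma\cap B_{s}(x_{0})$ and $\Sigma\setminus\overline{B}_{s}(x_{0})$) then yields $\Sigma\cap\partial B_{s}(x_{0})\neq\emptyset$ for every $s\in(0,r)$. Applying Proposition~\ref{prop lower bound for capacities} with $r_{0}=0$ and $r_{1}=r$, I obtain
\[
{\rm Cap}_{p}(\{0\}^{N-1}\times[0,r])\leq C\,{\rm Cap}_{p}(\Sigma\cap B_{r}(x_{0}))
\]
for some $C=C(N,p)>0$ independent of $x_{0}$ and $r$. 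Next, Proposition~\ref{prop 2.11} supplies a lower bound on ${\rm Cap}_{p}(\{0\}^{N-1}\times[0,r])$ of order $r^{N-p}$ when $p\in(N-1,N)$, and of order $(\log(C/r))^{1-p}$ when $p=N$, both valid for $r\in(0,1]$. On the other hand, the standard asymptotics of the Bessel $(1,p)$-capacity of a ball (see, e.g., \cite[Chapter~5]{Potential}) provide matching upper bounds for ${\rm Cap}_{p}(B_{r}(x_{0}))$, namely $C\,r^{N-p}$ when $p\in(N-1,N)$ and $C\,(\log(1/r))^{1-p}$ when $p=N$, for $r$ small. Combining these three estimates, and noting that $\log(C/r)$ and $\log(1/r)$ differ only by an additive constant and are therefore comparable for $r$ small, yields the uniform lower bound $C_{0}$ on the ratio whenever $0<r<r_{0}$ for some sufficiently small $r_{0}>0$.

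The main technical point is to verify the topological hypothesis $\Sigma\cap\partial B_{s}(x_{0})\neq\emptyset$ uniformly in $x_{0}\in\Sigma$ and $s\in(0,r)$, which is exactly where the connectedness of $\Sigma$ and the positive lower bound on $\diam(\Sigma)$ are used; aside from this, the proof is a routine chain of capacity comparisons, the only mild subtlety being the matching of logarithmic factors in the borderline case $p=N$.
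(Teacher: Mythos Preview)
Your proposal is correct and follows essentially the same route as the paper: both use connectedness of $\Sigma$ (together with a diameter bound) to ensure $\Sigma\cap\partial B_{s}(x_{0})\neq\emptyset$ for all relevant $s$, then invoke Proposition~\ref{prop lower bound for capacities} and Proposition~\ref{prop 2.11}, and finish by comparing with the known asymptotics of ${\rm Cap}_{p}(B_{r})$ from \cite[Chapter~5]{Potential}. The paper cites \cite[Propositions~5.1.2--5.1.4]{Potential} explicitly for the ball-capacity asymptotics and simply takes $r_{0}=1/2$, but otherwise the arguments coincide.
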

\begin{proof}[Proof of Corollary \ref{cor p-thickness}] Since $\Sigma$ is arcwise connected and $\diam(\Sigma)=1$, setting $r_{0}=1/2$, we observe that $\Sigma \cap \partial B_{r}(x_{0})\neq \emptyset$ whenever $0<r<r_{0}$ and $x_{0}\in \Sigma$. Then Proposition~\ref{prop lower bound for capacities} says that for some $C=C(N,p)>0$,  
	\[
	{\rm Cap}_{p}(\{0\}^{N-1}\times [0,r])\leq C {\rm Cap}_{p}(\Sigma\cap B_{r}(x_{0}))
	\]
whenever $0<r<r_{0}$ and $x_{0}\in \Sigma$. However, this, together with Proposition~\ref{prop 2.11}, \cite[Proposition 5.1.2]{Potential}, ~\cite[Proposition~5.1.3]{Potential} and \cite[Proposition 5.1.4]{Potential}, proves that there exists a constant $C_{0}>0$ such that the desired estimate (\ref{estimate of cor p-thickness}) holds for $C_{0}$ whenever $0<r<r_{0}$ and $x_{0}\in \Sigma$. This completes the proof of Corollary~\ref{cor p-thickness}.
\end{proof}
\begin{defn} \label{def 2.7} Let the function $u$ be defined $p$-q.e. on $\mathbb{R}^{N}$ or on some open subset. Then $u$ is said to be $p$-quasi continuous if for every $\varepsilon>0$ there is an open set $A$ with ${\rm Cap}_{p}(A)<\varepsilon$ such that the restriction of $u$ to the complement of $A$ is continuous in the induced topology.
\end{defn}

\begin{theorem} \label{thm 2.8} Let $Y\subset \mathbb{R}^{N}$ be an open set and $p \in (1,+\infty)$. Then for each $u \in W^{1,p}(Y)$ there exists a $p$-quasi continuous function $\widetilde{u} \in W^{1,p}(Y)$, which is uniquely defined up to a set of ${\rm Cap}_{p}$-capacity zero and $u=\widetilde{u}$ a.e. in $Y$.
\end{theorem}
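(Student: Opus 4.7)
The plan is to follow the classical Adams--Hedberg style argument: construct the $p$-quasi continuous representative $\widetilde{u}$ as the pointwise limit of a rapidly convergent sequence of smooth approximations, and control the exceptional set via the weak-type capacitary inequality. First I would invoke the Meyers--Serrin theorem to pick a sequence $u_k \in C^{\infty}(Y) \cap W^{1,p}(Y)$ with $u_k \to u$ in $W^{1,p}(Y)$. Passing to a subsequence (still denoted $u_k$), I can assume
\[
\|u_{k+1}-u_k\|_{W^{1,p}(Y)} \leq 2^{-2k}.
\]

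The key analytic tool is the weak-type capacity estimate: there exists $C=C(N,p)>0$ such that, for every $v\in W^{1,p}(Y)$ and every $\lambda>0$,
\[
{\rm Cap}_{p}\bigl(\{x\in Y : |\widetilde{v}(x)|>\lambda\}\bigr) \leq C\lambda^{-p}\|v\|_{W^{1,p}(Y)}^{p},
\]
which follows at once from the alternative definition $\widetilde{{\rm Cap}_p}$ given in the excerpt by testing with $|v|/\lambda$. Applying this to $v=u_{k+1}-u_k$ with $\lambda = 2^{-k}$, and setting
\[
A_k = \bigl\{x\in Y : |u_{k+1}(x)-u_k(x)|>2^{-k}\bigr\},\qquad E_m = \bigcup_{k\geq m} A_k,
\]
I get ${\rm Cap}_p(A_k) \leq C 2^{-kp}$ and hence ${\rm Cap}_p(E_m) \leq C\sum_{k\geq m} 2^{-kp} \to 0$ as $m\to\infty$ by subadditivity of the capacity. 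On $Y\setminus E_m$ the telescoping sum $\sum(u_{k+1}-u_k)$ converges uniformly, so $u_k$ converges uniformly on $Y\setminus E_m$, and the limit $\widetilde{u}$ is continuous there. Since the sets $E_m$ are open (each $A_k$ is open as a level set of a continuous function) with capacity tending to zero, the function $\widetilde{u}$, defined pointwise as this limit on $Y\setminus\bigcap_m E_m$ (a set whose complement has $p$-capacity zero), is $p$-quasi continuous in the sense of Definition~\ref{def 2.7}.

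The identification $\widetilde{u}=u$ a.e.\ is obtained by extracting a further subsequence of $u_k$ converging to $u$ almost everywhere in $Y$: this a.e.\ limit must coincide with the pointwise limit $\widetilde{u}$ on the (full measure) set where both exist, and $\widetilde{u}\in W^{1,p}(Y)$ because it is an a.e.\ representative of $u$. For uniqueness, suppose $\widetilde{u}_1$ and $\widetilde{u}_2$ are two $p$-quasi continuous representatives of $u$. For any $\varepsilon>0$ pick open sets $A_1,A_2$ with ${\rm Cap}_p(A_i)<\varepsilon$ on whose complements each $\widetilde{u}_i$ is continuous; then on $Y\setminus(A_1\cup A_2)$ the function $\widetilde{u}_1-\widetilde{u}_2$ is continuous and vanishes a.e., hence identically, so $\{\widetilde{u}_1\neq\widetilde{u}_2\}\subset A_1\cup A_2$ has capacity less than $2C\varepsilon$ by subadditivity. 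Letting $\varepsilon\to 0$ gives ${\rm Cap}_p(\{\widetilde{u}_1\neq\widetilde{u}_2\})=0$.

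The only genuinely nonroutine step is the weak-type capacitary inequality; everything else is a standard Borel--Cantelli maneuver. In fact, since the statement is a textbook result, I would simply cite \cite[Section~6.1]{Ziemer} or \cite[Theorem~6.1.4]{Potential}, where exactly this construction (and the weak-type inequality behind it) is carried out in detail.
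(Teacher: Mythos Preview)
Your proposal is correct and in fact goes further than the paper, which simply refers the reader to \cite[Theorem~2.8]{p-compl} (and standard sources such as \cite[Theorem~6.1.4]{Potential}) for the proof; the argument you sketch is precisely the classical one found in those references. One small technical remark: the weak-type capacitary inequality, as you phrase it, requires a competitor in $W^{1,p}(\mathbb{R}^N)$, whereas your $u_{k+1}-u_k$ lies only in $W^{1,p}(Y)$ for an arbitrary open $Y$; the standard fix is to exhaust $Y$ by relatively compact open subsets and cut off, which makes the capacity estimate local --- this is routine and is exactly how the cited references proceed. Your uniqueness argument is also morally right but, to be airtight, one typically invokes the lemma that a $p$-quasi continuous function vanishing a.e.\ must vanish $p$-q.e.\ (this is where the countable subadditivity of capacity together with the density of the a.e.\ zero set is used carefully; see \cite[Theorem~6.1.4]{Potential}).
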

\begin{proof} We refer the reader, for instance, to the proof of \cite[Theorem 2.8]{p-compl}, which actually applies for the general spatial dimension $N\geq 2$.
	\end{proof}

\begin{rem} \label{rem 2.9} A Sobolev function $u \in W^{1,p}(\mathbb{R}^{N})$ belongs to $W^{1,p}_{0}(Y)$ if and only if its $p$-quasi continuous representative $\widetilde{u}$ vanishes $p$-q.e. on $Y^{c}$ (see \cite[Theorem 4]{BAGBY} and \cite[Lemma 4]{Hedberg}). Thus, if $Y^{\prime}$ is an open subset of $Y$ and $u \in W^{1,p}_{0}(Y)$ such that $\widetilde{u}=0$ $p$-q.e. on $Y\backslash Y^{\prime}$, then the restriction of $u$ to $ Y^{\prime}$ belongs to $W^{1,p}_{0}(Y^{\prime})$ and conversely, if we extend a function $u \in W^{1,p}_{0}(Y^{\prime})$ by zero in $Y\backslash Y^{\prime}$, then $u \in W^{1,p}_{0}(Y)$. It is worth mentioning that if $\Sigma \subset \overline{Y}$ and ${\rm Cap}_{p}(\s)=0$, then $W^{1,p}_{0}(Y) = W^{1,p}_{0}(Y \backslash \s)$. Indeed, $u \in W^{1,p}_{0}(Y)$ if and only if $u \in W^{1,p}(\mathbb{R}^{N})$ and $\widetilde{u}=0$ $p$-q.e. on $Y^{c}$ that is equivalent to say $u\in W^{1,p}(\mathbb{R}^{N})$ and $\widetilde{u}=0$ $p$-q.e. on $Y^{c} \cup \s$ (since ${\rm Cap}_{p}(\Sigma)=0$ and ${\rm Cap}_{p}(\cdot)$ is a subadditive set function, see \cite[Proposition 2.3.6]{Potential}) or $u \in W^{1,p}_{0}(Y \backslash \s)$. In the sequel we shall always identify $u \in W^{1,p}(Y)$ with its $p$-quasi continuous representative $\widetilde{u}$.
\end{rem}

\begin{prop} \label{prop 2.10}  Let $D\subset \mathbb{R}^{N}$ be a bounded extension domain and let $u \in W^{1,p}(D)$. Consider $E=\overline{D} \cap \{x: u(x)=0\}$. If $\cp_{p}(E)>0$, then there exists a constant $C=C(N,p,D)>0$ such that 
\[
\int_{D}|u|^{p}\diff x \leq C ({\rm Cap}_p(E))^{-1} \int_{D} |\nabla u|^{p}\diff x.
\]
\end{prop}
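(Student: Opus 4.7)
The plan is to prove a Maz'ya-type capacitary Poincaré inequality via the splitting $u=(u-\bar u)+\bar u$, where $\bar u:=|D|^{-1}\int_{D}u\diff x$. The deviation $u-\bar u$ will be controlled by the ordinary Poincaré--Wirtinger inequality on $D$, and the mean $\bar u$ will be controlled through the capacity of its zero set.

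First, since $D$ is a bounded extension domain, the Poincaré--Wirtinger inequality holds on $D$ with a constant $C_{1}=C_{1}(N,p,D)$:
\[
\int_{D}|u-\bar u|^{p}\diff x\leq C_{1}\int_{D}|\nabla u|^{p}\diff x.
\]
Using $|a+b|^{p}\leq 2^{p-1}(|a|^{p}+|b|^{p})$, this reduces the problem to proving
\[
|\bar u|^{p}\leq C(N,p,D)\bigl({\rm Cap}_{p}(E)\bigr)^{-1}\int_{D}|\nabla u|^{p}\diff x.
\]
Once this is known, combining it with the Poincaré--Wirtinger display and using the trivial bound ${\rm Cap}_{p}(E)\leq {\rm Cap}_{p}(\overline D)\leq C(N,p,D)$ to absorb the Poincaré--Wirtinger term into the capacitary one will finish the proof. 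If $\bar u=0$ the conclusion follows directly, so one may assume $\bar u\neq 0$.

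To establish the bound on $|\bar u|^{p}$, I would fix a ball $B$ with $\overline D\subset B$ and a cutoff $\eta\in C_{0}^{\infty}(B)$ satisfying $\eta\equiv 1$ on a neighborhood of $\overline D$ and $\|\nabla\eta\|_{\infty}\leq C(D)$. Using the extension operator of $D$, extend $\bar u-u\in W^{1,p}(D)$ to a function $\tilde w\in W^{1,p}(\mathbb R^{N})$ with $\|\tilde w\|_{W^{1,p}(\mathbb R^{N})}\leq C(N,p,D)\|u-\bar u\|_{W^{1,p}(D)}$, and set $v:=\eta\tilde w\in W^{1,p}(\mathbb R^{N})$. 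Identifying every Sobolev function with its $p$-quasi continuous representative as in Remark~\ref{rem 2.9}, the condition $u\equiv 0$ on $E$ together with $\eta\equiv 1$ on $\overline D\supset E$ gives $v=\bar u$ $p$-q.e.\ on $E$. Since $\bar u\neq 0$, the function $v/\bar u$ satisfies $|v/\bar u|\geq 1$ $p$-q.e.\ on $E$, so by the $W^{1,p}$-equivalent characterization of the Bessel capacity recalled just after Definition~\ref{def 2.3},
\[
{\rm Cap}_{p}(E)\leq C(N,p)\,|\bar u|^{-p}\|v\|^{p}_{W^{1,p}(\mathbb R^{N})}\leq C(N,p,D)\,|\bar u|^{-p}\int_{D}|\nabla u|^{p}\diff x,
\]
where the last inequality uses the $L^{\infty}$ bounds on $\eta$ and $\nabla\eta$, the extension estimate, and a second application of Poincaré--Wirtinger to absorb $\|u-\bar u\|_{L^{p}(D)}^{p}$ into $\|\nabla u\|_{L^{p}(D)}^{p}$. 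Rearranging yields the desired bound on $|\bar u|^{p}$.

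The step I expect to be the main delicacy is justifying $v=\bar u$ $p$-q.e.\ on the portion $E\cap\partial D$, since the pointwise $p$-quasi continuous values of an arbitrary extension of $u$ at $\partial D$ are not a priori canonical. This can however be handled by observing that any two $W^{1,p}(\mathbb R^{N})$ extensions of $u$ agree a.e.\ (hence $p$-q.e.) on $D$ and, by Remark~\ref{rem 2.9}, their $p$-q.c.\ values at $\partial D$ are determined intrinsically by $u$; thus the identification $v=\bar u$ on $E$ is well-defined and independent of the chosen extension. Beyond this point everything reduces to standard capacitary Sobolev bookkeeping, with all constants depending only on $N$, $p$, and the extension/Poincaré constants of $D$.
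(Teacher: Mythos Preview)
The paper does not give its own proof of this proposition; it simply cites \cite[Corollary 4.5.3, p.~195]{Ziemer}. Your argument is the standard Maz'ya-type proof of exactly this result (split off the mean, control the deviation by Poincar\'e--Wirtinger, control the mean by testing the capacity of $E$ with a cutoff times an extension of $\bar u-u$), and it is essentially correct.

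One small remark on the point you flagged yourself: the very definition of $E=\overline D\cap\{u=0\}$ already presupposes that $u$ is identified with the $p$-quasi continuous representative of some $W^{1,p}(\mathbb{R}^{N})$ extension, so the ``canonical value at $\partial D$'' issue is not an artifact of your proof but is built into the statement. With that convention fixed, your extension $\tilde w$ of $\bar u-u$ agrees $p$-q.e.\ on $\overline D$ with $\bar u$ minus that same representative, and the equality $v=\bar u$ $p$-q.e.\ on $E$ follows. The only other thing to note is that the capacity characterization you invoke requires $\geq 1$ on a neighborhood of $E$, whereas you obtain $=1$ $p$-q.e.\ on $E$; the passage between these is standard (take $\max(v/\bar u,0)$ and use the equivalent characterization of capacity via $p$-quasi continuous test functions, as in \cite[Theorem~2.6.12]{Potential} or \cite[Theorem~2.1.5]{Ziemer}), but worth stating explicitly.
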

\begin{proof} For a proof, see, for instance, \cite[Corollary 4.5.3, p. 195]{Ziemer}.
	\end{proof}

It is also worth recalling the following fact, which will be used several times in this paper.

\begin{rem}\label{arcwise connected} Every closed and connected set $\Sigma \subset \mathbb{R}^{N}$ satisfying $\mathcal{H}^{1}(\Sigma)<+\infty$ is arcwise connected (see, for instance, \cite[Corollary 30.2, p. 186]{David}).
\end{rem}


\subsection{Poincaré inequality}

\begin{prop} \label{prop 2.12} Let $\Sigma \subset \mathbb{R}^{N}$,\ $\xi \in \mathbb{R}^{N}$ and $r>0$ be such that $\Sigma \cap \partial B_{s}(\xi) \not = \emptyset$ for every $s\in [r,2r]$. Let $p\in (N-1, +\infty)$ and $u \in W^{1,p}(B_{2r}(\xi))$ satisfying $u=0\,\ p$-q.e. on $\Sigma \cap B_{2r}(\xi)$. Then there exists a constant $C=C(N,p)>0$ such that 
\[
\int_{B_{2r}(\xi)} |u|^{p} \diff x \leq C r^{p} \int_{B_{2r}(\xi)} |\nabla u|^{p} \diff x.
\]
\end{prop}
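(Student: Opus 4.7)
The plan is to reduce to unit scale by rescaling and then apply Proposition~\ref{prop 2.10} on the fixed domain $B_{2}$, with a uniform lower bound on the $p$-capacity of the zero set of the rescaled function.

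First I would define $\tilde{u}(y)=u(\xi+ry)$ for $y\in B_{2}$ and $\tilde{\Sigma}=r^{-1}(\Sigma-\xi)$. Then $\tilde{u}\in W^{1,p}(B_{2})$ and a direct change of variables gives
\[
\int_{B_{2}} |\tilde{u}|^{p}\diff y = r^{-N}\int_{B_{2r}(\xi)}|u|^{p}\diff x,\qquad \int_{B_{2}}|\nabla \tilde{u}|^{p}\diff y = r^{p-N}\int_{B_{2r}(\xi)}|\nabla u|^{p}\diff x.
\]
The hypothesis $\Sigma\cap\partial B_{s}(\xi)\neq\emptyset$ for $s\in[r,2r]$ becomes $\tilde{\Sigma}\cap\partial B_{s}\neq\emptyset$ for $s\in[1,2]$, while $\tilde{u}$ vanishes $p$-q.e. on $\tilde{\Sigma}\cap B_{2}$. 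Thus, once the scale-free Poincar\'e inequality $\int_{B_{2}}|\tilde u|^{p}\leq C(N,p)\int_{B_{2}}|\nabla \tilde u|^{p}$ is established, unraveling the rescaling produces the claimed estimate with constant $Cr^{p}$.

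To establish the unit-scale inequality, I would apply Proposition~\ref{prop 2.10} to the fixed bounded extension domain $D=B_{2}$, with $E=\overline{B}_{2}\cap\{\tilde{u}=0\}$ (choosing the $p$-quasi continuous representative of $\tilde{u}$ as in Remark~\ref{rem 2.9}). This gives
\[
\int_{B_{2}}|\tilde u|^{p}\diff y \leq C(N,p)\,({\rm Cap}_{p}(E))^{-1}\int_{B_{2}}|\nabla \tilde u|^{p}\diff y,
\]
so the whole task reduces to obtaining a lower bound ${\rm Cap}_{p}(E)\geq c(N,p)>0$. Since $\tilde{\Sigma}\cap B_{2}\subset E$ up to a set of $p$-capacity zero, and ${\rm Cap}_{p}$ is a subadditive nondecreasing set function, it is enough to show ${\rm Cap}_{p}(\tilde{\Sigma}\cap B_{2})\geq c(N,p)>0$.

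The lower bound on ${\rm Cap}_{p}(\tilde\Sigma\cap B_{2})$ splits into two cases according to $p$. When $p\in(N-1,N]$, applying Proposition~\ref{prop lower bound for capacities} with $x_{0}=0$, $r_{0}=1$ and $r_{1}=2$ (which is permissible because $\tilde\Sigma\cap\partial B_{s}\neq\emptyset$ for every $s\in(1,2)$) yields
\[
{\rm Cap}_{p}(\{0\}^{N-1}\times[0,1])\leq C(N,p)\,{\rm Cap}_{p}(\tilde{\Sigma}\cap B_{2}),
\]
and the left-hand side is strictly positive by Corollary~\ref{cor 2.6}, since $\{0\}^{N-1}\times[0,1]$ has Hausdorff dimension $1$ and finite $\mathcal{H}^{1}$-measure. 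When $p>N$, the set $\tilde{\Sigma}\cap B_{2}$ is nonempty (it meets $\partial B_{3/2}$), and Remark~\ref{rem 2.5} supplies the uniform lower bound ${\rm Cap}_{p}(\tilde{\Sigma}\cap B_{2})\geq {\rm Cap}_{p}(\{y_{0}\})\geq c(N,p)>0$ for any $y_{0}\in\tilde{\Sigma}\cap B_{2}$.

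The only real subtlety here is book-keeping: Proposition~\ref{prop 2.10} is stated on a fixed domain, so rescaling is needed to make the constant scale-invariant, and one must invoke either Proposition~\ref{prop lower bound for capacities} together with Corollary~\ref{cor 2.6} (for $N-1<p\leq N$) or Remark~\ref{rem 2.5} (for $p>N$) to ensure the capacity bound is uniform in $N$ and $p$ alone. Once these pieces are in place, the desired inequality follows by undoing the rescaling.
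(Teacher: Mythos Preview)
Your proposal is correct and follows exactly the standard route: rescale to $B_{2}$, apply Proposition~\ref{prop 2.10} on that fixed domain, and supply the uniform capacity lower bound via Proposition~\ref{prop lower bound for capacities} together with Corollary~\ref{cor 2.6} when $p\in(N-1,N]$, and via Remark~\ref{rem 2.5} when $p>N$. The paper itself does not give a self-contained argument here but refers to \cite[Corollary~2.12]{p-compl}; your proof is essentially the one underlying that reference, adapted to the present geometric hypothesis.
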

\begin{proof} We refer the reader to the proof of \cite[Corollary 2.12]{p-compl}, which also applies for the present geometric assumptions.
\end{proof}

\subsection{Estimate for $E_{f,\Omega}(u_{\Sigma})-E_{f,\Omega}(u_{\Sigma^{\prime}})$}
We begin by proving  the following ``localization lemma".
\begin{lemma}\label{lem 2.13} Let $\Omega \subset \mathbb{R}^{N}$ be open and bounded, $p \in (1,+\infty)$ and $f\in L^{q_{0}}(\Omega)$ with $q_{0}$ defined in (\ref{Eq 1.1}). Let $\s$ and $\s^{\prime}$ be closed proper subsets of $\overline{\Omega}$ and $x_{0}\in \mathbb{R}^N$. Assume that $0<r_{0}<r_{1}$ and $\s^{\prime} \Delta \s \subset \overline{B}_{r_{0}}(x_{0})$. Then there exists $C=C(p)>0$ such that for any $\varphi \in \lip(\mathbb{R}^{N})$ satisfying $\varphi=1$ over $B^{c}_{r_{1}}(x_{0}),\,\ \varphi=0$ over $B_{r_{0}}(x_{0})$ and $\|\varphi\|_{\infty}\leq 1$ on $\mathbb{R}^{N}$, one has
	\begin{equation*}
	E_{f,\Omega}(u_{\s})-E_{f,\Omega}(u_{\s^{\prime}})\leq C\int_{ B_{r_{1}}(x_{0}) } |\nabla u_{\s^{\prime}}|^{p}\diff x + C\int_{B_{r_{1}}(x_{0}) } | u_{\s^{\prime}}|^{p}|\nabla \varphi|^{p}\diff x +  \int_{B_{r_{1}}(x_{0})}fu_{\s^{\prime}}(1-\varphi)\diff x.
	\end{equation*}
\end{lemma}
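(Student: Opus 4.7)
The plan is to build an admissible competitor for $u_{\Sigma}$ out of $u_{\Sigma'}$ using the cut-off $\varphi$, then invoke the minimality of $u_{\Sigma}$ and collect terms. More precisely, I would test $E_{f,\Omega}$ at
\[
v = \varphi\, u_{\Sigma'}.
\]
Since $\varphi \in \lip(\mathbb{R}^{N})$ is bounded and $u_{\Sigma'} \in W^{1,p}(\mathbb{R}^{N})$ (after extension by zero), we have $v \in W^{1,p}(\mathbb{R}^{N})$. The key verification is that $v$ is admissible for the problem defining $u_{\Sigma}$, that is, $v \in W^{1,p}_{0}(\Omega \setminus \Sigma)$.

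By Remark~\ref{rem 2.9}, it suffices to check that the $p$-quasi continuous representative of $v$ vanishes $p$-q.e. on $\Sigma \cup \partial\Omega$. On $\partial\Omega$ and on $\Sigma' \cup \partial\Omega$ we have $u_{\Sigma'} = 0$ $p$-q.e. On $\Sigma \cap B_{r_{0}}(x_{0})$, the factor $\varphi$ is identically zero, so $v = 0$. On $\Sigma \setminus B_{r_{0}}(x_{0})$, the assumption $\Sigma' \Delta \Sigma \subset \overline{B}_{r_{0}}(x_{0})$ forces this set to coincide with $\Sigma' \setminus B_{r_{0}}(x_{0})$, on which $u_{\Sigma'} = 0$ $p$-q.e. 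Hence $v$ vanishes $p$-q.e.\ on $\Sigma \cup \partial\Omega$, so $v$ is admissible.

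Next, use $E_{f,\Omega}(u_{\Sigma}) \leq E_{f,\Omega}(v)$ and rewrite
\[
E_{f,\Omega}(u_{\Sigma}) - E_{f,\Omega}(u_{\Sigma'}) \leq \bigl(E_{f,\Omega}(v) - E_{f,\Omega}(u_{\Sigma'})\bigr).
\]
For the linear part, $v - u_{\Sigma'} = -(1-\varphi) u_{\Sigma'}$, which is supported in $B_{r_{1}}(x_{0})$ because $\varphi \equiv 1$ outside $B_{r_{1}}(x_{0})$; this produces the term $\int_{B_{r_{1}}(x_{0})} f u_{\Sigma'}(1-\varphi)\,\diff x$. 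For the gradient part, outside $B_{r_{1}}(x_{0})$ one has $\nabla v = \nabla u_{\Sigma'}$, so the contributions cancel, and inside $B_{r_{0}}(x_{0})$ one has $\nabla v = 0$. On the remaining annular region, Leibniz gives $\nabla v = \varphi \nabla u_{\Sigma'} + u_{\Sigma'} \nabla \varphi$, and the elementary inequality $|a+b|^{p} \leq 2^{p-1}(|a|^{p} + |b|^{p})$ together with $\|\varphi\|_{\infty} \leq 1$ yields
\[
\frac{1}{p}\int_{B_{r_{1}}(x_{0})} |\nabla v|^{p}\diff x \leq C(p) \int_{B_{r_{1}}(x_{0})} |\nabla u_{\Sigma'}|^{p}\diff x + C(p) \int_{B_{r_{1}}(x_{0})} |u_{\Sigma'}|^{p}|\nabla \varphi|^{p}\diff x.
\]
Subtracting $\frac{1}{p}\int_{B_{r_{1}}(x_{0})} |\nabla u_{\Sigma'}|^{p}\diff x$ and combining with the linear part delivers exactly the claimed inequality.

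I expect no real obstacle: the only subtle point is the admissibility check, where one must use the hypothesis $\Sigma'\Delta\Sigma \subset \overline{B}_{r_{0}}(x_{0})$ to transfer the vanishing of $u_{\Sigma'}$ on $\Sigma'$ (outside $\overline{B}_{r_{0}}(x_{0})$) to vanishing on $\Sigma$, while inside $\overline{B}_{r_{0}}(x_{0})$ the cut-off $\varphi$ kills everything. The remainder of the argument is just the triangle inequality applied to $\nabla(\varphi u_{\Sigma'})$ and a clean cancellation outside $B_{r_{1}}(x_{0})$.
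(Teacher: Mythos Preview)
Your proposal is correct and follows exactly the standard cut-off competitor argument that the paper invokes by citing \cite[Lemma~4.1]{p-compl}: test the minimality of $u_{\Sigma}$ with $v=\varphi\,u_{\Sigma'}$, verify admissibility via Remark~\ref{rem 2.9} using $\Sigma'\Delta\Sigma\subset\overline{B}_{r_0}(x_0)$ and $\varphi=0$ on $B_{r_0}(x_0)$, then expand $\nabla(\varphi u_{\Sigma'})$ and use the convexity inequality. There is nothing to add; this is precisely the argument the paper defers to the reference.
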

\begin{proof} We refer the reader to the proof of \cite[Lemma 4.1]{p-compl}, that actually applies for the general spatial dimension $N\geq 2$.
\end{proof}

\begin{lemma} \label{lem 2.14} Let $\Omega \subset \mathbb{R}^{N}$ be open and bounded, $p\in (N-1,+\infty)$ and $f\in L^{q}(\Omega)$ with $q\geq q_{0}$, where $q_{0}$ is defined in (\ref{Eq 1.1}). Assume that  $\s$ is a closed arcwise connected proper subset of $\overline{\Omega}$ such that for some $x_{0}\in \mathbb{R}^{N}$ and $0<2r_{0}\leq r_{1}\leq 1$ it holds
	\begin{equation} 
	\s\cap B_{r_{0}}(x_{0})\not =\emptyset,\,\ \s\backslash B_{r_{1}}(x_{0}) \not = \emptyset. \label{2.1}
	\end{equation}
	Then for any $r\in [r_{0}, r_{1}/2]$, for any $\varphi \in \lip(\mathbb{R}^{N})$ such that $\|\varphi\|_{\infty}\leq 1$, $\varphi=1$ over $B^{c}_{2r}(x_{0}), \,\ \varphi= 0$ over $B_{r}(x_{0})$ and $\|\nabla \varphi\|_{\infty}\leq 1/r$, the following assertions hold.
	\begin{enumerate}[label=(\roman*)]
		\item There exists $C=C(N,p)>0$ such that
		\begin{equation}\label{2.2}
		\int_{B_{2r}(x_{0})}|u_{\s}|^{p}|\nabla \varphi|^{p}\diff x \leq C\int_{B_{2r}(x_{0})}|\nabla u_{\s}|^{p}\diff x. 
		\end{equation}
		\item There exists $C=C(N,p,q_{0},q,\|f\|_{q})>0$ such that
		\begin{equation}\label{2.3}
		\int_{B_{2r}(x_{0})} fu_{\s}(1-\varphi)\diff x \leq C\int_{B_{2r}(x_{0})}|\nabla u_{\Sigma}|^{p}\diff x + C r^{N+p^{\prime}-\frac{Np^{\prime}}{q}}.
		\end{equation}
	\end{enumerate}
\end{lemma}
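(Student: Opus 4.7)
The plan is to exploit the arcwise connectedness of $\Sigma$ together with~(\ref{2.1}) in order to place ourselves in the geometric setting of Proposition~\ref{prop 2.12}. Since $\Sigma$ is arcwise connected and meets both $B_{r_{0}}(x_{0})$ and $\overline{\Omega}\backslash B_{r_{1}}(x_{0})$, I fix any arc $\gamma\subset\Sigma$ joining a point of $\Sigma\cap B_{r_{0}}(x_{0})$ to a point of $\Sigma\backslash B_{r_{1}}(x_{0})$. The intermediate value theorem applied to $t\mapsto|\gamma(t)-x_{0}|$ then shows that $\gamma$ meets every sphere $\partial B_{s}(x_{0})$ with $s\in[r_{0},r_{1}]$. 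In particular, for every $r\in[r_{0},r_{1}/2]$ one has $\Sigma\cap\partial B_{s}(x_{0})\neq\emptyset$ for all $s\in[r,2r]$.

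For assertion (i), I would simply apply Proposition~\ref{prop 2.12} to the $p$-quasi continuous representative of $u_{\Sigma}\in W^{1,p}_{0}(\Omega\backslash\Sigma)$, which vanishes $p$-q.e.\ on $\Sigma\cap B_{2r}(x_{0})$ by Remark~\ref{rem 2.9}. This yields $\int_{B_{2r}(x_{0})}|u_{\Sigma}|^{p}\diff x\leq Cr^{p}\int_{B_{2r}(x_{0})}|\nabla u_{\Sigma}|^{p}\diff x$, and combining with the pointwise bound $\|\nabla\varphi\|_{\infty}\leq 1/r$ directly yields (\ref{2.2}).

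For assertion (ii), my plan is to test against $g:=u_{\Sigma}(1-\varphi)$. Since $1-\varphi$ is supported in $B_{2r}(x_{0})$, extending by zero one has $g\in W^{1,p}_{0}(B_{2r}(x_{0}))$, and the product rule combined with (\ref{2.2}) gives $\|\nabla g\|_{L^{p}(B_{2r}(x_{0}))}\leq C\|\nabla u_{\Sigma}\|_{L^{p}(B_{2r}(x_{0}))}$. I would then apply the scaled Sobolev--Poincar\'e inequality
\[
\|g\|_{L^{q'}(B_{2r}(x_{0}))}\leq C r^{1-N/p+N/q'}\|\nabla g\|_{L^{p}(B_{2r}(x_{0}))},
\]
whose admissibility in each of the three regimes $p<N$, $p=N$, $p>N$ is exactly guaranteed by $q\geq q_{0}$: it gives $q'\leq p^{*}$ when $p<N$; it gives $q'<\infty$ when $p=N$; and no constraint is needed when $p>N$ (using $W^{1,p}_{0}\hookrightarrow L^{\infty}$ followed by a H\"older step). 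Combining with H\"older's inequality $\int fg\diff x\leq\|f\|_{L^{q}(B_{2r}(x_{0}))}\|g\|_{L^{q'}(B_{2r}(x_{0}))}$ and Young's inequality $ab\leq\varepsilon a^{p}+C_{\varepsilon}b^{p'}$ yields
\[
\int_{B_{2r}(x_{0})}fu_{\Sigma}(1-\varphi)\diff x\leq C\int_{B_{2r}(x_{0})}|\nabla u_{\Sigma}|^{p}\diff x+Cr^{p'(1-N/p+N/q')}.
\]
A short computation using $p'/p=p'-1$ rewrites the remainder exponent as $N+p'-Np'/q$, which gives (\ref{2.3}).

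The main technical point I anticipate is verifying uniformly across the three regimes $p<N$, $p=N$, $p>N$ that the same scaled embedding holds with the common exponent $1-N/p+N/q'$, and that (\ref{Eq 1.1}) combined with $q\geq q_{0}$ is precisely what is needed in each case. Once this is in place, the remainder of the argument is routine bookkeeping of exponents.
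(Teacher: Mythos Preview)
Your proposal is correct and follows essentially the same approach as the paper: both use the arcwise connectedness of $\Sigma$ to verify the sphere-intersection hypothesis of Proposition~\ref{prop 2.12}, then combine the resulting Poincar\'e inequality with a Sobolev embedding, H\"older's inequality, and Young's inequality. The only minor difference is packaging: the paper estimates $\|u_{\Sigma}\|_{L^{q_{0}'}(B_{2r}(x_{0}))}$ directly (using that $u_{\Sigma}$ vanishes on $\Sigma$) and then passes from $q_{0}$ to $q$ via a second H\"older step, whereas you work with $g=u_{\Sigma}(1-\varphi)\in W^{1,p}_{0}(B_{2r}(x_{0}))$ and apply the scaled Sobolev--Poincar\'e inequality with target exponent $q'$ in one shot; both routes produce the same exponent $N+p'-Np'/q$.
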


\begin{proof} 
	Thanks to (\ref{2.1}), $\Sigma  \cap \partial B_{s}(x_{0}) \neq \emptyset$ for all $s \in [r,2r]$. Then, since   $u_{\Sigma}=0$ $p$-q.e. on $\Sigma$ and $u_{\Sigma}~\in~W^{1,p}(B_{2r}(x_{0}))$, we can use Proposition~\ref{prop 2.12}, which says that there exists $C=C(N,p)>0$ such that
	\begin{equation}
	\int_{B_{2r}(x_{0})}|u_{\Sigma}|^{p}\diff x\leq Cr^{p} \int_{B_{2r}(x_{0})}|\nabla u_{\Sigma} |^{p}\diff x. \label{2.4}
	\end{equation}
	Therefore, 
	\begin{equation*}
	\int_{B_{2r}(x_{0})} |u_{\Sigma}|^{p}|\nabla \varphi|^{p}\diff x \leq \frac{1}{r^{p}} \int_{B_{2r}(x_{0})} |u_{\Sigma}|^{p} \diff x \leq C \int_{B_{2r}(x_{0})} |\nabla u_{\Sigma}|^{p} \diff x.
	\end{equation*}
	This proves (\ref{2.2}).

	Let us now prove (\ref{2.3}). First, notice that thanks to (\ref{2.4}) and the fact that $2r\leq1$, there exists $C_{0}=C_{0}(N,p)>0$ such that
	\begin{equation}
	\|u_{\Sigma}\|_{W^{1,p}(B_{2r}(x_{0}))}\leq C_{0} \|\nabla u_{\Sigma}\|_{L^{p}(B_{2r}(x_{0}))}. \label {2.5}
	\end{equation}
	Next, using the Sobolev embeddings (see \cite[Theorem 7.26]{PDE}) together with (\ref{2.5}) and the fact that $u_{\Sigma}=0$\, $p$-q.e. on $\Sigma$, we deduce that there exists $\widetilde{C}=\widetilde{C}(N, p, q_{0})>0$ such that
	\begin{equation} \label{2.6}
	\|u_{\Sigma}\|_{L^{q^{\prime}_{0}}(B_{2r}(x_{0}))} \leq \widetilde{C} r^{\beta}\|\nabla u_{\Sigma}\|_{L^{p}(B_{2r}(x_{0}))},
	\end{equation}
	where
	\begin{equation*} 
	 \beta=0 \,\ \text{if} \,\  N-1<p<N, \,\ \,\ \beta=\frac{N}{q^{\prime}_{0}} \,\ \text{if} \,\ p=N, \,\ \,\ \beta=1-\frac{N}{p} \,\  \text{if} \,\ N<p<+\infty.
	\end{equation*}
	Thus, using the fact that $|f u_{\Sigma} (1-\varphi)|\leq |fu_{\Sigma}|$, H\"{o}lder's inequality, the estimate (\ref{2.6}) and Young's inequality, we get
	\begin{align*}
	\int_{B_{2r}(x_{0})}fu_{\Sigma}(1-\varphi)\diff x \leq \|f\|_{L^{q_{0}}(B_{2r}(x_{0}))}\|u_{\Sigma}\|_{L^{q^{\prime}_{0}}(B_{2r}(x_{0}))} &\leq |B_{2r}(x_{0})|^{\frac{1}{q_{0}}-\frac{1}{q}}\|f\|_{L^{q}(\Omega)}\|u_{\Sigma}\|_{L^{q^{\prime}_{0}}(B_{2r}(x_{0}))}\\
	&\leq Cr^{N(\frac{1}{q_{0}}-\frac{1}{q})+\beta}\|\nabla u_{\Sigma}\|_{L^{p}(B_{2r}(x_{0}))}\\&= Cr^{\frac{N}{p^{\prime}}+1-\frac{N}{q}}\|\nabla u_{\Sigma}\|_{L^{p}(B_{2r}(x_{0}))}\\
	&\leq C  r^{N+p^{\prime}-\frac{Np^{\prime}}{q}} +  C\|\nabla u_{\Sigma}\|_{L^{p}(B_{2r}(x_{0}))}^p, 
	\end{align*}
	where $C=C(N,p,q_{0},q,\|f\|_{q})>0$.   This concludes the proof of Lemma~\ref{lem 2.14}. \end{proof}

The following corollary  follows directly from Lemma \ref{lem 2.13} and Lemma \ref{lem 2.14}, thus, we omit the proof.

\begin{cor} \label{cor 2.15} \textit{  Let $\Omega \subset \mathbb{R}^{N}$ be open and bounded, $p\in (N-1,+\infty)$ and $f\in L^{q}(\Omega)$ with $q\geq q_{0}$, where $q_{0}$ is defined in (\ref{Eq 1.1}). Let $\s$ and $\s^{\prime}$ be closed arcwise connected proper subsets of $\overline{\Omega}$ and let $x_{0}\in \mathbb{R}^{N}$. Suppose that $0<2r_{0}\leq r_{1}\leq 1$, $\s^{\prime} \Delta \s \subset \overline{B}_{r_{0}}(x_{0})$ and}
	\[
	\Sigma^{\prime}\cap B_{r_{0}}(x_{0})\not =\emptyset,\,\ \Sigma^{\prime}\backslash B_{r_{1}}(x_{0}) \not = \emptyset.
	\]
	\textit{Then for every $r \in [r_{0}, r_{1}/2]$, 
		\begin{equation} \label{2.7}
		E_{f,\Omega}(u_{\s}) - E_{f, \Omega}(u_{\Sigma^{\prime}}) \leq C \int_{B_{2r}(x_{0})} |\nabla u_{\Sigma^{\prime}}|^{p}\diff x + Cr^{N+p^{\prime}-\frac{Np^{\prime}}{q}},
		\end{equation}
		where $C=C(N,p,q_{0},q,\|f\|_{q})>0$.}
\end{cor}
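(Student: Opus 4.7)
The plan is to assemble Corollary~\ref{cor 2.15} as a direct composition of Lemma~\ref{lem 2.13} and Lemma~\ref{lem 2.14}; the two preceding results are essentially designed so that the corollary is one unwinding away. I fix $r\in[r_{0},r_{1}/2]$ and commit to the explicit Lipschitz cutoff $\varphi(x)=\min\{1,\max\{0,(|x-x_{0}|-r)/r\}\}$, which satisfies $\|\varphi\|_{\infty}\leq 1$, $\varphi\equiv 0$ on $B_{r}(x_{0})$, $\varphi\equiv 1$ on $B^{c}_{2r}(x_{0})$ and $\|\nabla\varphi\|_{\infty}\leq 1/r$. This is precisely the class of cutoffs admissible in both Lemma~\ref{lem 2.13} (with the pair $(r,2r)$) and in Lemma~\ref{lem 2.14}.

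Next I would apply Lemma~\ref{lem 2.13}. Its hypothesis $\Sigma'\Delta\Sigma\subset\overline{B}_{r}(x_{0})$ follows from the corollary's assumption $\Sigma'\Delta\Sigma\subset\overline{B}_{r_{0}}(x_{0})$ together with $r\geq r_{0}$. The lemma thus yields
\[
E_{f,\Omega}(u_{\Sigma})-E_{f,\Omega}(u_{\Sigma'})\leq C\int_{B_{2r}(x_{0})}|\nabla u_{\Sigma'}|^{p}\diff x+C\int_{B_{2r}(x_{0})}|u_{\Sigma'}|^{p}|\nabla\varphi|^{p}\diff x+\int_{B_{2r}(x_{0})}fu_{\Sigma'}(1-\varphi)\diff x,
\]
where I have also used that $\varphi=1$ on $B^{c}_{2r}(x_{0})$ to reduce the domain of integration from $B_{r_{1}}(x_{0})$ to $B_{2r}(x_{0})$ in the last two terms.

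Then I would invoke Lemma~\ref{lem 2.14} applied to $\Sigma'$. The lemma requires $\Sigma'$ to be closed and arcwise connected and to satisfy $\Sigma'\cap B_{r_{0}}(x_{0})\neq\emptyset$ and $\Sigma'\setminus B_{r_{1}}(x_{0})\neq\emptyset$, which are exactly the hypotheses that the corollary places on $\Sigma'$, so condition (\ref{2.1}) holds. Part~(i) of Lemma~\ref{lem 2.14} estimates the middle term by $C\int_{B_{2r}(x_{0})}|\nabla u_{\Sigma'}|^{p}\diff x$, while part~(ii) estimates the source term by $C\int_{B_{2r}(x_{0})}|\nabla u_{\Sigma'}|^{p}\diff x+Cr^{N+p'-Np'/q}$. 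Adding these bounds into the Lemma~\ref{lem 2.13} inequality yields (\ref{2.7}).

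There is no genuine obstacle here; the only matter requiring care is the matching of hypotheses. In particular, Lemma~\ref{lem 2.14} is stated for a generic $\Sigma$ satisfying (\ref{2.1}), but in the corollary it must be applied to $\Sigma'$, and one must check that the corollary's hypotheses on $\Sigma'$ (arcwise connectedness, nonempty trace in $B_{r_{0}}(x_{0})$, and protrusion past $B_{r_{1}}(x_{0})$) coincide with (\ref{2.1}) for $\Sigma'$. They do, and no additional hypothesis on $\Sigma$ itself is needed beyond what is necessary for $u_{\Sigma}$ to be well-defined, since the asymmetric role of $\Sigma$ and $\Sigma'$ in Lemma~\ref{lem 2.13} already reflects this asymmetry.
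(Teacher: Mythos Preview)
Your proposal is correct and follows exactly the approach the paper indicates: the paper itself merely states that the corollary follows directly from Lemma~\ref{lem 2.13} and Lemma~\ref{lem 2.14} and omits the proof. Your explicit choice of cutoff and the careful matching of hypotheses (in particular that Lemma~\ref{lem 2.14} is applied to $\Sigma'$, not $\Sigma$) fill in precisely the details the paper leaves implicit.
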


\subsection{Uniform boundedness of $u_{f,\Omega,\Sigma}$ with respect to $\Sigma$}
In this subsection, we prove a uniform bound, with respect to $\Sigma$, for a unique solution $u_{f,\Omega,\Sigma}$ to the Dirichlet problem $-\Delta_{p}u=f\,\ \text{in}\,\ \Omega\backslash \Sigma,\,\ u\in W^{1,p}_{0}(\Omega\backslash \Sigma)$. It is worth mentioning that the estimate (\ref{2.10}) will never be used in the sequel, however, we find it interesting enough to keep it in the present paper. Also notice that we can extend $u_{f,\Omega, \Sigma}$ by zero outside $\Omega\backslash \Sigma$ to an element of $W^{1,p}(\mathbb{R}^{N})$, we shall use the same notation for this extension as for $u_{f,\Omega,\Sigma}$. 
		\begin{prop} \label{prop 2.16} Let $\Om\subset \mathbb{R}^{N}$ be open and bounded, $\Sigma$ be a closed proper subset of $\overline{\Omega}$, $p \in (1,+\infty)$ and $f \in L^{q_{0}}(\Om)$ with $q_{0}$  defined in~(\ref{Eq 1.1}). Then there exists a constant $C>0$, possibly depending only on $N,\, p$ and $q_{0}$, such that
			\begin{equation}\label{2.8}
			\int_{\Om}|\nabla u_{f,\Omega,\Sigma}|^{p}\diff x \leq C |\Om|^{\alpha} \|f\|^{\beta}_{L^{q_{0}}(\Om)}, 
			\end{equation}
			where 
			\begin{equation} \label{2.9}
			(\alpha, \beta)=(0, p^{\prime})\,\ \text{if} \,\ 1<p<N, \,\
			(\alpha, \beta)=\biggl(\frac{N^{\prime}}{q^{\prime}_{0}}, N^{\prime}\biggr) \,\ \text{if} \,\  p=N, \,\
			(\alpha, \beta)=\biggl(\frac{p-N}{N(p-1)}, p^{\prime}\biggr)\,\ \text{if} \,\ p>N.
			\end{equation}
			Moreover, if $f\in L^{q}(\Omega)$ with $q>\frac{N}{p}$ if $p \in (1,N]$ and $q=1$ if $p>N$, then there exists a constant $C=C(N,p,q,\|f\|_{q},|\Om|)>0$ such that 
			\begin{equation}\label{2.10}
			\|u_{f,\Omega,\Sigma}\|_{L^{\infty}(\mathbb{R}^{N})} \leq C. 
			\end{equation} 
		\end{prop}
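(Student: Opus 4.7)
The strategy is to test the weak formulation \eqref{1.3} against $u_{f,\Omega,\Sigma}$ itself, which is admissible since $u_\Sigma \in W^{1,p}_0(\Omega\setminus\Sigma)$. This yields the energy identity
\[
\int_\Omega |\nabla u_\Sigma|^p \diff x = \int_\Omega f u_\Sigma \diff x \leq \|f\|_{L^{q_0}(\Omega)} \|u_\Sigma\|_{L^{q_0'}(\Omega)}
\]
by Hölder's inequality. The proof of \eqref{2.8} then reduces to estimating $\|u_\Sigma\|_{L^{q_0'}(\Omega)}$ in terms of $\|\nabla u_\Sigma\|_{L^p(\Omega)}$ with explicit $|\Omega|$-dependence, for each of the three ranges of $p$ in \eqref{Eq 1.1}, and then solving for $\|\nabla u_\Sigma\|_{L^p(\Omega)}^p$.

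For $p \in (1,N)$ we have $q_0' = p^*$, and the Gagliardo-Nirenberg-Sobolev inequality applied to the zero extension of $u_\Sigma$ to $\mathbb{R}^N$ gives $\|u_\Sigma\|_{L^{p^*}} \leq C(N,p)\|\nabla u_\Sigma\|_{L^p}$ with no $|\Omega|$-dependence, producing $(\alpha,\beta)=(0,p')$. For $p = N$, where $q_0'$ is finite but arbitrary, I would use the Moser-Trudinger inequality applied to $u_\Sigma/\|\nabla u_\Sigma\|_{L^N}$ together with the pointwise bound $t^{q_0'} \leq C_{N,q_0} \exp(\alpha_N t^{N/(N-1)})$ to obtain $\|u_\Sigma\|_{L^{q_0'}(\Omega)} \leq C(N,q_0) |\Omega|^{1/q_0'} \|\nabla u_\Sigma\|_{L^N}$; rearranging then yields $(\alpha,\beta)=(N'/q_0', N')$. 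For $p > N$, where $q_0 = 1$ and $q_0' = \infty$, Morrey's embedding combined with a scaling argument (to convert an $R$-dependent constant to an $|\Omega|$-dependent one) gives $\|u_\Sigma\|_{L^\infty} \leq C(N,p)|\Omega|^{1/N-1/p}\|\nabla u_\Sigma\|_{L^p}$, producing $(\alpha,\beta)=((p-N)/(N(p-1)),p')$.

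For the $L^\infty$ bound \eqref{2.10} under the stronger integrability assumption, my plan is a standard De Giorgi / Moser iteration. When $p > N$, the bound already follows directly from Morrey's embedding combined with \eqref{2.8}. When $p \leq N$ and $q > N/p$, I would test \eqref{1.3} with truncations $\varphi = \mathrm{sgn}(u_\Sigma)(|u_\Sigma| - k)_+$ at levels $k \geq 0$ (these are admissible competitors in $W^{1,p}_0(\Omega\setminus\Sigma)$), apply the Sobolev inequality on the super-level sets together with Hölder to exploit $q > N/p$, and then iterate on a geometric sequence of levels to conclude that the level sets shrink fast enough to force $u_\Sigma \in L^\infty$ with a quantitative bound. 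The resulting constant depends only on $N, p, q, \|f\|_q, |\Omega|$ because the iteration rests exclusively on the global weak formulation \eqref{1.3} and the fact that the zero extension of $u_\Sigma$ belongs to $W^{1,p}_0(\Omega)$.

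The main obstacle, in my view, is the borderline case $p = N$: the absence of a canonical critical Sobolev exponent forces either a Moser-Trudinger argument or an embedding into $L^s$ with $s > q_0'$ followed by Hölder's inequality, and in either case one must be careful to extract precisely the power $|\Omega|^{N'/q_0'}$. The remainder of the argument is routine, but throughout it is essential that every constant is independent of $\Sigma$ — this is automatic, since all estimates use only the weak formulation on $\Omega\setminus\Sigma$ and the ambient membership of $u_\Sigma$ (extended by zero) in $W^{1,p}_0(\Omega)$.
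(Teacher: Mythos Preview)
Your proposal is correct and follows the paper's argument closely for $p\neq N$ and for the $L^\infty$ bound (the paper simply cites \cite[Lemma~A.2]{p-compl} for \eqref{2.10}, which is precisely the De~Giorgi iteration you describe). The only genuine difference is the borderline case $p=N$: you invoke Moser--Trudinger to obtain $\|u_\Sigma\|_{L^{q_0'}(\Omega)}\leq C|\Omega|^{1/q_0'}\|\nabla u_\Sigma\|_{L^N}$, whereas the paper uses a more elementary route---it picks $\varepsilon\in(0,N-1]$ so that $(N-\varepsilon)^*=q_0'$, applies the subcritical Sobolev inequality $\|u_\Sigma\|_{L^{q_0'}}\leq C\|\nabla u_\Sigma\|_{L^{N-\varepsilon}}$, and then H\"older on the gradient to pass from $L^{N-\varepsilon}$ to $L^N$, which produces the factor $|\Omega|^{1/q_0'}$. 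Both arguments yield the same bound; the paper's avoids the exponential inequality at the cost of a small case split (when $q_0>N$ one first reduces to $q_0'=N'$ via H\"older on $u_\Sigma$), while your Moser--Trudinger approach handles all $q_0>1$ uniformly.
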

		\begin{proof} To establish the estimate (\ref{2.10}), we use \cite[Lemma A.2]{p-compl} with $U=\Omega \backslash \Sigma$, which provides a constant $C=C(N,p,q,\|f\|_{q},|U|)>0$ such that $\|u_{f,\Omega,\Sigma}\|_{L^{\infty}(\mathbb{R}^{N})} \leq C$, but observing that $C$ is increasing with respect to $|U|$, we recover (\ref{2.10}). Now let $f \in L^{q_{0}}(\Om)$. Using $u_{f,\Omega,\Sigma}$ as a test function in (\ref{1.3}), we get
			\begin{align*}
			\int_{\Om} |\nabla u_{f,\Omega,\Sigma}|^{p}\diff x = \int_{\Om} fu_{f,\Omega,\Sigma} \diff x \leq \|f\|_{L^{q_{0}}(\Om)} \|u_{f,\Omega,\Sigma}\|_{L^{q_{0}^{\prime}}(\Om)}, \numberthis \label{2.11}
			\end{align*}
			where the above estimate comes by using H\"{o}lder's inequality. Next, recalling that by the Sobolev inequalities (see \cite[Theorem 7.10]{PDE}) there is $C=C(N,p)>0$ such that $\|u_{f,\Omega,\Sigma}\|_{L^{q_{0}^{\prime}}(\Om)} \leq C|\Omega|^{\gamma} \|\nabla u_{f,\Omega,\Sigma}\|_{L^{p}(\Om)}$, where
			\begin{equation*}  
			\gamma=0 \,\ \,\  \text{if} \,\ \,\ 1<p<N, \qquad \gamma=\frac{1}{N}-\frac{1}{p}\,\ \,\ \text{if} \,\ \,\ p>N,
			\end{equation*}
			and using (\ref{2.11}), we recover (\ref{2.8}) in the case when $p\neq N$. If $p=N$ and $1<q_{0}\leq N$, then for $\varepsilon\in (0, N-1]$ such that $\frac{1}{q^{\prime}_{0}}=\frac{1}{N-\varepsilon}-\frac{1}{N}$, we get
			\begin{align*}
			\|u_{f,\Omega,\Sigma}\|_{L^{q^{\prime}_{0}}(\Omega)} &\leq C \|\nabla u_{f,\Omega,\Sigma}\|_{L^{N-\varepsilon}(\Omega)} \,\ (\text{by the Sobolev inequality})\\
			& \leq C|\Om|^{\frac{1}{q^{\prime}_{0}}} \|\nabla u_{f,\Omega,\Sigma}\|_{L^{N}(\Om)} \,\ (\text{by H\"{o}lder's inequality}).  
			\end{align*}
			The latter estimate together with (\ref{2.11}) yields (\ref{2.8}) in the case when $p=N$ and $1<q_{0}\leq N$. Assume now that $p=N$ and $q_{0}>N$. Then $q^{\prime}_{0}<N^{\prime}\leq N$. Using H\"{o}lder's inequality and the fact that \[\|u_{f,\Omega,\Sigma}\|_{L^{N^{\prime}}(\Omega)}\leq C|\Om|^{\frac{1}{N^{\prime}}} \|\nabla u_{f,\Omega,\Sigma}\|_{L^{N}(\Om)},\] which was proved above, we obtain that
			\[
			\|u_{f,\Omega,\Sigma}\|_{L^{q^{\prime}_{0}}(\Omega)} \leq |\Omega|^{\frac{1}{q^{\prime}_{0}}-\frac{1}{N^{\prime}}}\|u_{f,\Omega,\Sigma}\|_{L^{N^{\prime}}(\Omega)} \leq C|\Omega|^{\frac{1}{q^{\prime}_{0}}} \|\nabla u_{f,\Omega,\Sigma}\|_{L^{N}(\Omega)}.
			\]
			This, together with (\ref{2.11}), yields (\ref{2.8}) in the case when $p=N$ and $q_{0}>N$, and completes the proof of Proposition~\ref{prop 2.16}.  
		\end{proof}



\subsection{Existence}
\begin{theorem} \label{thm 2.17} Let $\Om\subset \mathbb{R}^{N}$ be open and bounded, $p\in (N-1,+\infty)$,  $f \in L^{q_{0}}(\Om)$ with $q_{0}$ defined in (\ref{Eq 1.1}). Let $({\s}_{n})_{n}\subset\mathcal{K}(\Omega)$ be a sequence converging to $\s \in \mathcal{K}(\Omega)$ in the Hausdorff distance. Then $u_{\s_{n}} \underset{n \to +\infty}{\longrightarrow} u_{\s}\,\ \text{strongly in}\,\ W^{1,p}(\Om).$
\end{theorem}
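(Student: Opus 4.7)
The plan is the classical three-step argument: extract a weak limit from uniform bounds, identify it with $u_\Sigma$, and upgrade to strong convergence via convergence of energies. The main obstacle will be the identification step, where the weak limit must be shown to belong to $W^{1,p}_0(\Omega \setminus \Sigma)$ despite the fact that the Dirichlet sets $\Sigma_n$ move with $n$.

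First, by Proposition \ref{prop 2.16} the sequence $(u_{\Sigma_n})$ is uniformly bounded in $W^{1,p}(\Omega)$. After extension by zero, $u_{\Sigma_n} \in W^{1,p}_0(\Omega)$, and weak compactness together with Rellich--Kondrachov provide a subsequence, still denoted $(u_{\Sigma_n})$, such that $u_{\Sigma_n} \rightharpoonup u^\ast$ weakly in $W^{1,p}(\Omega)$ and strongly in $L^p(\Omega)$, with $u^\ast \in W^{1,p}_0(\Omega)$ (since the latter is weakly closed). For any $\varphi \in C^\infty_c(\Omega \setminus \Sigma)$ one has $\dist(\operatorname{supp}(\varphi), \Sigma) > 0$, so Hausdorff convergence yields $\operatorname{supp}(\varphi) \cap \Sigma_n = \emptyset$ for $n$ large; hence $\varphi \in W^{1,p}_0(\Omega \setminus \Sigma_n)$, and minimality gives $E_{f,\Omega}(u_{\Sigma_n}) \leq E_{f,\Omega}(\varphi)$. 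Taking $\limsup$ and invoking density of $C^\infty_c(\Omega \setminus \Sigma)$ in $W^{1,p}_0(\Omega \setminus \Sigma)$, I obtain $\limsup_n E_{f,\Omega}(u_{\Sigma_n}) \leq E_{f,\Omega}(u_\Sigma)$.

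The crucial step is now to prove $u^\ast \in W^{1,p}_0(\Omega \setminus \Sigma)$. The idea is to approximate $u^\ast$ in $W^{1,p}$ by functions that already vanish near $\Sigma$, exploiting the Poincar\'e inequality of Proposition \ref{prop 2.12}, which is available thanks to the $p$-thickness of the connected sets $\Sigma_n$. For $\varepsilon>0$ small compared to $\diam(\Sigma)$, let $\eta_\varepsilon \colon \mathbb{R}^N \to [0,1]$ be Lipschitz with $\eta_\varepsilon \equiv 0$ on $\Sigma_\varepsilon := \{x : \dist(x, \Sigma) < \varepsilon\}$, $\eta_\varepsilon \equiv 1$ outside $\Sigma_{2\varepsilon}$, and $|\nabla \eta_\varepsilon| \leq C/\varepsilon$. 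Then $\eta_\varepsilon u^\ast \in W^{1,p}_0(\Omega \setminus \Sigma)$, as it vanishes on an open neighborhood of $\Sigma$. Now cover $\Sigma_{2\varepsilon}$ by balls $\{B_{3\varepsilon}(x_j)\}_j$ centered on a maximal $\varepsilon$-separated subset $\{x_j\} \subset \Sigma$: this cover has bounded overlap in $\mathbb{R}^N$. For $n$ large, Hausdorff convergence places a point $x_j' \in \Sigma_n$ within $\varepsilon/10$ of $x_j$, and the connectedness of $\Sigma_n$ together with $\diam(\Sigma_n) \to \diam(\Sigma)$ guarantees $\Sigma_n \cap \partial B_s(x_j) \neq \emptyset$ for every $s$ in a suitable annulus around $3\varepsilon$. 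Proposition \ref{prop 2.12} thus gives
\[
\int_{B_{3\varepsilon}(x_j)} |u_{\Sigma_n}|^p \, dx \leq C \varepsilon^p \int_{B_{3\varepsilon}(x_j)} |\nabla u_{\Sigma_n}|^p \, dx.
\]
Summing over $j$ with bounded overlap and passing to the limit by strong $L^p$ convergence,
\[
\int_{\Sigma_{2\varepsilon}} |u^\ast|^p \, dx \leq C \varepsilon^p M, \qquad M := \sup_n \int_\Omega |\nabla u_{\Sigma_n}|^p \, dx.
\]
Hence $\int_\Omega |u^\ast \nabla \eta_\varepsilon|^p \, dx \leq (C/\varepsilon)^p \cdot C \varepsilon^p M = C' M$, so $\eta_\varepsilon u^\ast$ is bounded in $W^{1,p}(\Omega)$ uniformly in $\varepsilon$. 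Since $\eta_\varepsilon u^\ast \to u^\ast$ in $L^p(\Omega)$ by dominated convergence, every weak cluster point is $u^\ast$, and so $\eta_\varepsilon u^\ast \rightharpoonup u^\ast$ in $W^{1,p}(\Omega)$ as $\varepsilon \to 0^+$. The space $W^{1,p}_0(\Omega \setminus \Sigma)$ being convex and norm-closed, hence weakly closed, I conclude $u^\ast \in W^{1,p}_0(\Omega \setminus \Sigma)$.

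Finally, combining minimality of $u_\Sigma$ on $W^{1,p}_0(\Omega\setminus\Sigma)$, weak lower semicontinuity, and the upper bound from above:
\[
E_{f,\Omega}(u_\Sigma) \leq E_{f,\Omega}(u^\ast) \leq \liminf_n E_{f,\Omega}(u_{\Sigma_n}) \leq \limsup_n E_{f,\Omega}(u_{\Sigma_n}) \leq E_{f,\Omega}(u_\Sigma).
\]
All inequalities are equalities, so by uniqueness of the minimizer $u^\ast = u_\Sigma$ and $E_{f,\Omega}(u_{\Sigma_n}) \to E_{f,\Omega}(u_\Sigma)$. Together with strong $L^p$ convergence of $u_{\Sigma_n}$, this forces $\|\nabla u_{\Sigma_n}\|_{L^p(\Omega)} \to \|\nabla u_\Sigma\|_{L^p(\Omega)}$, which with weak convergence of gradients and the uniform convexity of $L^p$ for $p > 1$ yields $\nabla u_{\Sigma_n} \to \nabla u_\Sigma$ strongly in $L^p(\Omega)$. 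Since every subsequence of $(u_{\Sigma_n})$ admits a further subsequence converging in $W^{1,p}(\Omega)$ to $u_\Sigma$, the full sequence converges, completing the proof.
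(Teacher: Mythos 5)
Your proof takes a genuinely different route from the paper. The paper proves nothing here: it simply refers to \v{S}ver\'ak for $N=p=2$ and to Bucur for the general case, both of which rest on abstract Mosco-convergence results for Sobolev spaces of domains with topological constraints. You instead give a self-contained proof that, in effect, re-derives the two Mosco conditions by hand, using only ingredients already present in the paper: Proposition~\ref{prop 2.16} for the uniform energy bound, Hausdorff convergence for the recovery sequence (your $C^\infty_c(\Omega\setminus\Sigma)$ argument, which is essentially the ``$\liminf$'' half of Mosco convergence combined with $\Gamma$-lower-semicontinuity), and, as the key step, Proposition~\ref{prop 2.12} applied on a bounded-overlap cover of a tubular neighborhood of $\Sigma$ to establish the ``$\limsup$'' half, namely that the weak limit lies in $W^{1,p}_0(\Omega\setminus\Sigma)$. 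The connectedness of the $\Sigma_n$ is exactly what makes the Poincar\'e inequality available, and your argument isolates that cleanly. This is a more elementary and self-contained path than the one the paper invokes, at the cost of a somewhat longer argument.

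Two points should be tightened. First, the cutoff argument is set up ``for $\varepsilon>0$ small compared to $\diam(\Sigma)$'' and relies on $\Sigma_n$ crossing all spheres $\partial B_s(x_j)$ for $s$ in an annulus of radius $\sim 3\varepsilon$, which requires $\diam(\Sigma_n)$ bounded below by a multiple of $\varepsilon$. This fails when $\diam(\Sigma)=0$, i.e.\ when $\Sigma$ is a single point or empty; these cases are permitted in $\mathcal{K}(\Omega)$ and must be treated separately. They are easy: if $\Sigma=\emptyset$, the Hausdorff convention forces $\Sigma_n=\emptyset$ for large $n$; if $\Sigma=\{x_0\}$ and $p\le N$ then ${\rm Cap}_p(\Sigma)=0$ so $W^{1,p}_0(\Omega\setminus\Sigma)=W^{1,p}_0(\Omega)$ and the identification of $u^\ast$ is automatic; if $\Sigma=\{x_0\}$ and $p>N$, Morrey's embedding makes $u_{\Sigma_n}\to u^\ast$ locally uniformly, and picking $y_n\in\Sigma_n$, $y_n\to x_0$, with $u_{\Sigma_n}(y_n)=0$ (legitimate since $p>N$ gives continuous representatives) forces $u^\ast(x_0)=0$, hence $u^\ast\in W^{1,p}_0(\Omega\setminus\{x_0\})$ by Remark~\ref{rem 2.9}. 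Second, the passage from $E_{f,\Omega}(u_{\Sigma_n})\to E_{f,\Omega}(u_\Sigma)$ to norm convergence of gradients needs $\int_\Omega f u_{\Sigma_n}\,dx\to\int_\Omega f u_\Sigma\,dx$; invoking ``strong $L^p$ convergence of $u_{\Sigma_n}$'' for this is insufficient because $f\in L^{q_0}$ with $q_0<p'$ in general. The correct justification is that $v\mapsto\int_\Omega f v\,dx$ is a bounded linear functional on $W^{1,p}_0(\Omega)$ by the Sobolev embedding, and $u_{\Sigma_n}\rightharpoonup u_\Sigma$ weakly in $W^{1,p}_0(\Omega)$. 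With these two repairs the argument is complete.
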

\begin{proof} For a proof, see \cite{sverak} for the case $N=p=2$ and \cite{Bucur} for the general case.
\end{proof}
\begin{prop} \label{prop 2.18}
Problem \ref{P 1.1} admits a minimizer.
\end{prop}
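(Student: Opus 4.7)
The plan is to apply the direct method of the calculus of variations: take a minimizing sequence, extract a Hausdorff-convergent subsequence, and then pass to the liminf using Gołąb's theorem for the length term and Theorem~\ref{thm 2.17} for the compliance term.

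First, choose a minimizing sequence $(\Sigma_n)_n \subset \mathcal{K}(\Omega)$ for $\mathcal{F}_{\lambda,f,\Omega}$. Since any singleton $\{x_0\} \subset \overline{\Omega}$ lies in $\mathcal{K}(\Omega)$ and has $\mathcal{F}_{\lambda,f,\Omega}(\{x_0\}) = C_{f,\Omega}(\emptyset) < +\infty$ (by Remark~\ref{rem 2.9} and Proposition~\ref{prop 2.16}), the infimum is finite and nonnegative. In particular, for $n$ large,
\[
\lambda \mathcal{H}^{1}(\Sigma_n) \leq \mathcal{F}_{\lambda,f,\Omega}(\Sigma_n) \leq C_{f,\Omega}(\emptyset) + 1,
\]
so $\sup_n \mathcal{H}^{1}(\Sigma_n) < +\infty$.

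Next, since each $\Sigma_n$ is a closed subset of the compact set $\overline{\Omega}$, Blaschke's selection theorem yields a subsequence, still denoted $(\Sigma_n)_n$, and a nonempty closed set $\Sigma \subset \overline{\Omega}$ such that $\Sigma_n \to \Sigma$ in the Hausdorff distance. Connectedness is preserved under Hausdorff convergence inside a compact ambient space, so $\Sigma$ is connected. Moreover, $\Sigma$ is a proper subset of $\overline{\Omega}$: by Gołąb's theorem (semicontinuity of $\mathcal{H}^1$ on Hausdorff-converging sequences of connected compact sets in $\mathbb{R}^N$),
\[
\mathcal{H}^{1}(\Sigma) \leq \liminf_{n \to +\infty} \mathcal{H}^{1}(\Sigma_n) < +\infty,
\]
and since $N \geq 2$ and $\Omega$ is open and bounded (hence $\mathcal{H}^{1}(\overline{\Omega}) = +\infty$), we conclude $\Sigma \neq \overline{\Omega}$. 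Thus $\Sigma \in \mathcal{K}(\Omega)$.

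Finally, by Theorem~\ref{thm 2.17} applied to the convergent sequence $\Sigma_n \to \Sigma$, the solutions $u_{\Sigma_n}$ converge strongly in $W^{1,p}(\Omega)$ to $u_\Sigma$, which gives
\[
C_{f,\Omega}(\Sigma) = \frac{1}{p'} \int_\Omega |\nabla u_\Sigma|^p \diff x = \lim_{n \to +\infty} \frac{1}{p'} \int_\Omega |\nabla u_{\Sigma_n}|^p \diff x = \lim_{n \to +\infty} C_{f,\Omega}(\Sigma_n).
\]
Combining the two semicontinuity facts,
\[
\mathcal{F}_{\lambda,f,\Omega}(\Sigma) \leq \liminf_{n \to +\infty} \mathcal{F}_{\lambda,f,\Omega}(\Sigma_n) = \inf_{\Sigma' \in \mathcal{K}(\Omega)} \mathcal{F}_{\lambda,f,\Omega}(\Sigma'),
\]
so $\Sigma$ is a minimizer. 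The only nonroutine ingredient is the Gołąb-type semicontinuity of $\mathcal{H}^1$ on connected Hausdorff-converging sets, which is the standard compactness tool in such shape-optimization settings and is the main reason the connectedness constraint in Problem~\ref{P 1.1} is used here.
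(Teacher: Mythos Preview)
Your proof is correct and follows essentially the same direct-method approach as the paper: minimizing sequence, Blaschke compactness, Theorem~\ref{thm 2.17} for the compliance, and lower semicontinuity of $\mathcal{H}^{1}$ under Hausdorff convergence of connected sets. Two cosmetic remarks: the equality $C_{f,\Omega}(\{x_0\})=C_{f,\Omega}(\emptyset)$ only holds when $p\leq N$ (for $p>N$ a point has positive capacity), but the inequality $C_{f,\Omega}(\{x_0\})\leq C_{f,\Omega}(\emptyset)<+\infty$ is all you need; and one should note, as the paper does, that if $\Sigma_n=\emptyset$ along a subsequence then the empty set is already a minimizer, so that Blaschke's theorem can be applied to a nonempty sequence.
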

\begin{proof} 
	 Let $({\s}_{n})_{n}\subset \mathcal{K}(\Omega)$ be a minimizing sequence for Problem \ref{P 1.1}. We can assume that $\Sigma_{n}\neq \emptyset$ and $C_{f, \Omega}(\Sigma_{n})+\lambda \mathcal{H}^{1}(\Sigma_{n})\leq C_{f,\Omega}(\emptyset)$ at least for a subsequence still denoted by $n$, because otherwise the empty set would be a minimizer. Then, by Blaschke's theorem (see \cite[Theorem 6.1]{APD}), there exists $\Sigma \in \mathcal{K}(\Omega)$ such that, up to a subsequence still denoted by the same index, ${\s}_{n}$ converges to $\s$ in the Hausdorff distance as $n \to +\infty$. Furthermore, by Theorem~\ref{thm 2.17}, $u_{{\s_{n}}}$ converges to $u_{\s}$ strongly in $W^{1,p}_{0}(\Om)$ and hence $C_{f,\Omega}(\Sigma_{n})\to C_{f,\Omega}(\Sigma)$ as $n\to +\infty$. Then, using the fact that $\mathcal{H}^{1}$-measure is lower semicontinuous with respect to the topology generated by the Hausdorff distance, we deduce that $\s$ is a minimizer of Problem~\ref{P 1.1}.
\end{proof}
The next proposition says that, at least for some range of values of $\lambda>0$, solutions to Problem~1.1 are nontrivial.
\begin{prop} \label{prop 2.19} Let $\Omega \subset \mathbb{R}^{N}$ be open and bounded, $p\in (N-1, +\infty)$, $f \in L^{q_{0}}(\Omega),\,\ f\neq 0$ and $q_{0}$ is defined in (\ref{Eq 1.1}). Then there exists a number $\lambda_{0}=\lambda_{0}(N,p,f,\Omega)>0$ such that if Problem~\ref{P 1.1} is defined for $\lambda \in (0, \lambda_{0}]$, then every solution to this problem has positive $\mathcal{H}^{1}$-measure. Moreover, if $p>N$, then the empty set will not be a solution to Problem~\ref{P 1.1}.
\end{prop}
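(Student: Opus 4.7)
The plan is to reduce the statement to exhibiting a fixed, positive-length competitor that strictly beats every $\mathcal{H}^{1}$-null competitor once $\lambda$ is small. Observe first that any $\Sigma\in\mathcal{K}(\Omega)$ with $\mathcal{H}^{1}(\Sigma)=0$ is the empty set or a singleton, since a compact connected set containing two distinct points projects $1$-Lipschitzly onto the segment between them and hence has $\mathcal{H}^{1}$-measure at least the length of that segment. Now $C_{f,\Omega}$ is continuous on $\mathcal{K}(\Omega)$ in the Hausdorff distance (Theorem~\ref{thm 2.17}), and the family $\mathcal{T}:=\{\emptyset\}\cup\{\{x\}:x\in\overline{\Omega}\}$ of trivial candidates is compact (being the disjoint union of a point and the continuous image of $\overline{\Omega}$), so $M:=\inf_{\Sigma'\in\mathcal{T}}C_{f,\Omega}(\Sigma')$ is attained at some $\Sigma^{*}\in\mathcal{T}$. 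For $p\in(N-1,N]$ every singleton has zero $p$-capacity (Corollary~\ref{cor 2.6}), so by Remark~\ref{rem 2.9} $u_{\{x\}}=u_{\emptyset}$ and $M=C_{f,\Omega}(\emptyset)$; for $p>N$ the minimum is realized by some $\{x^{*}\}$ and $M=C_{f,\Omega}(\{x^{*}\})$.

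The key construction is to build $\Sigma_{0}\in\mathcal{K}(\Omega)$ with $0<\mathcal{H}^{1}(\Sigma_{0})<+\infty$ and $C_{f,\Omega}(\Sigma_{0})<M$. Since $f\neq 0$, the solution $u_{\Sigma^{*}}$ is not identically zero, so the set $A=\{u_{\Sigma^{*}}\neq 0\}$, taken in the $p$-quasicontinuous sense, has positive Lebesgue measure in $\Omega$. By a Fubini-type slicing I would select a short line segment $\Sigma_{0}\subset\Omega$ whose intersection with $A$ has positive $\mathcal{H}^{1}$-measure; since $p>N-1$, such a subset of the segment has positive $p$-capacity by Corollary~\ref{cor 2.6}, so $u_{\Sigma^{*}}$ does not vanish $p$-q.e.\ on $\Sigma_{0}$. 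In the subcase $p>N$ I additionally arrange that $\Sigma_{0}$ contains $x^{*}$ by picking the segment through $x^{*}$ along a direction along which $u_{\{x^{*}\}}$ is not identically zero nearby. By Remark~\ref{rem 2.9} this gives $u_{\Sigma^{*}}\notin W^{1,p}_{0}(\Omega\setminus\Sigma_{0})$, and uniqueness of the energy minimizer in $W^{1,p}_{0}(\Omega\setminus\Sigma_{0})$ forces $u_{\Sigma_{0}}\neq u_{\Sigma^{*}}$; hence $E_{f,\Omega}(u_{\Sigma_{0}})>E_{f,\Omega}(u_{\Sigma^{*}})$, that is, $C_{f,\Omega}(\Sigma_{0})<M$.

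Setting $\lambda_{0}:=(M-C_{f,\Omega}(\Sigma_{0}))/(2\mathcal{H}^{1}(\Sigma_{0}))>0$, for every $\lambda\in(0,\lambda_{0}]$ and every $\Sigma'\in\mathcal{T}$ one has
\[
\mathcal{F}_{\lambda,f,\Omega}(\Sigma_{0})\leq C_{f,\Omega}(\Sigma_{0})+\lambda_{0}\mathcal{H}^{1}(\Sigma_{0})<M\leq\mathcal{F}_{\lambda,f,\Omega}(\Sigma'),
\]
ruling out trivial minimizers and proving the main statement.

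The ``moreover'' part for $p>N$ is independent of the size of $\lambda$ and simpler. By the Morrey embedding $u_{\emptyset}\in C(\overline{\Omega})$; since $f\neq 0$ forces $u_{\emptyset}\not\equiv 0$ while $u_{\emptyset}=0$ on $\partial\Omega$, there is $x_{0}\in\Omega$ with $u_{\emptyset}(x_{0})\neq 0$. Then $\cp_{p}(\{x_{0}\})>0$ by Remark~\ref{rem 2.5}, so Remark~\ref{rem 2.9} gives $u_{\emptyset}\notin W^{1,p}_{0}(\Omega\setminus\{x_{0}\})$, hence $C_{f,\Omega}(\{x_{0}\})<C_{f,\Omega}(\emptyset)$, and $\{x_{0}\}$ strictly improves on $\emptyset$ for every $\lambda>0$. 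The main technical delicacy in the whole plan is the Fubini-type slicing step, which must be carried out for the $p$-quasicontinuous representative so that the set on which $u_{\Sigma^{*}}$ is nonzero along $\Sigma_{0}$ really has positive $p$-capacity and not merely positive $\mathcal{H}^{1}$-measure.
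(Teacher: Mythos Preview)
The paper itself does not give a detailed proof, referring instead to \cite[Proposition~2.17]{p-compl} for $N=2$ and asserting the general case is similar. Your strategy---exhibit a fixed positive-length competitor $\Sigma_0$ that strictly beats every trivial competitor---is the natural one and is essentially correct.

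One step is under-justified. In the subcase $p>N$ you take $\Sigma_0$ to be a short segment through $x^*$ along which $u_{\{x^*\}}$ is not identically zero; this presupposes both that $x^*\in\Omega$ (so that such a segment lies in $\overline{\Omega}$) and that $u_{\{x^*\}}$ does not vanish identically on any neighbourhood of $x^*$. Neither is automatic, but both follow from the strict inequality $M<C_{f,\Omega}(\emptyset)$ that your ``moreover'' argument already supplies. Indeed, if $x^*\in\partial\Omega$ then $\Omega\setminus\{x^*\}=\Omega$ and $C_{f,\Omega}(\{x^*\})=C_{f,\Omega}(\emptyset)>M$, contradicting optimality of $x^*$. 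And if $u_{\{x^*\}}\equiv 0$ on some $B_\rho(x^*)\subset\Omega$, then $f=0$ a.e.\ on $B_\rho(x^*)$ from the equation; splitting any $\varphi\in W^{1,p}_0(\Omega)$ as $\eta\varphi+(1-\eta)\varphi$ with $\eta\in C^\infty_c(B_\rho(x^*))$, $\eta\equiv 1$ near $x^*$, one sees that $u_{\{x^*\}}$ satisfies the Euler--Lagrange equation against every test function in $W^{1,p}_0(\Omega)$, so $u_{\{x^*\}}=u_\emptyset$ and $M=C_{f,\Omega}(\emptyset)$, again a contradiction. With these two points secured, there is $z\in B_\rho(x^*)$ with $u_{\{x^*\}}(z)\neq 0$, the segment $[x^*,z]\subset\Omega$ works, and since $\cp_p(\{z\})>0$ for $p>N$ the argument concludes.

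Your flagged ``main technical delicacy'' (the Fubini slicing in the case $p\in(N-1,N]$) is in fact routine: fix any Borel $p$-quasicontinuous representative of $u_{\Sigma^*}$, apply Fubini to the set where it is nonzero, and then Corollary~\ref{cor 2.6} converts positive $\mathcal{H}^1$-measure on a segment directly into positive $p$-capacity, which is exactly what you need.
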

\begin{proof}
For a proof in the case when $N=2$, we refer the reader to \cite[Proposition 2.17]{p-compl}, the proof for the general case is similar.
\end{proof}

\section{Decay of the $p$-energy under flatness control}
In this section we establish the desired decay behavior of the $p$-energy $r\mapsto \int_{B_{r}(x_{0})}|\nabla u_{\Sigma}|^{p}\diff x$ by controlling the flatness of $\Sigma$ at $x_{0}$.

 We begin by establishing a control for the functional $r\mapsto \int_{B_{r}}|\nabla u|^{p}\diff x$, where $u$ is a weak solution to the $p$-Laplace equation in $B_{1}\backslash (\{0\}^{N-1}\times (-1,1))$ vanishing $p$-q.e. on $\{0\}^{N-1}\times (-1,1)$. In \cite{Lundstrom} it was shown that if $u$ is a positive $p$-harmonic function in $B_{1}\backslash(\{0\}^{N-1}\times(-1,1))$, continuous in $B_{1}$ with $u=0$ on $\{0\}^{N-1}\times (-1,1)$, then there exists $\delta=\delta(N,p)\in (0,1)$ such that $u \in C^{0,\beta}(B_{\delta})$, where $\beta=(p-N+1)/(p-1)$ and $C^{0,\beta}(U)$ denotes the space of H\"{o}lder continuous functions in the open set $U$. Furthermore, $\beta$ is the optimal H\"{o}lder exponent for $u$. In fact, comparing the function $u$ with the $p$-superharmonic and $p$-subharmonic functions constructed in \cite[Lemma~3.4]{Lundstrom}, \cite[Lemma 3.7]{Lundstrom7}, it was shown that there exists $C=C(N,p)>0$ and $\delta=\delta(N,p)\in (0,1)$ such that
 \begin{equation}\label{4.1}
 C^{-1}\dist(x, \{0\}^{N-1}\times (-1,1))^{\beta} \leq \frac{u(x)}{u(A_{1/2})} \leq C \dist(x, \{0\}^{N-1}\times (-1,1))^{\beta}
 \end{equation} 
whenever $x \in B_{\delta}$, where $A_{1/2}$ is a point in $\{|x^{\prime}|=1/2\}\cap \partial B_{1/2}$. The upper bound in (\ref{4.1}) implies that $u \in C^{0,\beta}(B_{\delta})$ (see \cite[Corollary 3.7]{Lundstrom}), and the lower bound proves that $\beta$ is optimal. 

However, for the purposes of this paper, the optimal regularity for a $p$-harmonic function vanishing on a 1-dimensional plane is not necessary. It so happened that for every $p\in (N-1,+\infty)$ we also constructed a nice barrier in order to estimate a nonnegative $p$-subharmonic function vanishing on a 1-dimensional plane. More precisely, for any fixed $\gamma \in (0, \beta)$ and some $\delta=\delta(N,p,\gamma) \in (0,1)$, we constructed a $p$-superharmonic function in Lemma \ref{lem A.1}, comparing with which a nonnegative $p$-subharmonic function $u$ in $B_{1}\backslash (\{0\}^{N-1}\times (-1,1))$, continuous in $B_{1}$ and with $u=0$ on $\{0\}^{N-1}\times (-1,1)$, we obtain the following control
\[
u(x)\leq Cu(A_{1/2})\dist(x, \{0\}^{N-1}\times (-1,1))^{\gamma},
\]
where $x \in B_{\delta}$ and $C=C(N,p, \gamma)>0$. If $\gamma$ is close enough to $\beta$, using the above control, we deduce the estimate (\ref{4.2}) which is sufficient to obtain the desired decay behavior of the $p$-energy under flatness control. Finally, since our barrier is slightly simpler than those in \cite[Lemma 3.4]{Lundstrom} and \cite[Lemma 3.7]{Lundstrom7} and in order to make the presentation self-contained, we shall use it in the proof of Lemma \ref{lem 4.1}.

\begin{lemma} \label{lem 4.1}
Let $p\in (N-1,+\infty)$. There exist $\alpha, \delta \in (0,1)$ and $C>0$, depending only on $N$ and $p$, such that if $u\in W^{1,p}(B_{1})$ is a weak solution to the $p$-Laplace equation in $B_{1}\backslash (\{0\}^{N-1}\times (-1,1))$ satisfying $u=0\,\ p\text{-q.e. on}\,\ \{0\}^{N-1}\times (-1,1)$, then
\begin{equation}\label{4.2}
\int_{B_{r}}|\nabla u|^{p}\diff x \leq Cr^{1+\alpha} \int_{B_{1}} |\nabla u|^{p} \diff x \,\ \text{for all} \,\ r \in (0, \delta]. 
\end{equation}
\end{lemma}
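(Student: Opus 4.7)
The plan is to combine the pointwise barrier estimate supplied by Lemma~\ref{lem A.1} with a standard $L^\infty$ bound and a Caccioppoli inequality. Set $L=\{0\}^{N-1}\times(-1,1)$. The positive and negative parts $u_\pm$ are nonnegative, $p$-subharmonic on $B_1\setminus L$, and vanish $p$-q.e.\ on $L$; it suffices to prove (\ref{4.2}) for each of them, so I assume $u\geq 0$.

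\textbf{Step 1: $L^\infty$ bound.} Since $p>N-1$, Corollary~\ref{cor 2.6} gives $\mathrm{Cap}_p(L)>0$. Proposition~\ref{prop 2.10} applied on $B_1$ then yields $\|u\|_{L^p(B_1)}\leq C\|\nabla u\|_{L^p(B_1)}$, and combining with interior Moser iteration for nonnegative $p$-subharmonic functions provides
\[
M:=\sup_{B_{3/4}}u\;\leq\;C\|u\|_{L^p(B_1)}\;\leq\;C\|\nabla u\|_{L^p(B_1)}.
\]

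\textbf{Step 2: Pointwise decay near $L$.} Because $p>N-1$, the critical exponent $\beta=(p-N+1)/(p-1)$ satisfies $\beta>(p-N+1)/p>0$, so I may pick $\gamma\in\bigl((p-N+1)/p,\,\beta\bigr)$. Lemma~\ref{lem A.1} provides $\delta=\delta(N,p,\gamma)\in(0,1)$ and a $p$-superharmonic function $\Psi$ on $B_\delta\setminus L$ whose growth near $L$ is of order $\mathrm{dist}(x,L)^\gamma$; after rescaling by $M$, it dominates $u$ on $\partial B_\delta$ and vanishes $p$-q.e.\ on $L$. The weak comparison principle for $p$-sub/superharmonic functions, using Remark~\ref{rem 2.9} to handle the boundary data on $L$, then yields
\[
u(x)\;\leq\;CM\,\mathrm{dist}(x,L)^\gamma\qquad\text{for all }x\in B_\delta.
\]

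\textbf{Step 3: From pointwise to energy decay.} Writing $x=(x',x_N)$ so that $\mathrm{dist}(x,L)=|x'|$ whenever $|x_N|<1$, direct integration gives
\[
\int_{B_{2r}}|u|^p\,\diff x\;\leq\;CM^p\int_{B_{2r}}|x'|^{p\gamma}\,\diff x\;\leq\;CM^pr^{N+p\gamma},\qquad r\in(0,\delta/2].
\]
Testing the $p$-sub-Laplace equation with $u\varphi^p$, where $\varphi$ is a standard cut-off equal to $1$ on $B_r$ and supported in $B_{2r}$, produces the Caccioppoli estimate
\[
\int_{B_r}|\nabla u|^p\,\diff x\;\leq\;Cr^{-p}\int_{B_{2r}}|u|^p\,\diff x\;\leq\;CM^pr^{N-p+p\gamma}\;=\;CM^pr^{1+\alpha},
\]
with $\alpha:=N-p+p\gamma-1>0$ by the choice of $\gamma$. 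Combining with Step~1 proves (\ref{4.2}) on $(0,\delta/2]$, and the range $r\in(\delta/2,\delta]$ is handled by enlarging $C$.

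The genuine work is packaged into Lemma~\ref{lem A.1}: constructing a radial $p$-superharmonic barrier on $B_\delta\setminus L$ whose boundary behaviour near $L$ is $\mathrm{dist}(x,L)^\gamma$ for $\gamma$ arbitrarily close to, but strictly below, the critical exponent $\beta=(p-N+1)/(p-1)$, keeping $\delta$ bounded away from zero. Once that barrier is in hand, the remainder of the proof above is a routine chain of classical elliptic estimates (weak comparison, Moser, Caccioppoli).
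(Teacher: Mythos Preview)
Your proof is correct and shares the paper's core strategy: an $L^\infty$ bound, comparison with the barrier from Lemma~\ref{lem A.1}, then Caccioppoli. The main difference is in how the general (non-continuous) case is handled. The paper first treats continuous nonnegative $p$-harmonic $u$ (its Step~1, using the Carleson estimate Lemma~\ref{lem 4.2} and the pointwise comparison principle), and only afterward reduces the general $u\in W^{1,p}(B_1)$ to this case via a somewhat elaborate approximation argument (its Step~2: approximate the trace by smooth functions, take $p$-Dirichlet replacements, obtain continuity up to $L$ via the capacity density condition, compare $u_n^{\pm}$ with a further replacement $v_n$, and pass to the limit). You bypass all of this by working directly with the weak subsolutions $u_\pm$ and the weak comparison principle, relying on Remark~\ref{rem 2.9} to make the test functions in Moser iteration, comparison, and Caccioppoli admissible in $W^{1,p}_0(B_\delta\setminus L)$ (since $u_\pm=0$ $p$-q.e.\ on $L$). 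Your route is more direct and avoids both the Carleson estimate and the approximation step.

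One imprecision worth flagging: the barrier $\hat u(x)=|x'|^\gamma+x_N^2$ from Lemma~\ref{lem A.1} does \emph{not} vanish on $L$ (it equals $x_N^2$ there); the $x_N^2$ term is present precisely so that $\hat u$ is bounded below on the outer boundary near $L$, where $|x'|^\gamma$ alone tends to $0$ and could not dominate $u\leq M$. The comparison still works because $u=0\leq CM\hat u$ $p$-q.e.\ on $L$, but the pointwise conclusion is $u(x)\leq CM(|x'|^\gamma+x_N^2)$ rather than $CM\,\dist(x,L)^\gamma$. This is harmless for Step~3: on $B_{2r}$ the extra term contributes only $O(M^p r^{N+2p})$ to $\int_{B_{2r}}|u|^p$, which is absorbed into $CM^p r^{N+p\gamma}$ since $p\gamma<2p$.
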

Before starting to prove Lemma \ref{lem 4.1}, we recall the following Carleson estimate (see \cite[Lemma 3.3]{Lundstrom}).
\begin{lemma} \label{lem 4.2} Let $p\in (N-1,+\infty)$. There exist $\varepsilon=\varepsilon(N,p)\in (0,1)$ and $C=~C(N,p)>0$ such that  if $u$ is a nonnegative $p$-harmonic function in 
	$B_{1}\backslash(\{0\}^{N-1} \times (-1,1))$, continuous in $B_{1}$ and satisfying $u=0$ on $\{0\}^{N-1}\times (-1,1)$, then 
	\[
	\max_{x\in \overline{B}_{\varepsilon}}u(x) \leq Cu(A_{\varepsilon}),
	\]
where $A_{\varepsilon}$ denotes a point such that $\dist(A_{\varepsilon}, \{0\}^{N-1}\times \mathbb{R})=\varepsilon$ and $A_{\varepsilon}\in \partial B_{\varepsilon}$.
\end{lemma}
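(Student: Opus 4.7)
The plan is to combine the weak maximum principle, the interior Harnack inequality for positive $p$-harmonic functions, and boundary H\"older continuity up to the line $L:=\{0\}^{N-1}\times(-1,1)$, reducing the estimate to a Harnack chain of universally bounded length between the maximum point $x^\ast$ and $A_\varepsilon$.

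First, I would observe that since $u$ is continuous on $\overline{B}_\varepsilon$, $p$-harmonic in the open set $B_\varepsilon\setminus L$, nonnegative, and vanishes on $L$, the weak maximum principle applied in the open set $B_\varepsilon\setminus L$ shows that the supremum $M:=\max_{\overline{B}_\varepsilon}u$ is attained at some point $x^\ast\in\partial B_\varepsilon$ (the boundary piece on $L$ contributes only $u=0$). Setting $d^\ast:=\dist(x^\ast,L)$, I would then split into two cases.

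\textbf{Case A:} $d^\ast\ge\varepsilon/8$. In this regime both $x^\ast$ and $A_\varepsilon$ lie in the tubular region $U:=\{x\in B_{2\varepsilon}:\dist(x,L)>\varepsilon/16\}$, on which $u$ is strictly positive (by the strong minimum principle, since $u\ge 0$ and vanishes only on $L$). One may connect $x^\ast$ to $A_\varepsilon$ by a chain of $K=K(N)$ overlapping balls of radius comparable to $\varepsilon/32$ contained in $U$, and applying the scale-invariant interior Harnack inequality for positive $p$-harmonic functions (\emph{cf.}\ \cite[Theorem~6.2]{Lindqvist}) along this chain yields $u(x^\ast)\le C(N,p)\,u(A_\varepsilon)$.

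\textbf{Case B:} $d^\ast<\varepsilon/8$. This is the essential case. Here one uses H\"older continuity of $u$ up to $L$. Because $p>N-1$, the line $L$ is uniformly $p$-thick (Corollary~\ref{cor p-thickness}, via Proposition~\ref{prop lower bound for capacities}), so the Wiener-type boundary regularity theory for the $p$-Laplacian (equivalently, the energy decay of Lemma~\ref{lem 4.1} combined with Poincar\'e--Morrey) produces universal constants $\beta\in(0,1)$ and $C_0>0$ such that for every $\xi\in L\cap B_{1/2}$, every $r\le 1/4$, and every $y\in B_r(\xi)$,
\[
u(y)\le C_0\bigl(\dist(y,L)/r\bigr)^{\beta}\sup_{B_{2r}(\xi)}u.
\]
Applying this with $\xi$ the orthogonal projection of $x^\ast$ onto $L$ and $r=\varepsilon$ gives $M\le C_0(1/8)^\beta\sup_{B_{2\varepsilon}}u$. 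Choosing $\varepsilon$ so small that $C_0(1/8)^\beta<1/2$ and iterating across dyadic scales $\varepsilon,2\varepsilon,4\varepsilon,\ldots$ up to a fixed scale (say $1/4$) at which Case A applies, one telescopes the estimates to produce $M\le C\,u(A_\varepsilon)$ with $C=C(N,p)$.

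The main obstacle is closing the iteration in Case B cleanly, since each use of the H\"older estimate replaces the ambient scale $\varepsilon$ by $2\varepsilon$ and passes to a larger sup on the right-hand side rather than directly to $u(A_\varepsilon)$. The most transparent workaround is a compactness-and-contradiction scheme: assuming no universal constant works, one normalises a counterexample sequence so that $\sup_{\overline{B}_\varepsilon}u_n=1$ while $u_n(A_\varepsilon)\to 0$, and uses the uniform boundary H\"older control to extract a locally uniform subsequential limit $v$ that is nonnegative, $p$-harmonic in $B_1\setminus L$, continuous on $B_1$, vanishes on $L$ (the vanishing passes to the limit precisely because the H\"older modulus is \emph{uniform}, which is where $p>N-1$ enters decisively), satisfies $v(A_\varepsilon)=0$, yet $\sup_{\overline{B}_\varepsilon}v=1$. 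The strong minimum principle applied on the connected component of $B_1\setminus L$ containing $A_\varepsilon$ (all of $B_1\setminus L$ when $N\ge 3$; the appropriate half-ball for $N=2$, with $A_\varepsilon$ chosen in the same component as $x^\ast$) then forces $v\equiv 0$ there, contradicting the normalisation and completing the proof.
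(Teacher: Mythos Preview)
The paper does not prove this lemma; it cites \cite[Lemma~3.3]{Lundstrom}, so there is no in-paper argument to compare against. Your ingredients --- the maximum principle to place $x^\ast$ on $\partial B_\varepsilon$, interior Harnack for Case~A, and boundary H\"older decay coming from the $p$-thickness of $L$ when $p>N-1$ --- are the standard ones, and Case~A is correct.

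The gap is in Case~B. Your compactness workaround does not close: normalising $\sup_{\overline{B}_\varepsilon}u_n=1$ gives no control of $u_n$ outside $\overline{B}_\varepsilon$, so you cannot ``extract a locally uniform subsequential limit $v$ \dots\ $p$-harmonic in $B_1\setminus L$'' as written. Harnack chains from $A_\varepsilon$ do force $u_n\to 0$ locally uniformly on compacta of $B_1\setminus L$, hence $\dist(x_n^\ast,L)\to 0$; but to then contradict $u_n(x_n^\ast)=1$ via boundary H\"older you must bound $\sup_{B_{2\rho}(\xi_n)}u_n$ for a ball that necessarily spills outside $\overline{B}_\varepsilon$, and nothing in your normalisation prevents that supremum from diverging. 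The dyadic iteration has the same circularity: it terminates not ``at a fixed scale at which Case~A applies'' (there is no reason the maximum at scale $1/4$ is attained away from $L$), but at a scale where you would again need the Carleson estimate itself. The proof in \cite{Lundstrom}, of Caffarelli--Fabes--Mortola--Salsa type, avoids this by running the doubling for a \emph{single} $u$: one fixes a universal $M$, shows that if $\sup_{\overline B_r(\xi)}u>N_0\,u(A_r(\xi))$ then either the maximum lies at distance $\ge r/M$ from $L$ (and a Harnack chain of length $\lesssim\log M$, hence universal, already contradicts this for $N_0$ large), or it lies within $r/M$ of $L$, whereupon boundary H\"older produces a new centre with the supremum multiplied by a factor exceeding the single-step Harnack constant linking successive corkscrew values. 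The first branch of the dichotomy is itself the contradiction, with constants depending only on $N,p$; no compactness and no control of $u$ outside $B_\varepsilon$ are needed.
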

\begin{proof}[Proof of Lemma \ref{lem 4.1}] To lighten the notation, we denote $\{0\}^{N-1}\times (-1,1)$ by $S$.\\
	\textit{Step 1.}  We prove the estimate (\ref{4.2}) in the case when $u$ is continuous and nonnegative in $B_{1}$ with $u=0$ on $S$. Let $\gamma=\frac{1}{2}\Bigl(\tfrac{p-N+1}{p}+\tfrac{p-N+1}{p-1}\Bigr)$. By Lemma \ref{lem A.1}, there exists $\delta_{0}=\delta_{0}(N,p)\in (0,1)$ such that $\hat{u}(x)=|x^{\prime}|^{\gamma}+x_{N}^{2}$ is $p$-superharmonic in $\{0<|x^{\prime}|<\delta_{0}\}\cap \{|x_{N}|<1\}$. On the other hand, according to 
	Lemma \ref{lem 4.2}, there exists $\varepsilon=\varepsilon(N,p) \in (0,1)$ and $C=C(N,p)>0$ such that $u\leq C u(A_{\varepsilon})$ in $\overline{B}_{\varepsilon}$, where $A_{\varepsilon}$ denotes a point with $\dist(A_{\varepsilon}, \{0\}^{N-1}\times \mathbb{R})=\varepsilon$ and $A_{\varepsilon}\in \partial B_{\varepsilon}$. Without loss of generality, we can assume that $\delta_{0}\leq \varepsilon$. Hereinafter in this proof, $C$ denotes a positive constant that can only depend on $N,\, p$ and can be different from line to line.  Using Harnack's inequality (see, for instance, \cite[Theorem 6.2]{NPT}), we deduce that $u(A_{\varepsilon})\leq C u(A_{1/2})$ and hence $u \leq Cu(A_{1/2})$ in $\overline{B}_{\delta_{0}}$ for a point $A_{1/2} \in \{|x^{\prime}|=1/2\}\cap \partial B_{1/2}$. Next, since 
	\[
	\hat{u}(x) = \biggl(\frac{\delta_{0}}{\sqrt{2}}\biggr)^{\gamma}+x_{N}^{2}\geq \biggl(\frac{\delta_{0}}{\sqrt{2}}\biggr)^{\gamma} \,\ \text{if} \,\ |x^{\prime}|=\frac{\delta_{0}}{\sqrt{2}} \,\ \text{and} \,\ \hat{u}(x)=|x^{\prime}|^{\gamma}+\frac{\delta_{0}^{2}}{2}\geq \frac{\delta_{0}^{2}}{2}\,\ \text{if} \,\ |x_{N}|=\frac{\delta_{0}}{\sqrt{2}},
	\]
	the estimate $u \leq C u(A_{1/2})\hat{u}$ holds on $\partial(\{|x^{\prime}|<\delta_{0}/\sqrt{2}\}\cap \{|x_{N}|<\delta_{0}/\sqrt{2}\})$; see Figure~\ref{Figure step 1}. 
	\begin{figure}
		\centering
		\includegraphics[width=.8\textwidth]{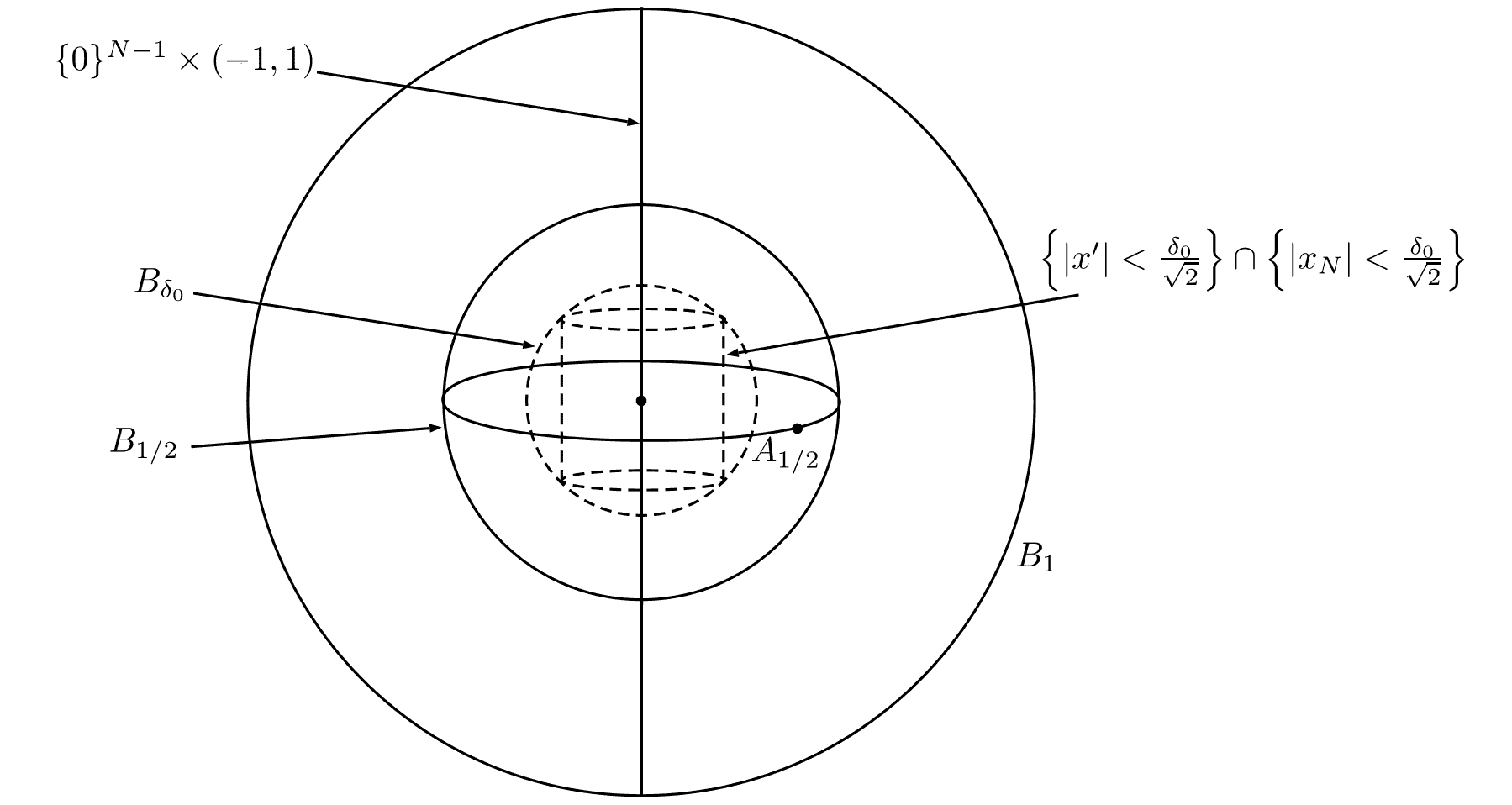}
		\caption{In the proof of Lemma~\ref{lem 4.1} we estimate on $\partial\Bigl(\Bigl\{|x^{\prime}|<\frac{\delta_{0}}{\sqrt{2}}\Bigr\}\cap \Bigl\{|x_{N}|<\frac{\delta_{0}}{\sqrt{2}}\Bigr\}\Bigr)$ a nonnegative ~$p$-harmonic function $u$ in $B_{1}\backslash (\{0\}^{N-1}\times (-1,1))$, continuous in $B_{1}$ with $u=0$ on $\{0\}^{N-1}\times (-1,1)$.}
		\label{Figure step 1}
	\end{figure}
Notice also that $u(x)\leq C u(A_{1/2})\hat{u}(x)$ if $x \in S$. Thus, using the comparison principle (see e.g. \cite[Theorem~7.6]{NPT}), we obtain
\begin{equation}\label{4.3}
u\leq Cu(A_{1/2})\hat{u}\,\ \text{in}\,\ \{|x^{\prime}|<\delta_{0}/\sqrt{2}\}\cap \{|x_{N}|<\delta_{0}/\sqrt{2}\}.
\end{equation} 
	Now we set $\delta:=\delta_{0}/10$. According to Lemma~\ref{lem A.2}, $u$ is a $p$-subharmonic function in $B_{1}$. Then, using Caccioppoli's inequality (see e.g. \cite[Lemma 2.9]{Lindqvist} or \cite[Lemma 3.27]{NPT}), which is applicable to nonnegative $p$-subharmonic functions, and also using (\ref{4.3}), for all $r \in (0, \delta]$, we deduce that
	\begin{align*}
	\int_{B_{r}}|\nabla u|^{p}\diff x \leq p^{p}r^{-p}\int_{B_{2r}}u^{p}\diff x \leq C u^{p}(A_{1/2}) r^{-p}\int_{B_{2r}}\hat{u}^{p}\diff x& \leq C u^{p}(A_{1/2})r^{-p}\int_{B_{2r}}(r^{\gamma}+r^{2})^{p}\diff x\\
	&\leq Cu^{p}(A_{1/2})r^{\gamma p +N -p}\\ &=Cu^{p}(A_{1/2})r^{1+\alpha},  \numberthis \label{4.4}
	\end{align*}
where $\alpha=\gamma p -p +N-1$ is positive, since $\gamma>(p-N+1)/p$. On the other hand, by Harnack's inequality, $u(A_{1/2})\leq Cu(x)$ for all $x \in B_{1/4}(A_{1/2})$ and then
\begin{align*}
u^{p}(A_{1/2})=\frac{1}{|B_{1/4}|}\int_{B_{1/4}(A_{1/2})}u^{p}(A_{1/2})\diff x \leq C\int_{B_{1/4}(A_{1/2})}u^{p}\diff x 
&\leq C \int_{B_{1}}u^{p}\diff x\\
&\leq C \int_{B_{1}}|\nabla u|^{p}\diff x, \numberthis \label{4.5}
\end{align*}
where we have used Proposition \ref{prop 2.12}. Gathering together (\ref{4.4}) and (\ref{4.5}), we deduce (\ref{4.2}). \\
\textit{Step 2.} We prove (\ref{4.2}) in the case when $u \in W^{1,p}(B_{1})$ and $u=0$ $p$-q.e. on $S$. Let us fix a sequence $(\varphi_{n})_{n\in \mathbb{N}} \subset C^{\infty}(\overline{B}_{1})$  such that for each $n\in \mathbb{N}$, $\varphi_{n}=0$ on $S$ and, furthermore, $\varphi_{n}\to u$ in $W^{1,p}(B_{1})$. Let us briefly explain  why such a sequence exists. For an arbitrary open set $U$ with $B_{1}\subset \subset U$, according to ~\cite[Theorem 7.25]{PDE}, there exists $\widetilde{u} \in W^{1,p}_{0}(U)$ such that $\widetilde{u}=u$ a.e. in $B_{1}$. By \cite[Theorem 6.1.4]{Potential}, $\widetilde{u}=u$ $p$-q.e. in $B_{1}$ and hence $\widetilde{u}=0$ $p$-q.e. on $S$. Then $\widetilde{u}\in W^{1,p}_{0}(U\backslash S)$ (see Remark~\ref{rem 2.9}). So there exists a sequence $(\varphi_{n})_{n} \subset C^{\infty}_{0}(U\backslash S)$ such that $\varphi_{n}\to \widetilde{u}$ in $W^{1,p}(U)$. It remains to note that $\varphi_{n}\to u$ in $W^{1,p}(B_{1})$. Next, for each $n\in \mathbb{N}$, let $u_{n}$ be a unique solution to the Dirichlet problem
\begin{equation*}
\begin{cases}
-\Delta_{p}v&=\,\ 0 \,\ \text{in} \,\ B_{1}\backslash S\\
\qquad v&=\,\ \varphi_{n} \,\ \text{on} \,\ S \cup \partial B_{1},
\end{cases}
\end{equation*}
which means that $u_{n}-\varphi_{n}\in W^{1,p}_{0}(B_{1}\backslash S)$ and 
\[
\int_{B_{1}}|\nabla u_{n}|^{p-2}\nabla u_{n}\nabla \zeta \diff x =0 \,\ \text{for all} \,\ \zeta \in W^{1,p}_{0}(B_{1}\backslash S).
\]
Notice that, by \cite[Theorem~2.19]{Lindqvist}, $u_{n}$ is continuous in $B_{1}\backslash S$. On the other hand, since $\varphi_{n}$ is continuous in $\overline{B}_{1}$, according to \cite[Theorem 6.27]{NPT}, we can show that $u_{n}$ is continuous in $\overline{B}_{1}$ if we can prove that there exist constants $C_{0}>0$ and $r_{0}>0$ such that
\begin{equation}\label{4.6}
\frac{{\rm Cap}_{p}((S \cup \partial B_{1})\cap B_{r}(x))}{{\rm Cap}_{p}(B_{r}(x))}\geq C_{0}
\end{equation}
whenever $0<r<r_{0}$ and $x\in S \cup \partial B_{1}$. However, the estimate (\ref{4.6}) in the case when $p \in (N-1,N]$ follows from Corollary~\ref{cor p-thickness}; in the case when $p>N$, using Remark~\ref{rem 2.5} and the fact that the Bessel capacity is an invariant under translations and is nondecreasing with respect to set inclusion, it is easy to see that the estimate (\ref{4.6}) holds for $C_{0}={\rm Cap}_{p}(\{0\})/{\rm Cap}_{p}(B_{1})$ whenever $x\in S \cup \partial B_{1}$ and $0<r<1$. Thus, for each $n \in \mathbb{N}$,  $u_{n}$ is continuous in $\overline{B}_{1}$. Then, by Lemma~\ref{lem A.2}, $u_{n}^{+}=\max\{u_{n}, 0\}$ and $u_{n}^{-}=-\min\{u_{n}, 0\}$ are nonnegative $p$-subharmonic functions in $B_{1}$ such that $u_{n}^{+}=u_{n}^{-}=0$ on $S$. Now let $v_{n}$ be a unique solution to the Dirichlet problem
\begin{equation*}
\begin{cases}
-\Delta_{p}v &= \,\ 0 \,\ \text{in} \,\ B_{1}\backslash S\\
\qquad v &= \,\ u_{n}^{+}\,\ \text{on} \,\ S \cup \partial B_{1}.
\end{cases}
\end{equation*} 
As before, by \cite[Theorem~2.19]{Lindqvist} and \cite[Theorem~6.27]{NPT}, $v_{n}$ is continuous in $\overline{B}_{1}$ and also $v_{n}=u_{n}^{+}$ on $S \cup \partial B_{1}$. Then, by the comparison principle, $u_{n}^{+}\leq v_{n}$ in $\overline{B}_{1}$. Let $\delta=\delta(N,p),\, \alpha=\alpha(N,p)\in (0,1)$ be the constants from \textit{Step 1}. Next, applying Caccioppoli's inequality to $u_{n}^{+}$, using the fact that $u_{n}^{+}\leq v_{n}$ in $\overline{B}_{1}$ and applying the result of \textit{Step 1} to $v_{n}$, for all $r \in (0, \delta]$ we deduce that
\begin{align*}
\int_{B_{r}}|\nabla u_{n}^{+}|^{p}\diff x \leq p^{p}r^{-p}\int_{B_{2r}}u^{+ p}_{n}\diff x \leq p^{p}r^{-p}\int_{B_{2r}}v_{n}^{ p}\diff x &\leq Cr^{1+\alpha} \int_{B_{1}}|\nabla v_{n}|^{p}\diff x \\
& \leq Cr^{1+\alpha} \int_{B_{1}}|\nabla u^{+}_{n}|^{p}\diff x, \numberthis\label{4.7}
\end{align*}
where the last estimate comes from the fact that $v_{n}$ minimizes the functional $v\mapsto \int_{B_{1}}|\nabla v|^{p}\diff x$ among all $v \in W^{1,p}(B_{1})$ such that $v-u_{n}^{+} \in W^{1,p}_{0}(B_{1}\backslash S)$ (see Theorem \ref{thm 2.2}) and $u_{n}^{+}$ is a competitor. Arguing by the same way as for $u_{n}^{+}$, we deduce that for all $r \in (0, \delta]$,
\begin{equation}\label{4.8}
\int_{B_{r}}|\nabla u_{n}^{-}|^{p}\diff x \leq Cr^{1+\alpha}\int_{B_{1}}|\nabla u_{n}^{-}|^{p}\diff x.
\end{equation}
Next, since $\varphi_{n} \to u$ in $W^{1,p}(B_{1})$ and $u$ solves the Dirichlet problem $-\Delta_{p}v=0\,\ \text{in}\,\ B_{1}\backslash S$ with its own trace on $S\cup \partial B_{1}$, by \cite[Theorem~3.5]{Bucur}, $u_{n}\to u$ in $W^{1,p}(B_{1})$ and hence $u_{n}^{+} \to u^{+}$, $u_{n}^{-}\to u^{-}$ in $W^{1,p}(B_{1})$. This, together with (\ref{4.7}) and (\ref{4.8}), implies that for all $r \in (0, \delta]$,
\begin{align*}
\int_{B_{r}}|\nabla u|^{p}\diff x = \int_{B_{r}}|\nabla u^{+}-\nabla u^{-}|^{p}\diff x &\leq 2^{p-1}\int_{B_{r}}|\nabla u^{+}|^{p}\diff x + 2^{p-1}\int_{B_{r}}|\nabla u^{-}|^{p}\diff x \\
& \leq C r^{1+\alpha}\int_{B_{1}}|\nabla u^{+}|^{p}\diff x+ Cr^{1+\alpha}\int_{B_{1}}|\nabla u^{-}|^{p}\diff x\\
&\leq Cr^{1+\alpha}\int_{B_{1}}|\nabla u|^{p}\diff x.
\end{align*}
This completes the proof of Lemma \ref{lem 4.1}.
\end{proof}

Now we establish an estimate for a weak solution to the $p$-Laplace equation in $B_{r}(x_{0})\backslash \Sigma$ that vanishes on $\Sigma \cap B_{r}(x_{0})$ in the case when $\Sigma$ is close enough, in $\overline{B}_{r}(x_{0})$ and in the Hausdorff distance, to a diameter of $\overline{B}_{r}(x_{0})$.

\begin{lemma}\label{lem 4.3}
 Let $p\in (N-1,+\infty)$ and let $\alpha, \delta \in (0,1)$, $C>1$ be as in Lemma \ref{lem 4.1}. Then for each $\varrho \in (0,\delta]$ there exists $\varepsilon_{0} \in (0, \varrho)$ such that the following holds. Let $\s \subset \mathbb{R}^{N}$ be a closed set such that $(\s \cap B_{r}(x_{0})) \cup \partial B_{r}(x_{0})$ is connected and assume that for some affine line $L$ passing through $x_{0}$, $d_{H}(\s\cap \overline{B}_{r}(x_{0}), L\cap \overline{B}_{r}(x_{0}))\leq \varepsilon_{0}r$. Then for any weak solution $u\in W^{1,p}(B_{r}(x_{0}))$ to the $p$-Laplace equation in $B_{r}(x_{0})\backslash\s$ vanishing $p\text{-q.e. on}\,\ \s\cap B_{r}(x_{0})$, the following estimate holds
\begin{equation*}
\int_{B_{\varrho r}(x_{0})} |\nabla u|^{p}\diff x\leq (C \varrho)^{1+\alpha}\int_{B_{r}(x_{0})} |\nabla u|^{p} \diff x.
\end{equation*}
\end{lemma}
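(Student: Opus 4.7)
The plan is to argue by contradiction and compactness, reducing to Lemma~\ref{lem 4.1}. By translation and rescaling we may take $x_0 = 0$ and $r = 1$. Suppose the conclusion fails for some $\varrho \in (0,\delta]$: then there exist $\varepsilon_n \downarrow 0$, closed sets $\Sigma_n$ with $(\Sigma_n \cap B_1) \cup \partial B_1$ connected, affine lines $L_n$ through the origin with $d_H(\Sigma_n \cap \overline{B}_1, L_n \cap \overline{B}_1) \leq \varepsilon_n$, and weak solutions $u_n \in W^{1,p}(B_1)$ of the $p$-Laplace equation in $B_1 \setminus \Sigma_n$ vanishing $p$-q.e.\ on $\Sigma_n \cap B_1$, such that $\int_{B_\varrho}|\nabla u_n|^p\,dx > (C\varrho)^{1+\alpha}\int_{B_1}|\nabla u_n|^p\,dx$. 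By homogeneity of the equation, I would normalize so that $\int_{B_1}|\nabla u_n|^p\,dx = 1$. Passing to a subsequence and rotating uniformly, assume $L_n \to L := \{0\}^{N-1}\times \mathbb{R}$ and $\Sigma_n \cap \overline{B}_1 \to L \cap \overline{B}_1$ in Hausdorff distance. The connectedness assumption together with Hausdorff closeness to $L \cap \overline{B}_1$ forces $\Sigma_n$ to meet every sphere $\partial B_s$ for $s \in [1/2, 1]$, so Proposition~\ref{prop 2.12} yields $\|u_n\|_{L^p(B_1)} \leq C$ and a weak subsequential limit $u_n \rightharpoonup u$ in $W^{1,p}(B_1)$.

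The next step is to identify $u$ as a weak solution of the $p$-Laplace equation in $B_1 \setminus L$ that vanishes $p$-q.e.\ on $L \cap B_1$. For any test function $\varphi \in C_c^\infty(B_1 \setminus L)$, Hausdorff convergence gives $\mathrm{supp}\,\varphi \cap \Sigma_n = \emptyset$ for $n$ large, whence $\int |\nabla u_n|^{p-2}\nabla u_n \cdot \nabla \varphi\,dx = 0$; a standard Minty monotonicity trick together with a smooth cutoff between compact subsets of $B_1 \setminus L$ upgrades weak convergence of $\nabla u_n$ to strong $L^p_{\loc}(B_1 \setminus L)$ convergence and yields the $p$-Laplace equation for $u$. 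The vanishing of $u$ on $L$ follows from a Mosco-type (Bucur-Sver\'ak) argument: the uniform $p$-thickness of both $\Sigma_n$ and $L$ supplied by Corollary~\ref{cor p-thickness}, together with the Hausdorff convergence $\Sigma_n \to L$, forces the weak $W^{1,p}$ limit to inherit the vanishing on $L$, and moreover yields strong convergence $u_n \to u$ in $W^{1,p}(B_1)$, as in the proof engine behind Theorem~\ref{thm 2.17}.

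Lemma~\ref{lem 4.1} applied to $u$ then gives $\int_{B_\varrho}|\nabla u|^p\,dx \leq C\varrho^{1+\alpha}$. Combining with the full strong convergence from the previous step, $\int_{B_\varrho}|\nabla u_n|^p\,dx \to \int_{B_\varrho}|\nabla u|^p\,dx \leq C\varrho^{1+\alpha} < (C\varrho)^{1+\alpha}$, where the strict inequality uses $C>1$, contradicting the standing assumption. The main obstacle is precisely this last step, namely the upgrade from weak to strong $W^{1,p}$ convergence across the set $L$: since $u_n$ and $u$ satisfy distinct vanishing conditions (on $\Sigma_n$ versus on $L$) and the $p$-energy is nonlinear, neither weak lower semicontinuity nor the monotonicity trick alone suffices to pass to the limit on a ball intersecting $L$. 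The resolution rests on the Mosco convergence of the spaces $W^{1,p}_0(B_1 \setminus \Sigma_n)$ to $W^{1,p}_0(B_1 \setminus L)$, which becomes available precisely thanks to the uniform $p$-thickness established in Corollary~\ref{cor p-thickness} (and ultimately in Corollary~\ref{cor lower bound for capacities}).
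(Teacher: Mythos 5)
Your overall strategy is the paper's: argue by contradiction, normalize so that $\int_{B_1}|\nabla u_n|^p\,dx = 1$, get a weak $W^{1,p}(B_1)$ limit $v$ via a capacity/Poincar\'e bound, identify $v$ as a $p$-harmonic function in $B_1\setminus L$ vanishing $p$-q.e.\ on $L$, apply Lemma~\ref{lem 4.1}, and reach a contradiction using $C>1$. The identification of $v$ as a weak solution (test functions compactly supported away from $L$, a.e.\ convergence of gradients, monotonicity/Mazur) and the use of Mosco convergence of $W^{1,p}_0(B_1\setminus\Sigma_n)$ to $W^{1,p}_0(B_1\setminus L)$ to propagate the vanishing condition are both exactly what the paper does.

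There is, however, a genuine gap in your handling of the decisive step: passing $\int_{B_\varrho}|\nabla u_n|^p\,dx \to \int_{B_\varrho}|\nabla v|^p\,dx$. You attribute full strong convergence $u_n\to v$ in $W^{1,p}(B_1)$ to the Mosco/Bucur--Sver\'ak machinery and uniform $p$-thickness. This is too strong and the cited machinery does not apply here: Mosco convergence of $W^{1,p}_0(B_1\setminus\Sigma_n)$ to $W^{1,p}_0(B_1\setminus L)$ gives exactly that weak $W^{1,p}$ limits of sequences drawn from those spaces land in $W^{1,p}_0(B_1\setminus L)$ -- which the paper uses, after localizing by $\psi\in C^1_0(B_1)$, to prove $v=0$ $p$-q.e.\ on $L\cap B_1$ -- but it does \emph{not} by itself yield strong convergence of $u_n$. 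The strong-convergence statement in the vein of Theorem~\ref{thm 2.17} (Sver\'ak/Bucur) concerns solutions of the Dirichlet problem with fixed right-hand side and zero boundary data on $\Sigma_n\cup\partial\Omega$; here $u_n$ is $p$-harmonic with \emph{varying} boundary trace on $\partial B_1$, so that theorem is not applicable, and indeed strong convergence in all of $W^{1,p}(B_1)$ can fail (traces may oscillate on $\partial B_1$). What one can prove -- and all that is needed, since $\varrho\le\delta<1$ -- is strong convergence of $\nabla u_n$ to $\nabla v$ in $L^p(B_\delta)$. The paper obtains this by a cutoff test-function argument: for a cutoff $\chi_\varepsilon$ equal to $1$ on $\overline B_\delta$ and supported in $B_\varepsilon$, $v_n\chi_\varepsilon\in W^{1,p}_0(B_1\setminus\Sigma_n)$ is admissible, the equation gives $\int\chi_\varepsilon|\nabla v_n|^p = -\int v_n|\nabla v_n|^{p-2}\nabla v_n\cdot\nabla\chi_\varepsilon$, and passing to the limit (strong $L^p$ convergence of $v_n$, weak $L^{p'}$ convergence of $|\nabla v_n|^{p-2}\nabla v_n$) yields $\limsup_n\int_{B_\delta}|\nabla v_n|^p\le\int_{B_\delta}|\nabla v|^p$; with weak lower semicontinuity this gives strong convergence on $B_\delta$. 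Your Minty-plus-cutoff remark is aiming at this, but you then attribute the key step to Mosco convergence, which is not what closes the argument.
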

\begin{figure}[H]
	\centering
	\includegraphics[width=.777\textwidth]{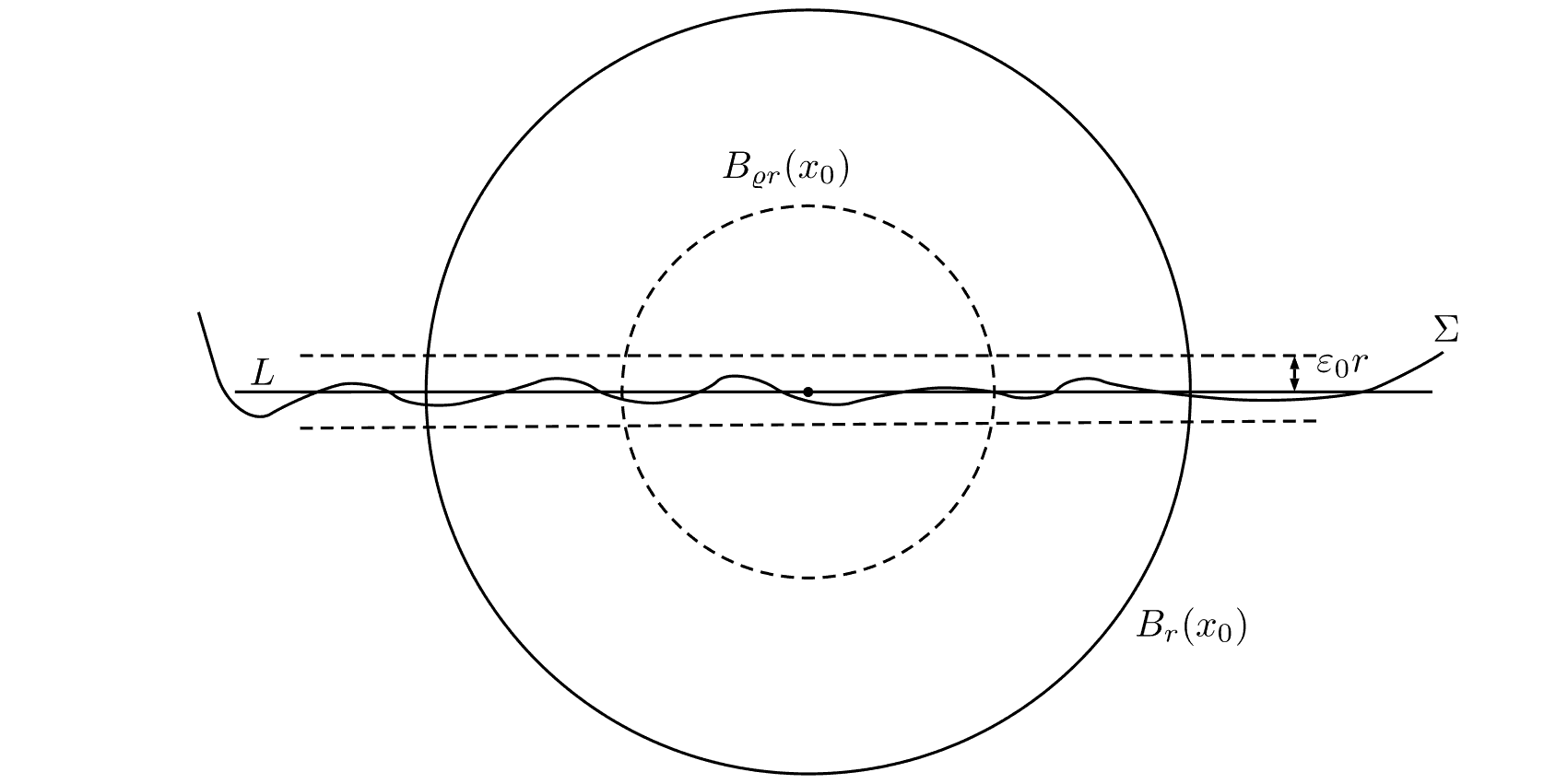}
	\caption{The geometry in Lemma \ref{lem 4.3}.}
	\label{Figure step 2}
\end{figure}
\begin{proof} Since the $p$-Laplacian is an invariant under scalings, rotations and translations, we can assume that $B_{r}(x_{0})=B_{1}$ and $L\cap \overline{B}_{r}(x_{0})=\{0\}^{N-1}\times [-1,1]$. To simplify the notation, we denote $\{0\}^{N-1}\times [-1,1]$ by $S$. By contradiction, suppose that for some $\varrho \in (0,\delta]$ there exist sequences $(\varepsilon_{n})_{n},$ $(\s_{n})_{n}$ and $(u_{n})_{n}$ such that for each $n\in \mathbb{N}$: $\varepsilon_{n}\in (0,\varrho)$, $\varepsilon_{n}\downarrow 0$ as $n \to +\infty$; $\s_{n}$ is closed, $(\s_{n}\cap B_{1})\cup \partial B_{1}$ is connected, $d_{H}(\s_{n}\cap \overline{B}_{1}, S)\leq \varepsilon_{n}$ implying that
	\begin{equation}
	d_{H}({\s}_{n}\cap \overline{B}_{1}, S)\to 0 \,\ \text{as}\,\ n\to +\infty; \label{4.9}
	\end{equation}
	$u_{n}$ is a weak solution to the $p$-Laplace equation in $B_{1}\backslash \s_{n},\,\ u_{n}=0\,\ p$-q.e. on $\s_{n}\cap B_{1}$ and 
			\begin{equation}
			\int_{B_{\varrho}}|\nabla u_{n}|^{p}\diff x>(C\varrho)^{1+\alpha}\int_{B_{1}}|\nabla u_{n}|^{p}\diff x. \label{4.10} 
			\end{equation} 
	Next, for each $n\in \mathbb{N}$, we define $v_{n} \in W^{1,p}(B_{1})$ as
	\begin{equation}
	v_{n}(\cdot)=\frac{u_{n}(\cdot)}{\Bigl(\int_{B_{1}}|\nabla u_{n}|^{p}\diff x\Bigr)^{\frac{1}{p}}}. \label{4.11}
	\end{equation}
	Notice that $v_{n}=0\,\ p \text{-q.e. on}\,\ \s_{n}\cap B_{1}$ and 
	\begin{equation}
	\int_{B_{1}}|\nabla v_{n}|^{p}\diff x=1. \label{4.12}
	\end{equation}
	On the other hand, for each $n\in \mathbb{N}$, $\Sigma_{n}\cap B_{\delta}\neq \emptyset$. This, together with the fact that $(\Sigma_{n}\cap B_{1})\cup \partial B_{1}$ is connected, according to Corollary~\ref{cor lower bound for capacities} and Proposition~\ref{prop 2.11} in the case when $p\in (N-1,N]$, and according to Remark~\ref{rem 2.5} in the case when $p\in (N,+\infty)$, implies that there exists a constant $\widetilde{C}>0$ (independent of $n$) such that for each $n\in \mathbb{N}$,
	\begin{equation*}
	{\rm Cap}_{p}({\s}_{n}\cap B_{1})\geq \widetilde{C}.
	\end{equation*}
	Using the above estimate together with Proposition~\ref{prop 2.10} and with (\ref{4.12}), we conclude that the sequence $(v_{n})_{n}$ is bounded in $W^{1,p}(B_{1})$. Hence, up to a subsequence still denoted by the same index, we have
	\begin{align}
	v_{n} & \rightharpoonup v \,\ \text{weakly in} \,\ W^{1,p}(B_{1}) \label{4.13} \\ 
	v_{n} & \to v \,\ \text{strongly in} \,\ L^{p}(B_{1}), \label{4.14}
	\end{align}
	for some $v \in W^{1,p}(B_{1})$. Let us now show that $v=0\,\ p \text{-q.e. on}\,\ S\cap B_{1}$. For each $t\in (0,1)$, we fix a function $\psi \in C^{1}_{0}(B_{1})$ such that $\psi=1$ on $\overline{B}_{t}$ and $0\leq \psi \leq 1$. Since $(\s_{n}\cap B_{1})\cup \partial B_{1}$ is connected for each $n \in \mathbb{N}$ and $d_{H}(\s_{n}\cap \overline{B}_{1}, S) \to 0$ as $n\to +\infty$, it follows (see Section 6 in \cite{Bucur}) that the sequence of Sobolev spaces $W^{1,p}_{0}(B_{1}\backslash \s_{n})$ converges in the sense of Mosco to $W^{1,p}_{0}(B_{1}\backslash S)$. Notice that for each $n \in \mathbb{N}$, $v_{n}\psi \in W^{1,p}_{0}(B_{1}\backslash \Sigma_{n})$ and by (\ref{4.13}), $v_{n}\psi \rightharpoonup v\psi$\,\ weakly in\,\ $W^{1,p}(\mathbb{R}^{N})$. Then, using the definition of limit in the sense of Mosco, we deduce that $v\psi \in W^{1,p}_{0}(B_{1}\backslash S)$. This implies that $v=0\,\ p\text{-q.e. on}\,\ \{0\}^{N-1}\times [-t,t]$ (see Remark \ref{rem 2.9}). As $t\in (0,1)$ was arbitrarily chosen, $v=0\,\ p \text{-q.e. on}\,\ S\cap B_{1}$.
	
	We claim that $v$ is a weak solution to the $p$-Laplace equation in $B_{1}\backslash S$, that is,
	\begin{equation}\label{4.15}
	\int_{B_{1}}|\nabla v|^{p-2}\nabla v \nabla \varphi\diff x=0 \,\ \text{for all} \,\ \varphi \in C^{\infty}_{0}(B_{1}\backslash S).
	\end{equation}	
	In order to get the equality above, it suffices to show that $|\nabla v_{n}|^{p-2}\nabla v_{n} \rightharpoonup |\nabla v|^{p-2}\nabla v$ weakly in $L^{p^{\prime}}(B_{1};\mathbb{R}^{N})$. In fact, if $\varphi \in C^{\infty}_{0}(B_{1}\backslash S)$, then $\{\varphi \neq 0\} \subset \subset B_{1}\backslash S$ and thanks to (\ref{4.9}), for all $n$ large enough, $\{\varphi \neq 0\}\subset\subset B_{1}\backslash \Sigma_{n}$, so we can write the following
	\[
	\int_{B_{1}}|\nabla v_{n}|^{p-2}\nabla v_{n}\nabla \varphi \diff x = 0.
	\]
	Next, letting $n$ tend to $+\infty$ in the above equality and using that $|\nabla v_{n}|^{p-2}\nabla v_{n}\rightharpoonup |\nabla v|^{p-2}\nabla v$ weakly in $L^{p^{\prime}}(B_{1};\mathbb{R}^{N})$, we would obtain (\ref{4.15}). We first prove that, at least for a subsequence, $\nabla v_{n}\to \nabla v$ a.e. in $B_{1}$. For each integer $m\geq 10$, we define $\Om_{m}:=\{x \in B_{1}: \dist(x, S)>1/m\}$. Notice that $v_{n}\rightharpoonup v$ weakly in $W^{1,p}(\Om_{m})$ and for all $n$ large enough (with respect to $m$), $v_{n}$ is a weak solution in $\Om_{m}$. Then, according to ~\cite[Theorem 2.1]{BOCCARDO}, there exists a subsequence $(v_{n(m,k)})_{k\in \mathbb{N}}$ such that $\nabla v_{n(m,k)}\to \nabla v$ a.e. in $\Om_{m}$. For each $m$ as above, let $(v_{n(m+1,k)})_{k\in \mathbb{N}}$ be a subsequence of $(v_{n(m,k)})_{k\in \mathbb{N}}$ satisfying $\nabla v_{n(m+1,k)}\to \nabla v$ a.e. in $\Omega_{m+1}$. Thus, for the diagonal subsequence $(v_{n(m,m)})_{m\in \mathbb{N}}$, $\nabla v_{n(m,m)} \to \nabla v$ a.e. in $B_{1}$. So, at least for a subsequence, $\nabla v_{n} \rightharpoonup \nabla v$ weakly in $L^{p}(B_{1};\mathbb{R}^{N})$ and $\nabla v_{n} \to \nabla v$ a.e. in $B_{1}$, but then, using Mazur's lemma, we deduce that $|\nabla v_{n}|^{p-2}\nabla v_{n} \rightharpoonup |\nabla v|^{p-2}\nabla v$ weakly in $L^{p^{\prime}}(B_{1};\mathbb{R}^{N})$. This proves the claim.

	We now  want to prove the strong convergence of $\nabla v_n$ to $\nabla v$ in $L^p(B_{\delta};\mathbb{R}^{N})$. Since $\nabla v_{n}\rightharpoonup \nabla v$ weakly in $L^{p}(B_{\delta};\mathbb{R}^{N})$, we only need to prove that  $\|\nabla v_{n}\|_{L^{p}(B_{\delta};\mathbb{R}^{N})}$ tends to $\|\nabla v\|_{L^{p}(B_{\delta};\mathbb{R}^{N})}$. By the weak convergence, we already have that
			$$\int_{B_{\delta}}|\nabla v|^{p}\diff x \leq \liminf_{n\to +\infty} \int_{B_{\delta}}|\nabla v_n|^{p}\diff x.$$
			Thus, it remains to prove the reverse inequality with a limsup. For this, for an arbitrary $\varepsilon \in (\delta, 1)$, we fix $\chi_{\varepsilon} \in C^{\infty}_{0}(B_{1})$ such that $0\leq \chi_{\varepsilon} \leq 1$, $\chi_{\varepsilon}=1$ on $\overline{B}_{\delta}$, $\chi_{\varepsilon}=0$ on $B^{c}_{\varepsilon}$ and $\|\nabla \chi_{\varepsilon}\|_{\infty} \leq 2/(\varepsilon-\delta)$. Notice that $v_{n}\chi_{\varepsilon} \in W^{1,p}_{0}(B_{1}\backslash \Sigma_{n})$. Then, since $v_{n} \in W^{1,p}(B_{1})$ is a weak solution in $B_{1}\backslash \Sigma_{n}$ and $\chi_{\varepsilon}=0$ on $B^{c}_{\varepsilon}$,
			\begin{align*}
			\int_{B_{\varepsilon}}\chi_{\varepsilon} |\nabla v_{n}|^{p}\diff x = - \int_{B_{\varepsilon}}v_{n}|\nabla v_{n}|^{p-2} \nabla v_{n} \nabla \chi_{\varepsilon} \diff x.
			\end{align*}
			On the other hand, from the fact that $\|\nabla \chi_{\varepsilon}\|_{\infty}\leq 2/(\varepsilon-\delta)$, (\ref{4.12}), (\ref{4.14}) and since $|\nabla v_{n}|^{p-2}\nabla v_{n}$ weakly converges to $|\nabla v|^{p-2}\nabla v$ in $L^{p^{\prime}}(B_{\varepsilon};\mathbb{R}^{N})$, it follows that
			\[
			\lim_{n \to +\infty} - \int_{B_{\varepsilon}} v_{n} |\nabla v_{n}|^{p-2}\nabla v_{n} \nabla \chi_{\varepsilon} \diff x = - \int_{B_{\varepsilon}} v |\nabla v|^{p-2} \nabla v \nabla \chi_{\varepsilon} \diff x=\int_{B_{\varepsilon}}\chi_{\varepsilon} |\nabla v|^{p}\diff x,
			\]
			where to get the latter equality we have used that $v \in W^{1,p}(B_{1})$ is a weak solution to the $p$-Laplace equation in $B_{1}\backslash S$, $v\chi_{\varepsilon} \in W^{1,p}_{0}(B_{1}\backslash S)$ and $\chi_{\varepsilon}=0$ on $B^{c}_{\varepsilon}$. So we obtain that 
			$$
			\limsup_{n \to +\infty} \int_{B_{\delta}} |\nabla v_{n}|^{p}\diff x \leq \limsup_{n\to +\infty}\int_{B_{\varepsilon}}\chi_{\varepsilon}|\nabla v_{n}|^{p}\diff x = \int_{B_{\varepsilon}}\chi_{\varepsilon}|\nabla v|^{p}\diff x.
			$$
			Next, letting $\varepsilon$ tend to $\delta+$ and using Lebesgue's dominated convergence theorem, we get
			\begin{equation*}
			\limsup_{n\to +\infty} \int_{B_{\delta}}|\nabla v_{n}|^{p}\diff x \leq  \int_{B_{\delta}}|\nabla v|^{p}\diff x.
			\end{equation*}
			Thus, we have proved the strong convergence of $\nabla v_n$ to $\nabla v$ in $L^p(B_{\delta};\mathbb{R}^{N})$. Using (\ref{4.10}), (\ref{4.11}) and passing to the limit, we therefore arrive at
	\begin{equation}
	\int_{B_{\varrho}}|\nabla v|^{p} \diff x \geq (C \varrho)^{1+\alpha}. \label{4.16}
	\end{equation}
	However, Lemma \ref{lem 4.1}, together with (\ref{4.12}) and (\ref{4.13}), says the following
	\[
	\int_{B_{\varrho}}|\nabla v|^{p}\diff x \leq C \varrho^{1+\alpha},
	\]
	which leads to a contradiction with (\ref{4.16}), since $\alpha, \varrho>0$ and $C>1$. This completes the proof of Lemma~\ref{lem 4.3}. 
	
\end{proof}

Now we want to establish an estimate for a weak solution to the $p$-Poisson equation in $B_{r}(x_{0})\backslash \Sigma$ that vanishes on $\Sigma \cap B_{r}(x_{0})$ in the case when $\Sigma$ is sufficiently close, in $\overline{B}_{r}(x_{0})$ and in the Hausdorff distance, to a diameter of $\overline{B}_{r}(x_{0})$. For that purpose, in the following lemma we control the difference between a weak solution to the $p$-Poisson equation and its $p$-Dirichlet replacement in a ball with a crack.

\begin{lemma}  \label{lem 4.4}
	Let $p\in (N-1, +\infty)$ and $f \in L^{q}(B_{r_{1}}(x_{0}))$ with $q> q_{0}$, where $q_{0}$ is defined in (\ref{Eq 1.1}). Let $\s$ be a closed arcwise connected set in $\mathbb{R}^{N}$ and $0<2 r_{0}\leq r_{1}\leq 1$ satisfy
	\begin{equation}\label{4.17}
	\s \cap B_{r_{0}}(x_{0})\neq \emptyset, \,\ \s \backslash B_{r_{1}}(x_{0}) \neq \emptyset \,\ \text{and} \,\ B_{r_{1}}(x_{0})\backslash \Sigma \neq \emptyset. 
	\end{equation}
	Let $u \in W^{1,p}(B_{r_{1}}(x_{0}))$ satisfying $u=0\,\ p\text{-q.e. on}\,\ \s\cap B_{r_{1}}(x_{0})$ be a unique solution to the Dirichlet problem 
	\[
	-\Delta_{p}v = f \,\ \text{in}\,\ B_{r_{1}}(x_{0})\backslash \s,
	\]
	which means that 
	\begin{equation}\label{4.18}
	\int_{B_{r_{1}}(x_{0})}|\nabla u|^{p-2}\nabla u \nabla \varphi \diff x = \int_{B_{r_{1}}(x_{0})} f\varphi \diff x \,\ \text{for all}\,\ \varphi \in W^{1,p}_{0}(B_{r_{1}}(x_{0})\backslash \s). 
	\end{equation}
	Let $w \in W^{1,p}(B_{r_{1}}(x_{0}))$ satisfying $w=0\,\ p\text{-q.e. on}\,\ \s\cap B_{r_{1}}(x_{0})$ be a unique solution to the Dirichlet problem
	\begin{equation*}
	\begin{cases}
	-\Delta_{p} v &=\,\  0  \,\ \text{in}\,\ B_{r_{1}}(x_{0})\backslash \s \\
	\qquad v & =\,\ u \ \text{on}\,\ (\s\cap B_{r_{1}}(x_{0})) \cup \partial B_{r_{1}}(x_{0}),
	\end{cases}
	\end{equation*}
	which means that $w - u \in W^{1,p}_{0}(B_{r_{1}}(x_{0})\backslash \s)$ and
	\begin{equation}
	\int_{B_{r_{1}}(x_{0})}|\nabla w|^{p-2}\nabla w \nabla \varphi \diff x = 0 \,\ \text{for all}\,\ \varphi \in W^{1,p}_{0}(B_{r_{1}}(x_{0})\backslash \s). \label{4.19}
	\end{equation}
	If $2\leq p<+\infty$, then
	\begin{equation}\label{4.20}
	\int_{B_{r_{1}}(x_{0})}|\nabla u-\nabla w|^{p}\diff x \leq Cr^{N+p^{\prime}-\frac{Np^{\prime}}{q}}_{1}, 
	\end{equation}
	where $C=C(N, p, q_{0}, q, \|f\|_{q})>0$.\\
	If $1<p <2$, then
	\begin{equation}\label{4.21}
	\int_{B_{r_{1}}(x_{0})}|\nabla u- \nabla w|^{p}\diff x \leq C (I(u))^{p} (r^{p-1}_{1})^{2+p^{\prime}-\frac{2p^{\prime}}{q}}, 
	\end{equation}
	where $C=C(p, q_{0},q,  \|f\|_{q})>0$ and $I(u)=2^{\frac{2}{p}}\Bigl(\int_{B_{r_{1}}(x_{0})}|\nabla u|^{p}\diff x \Bigr)^{\frac{2-p}{p}}$.
\end{lemma}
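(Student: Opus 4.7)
The plan is to test both weak formulations against the difference $u-w$, which belongs to $W^{1,p}_{0}(B_{r_{1}}(x_{0})\backslash \Sigma)$ by construction, exploit the classical monotonicity of the $p$-Laplacian, and then bound the resulting source term using the Sobolev embedding and the Poincaré inequality from Proposition~\ref{prop 2.12}.

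First, I would subtract (\ref{4.19}) from (\ref{4.18}) with the admissible choice $\varphi=u-w$ to obtain the fundamental identity
\[
\int_{B_{r_{1}}(x_{0})}\bigl(|\nabla u|^{p-2}\nabla u-|\nabla w|^{p-2}\nabla w\bigr)\cdot(\nabla u-\nabla w)\diff x=\int_{B_{r_{1}}(x_{0})}f(u-w)\diff x.
\]
To estimate the right-hand side I would follow verbatim the strategy of the proof of Lemma~\ref{lem 2.14}(ii): by H\"older, $\int f(u-w)\leq \|f\|_{L^{q_{0}}(B_{r_{1}}(x_{0}))}\|u-w\|_{L^{q_{0}'}(B_{r_{1}}(x_{0}))}$; the hypotheses (\ref{4.17}) together with the arcwise connectedness of $\Sigma$ force $\Sigma\cap \partial B_{s}(x_{0})\neq \emptyset$ for every $s\in [r_{1}/2,r_{1}]$, so Proposition~\ref{prop 2.12} applies to $u-w$ (which vanishes $p$-q.e.\ on $\Sigma\cap B_{r_{1}}(x_{0})$), and combining it with the relevant Sobolev embedding yields $\|u-w\|_{L^{q_{0}'}(B_{r_{1}}(x_{0}))}\leq C r_{1}^{\beta}\|\nabla(u-w)\|_{L^{p}(B_{r_{1}}(x_{0}))}$ with $\beta$ as in Lemma~\ref{lem 2.14}. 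Pushing the factor $|B_{r_{1}}|^{1/q_{0}-1/q}\|f\|_{q}$ through the computation and simplifying as in Lemma~\ref{lem 2.14} gives
\[
\int_{B_{r_{1}}(x_{0})} f(u-w)\diff x \leq C\, r_{1}^{\,N/p'+1-N/q}\,\|\nabla(u-w)\|_{L^{p}(B_{r_{1}}(x_{0}))}.
\]

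Now I would invoke the standard pointwise monotonicity inequalities for the vector field $a\mapsto |a|^{p-2}a$. For $p\geq 2$ one has $|a-b|^{p}\leq c_{p}(|a|^{p-2}a-|b|^{p-2}b)\cdot(a-b)$; integrating and inserting the previous bound, dividing by $\|\nabla(u-w)\|_{p}$ and raising to the power $p'=p/(p-1)$, yields (\ref{4.20}) with exponent $p'(N/p'+1-N/q)=N+p'-Np'/q$. For $1<p<2$ I would use instead $|a-b|^{2}\leq C_{p}(|a|+|b|)^{2-p}(|a|^{p-2}a-|b|^{p-2}b)\cdot(a-b)$, raise pointwise to the power $p/2$, and apply H\"older with exponents $2/p$ and $2/(2-p)$ to obtain
\[
\int |\nabla u-\nabla w|^{p}\diff x\leq C\Bigl(\!\int\! (|\nabla u|^{p-2}\nabla u-|\nabla w|^{p-2}\nabla w)\cdot(\nabla u-\nabla w)\diff x\Bigr)^{\!p/2}\!\Bigl(\!\int\! (|\nabla u|+|\nabla w|)^{p}\diff x\Bigr)^{\!(2-p)/2}\!.
\]
Since $w$ is the $p$-Dirichlet minimizer with the same trace as $u$ (Theorem~\ref{thm 2.2}), $\int|\nabla w|^{p}\leq \int|\nabla u|^{p}$, and hence the last factor is controlled by a multiple of $\|\nabla u\|_{p}^{p(2-p)/2}$. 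Substituting the above bound on $\int f(u-w)$, rearranging the resulting inequality for $\|\nabla(u-w)\|_{p}^{p}$, and noting that $N-1<p<2$ forces $N=2$ (so that $p(N/p'+1-N/q)=(p-1)(2+p'-2p'/q)$), one obtains (\ref{4.21}) with the explicit constant $I(u)=2^{2/p}(\int|\nabla u|^{p})^{(2-p)/p}$.

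The routine part is the $p\geq 2$ case, where the strong monotonicity of $-\Delta_{p}$ collapses everything to a one-line Young-type argument. The delicate step, and the main obstacle, is the $1<p<2$ case: one must keep track of the degenerate factor $(|\nabla u|+|\nabla w|)^{p(2-p)/2}$ throughout the Hölder splitting, use the energy-minimizing property of $w$ to control it in terms of $u$ alone (producing the factor $I(u)$), and verify the slightly unusual-looking exponent $(p-1)(2+p'-2p'/q)$ by an algebraic simplification that exploits the constraint $N=2$.
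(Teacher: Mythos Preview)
Your proof is correct and follows essentially the same route as the paper: test the two weak formulations against $u-w$, combine the standard monotonicity inequalities for $a\mapsto|a|^{p-2}a$ with the Poincar\'e inequality of Proposition~\ref{prop 2.12} and the Sobolev embedding to bound $\int f(u-w)$, and then absorb. The paper gives only the $p\geq 2$ case explicitly (referring to \cite[Lemma~4.9]{p-compl} for $1<p<2$), but your sketch of the degenerate case via the H\"older splitting with exponents $2/p$, $2/(2-p)$ and the observation that $N-1<p<2$ forces $N=2$ is exactly the intended argument, and your exponent check $p(N/p'+1-N/q)=(p-1)(2+p'-2p'/q)$ is correct.
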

\begin{rem}
	Observe that for any $N\geq 2$ and any $p\in (N-1,+\infty)$, $N+p^{\prime}-Np^{\prime}/q$ is positive if $q>q_{0}$, where $q_{0}$ is defined in (\ref{Eq 1.1}).
\end{rem}
\begin{proof} We provide a proof of the estimate (\ref{4.20}), for a proof of the estimate (\ref{4.21}) see \cite[Lemma 4.9]{p-compl}. For convenience, we define $z=u-w$. Thanks to (\ref{4.17}) and the fact that $z=0$ $p$-q.e. on $\Sigma \cap B_{r_{1}}(x_{0})$, by Proposition \ref{prop 2.12}, there exists $C_{0}=C_{0}(N,p)>0$ such that
	\[
	\|z\|_{L^{p}(B_{r_{1}}(x_{0}))} \leq C_{0} r_{1}\|\nabla z\|_{L^{p}(B_{r_{1}}(x_{0}))}.
	\]
	Since $r_{1}\leq 1$, the above estimate leads to the following
	\begin{equation}
	\|z\|_{W^{1,p}(B_{r_{1}}(x_{0}))}\leq C \|\nabla z\|_{L^{p}(B_{r_{1}}(x_{0}))}, \label {4.22}
	\end{equation}
	where $C=C(N,p)>0$. Then, using the Sobolev embeddings (see \cite[Theorem 7.26]{PDE}) together with (\ref{4.22}) and in the case when $N<p<+\infty$ using also that $z(\xi)=0$ for some $\xi \in \Sigma \cap B_{r_{1}}(x_{0})$, yielding that $|z(x)|=|z(x)-z(\xi)|\leq C^{\prime}(2r_{1})^{1-\frac{N}{p}}\|z\|_{W^{1,p}(B_{r_{1}}(x_{0}))}$ for some $C^{\prime}=C^{\prime}(N,p)>0$, we deduce the following
	\begin{equation} \label{4.23}
	\|z\|_{L^{q^{\prime}_{0}}(B_{r_{1}}(x_{0}))} \leq \widetilde{C} r^{\alpha}_{1}\|\nabla z\|_{L^{p}(B_{r_{1}}(x_{0}))},
	\end{equation}
	where $\widetilde{C}=\widetilde{C}(N, p, q_{0})>0$ and
	\begin{equation*}
	 \alpha=0 \,\ \text{if} \,\  2\leq N-1<p<N,\,\ \,\ \alpha=\frac{N}{q^{\prime}_{0}} \,\ \text{if} \,\ p=N, \,\ \,\  \alpha=1-\frac{N}{p} \,\  \text{if} \,\ N<p<+\infty .
	\end{equation*}
	Next, according to \cite[Lemma 2.2]{ABC}, there exists $c_{0}=c_{0}(p)>0$ such that,
	\[
	\int_{B_{r_{1}}(x_{0})}|\nabla z|^{p}\diff x \leq c_{0}\int_{B_{r_{1}}(x_{0})}\langle |\nabla u|^{p-2}\nabla u-|\nabla w|^{p-2}\nabla w, \nabla z\rangle \diff x, 
	\]
	and, since $z$ is a test function for (\ref{4.18}) and (\ref{4.19}), we get 
	\begin{align*}
	\int_{B_{r_{1}}(x_{0})}|\nabla z|^{p}\diff x  \leq c_{0}\int_{B_{r_{1}}(x_{0})}\langle|\nabla u|^{p-2}\nabla u -|\nabla w|^{p-2}\nabla w, \nabla z \rangle\diff x
	 = c_{0} \int_{B_{r_{1}}(x_{0})}f z \diff x.
	\end{align*}
	Applying H\"older's inequality to the right-hand side of the above formula and using (\ref{4.23}), we obtain
	\begin{align*}
	\int_{B_{r_{1}}(x_{0})}|\nabla z|^{p}\diff x \leq c_{0} \|f\|_{L^{q_{0}}(B_{r_{1}}(x_{0}))}\|z\|_{L^{q^{\prime}_{0}}(B_{r_{1}}(x_{0}))} &\leq c_{0}|B_{r_{1}}(x_{0})|^{\frac{1}{q_{0}}-\frac{1}{q}}\|f\|_{L^{q}(B_{r_{1}}(x_{0}))}\|z\|_{L^{q^{\prime}_{0}}(B_{r_{1}}(x_{0}))}\\
	& \leq C r^{N(\frac{1}{q_{0}}-\frac{1}{q})+\alpha}_{1} \biggl(\int_{B_{r_{1}}(x_{0})}|\nabla z|^{p}\diff x\biggr)^{\frac{1}{p}}
	\end{align*}
	for some $C=C(N, p, q_{0},q,  \|f\|_{q})>0$. Therefore,
	\begin{align*}
	\int_{B_{r_{1}}(x_{0})}|\nabla z|^{p}\diff x & \leq C^{p^{\prime}}r^{Np^{\prime}(\frac{1}{q_{0}}-\frac{1}{q})+p^{\prime}\alpha}_{1}=C^{p^{\prime}}r^{N+p^{\prime}-\frac{N p^{\prime}}{q}}_{1}.
	\end{align*}
	This completes the proof of Lemma~\ref{lem 4.4}.
\end{proof}

Using together Lemma~\ref{lem 4.3} and Lemma~\ref{lem 4.4}, we obtain the following estimate for the solution $u_{\Sigma}$ to the Dirichlet problem $-\Delta_{p}u=f\,\ \text{in}\,\ \Omega\backslash \Sigma,\,\ u \in W^{1,p}_{0}(\Omega\backslash \Sigma)$. Notice that in the following statement the definition of $\gamma(p,q)$ also depends on $N$, but we decided not to mention it explicitly to simplify the notation.

\begin{lemma} \label{lem 4.5} Let $p\in (N-1, +\infty)$ and $f \in L^{q}(\Omega)$ with $q> q_{0}$, where $q_{0}$ is defined in (\ref{Eq 1.1}). Then there exist $a\in (0,1/2)$, $\varepsilon_0 \in (0,a)$  and $C=C(N,p,q_0,q,\|f\|_q, |\Omega|)>0$ such that the following holds.  Assume that $\s\subset \overline{\Omega}$ is a closed arcwise connected set, $0<2 r_{0}\leq r_{1}\leq 1$, $B_{r_{1}}(x_{0})\subset \Omega$,
	\begin{equation*}
	\s \cap B_{r_{0}}(x_{0})\neq \emptyset\,\ \text{and} \,\ \s \backslash B_{r_{1}}(x_{0}) \neq \emptyset.
	\end{equation*}
	In addition, suppose that there exists an affine line $L\subset \mathbb{R}^{N}$ passing through $x_{0}$ such that
	\begin{equation}\label{4.24}
	d_{H}(\s\cap \overline{B}_{r_{1}}(x_{0}), L\cap \overline{B}_{r_{1}}(x_{0})) \leq \varepsilon_{0}r_{1}.
	\end{equation}
	Then
	\begin{equation}\label{4.25}
	\frac{1}{ar_{1}}\int_{B_{ar_{1}}(x_{0})} |\nabla u_{\s}|^p \diff x \leq \frac{1}{2} \left( \frac{1}{r_{1}}\int_{B_{r_{1}}(x_{0})} |\nabla u_{\s}|^p \diff x \right)+ C r_{1}^{\gamma(p,q)},
	\end{equation}
	where 
	\begin{equation} 
	\gamma(p,q)=	N-1+p^{\prime}-\frac{Np^{\prime}}{q} \,\ \text{ if }  \; 2\leq p<+\infty, \qquad
	\gamma(p,q)=3p-3-\frac{2p}{q} \,\ \text{ if } \; 1<p <2.
\label{defgamma}
	\end{equation}
\end{lemma}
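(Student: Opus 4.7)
\textbf{Proof plan for Lemma \ref{lem 4.5}.} The strategy is to replace the $p$-Poisson problem by the corresponding $p$-harmonic problem on the ball $B_{r_1}(x_0)$, apply the flat decay estimate of Lemma \ref{lem 4.3} to the replacement, and then account for the error by means of Lemma \ref{lem 4.4}.

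\emph{Step 1 (Dirichlet replacement and its decay).} Let $w \in W^{1,p}(B_{r_1}(x_0))$ be the $p$-Dirichlet replacement of $u_{\Sigma}$ in $B_{r_1}(x_0)\setminus\Sigma$, namely the unique solution to $-\Delta_p w=0$ in $B_{r_1}(x_0)\setminus\Sigma$ with $w-u_{\Sigma}\in W^{1,p}_0(B_{r_1}(x_0)\setminus\Sigma)$ and $w=0$ $p$-q.e. on $\Sigma\cap B_{r_1}(x_0)$. Since $u_{\Sigma}$ is an admissible competitor and $w$ minimizes the $p$-energy with its boundary data (Theorem \ref{thm 2.2}), one has $\int_{B_{r_1}(x_0)}|\nabla w|^p\,dx\leq \int_{B_{r_1}(x_0)}|\nabla u_{\Sigma}|^p\,dx$. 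The arcwise connectedness of $\Sigma$ together with $\Sigma\cap B_{r_0}(x_0)\neq\emptyset$ and $\Sigma\setminus B_{r_1}(x_0)\neq\emptyset$ forces an arc inside $\Sigma$ joining the two sets, so $(\Sigma\cap B_{r_1}(x_0))\cup\partial B_{r_1}(x_0)$ is connected and hypothesis (\ref{4.24}) supplies the Hausdorff closeness to a diameter. Therefore, setting $a:=\varrho$ and letting $\varepsilon_0$ be the number produced by Lemma \ref{lem 4.3} for this $\varrho$, Lemma \ref{lem 4.3} applied to $w$ yields
\[
\int_{B_{a r_1}(x_0)}|\nabla w|^p\,dx \leq (Ca)^{1+\alpha}\int_{B_{r_1}(x_0)}|\nabla u_{\Sigma}|^p\,dx.
\]

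\emph{Step 2 (Splitting and choice of $a$).} Using $|\nabla u_{\Sigma}|^p\leq 2^{p-1}(|\nabla w|^p+|\nabla u_{\Sigma}-\nabla w|^p)$ on $B_{a r_1}(x_0)$ and dividing by $a r_1$, I obtain
\[
\frac{1}{a r_1}\int_{B_{a r_1}(x_0)}|\nabla u_{\Sigma}|^p\,dx \leq 2^{p-1}C^{1+\alpha}a^{\alpha}\cdot\frac{1}{r_1}\int_{B_{r_1}(x_0)}|\nabla u_{\Sigma}|^p\,dx + \frac{2^{p-1}}{a r_1}\int_{B_{r_1}(x_0)}|\nabla u_{\Sigma}-\nabla w|^p\,dx.
\]
Fix $a=a(N,p)\in(0,1/2)$ so small that $2^{p-1}C^{1+\alpha}a^{\alpha}\leq 1/2$; this also fixes $\varepsilon_0=\varepsilon_0(N,p)\in(0,a)$ via Lemma \ref{lem 4.3}.

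\emph{Step 3 (Error estimate via Lemma \ref{lem 4.4}).} For $2\leq p<+\infty$, estimate (\ref{4.20}) gives $\int_{B_{r_1}(x_0)}|\nabla u_{\Sigma}-\nabla w|^p\,dx\leq Cr_1^{N+p'-Np'/q}$, so the error term above is bounded by $C\,a^{-1}r_1^{N-1+p'-Np'/q}=C'r_1^{\gamma(p,q)}$. For $1<p<2$, estimate (\ref{4.21}) gives a bound involving $I(u_{\Sigma})=2^{2/p}\bigl(\int|\nabla u_{\Sigma}|^p\bigr)^{(2-p)/p}$; I control $\int_{\Omega}|\nabla u_{\Sigma}|^p\,dx$ by the global estimate (\ref{2.8}) of Proposition \ref{prop 2.16} (giving a constant depending on $N,p,q_0,\|f\|_{q_0},|\Omega|$), so $I(u_{\Sigma})$ is bounded by a constant of the asserted form. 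A short computation using $(p-1)p'=p$ then gives
\[
(p-1)\!\left(2+p'-\tfrac{2p'}{q}\right)-1=3p-3-\tfrac{2p}{q}=\gamma(p,q),
\]
so the error term is again bounded by $Cr_1^{\gamma(p,q)}$. Combining with Step 2 yields (\ref{4.25}).

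\emph{Main obstacle.} There is no deep new difficulty: the genuinely hard work is done inside Lemma \ref{lem 4.3} (the flat $C^{1,\alpha}$-type decay for $p$-harmonic functions vanishing on a segment) and Lemma \ref{lem 4.4} (the $p$-Dirichlet replacement error). The delicate point in the present proof is the bookkeeping on exponents in the case $1<p<2$, where one must combine Proposition \ref{prop 2.16} to absorb the factor $I(u_{\Sigma})$ and then check that the power of $r_1$ collapses exactly to $\gamma(p,q)$; one must also make sure the constant $\varepsilon_0$ produced via Lemma \ref{lem 4.3} depends only on $N$ and $p$ (through the fixed choice of $a$), not on the data of the problem.
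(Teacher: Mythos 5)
Your proof is correct and follows essentially the same route as the paper: replace $u_\Sigma$ by its $p$-Dirichlet replacement $w$, apply the flat-decay estimate of Lemma~\ref{lem 4.3} to $w$ (choosing $a$ small enough that the decay factor is below $1/2$ after accounting for the factor $2^{p-1}$ from the elementary splitting), control the error $\nabla u_\Sigma - \nabla w$ by Lemma~\ref{lem 4.4} with $I(u_\Sigma)$ bounded via (\ref{2.8}), and verify the exponent arithmetic so that the error collapses to $Cr_1^{\gamma(p,q)}$ after dividing by $r_1$. The only cosmetic differences are the order in which Theorem~\ref{thm 2.2} (energy minimality of $w$) is invoked and the fact that you make explicit the verification that $(\Sigma\cap B_{r_1}(x_0))\cup\partial B_{r_1}(x_0)$ is connected, a hypothesis of Lemma~\ref{lem 4.3} that the paper leaves implicit.
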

\begin{proof} Let $w \in W^{1,p}(B_{r_{1}}(x_{0}))$ be a unique solution to the Dirichlet problem
	\begin{equation*}
	\begin{cases}
	-\Delta_{p} u &=\,\  0  \,\ \text{in}\,\ B_{r_{1}}(x_{0})\backslash \s \\
	\qquad u & =\,\ u_{\s} \ \text{on}\,\ (\s\cap B_{r_{1}}(x_{0}))\cup \partial B_{r_{1}}(x_{0}),
	\end{cases}
	\end{equation*}
	which means that $w - u_{\s} \in W^{1,p}_{0}(B_{r_{1}}(x_{0})\backslash \s)$ and
	\begin{equation}\label{4.27}
	\int_{B_{r_{1}}(x_{0})}|\nabla w|^{p-2}\nabla w \nabla \varphi \diff x = 0 \,\ \text{for all}\,\ \varphi \in W^{1,p}_{0}(B_{r_{1}}(x_{0})\backslash \s). 
	\end{equation}
	Let $I(\cdot)$ be as in Lemma \ref{lem 4.4}. Using  (\ref{2.8}) and H\"older's inequality, it is easy to see that
	\begin{equation} \label{4.28}
	I(u_{\Sigma})\leq C_{1}
	\end{equation}
	for some $C_{1}=C_{1}(N, p,  q_{0},q, \|f\|_{q}, |\Omega|)>0$. Then, applying Lemma \ref{lem 4.4} and using (\ref{4.28}), we get
	\begin{equation}\label{4.29}
	\int_{B_{r_{1}}(x_{0})}|\nabla u_{\s}-\nabla w|^{p}\diff x \leq Cr_1^{1+\gamma(p,q)}, 
	\end{equation}
	where $C=C(N,p, q_{0},q,  \|f\|_{q}, |\Omega|)>0$ and $\gamma(p,q)$ is defined in \eqref{defgamma}. 
	Now let $\alpha, \delta \in (0,1)$ and $\widetilde{C}>1$, depending only on $N$ and $p$, be as in Lemma~\ref{lem 4.1}, where $\widetilde{C}$ is such that the estimate (\ref{4.2}) holds with $C$ replaced by $\widetilde{C}$. Define $a=\min\bigl\{\delta, (2^{-p}\widetilde{C}^{-1-\alpha})^{\frac{1}{\alpha}}\bigr\}$. For each $N\geq 2$ and $p\in (N-1,+\infty)$, the constant $a$ is fixed. Applying Lemma~\ref{lem 4.3} with $r=r_{1}$ and $\varrho=a$, we obtain some $\varepsilon_{0} \in (0,a)$ such that under the condition (\ref{4.24}),
	\begin{align}\label{4.30}
	\frac{1}{a}\int_{B_{ar_1}(x_{0})} |\nabla w|^{p}\diff x \leq \widetilde{C}^{1+\alpha}a^{\alpha} \int_{B_{r_1}(x_{0})} |\nabla w|^{p}\diff x \leq 2^{-p}\int_{B_{r_1}(x_{0})} |\nabla w|^{p}\diff x.
	\end{align} 
	Hereinafter in this proof, $C$ denotes a positive constant that can only depend on $N,\, p,\,q_{0},\,q$, $\|f\|_{q},\, |\Omega|$ and can be different from line to line. Since for any nonnegative numbers $c$ and $d$, $(c+d)^{p}\leq 2^{p-1}(c^{p}+d^{p})$, we have

	\begin{eqnarray}
	\frac{1}{a}\int_{B_{ar_1}(x_0)} |\nabla u_{\s}|^p \diff x &\leq & \frac{2^{p-1}}{a} \int_{B_{ar_1}(x_0)} |\nabla w|^p \diff x + \frac{2^{p-1}}{a} \int_{B_{ar_1}(x_0)} |\nabla u_{\s}-\nabla w|^p \diff x \notag \\
	&\leq & \frac{1}{2}  \int_{B_{r_1}(x_0)} |\nabla w|^p \diff x + \frac{2^{p-1}}{a} \int_{B_{r_1}(x_0)} |\nabla u_{\s}-\nabla w|^p \diff x \notag \\
	&\leq &  \frac{1}{2}  \int_{B_{r_1}(x_0)} |\nabla w|^p \diff x + Cr_1^{1+\gamma(p,q)} \notag \\
	&\leq & \frac{1}{2}  \int_{B_{r_1}(x_0)} |\nabla u_{\s}|^p \diff x + Cr_1^{1+\gamma(p,q)}, \notag
	\end{eqnarray}
	where we have used (\ref{4.30}), (\ref{4.29}), and to obtain the last estimate, Theorem~\ref{thm 2.2}. The proof of Lemma~\ref{lem 4.5} follows by dividing the resulting inequality by $r_{1}$.
 \end{proof}
Finally, by iterating Lemma~\ref{lem 4.5} in a sequence of balls $\{B_{a^{l}r_{1}}(x_{0})\}_{l}$, we obtain the desired  decay behavior of the $p$-energy $r\mapsto \int_{B_{r}(x_{0})}|\nabla u_{\Sigma}|^{p}\diff x$ under flatness control on $\Sigma$ at $x_{0}$. 

\begin{lemma}\label{lem 4.6}
	 Let $p\in (N-1, +\infty)$ and $f \in L^{q}(\Omega)$ with  $q> q_{1}$, where $q_{1}$ is defined in (\ref{1.4}). Then there exist   $\varepsilon_{0}, b, \overline{r} \in (0,1)$ and  $C=C(N,p,q_{0},q,\|f\|_q, |\Omega|)>0$ such that the following holds. Assume that $\s\subset \overline{\Omega}$ is a closed arcwise connected set, $0<2r_{0}\leq r_{1}\leq\overline{r}$,  $B_{r_{1}}(x_{0}) \subset \Omega$ and that for each  $r \in [r_{0}, r_{1}]$ there exists an affine line $L=L(r)$  passing through $x_{0}$ such that $d_{H}(\s\cap \overline{B}_{r}(x_{0}), L\cap \overline{B}_{r}(x_{0})) \leq \varepsilon_{0} r$. Assume also that $\Sigma \setminus B_{r_1}(x_0)\not = \emptyset$. Then for all $r\in[r_{0}, r_{1}]$,
	\begin{equation}\label{4.31}
	\int_{B_{r}(x_{0})}|\nabla u_\Sigma|^{p}\diff x \leq C \Bigl(\frac{r}{r_{1}}\Big)^{1+b}  \int_{B_{r_{1}}(x_{0})}|\nabla u_\Sigma|^{p}\diff x + Cr^{1+b}.
	\end{equation}
\begin{proof}
	Let  $a\in (0,1/2)$, $\varepsilon_0 \in (0,a)$  and $C=C(N,p,q_0,q,\|f\|_q, |\Omega|)>0$ be the constants given by Lemma~\ref{lem 4.5}. The definition of $q_{1}$ and the assumption $q>q_{1}$ have been made in order to guarantee that $\gamma(p,q)>0$, where $\gamma(p,q)$ is defined in (\ref{defgamma}).
	Let us now define 
	\[b= \min\biggl\{\frac{\gamma(p,q)}{2},  \frac{\ln(3/4)}{\ln(a)} \biggr\}  \;,\quad \bar r =  \left(\frac{1}{4}\right)^{\frac{1}{b}}.\]
	Notice that for all $t \in (0, \bar r]$, 
	\begin{eqnarray}
	\frac{1}{2}t^{b} + t^{\gamma(p,q)} \leq (at)^{b}.  \label{4.32}
	\end{eqnarray}
	Indeed, since $0<2 b \leq \gamma(p,q)$, $b\leq \ln(3/4)/\ln(a)$ and $a, \overline{r}\in (0,1)$, $t^{\gamma(p,q)}\leq t^{2b}\leq \overline{r}^{b}t^{b}$ and $3/4\leq a^{b}$, so
	\[\frac{1}{2}t^{b} + t^{\gamma(p,q)} \leq \frac{1}{2}t^{b} + \overline{r}^{b} t^{b} \leq \frac{3}{4}t^{b} \leq (at)^{b}.\]
	 Under the assumptions of Lemma~\ref{lem 4.6}, we can apply Lemma~\ref{lem 4.5} in all the balls $B_{a^{l}r_{1}}(x_{0})$, $l\in \{0,...,k\}$, where $k\in \mathbb{N}$ is such that $a^{k+1}r_1<r_0\leq a^{k}r_{1}$. 
	Next, we define $\Psi(r)=\frac{1}{r}\int_{B_r(x_0)}|\nabla u_{\s}|^p \diff x$, $r \in (0,r_{1}]$
	and  prove by induction  that for each $l\in \{0,...,k\}$, 
	\begin{eqnarray}
	\Psi(a^{l} r_1) &\leq& \frac{1}{2^{l}} \Psi(r_1) + C(a^{l} r_{1})^{b}. \label{induction00}
	\end{eqnarray}
	It is clear that (\ref{induction00}) holds for $l= 0$. Assume that (\ref{induction00}) holds for some $l\in \{0,...,k-1\}$. Then, applying   Lemma~\ref{lem 4.5} and using the induction hypothesis, we get
	$$\Psi(a^{l+1}r_1)\leq \frac{1}{2} \Psi(a^l r_1) + C (a^l r_1)^{\gamma(p,q)} \leq \frac{1}{2}\biggl(\frac{1}{2^{l}}\Psi( r_1)  + C(a^{l}r_{1})^{b}\biggr)+ C(a^l r_1)^{\gamma(p,q)}.$$
	Thanks to (\ref{4.32}), we finally conclude that 
	$$\Psi\bigl(a^{l+1}r_1\bigr)\leq \frac{1}{2^{l+1}} \Psi( r_1)  +  C\bigl(a^{l+1} r_1\bigr)^{b}.$$
	Thus \eqref{induction00} is proved. Now let $r\in [r_0,r_1]$ and $l\in \{0,...,k\}$ be such that $a^{l+1}r_1< r\leq a^lr_1$. Then 
	\begin{align*}
	\Psi(r) \leq   \frac{1}{a}\Psi(a^l r_1) \leq   \frac{1}{a}\frac{1}{2^l}\Psi(r_1) + \frac{C}{a}(a^{l}r_1)^{b} \notag &\leq  \frac{2}{a} (a^{l+1})^{b} \Psi(r_1)  + C' (a^{l+1}r_{1})^{b} \\
	&\leq  C'' \left(\frac{r}{r_1}\right)^{b} \Psi(r_1) + C'' r^{b},
	\end{align*}
	where $C''=C''(a,N,p,q_{0},q,\|f\|_{q},|\Omega|)>0$. Since $a$ is fixed for each $N\geq 2$ and $p\in (N-1,+\infty)$, we can assume that $C''$ depends only on $N, p, q_{0},q, \|f\|_{q}$ and $|\Omega|$.  This completes the proof of Lemma~\ref{lem 4.6}.
\end{proof}

\section{Absence of loops}

\begin{theorem} \label{thm absence of loops} Let $\Omega \subset \mathbb{R}^{N}$ be open and bounded, $p \in (N-1, +\infty)$ and $f \in L^{q}(\Omega)$  with $q>q_1$, where $q_{1}$ is defined in (\ref{1.4}). Then every solution $\s$ to Problem~\ref{P 1.1} cannot contain closed loops (i.e., homeomorphic images of a circumference $S^{1}$).
\end{theorem}

\textup{The next lemma will be used in the proof of Theorem~\ref{thm absence of loops}.}
\begin{lemma} \label{lem 5.2} Let $\s$ be a closed connected set in $\mathbb{R}^{N}$ with $\mathcal{H}^{1}(\s)<+\infty$. Then the following assertions hold. 
	\begin{itemize}
		\item  If $\Sigma$ contains a simple closed curve $\Gamma$, then $\mathcal{H}^{1}$-a.e. point $x\in \Gamma$ is a ``noncut" point, namely, there exists a sequence of relatively open sets $D_{n}\subset \s$ satisfying
		\begin{enumerate}[label=(\roman*)]
			\item $x \in D_{n}$ for all sufficiently large $n$;
			\item $\s \backslash D_{n}$ are connected for all $n$;
			\item $\diam D_{n} \searrow 0$ as $n\to +\infty$;
			\item $D_{n}$ are connected for all $n$.
		\end{enumerate}
		\item ``flatness" : for $\mathcal{H}^{1}$-a.e. point $x\in \Sigma $ there exists the ``tangent" line $T_{x}$ to $\s$ at x in the sense that $x \in T_{x}$ and 
		\[\frac{1}{r}d_{H}(\s \cap \overline{B}_{r}(x), T_{x}\cap \overline{B}_{r}(x)) \underset{r\to 0+}{\to 0}. \]
	\end{itemize}
\end{lemma}

\begin{proof} By \cite[Lemma 5.6]{Paolini-Stepanov}, $\mathcal{H}^{1}$-a.e. point $x\in \Gamma$ is a noncut point for $\s$ (i.e., a point such that $\s\backslash \{x\}$ is connected). Then, by \cite[Lemma 5.3]{Stepanov-Paolini}, it follows that for each noncut point there are connected neighborhoods $D_{n}$ that can be cut leaving the set connected and $\diam(D_{n})\to 0$, so $(i)$-$(iv)$ are satisfied for a suitable sequence $D_{n}$. Let us now prove the second assertion of Lemma~\ref{lem 5.2}. First, notice that, there is a Lipschitz surjective mapping $g:[0,L]\to \Sigma$, where $L=\mathcal{H}^{1}(\Sigma)$ (see, for instance, \cite[Proposition~30.1]{David}). Furthermore, in \cite[Proposition 3.4]{Miranda-Paolini-Stepanov}, it was proved that $\mathcal{H}^{1}(\Sigma \backslash \Sigma_{0})=0$, where
	\begin{align*}
	\Sigma_{0}=\{x \in \Sigma: t\in (0,L),\,\ g^{\prime}(t)\,\ \text{exists},\,\ &|g^{\prime}(t)|=1\,\ \text{whenever}\,\ g(t)=x,\,\ g^{-1}(x) \,\ \text{is finite}\\ & \qquad \qquad  \text{and if}\,\ g(t)=g(s)=x,\,\ \text{then} \,\ g^{\prime}(t)=\pm g^{\prime}(s)\},
	\end{align*}
	and that for all $x \in \Sigma_{0}$,
\begin{equation}\label{5.1}
\frac{1}{r}\max_{y \in \Sigma \cap \overline{B}_{r}(x)} \dist (y, T_{x}\cap \overline{B}_{r}(x)) \underset{r \to 0+}{\to 0},
\end{equation}
where $T_{x}=x + Span(g^{\prime}(t))$,\, $x = g(t)$. In order to prove that 
\begin{equation} \label{5.2}
\frac{1}{r}\max_{y \in T_{x}\cap \overline{B}_{r}(x)} \dist(y, \Sigma \cap \overline{B}_{r}(x)) \underset{r\to 0+}{\to 0},
\end{equation}
we shall follow the same approach as in \cite[Proposition 2.2]{Approx}. Observe that for each $x \in \Sigma_{0}$ there exists a mapping $h\mapsto \xi(h)$ such that $\xi(h)\to 0$ when $h \to 0$ and $g(t+h)=g(t)+hg^{\prime}(t)+h\xi(h)$ when $|h|>0$ is small enough, where $g(t)=x$. Next, let $\delta \in (0,1)$ be given. We can choose a sufficiently small $r_{0}>0$ such that $|\xi(h)|<\delta/2$ for all $h \in (-r_{0}, r_{0})\backslash\{0\}$. Then for each $r \in (0, r_{0})$ and each $z \in T_{x}\cap \overline{B}_{(1-\delta/2)r}(x)$, there exists $\lambda \in [(\delta/2-1)r, (1-\delta/2 )r]$ such that $z=g(t)+\lambda g^{\prime}(t)$. So, defining $y= g(t+\lambda)$ and observing that $g(t+\lambda)=g(t)+\lambda g^{\prime}(t)+\lambda\xi(\lambda)$, we deduce that $y \in \Sigma \cap B_{r}(x)$ and $|z-y|<\delta r/2$. This implies that $\max_{z \in T_{x}\cap \overline{B}_{r}(x)} \dist(z, \Sigma \cap \overline{B}_{r}(x))<\delta r$ for all $r \in (0, r_{0})$ and, therefore, proves (\ref{5.2}). Observing that (\ref{5.1}) and (\ref{5.2}) together prove the second assertion of Lemma~\ref{lem 5.2}, we complete the proof.
\end{proof}

\begin{proof}[Proof of Theorem \ref{thm absence of loops}] For the sake of contradiction, assume that for some $\lambda>0$ a minimizer $\s$ of $\mathcal{F}_{\lambda,f, \Omega}$ over $\mathcal{K}(\Omega)$ contains a simple closed curve $\Gamma\subset \s$. Notice that there is no a relatively open subset in $\Sigma$ contained in both $\Gamma$ and $\partial \Omega$, because otherwise, according to Lemma~\ref{lem 5.2}, there would be a relatively open subset $D\subset \Sigma$ such that $D \subset \partial \Omega$ and $\Sigma\backslash D$ would remain connected, but, observing that in this case $u_{\Sigma\backslash D}=u_{\Sigma}$ and $\mathcal{H}^{1}(D)>0$, we would obtain a contradiction with the optimality of $\Sigma$. Thus, by Lemma~\ref{lem 5.2}, there is a point $x_{0} \in \Gamma \cap \Omega$ which is a noncut point for $\Sigma$ and such that $\Sigma$ is flat at $x_{0}$.
	Therefore for $x_{0}$ there exist the sets $D_{n} \subset \s$ and the tangent line $T_{x_{0}}$ to $\Sigma$ at $x_{0}$ as in Lemma~\ref{lem 5.2}. Let $\varepsilon_{0},b,\overline{r},C$   be the constants of Lemma~\ref{lem 4.6} and let $B_{t_{0}}(x_{0})\subset \Omega$ with $t_{0}<\overline{r}$.  We define $r_{n}:=\diam D_{n}$ so that $D_{n}\subset \s\cap \overline{B}_{r_{n}}(x_{0})$. The flatness of $\s$ at $x_{0}$ implies that for any given $\varepsilon>0$ there is $\delta \in (0, t_{0}]$ such that 
	\begin{equation*}
	d_{H}(\s\cap \overline{B}_{r}(x_{0}), T_{x_{0}}\cap \overline{B}_{r}(x_{0}))\leq \varepsilon r \,\ \text{for all}\,\ r\in (0,\delta].
	\end{equation*}
	For each $n \in \mathbb{N}$, we define $\s_{n}:=\s\backslash D_{n}$, which, by Lemma~\ref{lem 5.2}, remains closed and connected. 
	We fix  $\varepsilon = \varepsilon_{0}/2$ and $r\in (0,\delta]$. Next, we want to  apply  Lemma~\ref{lem 4.6} to $\Sigma_n$, but we have to control the Hausdorff distance between $\Sigma_{n}\cap \overline{B}_{r}(x_{0})$ and a diameter of $\overline{B}_{r}(x_{0})$. We already know that $\Sigma$ is $\varepsilon  r$-close, in $\overline{B}_{r}(x_{0})$ and in the Hausdorff distance, to $T_{x_{0}}\cap \overline{B}_{r}(x_{0})$ for all $r\in (0,\delta]$.  Furthermore, if 
	$r_n \leq \varepsilon_{0} r/2$, then
	\begin{align*}
	d_{H}(\Sigma_{n}\cap \overline{B}_{r}(x_{0}), T_{x_{0}}\cap \overline{B}_{r}(x_{0}))&\leq d_{H}(\Sigma_{n}\cap \overline{B}_{r}(x_{0}), \s\cap \overline{B}_{r}(x_{0}))+ d_{H}(\Sigma\cap \overline{B}_{r}(x_{0}), T_{x_{0}}\cap \overline{B}_{r}(x_{0})) \\
	&\leq r_{n} +  \frac{\varepsilon_{0}r}{2} \leq \frac{\varepsilon_{0}r}{2} +\frac{\varepsilon_{0}r}{2} =  \varepsilon_{0} r.
	\end{align*}
	Thus, if $2 r_{n}/\varepsilon_{0}<\delta/2$, we can apply Lemma~\ref{lem 4.6} to $\Sigma_n$ for the interval $[2 r_{n}/\varepsilon_{0}, \delta]$, which says that 
	\begin{equation*} 
	\int_{B_{r}(x_{0})}|\nabla u_{\Sigma_n}|^{p}\diff x \leq C \Bigl(\frac{r}{\delta}\Bigr)^{1+b}  \int_{B_{\delta}(x_{0})}|\nabla u_{\Sigma_n}|^{p}\diff x + Cr^{1+b}\,\ \text{for all}\,\  r\in\biggl[\frac{2 r_{n}}{\varepsilon_0}, \delta\biggr],
	\end{equation*}
	where $C=C(N,p,q_{0},q,\|f\|_{q},|\Omega|)>0$. Hereinafter in this proof, $C$ denotes a positive constant that does not depend on $r_{n}$ and can be different
	from line to line. Next, using the above estimate for $r=2 r_{n}/\varepsilon_{0}$ and using also (\ref{2.8}), we get
	\begin{equation}
	\int_{B_{\frac{2 r_{n}}{\varepsilon_0}}(x_{0})}|\nabla u_{\Sigma_n}|^{p}\diff x \leq Cr_n^{1+b} \notag
	\end{equation}
	for each $n\in \mathbb{N}$ such that $2 r_{n}/\varepsilon_{0}<\delta/2$. Recall that the exponent $b$ given by Lemma~\ref{lem 4.6} is positive provided $q>q_{1}$, which is one of our assumptions. Now, since $\Sigma$ is a minimizer of Problem~\ref{P 1.1} and $\s_{n}$ is a competitor for $\s$, we get the following
	\begin{align*} 
	0&\leq \mathcal{F}_{\lambda, f, \Omega}({\s}_{n})- \mathcal{F}_{\lambda, f, \Omega}(\s)  \leq E_{f,\Omega}(u_{\s})-E_{f,\Omega}(u_{\s_{n}}) - \lambda r_{n}\\
	& \leq C \int_{B_{2r_{n}}(x_{0})}|\nabla u_{\s_{n}}|^{p}\diff x + Cr^{N+p^{\prime}-\frac{Np'}{q}}_{n} -\lambda r_{n} \,\ (\text{by Corollary \ref{cor 2.15}})\\
	&\leq C \int_{B_{\frac{2r_{n}}{\varepsilon_{0}}}(x_{0})}|\nabla u_{\Sigma_{n}}|^{p}\diff x  + Cr^{N+p^{\prime}-\frac{Np'}{q}}_{n} -\lambda r_{n}  \\
	&\leq  Cr_n^{1+b} +Cr^{N+p^{\prime}-\frac{N p'}{q}}_{n} -\lambda r_{n}. 
	\end{align*}
	Notice that $N+p^{\prime}-Np^{\prime}/q>1$ if and only if $q>Np/(Np-N+1)$,
	which is always true under the assumption $q>q_1$. Therefore, letting $n$ tend to $+\infty$, we arrive to a contradiction. This completes the proof of Theorem~\ref{thm absence of loops}.
\end{proof}
	
\end{lemma}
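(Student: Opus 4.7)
The plan is to argue by contradiction: assume some minimizer $\Sigma$ contains a simple closed curve $\Gamma$, then exhibit a small modification of $\Sigma$ that strictly decreases the functional $\mathcal{F}_{\lambda,f,\Omega}$.

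First, I would locate a ``good point'' on $\Gamma$. Using Lemma~\ref{lem 5.2}, $\mathcal{H}^{1}$-a.e.\ point of $\Gamma$ is simultaneously a \emph{noncut} point of $\Sigma$ (so it has a basis of relatively open, connected neighborhoods $D_n \subset \Sigma$ with $\Sigma \setminus D_n$ still connected and closed) and a \emph{flatness} point (so $\Sigma$ has a tangent line $T_{x_0}$ there). A small preliminary observation is needed: no relatively open piece of $\Gamma$ can lie entirely in $\partial\Omega$, since deleting such a piece would not change $u_{\Sigma}$ but would strictly decrease $\mathcal{H}^{1}(\Sigma)$, contradicting optimality. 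Hence I can pick $x_0 \in \Gamma \cap \Omega$ with both properties, and the sequence $D_n$ with $r_n := \operatorname{diam}(D_n) \to 0$ and $D_n \subset \Sigma \cap \overline{B}_{r_n}(x_0)$.

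Next, I would use $\Sigma_n := \Sigma \setminus D_n$ as competitor. Since $D_n$ is connected with diameter $r_n$, one has $\mathcal{H}^{1}(D_n) \geq r_n$, so the length drops by at least $r_n$. Minimality yields
\[
\lambda r_n \;\leq\; \lambda\bigl(\mathcal{H}^{1}(\Sigma) - \mathcal{H}^{1}(\Sigma_n)\bigr) \;\leq\; C_{f,\Omega}(\Sigma_n) - C_{f,\Omega}(\Sigma) \;=\; E_{f,\Omega}(u_{\Sigma}) - E_{f,\Omega}(u_{\Sigma_n}).
\]
The task is then to show the right-hand side is $o(r_n)$, which forces the contradiction. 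Corollary~\ref{cor 2.15} bounds it by $C\int_{B_{2r}(x_0)}|\nabla u_{\Sigma_n}|^p\,dx + Cr^{N+p'-Np'/q}$ for any admissible radius $r$, so it remains to control the local $p$-energy of $u_{\Sigma_n}$ at a suitable small scale.

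Here I would invoke the decay estimate of Lemma~\ref{lem 4.6} on $\Sigma_n$. Fix $\varepsilon = \varepsilon_0/2$ (with $\varepsilon_0$ from Lemma~\ref{lem 4.6}) and choose $\delta > 0$ so small that $B_\delta(x_0) \subset \Omega$, $\delta < \overline{r}$, and $d_H(\Sigma \cap \overline{B}_r(x_0), T_{x_0} \cap \overline{B}_r(x_0)) \leq \varepsilon r$ for $r \in (0,\delta]$; this is possible by flatness of $\Sigma$ at $x_0$. Since $D_n \subset \overline{B}_{r_n}(x_0)$, for all $r$ with $r_n \leq \varepsilon_0 r/2$ the triangle inequality gives $d_H(\Sigma_n \cap \overline{B}_r(x_0), T_{x_0}\cap \overline{B}_r(x_0)) \leq \varepsilon_0 r$. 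Thus for every $n$ large enough that $2r_n/\varepsilon_0 < \delta/2$, Lemma~\ref{lem 4.6} applies to $\Sigma_n$ on the interval $[2r_n/\varepsilon_0, \delta]$ and, evaluated at $r = 2r_n/\varepsilon_0$ together with the global bound~(\ref{2.8}) for $\int_{B_\delta(x_0)}|\nabla u_{\Sigma_n}|^p\,dx$, yields $\int_{B_{2r_n/\varepsilon_0}(x_0)}|\nabla u_{\Sigma_n}|^p\,dx \leq C r_n^{1+b}$ with $b \in (0,1)$.

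Combining everything gives
\[
\lambda r_n \;\leq\; C\, r_n^{1+b} \;+\; C\, r_n^{N+p'-Np'/q}.
\]
The main verification left is that $N+p'-Np'/q > 1$, i.e.\ $q > Np/(Np-N+1)$, which is guaranteed by the assumption $q > q_1$ (recall the definition of $q_1$ in~(\ref{1.4})). Both exponents on the right-hand side exceeding $1$, dividing by $r_n$ and letting $n \to \infty$ forces $\lambda \leq 0$, a contradiction. The main obstacle I foresee is the bookkeeping in step three, where I need to ensure simultaneously that $\Sigma_n$ stays flat, that $2r_n/\varepsilon_0$ falls inside the range of admissible radii in Lemma~\ref{lem 4.6}, and that the Corollary~\ref{cor 2.15} hypotheses $\Sigma_n \cap B_{r_n}(x_0) \neq \emptyset$ and $\Sigma_n \setminus B_{2r_n/\varepsilon_0}(x_0) \neq \emptyset$ are genuinely met; the latter uses that $\Sigma$ contains the whole loop $\Gamma$, which extends well beyond $B_{r_n}(x_0)$ for small $n$.
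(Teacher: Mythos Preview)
Your proposal is correct and follows essentially the same route as the paper: choose a noncut, flat point $x_0\in\Gamma\cap\Omega$ via Lemma~\ref{lem 5.2}, use $\Sigma_n=\Sigma\setminus D_n$ as competitor, transfer the flatness of $\Sigma$ to $\Sigma_n$ by the triangle inequality for $d_H$, apply Lemma~\ref{lem 4.6} on $[2r_n/\varepsilon_0,\delta]$ together with the global bound~(\ref{2.8}) to get $\int_{B_{2r_n/\varepsilon_0}(x_0)}|\nabla u_{\Sigma_n}|^p\,dx\le Cr_n^{1+b}$, and conclude via Corollary~\ref{cor 2.15} and the exponent check $N+p'-Np'/q>1$. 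The only cosmetic difference is that you make the inequality $\mathcal{H}^1(D_n)\ge r_n$ explicit, whereas the paper absorbs it directly into the line $\mathcal{F}_{\lambda,f,\Omega}(\Sigma_n)-\mathcal{F}_{\lambda,f,\Omega}(\Sigma)\le E_{f,\Omega}(u_\Sigma)-E_{f,\Omega}(u_{\Sigma_n})-\lambda r_n$.
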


\section{Proof of partial regularity}
In this section, we prove that every solution $\Sigma$ to Problem~\ref{P 1.1} is  locally $C^{1,\alpha}$ regular at $\mathcal{H}^{1}$-a.e. point $x \in \Sigma \cap \Omega$.

Throughout this section, $\Omega$ denotes an open bounded subset in $\mathbb{R}^{N}$. Recall that $\mathcal{K}(\Omega)$ is the class of all closed connected proper subsets of $\overline{\Omega}$.

The factor $\lambda$ in the statement of Problem~\ref{P 1.1} affects the shape of an optimal set minimizing the functional $\mathcal{F}_{\lambda,f,\Omega}$ over $\mathcal{K}(\Omega)$, and, according to Proposition~\ref{prop 2.19}, we know that  there exists a number $\lambda_{0}=\lambda_{0}(N, p, f, \Omega)>0$  such that if $\lambda \in (0, \lambda_{0}]$, then each minimizer $\s$ of the functional $\mathcal{F}_{\lambda, f,\Omega}$ over $\mathcal{K}(\Omega)$ has positive $\mathcal{H}^{1}$-measure. Throughout this section, we assume that $\lambda=\lambda_{0}=1$ for simplicity. This is not restrictive regarding to the regularity theory.

\subsection{Control on defect of minimality}
We begin with the definition of the flatness.
\begin{defn} \label{def 6.1} \textit{For each closed set $\Sigma \subset \mathbb{R}^{N}$, each point $x \in \mathbb{R}^{N}$ and radius $r>0$, we define the flatness of $\s$ in $\overline{B}_{r}(x)$ as follows}
\begin{equation*}
\beta_{\s}(x,r)=\inf_{L\ni x} \frac{1}{r}d_{H}(\s\cap \overline{B}_{r}(x), L\cap \overline{B}_{r}(x)),
\end{equation*}
\textit{where the infimum is taken over the set of all affine lines (1-dimensional planes) $L$ passing through $x$.} 
\end{defn}
Notice that if $\beta_{\s}(x,r)<+\infty$, then it is easy to prove that  the infimum above is actually the minimum, and in this case $\beta_{\Sigma}(x,r) \in [0, \sqrt{2}]$ and $\beta_{\Sigma}(x,r)=\sqrt{2}$ if and only if $\Sigma\cap \overline{B}_{r}(x)$ is a point in $\partial B_{r}(x)$. Furthermore, it is worth noting that if $\kappa \in (0,1)$ and $\beta_{\Sigma}(x, \kappa r)<+\infty$, then the following inequality holds 
	\begin{equation}
	\beta_{\s}(x, \kappa r)\leq \frac{2}{\kappa}\beta_{\s}(x,r) \label{6.1}
	\end{equation}
(for a proof of the inequality (\ref{6.1}), we refer the reader to the proof of \cite[Proposition 6.1]{p-compl}, which actually applies for the general spatial dimension $N\geq 2$).

Now we introduce the following notions of the local energy and the density, which will play a crucial role in the proof of partial regularity.

\begin{defn}\label{def 6.2} \textit{Let $\s\in \mathcal{K}(\Omega)$  and  $\tau \in [0, \sqrt{2}]$. For each $x_{0} \in \overline{\Omega}$ and $r>0$, we define}
	\begin{equation}\label{6.2}
	w^{\tau}_{\s}(x_{0},r)=\sup_{\substack{\s^{\prime} \in \mathcal{K}(\Omega),\, \s^{\prime}\Delta \s \subset \overline{B}_{r}(x_{0}) \\ \mathcal{H}^{1}(\Sigma^{\prime})\leq 100 \mathcal{H}^{1}(\Sigma),\, \beta_{\s^{\prime}}(x_{0},r)\leq \tau}} \frac{1}{r} \int_{B_{r}(x_{0})}|\nabla u_{\s^{\prime}}|^{p}\diff x. 
	\end{equation}
\end{defn}
The condition $\mathcal{H}^{1}(\Sigma^{\prime})\leq 100 \mathcal{H}^{1}(\Sigma)$, together with the facts that $\mathcal{H}^{1}(\Sigma)<+\infty$, $\Sigma^{\prime}\in \mathcal{K}(\Omega)$ in the definition of $w^{\tau}_{\Sigma}$ above, guarantees that $\Sigma^{\prime}$ is arcwise connected (see Remark \ref{arcwise connected}).
\begin{defn}\label{def 6.3} \textit{Let $\Sigma \subset \mathbb{R}^{N}$ be $\mathcal{H}^{1}$-mesurable. For each $x_{0} \in \Sigma$ and $r>0$, we define}
	\begin{equation*}
	\theta_{\Sigma}(x_{0},r)=\frac{1}{r}\mathcal{H}^{1}(\Sigma \cap B_{r}(x_{0})).
    \end{equation*}
    \end{defn}

\begin{rem} \label{rem 6.4} Assume that $\Sigma\in\mathcal{K}(\Omega)$,  $\tau\in [0,\sqrt{2}]$, $x_{0}\in \overline{\Omega}$ and $\beta_{\s}(x_{0}, r)\leq\tau$. Then there exists a solution to problem (\ref{6.2}). Indeed, $\Sigma$ is a competitor in the definition of $w^{\tau}_{\Sigma}(x_{0},r)$. Thus, according to Proposition~\ref{prop 2.16}, $w^{\tau}_{\Sigma}(x_{0},r) \in [0,+\infty)$. We can then conclude using  the direct method in the  Calculus of Variations, standard compactness results and the fact that $\mathcal{H}^{1}$-measure is lower semicontinuous with respect to the topology generated by the Hausdorff distance.
\end{rem}

We shall use the following proposition in order to establish a decay behavior for $w_{\Sigma}^{\tau}(x_{0},r)$ whenever $\Sigma$ is flat enough in all balls $\overline{B}_{r}(x_{0})$ with $r \in [r_{0},r_{1}]$.

\begin{prop} \label{prop 6.5} Let $\Sigma \subset \overline{\Omega}$ be closed and arcwise connected, $x \in \overline{\Om}$, $\tau \in [0,1/10]$ and let $\beta_{\Sigma}(x,r_{1})\leq \varepsilon$ for some $\varepsilon \in [0, \tau]$. In addition, assume that $0<r_{0}<r_{1}$, $\beta_{\Sigma}(x,r)\leq \tau$ for all $r \in [r_{0},r_{1}]$ and $\Sigma \backslash \overline{B}_{r_{1}}(x)\neq \emptyset$. If $r \in [r_{0}, r_{1}]$, then for any closed arcwise connected set $\Sigma^{\prime}\subset \overline{\Omega}$ such that $\s^{\prime} \Delta \s \subset \overline{B}_{r}(x)$ and $\beta_{\s^{\prime}}(x, r) \leq \tau$ we have that
	\begin{enumerate}[label=(\roman*)]
		\item 
		\begin{equation}\label{6.3}
		\beta_{\s^{\prime}}(x, r_{1}) \leq \frac{5\tau r}{r_{1}} + \varepsilon, 
		\end{equation}
		\item 
		\begin{equation}\label{6.4}
		\beta_{\s^{\prime}}(x, s) \leq 6\tau\,\ \text{for all}\,\ s \in [r, r_{1}]. 
		\end{equation}
	\end{enumerate}
\end{prop}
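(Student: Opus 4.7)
The plan is to prove (i) and then derive (ii) from (i) by a scaling argument. Let $L$ and $L'$ be affine lines through $x$ realizing the infima in $\beta_{\Sigma}(x,r_{1})\leq\varepsilon$ and $\beta_{\Sigma'}(x,r)\leq\tau$ respectively, so that
\[
d_{H}(\Sigma\cap\overline{B}_{r_{1}}(x),L\cap\overline{B}_{r_{1}}(x))\leq\varepsilon r_{1},\qquad d_{H}(\Sigma'\cap\overline{B}_{r}(x),L'\cap\overline{B}_{r}(x))\leq\tau r.
\]
I would use $L$ itself as a test line for $\Sigma'$ in $\overline{B}_{r_{1}}(x)$ and show $d_{H}(\Sigma'\cap\overline{B}_{r_{1}}(x),L\cap\overline{B}_{r_{1}}(x))\leq 5\tau r+\varepsilon r_{1}$. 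Outside $\overline{B}_{r}(x)$, $\Sigma$ and $\Sigma'$ coincide (by $\Sigma'\Delta\Sigma\subset\overline{B}_{r}(x)$), so the ``outer'' part of $\Sigma'\cap\overline{B}_{r_{1}}(x)$ is automatically $\varepsilon r_{1}$-close to $L$. The whole difficulty is to control $\dist(\cdot,L)$ on the ``inner'' piece $\Sigma'\cap\overline{B}_{r}(x)$, where only closeness to $L'$ is a priori known.

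The central step is a bound on the angle $\theta=\angle(L,L')\in[0,\pi/2]$ via a pivot point $\xi^{*}\in\Sigma\cap\Sigma'\cap\partial B_{r}(x)$. Such a point exists: (a) $\Sigma\cap\overline{B}_{r}(x)\neq\emptyset$ (since $\beta_{\Sigma}(x,r)\leq\tau<\infty$) together with $\Sigma\setminus\overline{B}_{r_{1}}(x)\neq\emptyset$ and connectedness of $\Sigma$ force $\Sigma\cap\partial B_{s}(x)\neq\emptyset$ for every $s\in[r,r_{1}]$; (b) for $s>r$ these points lie outside $\overline{B}_{r}(x)$, hence inside $\Sigma\cap\Sigma'$; (c) a subsequential limit as $s_{n}\downarrow r$, together with closedness of both $\Sigma$ and $\Sigma'$, produces $\xi^{*}\in\partial B_{r}(x)$. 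Since $\xi^{*}\in\Sigma\cap\overline{B}_{r_{1}}(x)$ and $\xi^{*}\in\Sigma'\cap\overline{B}_{r}(x)$, we have $\dist(\xi^{*},L)\leq\varepsilon r_{1}$ and $\dist(\xi^{*},L')\leq\tau r$; combined with $|\xi^{*}-x|=r$ and both $L,L'$ passing through $x$, the spherical triangle inequality applied at $x$ yields
\[
\sin\theta\leq\tau+\frac{\varepsilon r_{1}}{r}
\]
whenever the sum of the relevant arcsines stays below $\pi/2$, which is comfortable as soon as $r\geq 2\varepsilon r_{1}$ given $\tau\leq 1/10$.

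Feeding this into the two directions of the Hausdorff distance: for $y\in\Sigma'\cap\overline{B}_{r}(x)$, projecting onto $L'$ gives $\dist(y,L)\leq\tau r+|\pi_{L'}(y)|\sin\theta\leq 2\tau r+\varepsilon r_{1}$ whenever the angle bound is valid, and in the complementary regime $r<2\varepsilon r_{1}$ the trivial $\dist(y,L)\leq|y-x|\leq r$ combined with $r(1-5\tau)\leq\varepsilon r_{1}$ (using $\tau\leq 1/10$) already gives $\dist(y,L)\leq 5\tau r+\varepsilon r_{1}$. The reverse estimate (every point of $L\cap\overline{B}_{r_{1}}(x)$ close to $\Sigma'$) is handled by a mirrored case analysis: when $|z-x|>r+\varepsilon r_{1}$ the nearest $\Sigma$-point is automatically outside $\overline{B}_{r}(x)$, hence in $\Sigma'$; when $|z-x|\leq r$ the projection-onto-$L'$ plus Hausdorff-closeness argument yields $2\tau r+\varepsilon r_{1}$; and in the thin annulus $|z-x|\in(r,r+\varepsilon r_{1}]$ one selects a ``same-side'' point $\xi_{t}^{+}\in\Sigma\cap\Sigma'\cap\partial B_{t}(x)$ via the same connectedness argument as for $\xi^{*}$, supplemented by a continuity argument showing that an arc in $\Sigma$ issued near an endpoint $x\pm r_{1}v$ of $L$ cannot flip sides before entering $\overline{B}_{t}(x)$ (valid when $t>\varepsilon r_{1}$; in the remaining sub-case $|z|\leq\varepsilon r_{1}$ the trivial estimate $|z|+\tau r\leq\varepsilon r_{1}+\tau r$ based on the $\Sigma'$-point within $\tau r$ of $x$ already fits). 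The generous factor $5$ in front of $\tau r$ is precisely what absorbs the $2$ from the angle bound and the $\sqrt{2}$ from the annular estimate uniformly across these regimes.

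Once (i) is established, (ii) follows immediately: for $s\in[r,r_{1}]$, all hypotheses of (i) survive when $r_{1}$ is replaced by $s$ and $\varepsilon$ by $\tau$ (the hypothesis $\beta_{\Sigma}(x,s)\leq\tau$ now playing the role of $\beta_{\Sigma}(x,r_{1})\leq\varepsilon$, with $\tau\in[0,\tau]$), so applying (i) gives $\beta_{\Sigma'}(x,s)\leq 5\tau r/s+\tau\leq 5\tau+\tau=6\tau$ since $r\leq s$. The main obstacle in the argument is the construction of the pivot $\xi^{*}$ and the careful juggling of the small/large-$r$ regimes in both directions of the Hausdorff distance; once the pivot is in hand, all the geometric estimates are elementary, but they become tight precisely when $\varepsilon r_{1}/r$ approaches $1$, which is why trivial and ``same-side'' bounds must bridge the regime where the angle estimate degenerates.
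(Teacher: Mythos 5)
Your approach is genuinely different from the paper's, but it has a gap that is not a matter of tidying constants. You compare the $r_1$-scale line $L$ for $\Sigma$ directly to the $r$-scale line $L'$ for $\Sigma'$ via a pivot $\xi^*\in\partial B_r(x)$, getting an angle estimate of the form $\sin\theta\lesssim\tau+\varepsilon r_1/r$. Converted back to distances this introduces an additive error of size $\varepsilon r_1$, not $\tau r$; and in the ``thin annulus'' $|z-x|\in(r,r+\varepsilon r_1]$ your same-side pivot $\xi_t^+$ yields, at best, $|\xi_t^+-z|\leq\sqrt{2}\,\varepsilon r_1$ (since $\xi_t^+$ lies on $\partial B_t$ within $\varepsilon r_1$ of $L$ and on the same side as $z$). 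The excess $(\sqrt{2}-1)\varepsilon r_1$ cannot be absorbed by $5\tau r$ uniformly: take $\varepsilon=\tau$ and $r\ll r_1$ (which is allowed, since $r\in[r_0,r_1]$ with $r_0$ arbitrary); then $5\tau r+\varepsilon r_1<\sqrt{2}\,\varepsilon r_1$. So the regime where the angle bound degenerates is precisely where the trivial and annular estimates also fail, and the claimed constant is not recovered. You also never invoke the hypothesis $\beta_\Sigma(x,r)\leq\tau$ at the intermediate scale, which is the ingredient that rescues the paper's argument.

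The paper's proof structures the estimate differently and thereby sidesteps the degeneracy. It introduces \emph{three} lines: $L_1$ realizing $\beta_\Sigma(x,r_1)\leq\varepsilon$, $L$ realizing $\beta_\Sigma(x,r)\leq\tau$, and $L'$ realizing $\beta_{\Sigma'}(x,r)\leq\tau$. Using the common pivot $y\in\Sigma\cap\Sigma'\cap\partial B_r(x)$ (constructed exactly as you construct $\xi^*$), it compares $L$ and $L'$ \emph{at scale $r$}, where both are $\tau r$-close to $y$; a geodesic estimate on $\partial B_r(x)$ then gives $d_H(L\cap\overline{B}_r,L'\cap\overline{B}_r)\leq 3\tau r$ --- an $O(\tau r)$ bound, not $O(\varepsilon r_1)$. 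Combined with $\beta_\Sigma(x,r)\leq\tau$ and $\beta_{\Sigma'}(x,r)\leq\tau$ this yields $d_H(\Sigma\cap\overline{B}_r,\Sigma'\cap\overline{B}_r)\leq 5\tau r$. Then comes the second key observation you are missing: because $\Sigma'\Delta\Sigma\subset\overline{B}_r(x)$, the sets agree outside $\overline{B}_r$, so $d_H(\Sigma\cap\overline{B}_{r_1},\Sigma'\cap\overline{B}_{r_1})\leq d_H(\Sigma\cap\overline{B}_r,\Sigma'\cap\overline{B}_r)\leq 5\tau r$. The conclusion (i) then drops out by the triangle inequality through $\Sigma$, with no case analysis over where $z$ sits relative to $\overline{B}_r$ and no dependence on the size of $r/r_1$. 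Item (ii) follows by the same computation at scale $s$, as in your proposal.
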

\begin{proof} 
	Every ball in this proof is centered at $x$.  Let $L_{1}, \, L$ and $L^{\prime}$ realize the infimum, respectively, in the definitions of $\beta_{\Sigma}(x,r_{1}),\, \beta_{\Sigma}(x,r)$ and $\beta_{\Sigma^{\prime}}(x,r)$. Notice that
	\begin{equation} \label{6.5}
	d_{H}(\s \cap \overline{B}_{r}, L\cap \overline{B}_{r}) \leq \tau r.
	\end{equation}
	On the other hand,
	\begin{align*}
	d_{H}(\Sigma^{\prime} \cap \overline{B}_{r_{1}},L_{1}\cap \overline{B}_{r_{1}}) & \leq d_{H}(\Sigma^{\prime} \cap \overline{B}_{r_{1}}, \s \cap \overline{B}_{r_{1}}) + d_{H}(\s \cap \overline{B}_{r_{1}}, L_{1}\cap \overline{B}_{r_{1}}) \\
	 &\leq d_{H}(\Sigma^{\prime} \cap \overline{B}_{r}, \s \cap \overline{B}_{r}) + \varepsilon r_{1}, \numberthis \label{6.6}
	\end{align*}
	where the latter inequality comes because $\Sigma^{\prime} \Delta \s \subset \overline{B}_{r}$ and $\beta_{\s}(x,r_{1})\leq \varepsilon$. In addition, 
	\begin{align*}
	d_{H}(\Sigma^{\prime} \cap \overline{B}_{r}, \s \cap \overline{B}_{r}) & \leq d_{H}(\Sigma^{\prime} \cap \overline{B}_{r}, L^{\prime} \cap \overline{B}_{r}) + d_{H}(L \cap \overline{B}_{r}, L^{\prime} \cap \overline{B}_{r}) + d_{H}(\s \cap \overline{B}_{r}, L \cap \overline{B}_{r})\\
	&\leq 2\tau r + d_{H}(L \cap \overline{B}_{r}, L^{\prime} \cap \overline{B}_{r}),\numberthis \label{6.7}
	\end{align*}
	where we have used (\ref{6.5}) and the assumption $\beta_{\Sigma^{\prime}}(x,r) \leq \tau$. Notice that, since $\Sigma \cap B_{r}\neq \emptyset$, $\Sigma \backslash \overline{B}_{r_{1}} \neq \emptyset$ and $\Sigma$ is arcwise connected, there is a sequence $(x_{n})_{n}\subset \Sigma\backslash \overline{B}_{r}$ converging to some point $y \in \partial B_{r}$. We conclude that $y\in \Sigma^{\prime}\cap \Sigma \cap \partial B_{r}$ because $\Sigma^{\prime}\Delta \Sigma \subset\overline{B}_{r}$ and $\Sigma^{\prime}$, $\Sigma$ are closed. If $y \in L\cap L^{\prime}$, then $L=L^{\prime}$. Assume that $y \not \in L$. Let $\Pi$ be the 2-dimensional plane passing through $L$ and $y$, and let $\xi \in L\cap \partial B_{r}$ be such that $|y-\xi|= \dist (y,L\cap \partial B_{r})$. Denote by $\gamma$ the geodesic in the circle $\Pi \cap \partial B_{r}$ connecting $y$ with $\xi$. Then
	\[
	\mathcal{H}^{1}(\gamma)\leq \arcsin(\beta_{\Sigma}(x,r))r \leq \arcsin(\tau)r\leq \frac{3}{2}\tau r,
	\]
	where we have used the assumption $\beta_{\Sigma}(x,r)\leq \tau$ and the fact that $\arcsin(t)\leq 3t/2$ for all $t \in [0,1/10]$. Notice that if $y \in L^{\prime}$, then $d_{H}(L\cap \overline{B}_{r}, L^{\prime}\cap \overline{B}_{r})\leq \mathcal{H}^{1}(\gamma)$, otherwise let $\xi^{\prime} \in L^{\prime}\cap \partial B_{r}$ be such that $|y-\xi^{\prime}|=\dist(y, L^{\prime}\cap \partial B_{r})$ and let $\gamma^{\prime}$ be the geodesic in the circle $\Pi^{\prime}\cap \partial B_{r}$ connecting $y$ and $\xi^{\prime}$, where $\Pi^{\prime}$ is the 2-dimensional plane passing through $L^{\prime}$ and $y$. Then, using the assumption $\beta_{\Sigma^{\prime}}(x,r)\leq \tau$ and proceeding as before, we get
	\[
	\mathcal{H}^{1}(\gamma^{\prime})\leq \frac{3}{2}\tau r.
	\]
	Finally, we can conclude that 
	\[
	d_{H}(L\cap \overline{B}_{r}, L^{\prime}\cap \overline{B}_{r})\leq \mathcal{H}^{1}(\gamma)+\mathcal{H}^{1}(\gamma^{\prime})\leq 3\tau r.
	\]
	This, together with (\ref{6.7}), gives the following
	\begin{equation}\label{6.8}
	d_{H}(\Sigma^{\prime} \cap \overline{B}_{r},\Sigma \cap \overline{B}_{r})\leq 5\tau r.
	\end{equation}
	Using (\ref{6.6}) and (\ref{6.8}), we get
	\begin{align*}
	d_{H}(\Sigma^{\prime} \cap \overline{B}_{r_{1}}, L_{1} \cap \overline{B}_{r_{1}}) & \leq 5\tau r + \varepsilon r_{1}.
	\end{align*}
	Thus, we have proved $(i)$. Now let $s \in [r, r_{1}]$ and let $L_{s}$ be an affine line realizing the infimum in the definition of $\beta_{\s}(x,s)$. As in the proof of $(i)$, we get
	\begin{align*}
	d_{H}(\Sigma^{\prime} \cap \overline{B}_{s}, L_{s} \cap \overline{B}_{s}) & \leq d_{H}(\Sigma^{\prime} \cap \overline{B}_{s}, \s \cap \overline{B}_{s}) + d_{H}(\s \cap \overline{B}_{s}, L_{s} \cap \overline{B}_{s}) \\
	& \leq d_{H}(\Sigma^{\prime} \cap \overline{B}_{r}, \s \cap \overline{B}_{r}) + d_{H}(\s \cap \overline{B}_{s}, L_{s} \cap \overline{B}_{s}).
	\end{align*}
	This, together with (\ref{6.8}) and the fact that $\beta_{\Sigma}(x,s)\leq \tau$, implies
	\begin{align*}
	d_{H}(\Sigma^{\prime} \cap \overline{B}_{s}, L_{s} \cap \overline{B}_{s}) \leq 5\tau r + \tau s\leq 6 \tau s,
	\end{align*}
	concluding the proof of Proposition~\ref{prop 6.5}.
\end{proof}

Hereinafter in this section, $\tau$ is a fixed constant such that $\tau \in (0,\varepsilon_{0}/6]$, where $\varepsilon_{0}$ is the constant of Lemma~\ref{lem 4.6}. Notice that $\varepsilon_{0}$ is fairly small.

Now we establish a decay behavior for $w^{\tau}_{\Sigma}(x,\cdot)$, provided that $\beta_{\Sigma}(x,\cdot)$ is small enough.

\begin{prop} \label{prop 6.6} Let $ p\in (N-1,+\infty)$ and $f \in L^{q}(\Omega)$ with $q> q_1$, where $q_1$ is defined in (\ref{1.4}). Let $\varepsilon_{0},b,\overline{r}\in(0,1),\, C>0$ be the constants of Lemma~\ref{lem 4.6}. Assume that $\s \in \mathcal{K}(\Omega)$, $\mathcal{H}^{1}(\Sigma)<+\infty$, $0<r_{0}\leq r_{1}/10$ and $B_{r_{1}}(x_{0})\subset \Omega$ with $r_{1}\in (0, \min\{\overline{r},\diam(\Sigma)/2\})$. Assume also that
	\begin{equation*}
	\beta_{\s}(x_{0}, r) \leq \tau/2
	\end{equation*}
	for all $r \in [r_{0},r_{1}]$. Then, for all $r \in [r_{0}, r_{1}/10]$,
	\begin{equation}\label{6.9}
	w^{\tau}_{\s}(x_{0}, r) \leq C \left(\frac{r}{r_{1}}\right)^{b} w^{\tau}_{\s}(x_{0}, r_{1}) + Cr^{b}. 
	\end{equation}
	
\end{prop}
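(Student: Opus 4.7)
The plan is to bound $\frac{1}{r}\int_{B_r(x_0)}|\nabla u_{\Sigma'}|^{p}\diff x$ uniformly over all admissible competitors $\Sigma'$ in the definition of $w^{\tau}_{\Sigma}(x_0, r)$, by applying Lemma~\ref{lem 4.6} to $\Sigma'$ on the range of radii $[r, r_1]$, and then recognizing that $\Sigma'$ is also admissible at the larger scale $r_1$, so that the right-hand side is bounded by $w^{\tau}_{\Sigma}(x_0, r_1)$. Taking the supremum over $\Sigma'$ on the left then closes the estimate.

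First I would fix $r \in [r_0, r_1/10]$ and a competitor $\Sigma' \in \mathcal{K}(\Omega)$ with $\Sigma' \Delta \Sigma \subset \overline{B}_r(x_0)$, $\mathcal{H}^{1}(\Sigma') \leq 100\mathcal{H}^{1}(\Sigma)$ and $\beta_{\Sigma'}(x_0, r) \leq \tau$. By Remark~\ref{arcwise connected}, $\Sigma'$ is arcwise connected. Since $\Sigma'$ coincides with $\Sigma$ outside $\overline{B}_{r}(x_0) \subset B_{r_1}(x_0)$ and $\diam(\Sigma) > 2r_1$ forces $\Sigma$ to leave $\overline{B}_{r_1}(x_0)$, we have $\Sigma' \setminus B_{r_1}(x_0) \neq \emptyset$. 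The inner condition $\Sigma' \cap B_r(x_0) \neq \emptyset$ follows from $\beta_{\Sigma'}(x_0, r) \leq \tau \ll 1$, applied to the point $x_0$ itself on any minimizing line.

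The crucial step is verifying the flatness hypothesis of Lemma~\ref{lem 4.6} for $\Sigma'$ on the whole interval $[r, r_1]$. Since $\beta_{\Sigma}(x_0, s) \leq \tau/2$ for all $s \in [r_0, r_1]$ and $\beta_{\Sigma'}(x_0, r) \leq \tau$, Proposition~\ref{prop 6.5}$(ii)$ applied with $\varepsilon = \tau/2$ gives $\beta_{\Sigma'}(x_0, s) \leq 6\tau \leq \varepsilon_{0}$ for every $s \in [r, r_1]$, which is precisely the flatness required. Lemma~\ref{lem 4.6} (applied with inner radius $r$ and outer radius $r_1$, admissible since $2r \leq r_1 \leq \overline{r}$) then yields
\[
\int_{B_{r}(x_0)}|\nabla u_{\Sigma'}|^{p}\diff x \leq C\Bigl(\frac{r}{r_1}\Bigr)^{1+b}\int_{B_{r_1}(x_0)}|\nabla u_{\Sigma'}|^{p}\diff x + Cr^{1+b}.
\]

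Finally, to bound the right-hand side by $w^{\tau}_{\Sigma}(x_0, r_1)$, I would apply Proposition~\ref{prop 6.5}$(i)$ (again with $\varepsilon = \tau/2$) to get
\[
\beta_{\Sigma'}(x_0, r_1) \leq \frac{5\tau r}{r_1} + \frac{\tau}{2} \leq \frac{\tau}{2} + \frac{\tau}{2} = \tau,
\]
where the restriction $r \leq r_1/10$ is used decisively. Since also $\Sigma' \Delta \Sigma \subset \overline{B}_{r_1}(x_0)$ and $\mathcal{H}^{1}(\Sigma') \leq 100\mathcal{H}^{1}(\Sigma)$, the set $\Sigma'$ is admissible in the supremum defining $w^{\tau}_{\Sigma}(x_0, r_1)$, whence $\frac{1}{r_1}\int_{B_{r_1}(x_0)}|\nabla u_{\Sigma'}|^{p}\diff x \leq w^{\tau}_{\Sigma}(x_0, r_1)$. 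Dividing the displayed estimate by $r$ and passing to the supremum over admissible $\Sigma'$ gives (\ref{6.9}). The only delicate point is the calibration of the two gap conditions (``$\tau/2$ on $\Sigma$'' vs.\ ``$\tau$ on competitors'') together with the dilation ratio $r_1/r \geq 10$: these are arranged precisely so that Proposition~\ref{prop 6.5} keeps $\Sigma'$ inside the $\tau$-admissible class at the larger scale $r_1$.
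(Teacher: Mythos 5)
Your proof is correct and follows essentially the same strategy as the paper: verify via Proposition~\ref{prop 6.5} that the competitor stays $\varepsilon_0$-flat on $[r,r_1]$ and $\tau$-admissible at scale $r_1$, apply Lemma~\ref{lem 4.6}, then compare with $w^{\tau}_{\Sigma}(x_0,r_1)$. The only cosmetic difference is that the paper invokes Remark~\ref{rem 6.4} to pick a maximizer $\Sigma_r$ realizing $w^{\tau}_{\Sigma}(x_0,r)$ and works with it directly, whereas you bound an arbitrary admissible competitor uniformly and then pass to the supremum; these are interchangeable.
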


\begin{proof} According to Remark~\ref{arcwise connected}, $\Sigma$ is arcwise connected. From Remark~\ref{rem 6.4} it follows that there is $\Sigma_{r}\subset \overline{\Omega}$ realizing the supremum in the definition of $w^{\tau}_{\Sigma}(x_{0},r)$ which, by Remark~\ref{arcwise connected}, is arcwise connected. Furthermore, Proposition \ref{prop 6.5} says that
	\[
	\beta_{\Sigma_{r}}(x_{0}, r_{1}) \leq \tau \,\ \text{and} \,\ \beta_{\Sigma_{r}}(x_{0}, s) \leq 6\tau \leq  \varepsilon_{0} \,\ \text{for all}\,\ s\in [r, r_{1}].
	\]
	Thus, we can apply Lemma~\ref{lem 4.6} to $u_{\s_r}$, which yields
	
	\begin{align*}
	w^{\tau}_{\s}(x_{0}, r) =\frac{1}{r}\int_{B_{r}(x_{0})}|\nabla u_{\Sigma_{r}}|^{p}\diff x 
	\leq C \Bigl(\frac{r}{r_{1}}\Bigr)^{b}\frac{1}{r_{1}}\int_{B_{r_{1}(x_{0})}}|\nabla u_{\Sigma_{r}}|^{p}\diff x + Cr^{b}
	 \leq  C  \Bigl(\frac{r}{r_{1}}\Bigr)^{b} w^{\tau}_{\s}(x_{0},r_{1}) + Cr^{b}.
	\end{align*}
	Notice that  to obtain the last estimate we have used the definition of $w^{\tau}_{\s}(x_{0},r_{1})$ and the fact that  $\beta_{\Sigma_{r}}(x_{0}, r_{1}) \leq \tau$. 
\end{proof}

Now we are in position to control a defect of minimality via $w^{\tau}_{\Sigma}$.

\begin{prop} \label{prop 6.7} Let $p\in (N-1,+\infty)$ and $f \in L^{q}(\Omega)$ with $q>q_{1}$, where $q_{1}$ is defined in (\ref{1.4}), and let $\varepsilon_{0}, b, \overline{r}\in (0,1)$ be the constants of Lemma~\ref{lem 4.6}. Assume that $\s \in \mathcal{K}(\Omega)$, $\mathcal{H}^{1}(\Sigma)<+\infty$, $0<r_{0}\leq r_{1}/10$, $B_{r_{1}}(x_{0})\subset \Omega$ with $r_{1} \in (0, \min\{\overline{r},\diam(\s)/2\})$. Assume also that 
	\begin{equation*}
	\beta_{\s}(x_{0}, r) \leq \tau/2
	\end{equation*}
	for all $r \in [r_{0}, r_{1}]$. Then there exists a constant $C>0$, possibly depending only on $N,\,p,\, q_{0}, \, q,\, \|f\|_{q},\, |\Omega|$, such that if $r \in [r_{0}, r_{1}/10]$, then for any  $\Sigma^{\prime}\in \mathcal{K}(\Omega)$ satisfying $\Sigma^{\prime} \Delta \Sigma \subset \overline{B}_{r}(x_{0})$,\,  $\mathcal{H}^{1}(\Sigma^{\prime}) \leq 100 \mathcal{H}^{1}(\Sigma)$ and $\beta_{\Sigma^{\prime}}(x_{0},r) \leq \tau$, 
	\begin{equation}\label{6.10}
	E_{f,\Omega}(u_{\s})-E_{f,\Omega}(u_{\Sigma^{\prime}})\leq Cr \Bigl(\frac{r}{r_{1}}\Bigr)^{b} w^{\tau}_{\s}(x_{0},r_{1}) + Cr^{1+b}. 
	\end{equation}

\end{prop}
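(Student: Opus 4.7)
The plan is to combine Corollary~\ref{cor 2.15}, which turns the defect of minimality into a local $p$-energy of $u_{\Sigma'}$, with the decay estimate of Lemma~\ref{lem 4.6} applied to $u_{\Sigma'}$ itself. The key intermediate step is to use Proposition~\ref{prop 6.5} to check that $\Sigma'$ inherits enough flatness on all the scales in $[r,r_1]$, both in order to run Lemma~\ref{lem 4.6} and to qualify as a competitor in the supremum defining $w^{\tau}_{\Sigma}(x_0,r_1)$.

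First I would apply Corollary~\ref{cor 2.15} with small scale $r$ and large scale $r_1$. All its hypotheses are satisfied: $\Sigma$ and $\Sigma'$ are closed arcwise connected subsets of $\overline{\Omega}$ (the arcwise connectedness of $\Sigma'$ follows from $\Sigma'\in\mathcal{K}(\Omega)$ and $\mathcal{H}^{1}(\Sigma')\leq 100\,\mathcal{H}^{1}(\Sigma)<+\infty$ via Remark~\ref{arcwise connected}); the assumption $\beta_{\Sigma'}(x_0,r)\leq\tau<1$, together with the fact that the best-approximating line passes through $x_0$, forces $\Sigma'\cap B_r(x_0)\neq\emptyset$; and $\Sigma'=\Sigma$ outside $\overline{B}_r(x_0)$ together with $r_1<\diam(\Sigma)/2$ gives $\Sigma'\setminus B_{r_1}(x_0)\neq\emptyset$. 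The corollary therefore yields
\begin{equation*}
E_{f,\Omega}(u_{\Sigma})-E_{f,\Omega}(u_{\Sigma'})\leq C\int_{B_{2r}(x_0)}|\nabla u_{\Sigma'}|^{p}\diff x+Cr^{N+p^{\prime}-\frac{Np^{\prime}}{q}}.
\end{equation*}

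Next I would invoke Proposition~\ref{prop 6.5} with $\varepsilon=\tau/2$. Its conclusion~(\ref{6.4}) provides $\beta_{\Sigma'}(x_0,s)\leq 6\tau\leq\varepsilon_{0}$ for every $s\in[r,r_1]$ (since $\tau\leq\varepsilon_{0}/6$), while~(\ref{6.3}) combined with $r\leq r_{1}/10$ produces the sharp bound
\begin{equation*}
\beta_{\Sigma'}(x_0,r_1)\leq\frac{5\tau r}{r_1}+\frac{\tau}{2}\leq\tau.
\end{equation*}
The first of these makes Lemma~\ref{lem 4.6} applicable to $u_{\Sigma'}$ on the interval $[2r,r_1]$ (one checks $4r\leq r_1\leq\overline{r}$, $B_{r_1}(x_0)\subset\Omega$ and $\Sigma'\setminus B_{r_1}(x_0)\neq\emptyset$), giving
\begin{equation*}
\int_{B_{2r}(x_0)}|\nabla u_{\Sigma'}|^{p}\diff x\leq C\Bigl(\frac{r}{r_1}\Bigr)^{1+b}\int_{B_{r_1}(x_0)}|\nabla u_{\Sigma'}|^{p}\diff x+Cr^{1+b}.
\end{equation*}
The second, together with $\Sigma'\Delta\Sigma\subset\overline{B}_{r_1}(x_0)$ and $\mathcal{H}^{1}(\Sigma')\leq 100\,\mathcal{H}^{1}(\Sigma)$, says exactly that $\Sigma'$ is admissible in the supremum defining $w^{\tau}_{\Sigma}(x_0,r_1)$, so $\int_{B_{r_1}(x_0)}|\nabla u_{\Sigma'}|^{p}\diff x\leq r_1\,w^{\tau}_{\Sigma}(x_0,r_1)$.

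Substituting the last two estimates into the output of Corollary~\ref{cor 2.15} produces
\begin{equation*}
E_{f,\Omega}(u_{\Sigma})-E_{f,\Omega}(u_{\Sigma'})\leq Cr\Bigl(\frac{r}{r_1}\Bigr)^{b}w^{\tau}_{\Sigma}(x_0,r_1)+Cr^{1+b}+Cr^{N+p^{\prime}-\frac{Np^{\prime}}{q}},
\end{equation*}
and the last term is absorbed into $Cr^{1+b}$ by the inequality $b\leq N-1+p^{\prime}-Np^{\prime}/q$ built into the definition of $b$ in Lemma~\ref{lem 4.6}. The main subtlety is that $\beta_{\Sigma'}$ is controlled only at scale $r$, whereas one needs $\Sigma'$ to be admissible for $w^{\tau}_{\Sigma}(x_0,\cdot)$ at the much larger scale $r_1$; Proposition~\ref{prop 6.5}(i) secures exactly the bound $\beta_{\Sigma'}(x_0,r_1)\leq\tau$, but only because the slack $\varepsilon=\tau/2$ in the hypothesis on $\Sigma$ and the gap $r\leq r_1/10$ combine to leave just enough room. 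Without either ingredient one would be stuck with $\beta_{\Sigma'}(x_0,r_1)\leq 6\tau$ and would have to redesign $w^{\tau}_{\Sigma}$ and the subsequent regularity machinery.
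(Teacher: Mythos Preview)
Your proof is correct and follows essentially the same route as the paper: apply Corollary~\ref{cor 2.15} to localize the energy difference, use Proposition~\ref{prop 6.5} to transfer flatness of $\Sigma'$ to all scales in $[r,r_1]$ (in particular getting $\beta_{\Sigma'}(x_0,r_1)\leq\tau$), then invoke Lemma~\ref{lem 4.6} for $u_{\Sigma'}$ and the definition of $w^{\tau}_{\Sigma}(x_0,r_1)$ to conclude. Your verification of the hypotheses and the closing remark on why the slack $\tau/2$ and the gap $r\leq r_1/10$ are both needed are accurate and slightly more explicit than what the paper records.
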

\begin{proof} According to Remark \ref{arcwise connected}, $\Sigma$ and $\Sigma^{\prime}$ are arcwise connected and by Corollary~\ref{cor 2.15},
	\begin{equation}\label{6.11}
	E_{f,\Omega}(u_{\s})-E_{f,\Omega}(u_{\s^{\prime}})\leq C\int_{B_{2r}(x_{0})}|\nabla u_{\s^{\prime}}|^{p}\diff x + Cr^{N+p^{\prime}-\frac{Np^{\prime}}{q}}, 
	\end{equation}
	where $C= C(N, p,q_{0}, q,  \|f\|_{q})>0$. On the other hand, by Proposition~\ref{prop 6.5}, 
	\[
	\beta_{\Sigma^{\prime}}(x_{0}, r_{1}) \leq \tau \,\ \text{and} \,\ \beta_{\Sigma^{\prime}}(x_{0}, s) \leq \varepsilon_{0} \,\ \text{for all}\,\ s\in [r, r_{1}].
	\]
	Thus, applying Lemma~\ref{lem 4.6} to $u_{\Sigma^{\prime}}$, we obtain that
	\begin{equation} \label{6.12}
	\int_{B_{2r}(x_{0})}|\nabla u_{\Sigma^{\prime}}|^{p}\diff x \leq C\Bigl(\frac{2r}{r_{1}}\Bigr)^{1+b}\int_{B_{r_{1}}(x_{0})}|\nabla u_{\Sigma^{\prime}}|^{p}\diff x + C(2r)^{1+b},
	\end{equation}
	where $C=C(N, p, q_{0},q,  \|f\|_{q}, |\Omega|)>0$. Hereinafter in this proof, $C$ denotes a positive constant that can only depend on $N,\, p,\, q_{0}$, $q,\, \|f\|_{q},\, |\Omega|$ and can be different from line to line. Using (\ref{6.11}), (\ref{6.12}) and the fact that $r^{N+p^{\prime}-\frac{Np^{\prime}}{q}} < r^{1+b}$ (because $r\in (0,1)$ and $0<b < N-1+p^{\prime}-N{p^{\prime}}/q$), we deduce the following chain of estimates
	\begin{align*}
	E_{f,\Omega}(u_{\s})-E_{f,\Omega}(u_{\s^{\prime}}) &\leq C\Bigl(\frac{r}{r_{1}}\Bigr)^{1+b}\int_{B_{r_{1}(x_{0})}}|\nabla u_{\Sigma^{\prime}}|^{p}\diff x + Cr^{1+b} \\
	& \leq Cr\Bigl(\frac{r}{r_{1}}\Bigr)^{b}\frac{1}{r_{1}}\int_{B_{r_{1}}(x_{0})}|\nabla u_{\s^{\prime}}|^{p}\diff x +Cr^{1+b} \\
	& \leq Cr\Bigl(\frac{r}{r_{1}}\Bigr)^{b}w^{\tau}_{\Sigma}(x_{0},r_{1}) + Cr^{1+b}, 
	\end{align*}
	where the last estimate is obtained using the  definition of $w^{\tau}_{\Sigma}(x_{0},r_{1})$ and the fact that $\beta_{\Sigma^{\prime}}(x_{0},r_{1})\leq \tau$. This completes the proof of Proposition~\ref{prop 6.7}.
\end{proof}

\subsection{Density control}

The following proposition says that there exists a constant $\kappa \in (0,1/100)$ such that if $\Sigma$ is a solution to Problem~\ref{P 1.1}, $\beta_{\Sigma}(x_{0},r)$, $w^{\tau}_{\Sigma}(x_{0},r)$ are fairly small provided that $B_{r}(x_{0})\subset \Omega$ with $x_{0}\in \Sigma$, and if $\theta_{\Sigma}(x_{0},r)$ is also small enough, then there exists $t \in [\kappa r, 2\kappa r]$ such that $\mathcal{H}^{0}(\Sigma \cap \partial B_{t}(x_{0}))=2$. This allows to construct a nice competitor for $\Sigma$ and derive the estimate (\ref{6.16}) leading to the regularity.
\begin{prop}\label{prop 6.8} Let $p \in (N-1,+\infty)$, $f \in L^{q}(\Omega)$ with $q>q_{1}$, where $q_{1}$ is defined in (\ref{1.4}). Then there exist $\delta, \varepsilon, \kappa\in (0,1/100)$ and $C=C(N,p,q_{0},q,\|f\|_{q},|\Omega|)>0,$ where $q_{0}$ is defined in (\ref{Eq 1.1}), such that the following holds. Assume that $\s$ is a solution to Problem \ref{P 1.1}, $x_{0} \in \Sigma$, $0<r <\min\{\delta,\diam(\s)/2 \}$, $B_{r}(x_{0}) \subset \Omega$ and
	\begin{equation}\label{6.13}
	\beta_{\s}(x_{0}, r) + w^{\tau}_{\Sigma}(x_{0},r)\leq \varepsilon.
	\end{equation}
	Assume also that 
    \begin{equation}\label{6.14}
    \theta_{\Sigma}(x_{0}, r)\leq 10\overline{\mu},
    \end{equation}
	where $\overline{\mu}$ is a unique positive solution to the equation $\mu=5+\mu^{1-\frac{1}{N}}$. Then the following assertions hold.
	\begin{enumerate}[label=(\roman*)]
		\item There exists $t \in [\kappa r, 2\kappa r]$ such that
		\begin{equation}\label{6.15}
		\mathcal{H}^{0}(\Sigma \cap \partial B_{t}(x_{0}))=2.
		\end{equation}
		\item Let $t \in [\kappa r, 2\kappa r]$ be such that $\mathcal{H}^{0}(\s \cap \partial B_{t}(x_{0}))=2$. Then 
		\begin{enumerate}[label=(ii-\arabic*)]
			\item the two points of $\s \cap \partial B_{t}(x_{0})$ belong to two different connected components of 
			\[
			\partial B_{t}(x_{0})\cap \{y: dist(y,L)\leq \beta_{\s}(x_{0},t)t\},
			\]
			where $L$ is an affine line realizing the infimum in the definition of $\beta_{\s}(x_{0},t).$
			\item $\s \cap \overline{B}_{t}(x_{0})$ is arcwise connected.
			\item If $\{z_{1}, z_{2}\}= \s \cap \partial  B_{t}(x_{0})$, then 
			\begin{equation} \label{6.16}
			\begin{split}
			\mathcal{H}^{1}(\s\cap B_{t}(x_{0}))\leq |z_{2} -z_{1}| +  Ct\Bigl(\frac{t}{r}\Bigr)^{b}w^{\tau}_{\Sigma}(x_{0}, r)+Ct^{1+b},
			\end{split}
			\end{equation} 
			where $b \in (0,1)$ is the constant given by Lemma~\ref{lem 4.6}.
		\end{enumerate}
	
	\end{enumerate}
\end{prop}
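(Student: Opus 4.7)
My plan is to combine the coarea inequality (controlling $n_t := \mathcal{H}^{0}(\Sigma \cap \partial B_t(x_0))$ in an averaged sense via the density bound), the flatness hypothesis (which localises the intersection points into a two-component tubular neighbourhood of the endpoints of a diameter), and the minimality of $\Sigma$ through Proposition~\ref{prop 6.7}. First I would fix $\kappa \in (0, 1/100)$, and then $\varepsilon \in (0, \kappa^{2}/10^{4})$ small enough that, by (\ref{6.1}), $\beta_{\Sigma}(x_0, t) \le 2\varepsilon/\kappa \ll 1$ for every $t \in [\kappa r, 2\kappa r]$. Consequently, for such $t$ the tubular set $N_t := \partial B_t(x_0) \cap \{y : \dist(y, L_t) \le \beta_\Sigma(x_0, t)\, t\}$ has exactly two connected components $K_t^{1}, K_t^{2}$ surrounding the two antipodes of $L_t \cap \partial B_t(x_0)$, where $L_t$ realises the infimum in the definition of $\beta_\Sigma(x_0, t)$.

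For (i), the coarea inequality
\[
\int_{\kappa r}^{2\kappa r} n_t \, dt \le \mathcal{H}^{1}(\Sigma \cap B_{r}(x_0)) \le 10\overline{\mu}\, r
\]
combined with Markov's inequality produces a subset $G \subset [\kappa r, 2\kappa r]$ of measure at least $\kappa r/2$ on which $n_t \le M := 40\overline{\mu}/\kappa$. On $G$ every point of $\Sigma \cap \partial B_t(x_0)$ lies in $N_t$; Hausdorff-closeness at scale $t$ combined with the arcwise connectivity of $\Sigma$ and the fact that $\Sigma$ must exit $\overline{B}_{r}(x_0)$ (from $r < \diam(\Sigma)/2$) forces at least one intersection point in each of $K_t^{1}$ and $K_t^{2}$, giving $n_t \ge 2$. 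To upgrade this to $n_t = 2$ for some $t \in G$, I would argue by contradiction: assume $n_t \ge 3$ on all of $G$. Using Theorem~\ref{thm absence of loops}, $\Sigma$ is a topological tree, so $\Sigma \cap \overline{B}_t(x_0)$ contains a removable subtree $A_t$ that can be severed without disconnecting $\Sigma \setminus A_t$. Cutting $A_t$ gives a genuine length gain; on the other hand the compliance defect, by Proposition~\ref{prop 6.7} applied to the competitor obtained by further replacing $(\Sigma \cap \overline{B}_t(x_0)) \setminus A_t$ by $(L_t \cap \overline{B}_t(x_0)) \cup \bigcup_{i} [z_i, z_i^{\prime}]$, where $z_i^{\prime}$ is the orthogonal projection of $z_i$ onto $L_t$ (which is admissible for small $\varepsilon$), is controlled by $C t (t/r)^{b} w^{\tau}_{\Sigma}(x_0, r) + C t^{1+b}$. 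Choosing $\varepsilon$ sufficiently small makes the length gain strictly dominate the compliance defect, yielding the desired contradiction.

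Parts (ii-1) and (ii-2) follow from the same topological tools applied at the specific $t$ from (i): (ii-1) is immediate from the argument forcing one intersection point in each of $K_t^{1}$ and $K_t^{2}$. For (ii-2), if $\Sigma \cap \overline{B}_t(x_0)$ split into components $A_1 \ni z_1$ and $A_2 \ni z_2$, arcwise connectivity of $\Sigma$ would produce an exterior arc in $\Sigma \setminus B_t(x_0)$ joining $z_1$ to $z_2$; removing a short sub-arc of that exterior arc and adding the straight segment $[z_1, z_2]$ inside $B_t(x_0)$ would give an admissible competitor shortening $\Sigma$ by a definite amount that beats the compliance defect from Proposition~\ref{prop 6.7}, contradicting minimality. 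For (ii-3), the competitor $\Sigma^{\prime} = (\Sigma \setminus B_t(x_0)) \cup [z_1, z_2]$ is admissible (closedness, membership in $\mathcal{K}(\Omega)$ using (ii-2), the flatness bound $\beta_{\Sigma^{\prime}}(x_0, t) \le \tau$, and the upper bound $\mathcal{H}^{1}(\Sigma^{\prime}) \le 100 \mathcal{H}^{1}(\Sigma)$ are all routine); then Proposition~\ref{prop 6.7} together with minimality yields (\ref{6.16}) after observing $\mathcal{H}^{1}(\Sigma^{\prime} \cap B_t(x_0)) = |z_2 - z_1|$.

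The hard part will be the contradiction step in (i): formalising the removable-subtree argument and quantifying the resulting length gain precisely enough to beat the compliance defect $C t (t/r)^{b} w^{\tau}_{\Sigma}(x_0, r) + C t^{1+b}$, while simultaneously ensuring that the pruned and straightened competitor remains in $\mathcal{K}(\Omega)$ and satisfies the constraints $\mathcal{H}^{1}(\Sigma^{\prime}) \le 100 \mathcal{H}^{1}(\Sigma)$ and $\beta_{\Sigma^{\prime}}(x_0, r) \le \tau$ required to invoke the definition of $w^{\tau}_{\Sigma}$. This delicate balance is what pins down the admissible range of $\kappa$ and $\varepsilon$ and, via the density bound $\theta_\Sigma(x_0, r) \le 10\overline{\mu}$, the constant $M$ controlling the number of extra arms that can appear.
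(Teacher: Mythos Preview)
Your plan for (ii-3) matches the paper exactly, but the heart of the argument --- part (i) --- has a genuine gap, and your (ii-2) argument is incorrect.

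\textbf{The gap in (i).} The ``removable subtree'' step does not work. Even granting that $\Sigma$ is a tree (Theorem~\ref{thm absence of loops}), the condition $n_t\ge 3$ for a single radius $t$, or even for all $t$ in a set of positive measure, does \emph{not} produce a subtree $A_t\subset\Sigma\cap\overline{B}_t(x_0)$ that can be deleted without disconnecting $\Sigma$: think of a simple arc that enters $B_t(x_0)$, exits, and re-enters (giving $n_t\ge 3$ on an interval of radii), where no interior subarc is removable. So you have no length gain to play against the compliance defect. The paper proceeds quite differently. It first manufactures a near-sharp \emph{upper} length bound: for a well-chosen $\varrho$ near $2\kappa r$ with $\mathcal{H}^0(\Sigma\cap\partial B_\varrho)$ controlled (via coarea and the density bound), it builds the competitor $(\Sigma\setminus B_\varrho)\cup(L\cap\overline{B}_\varrho)\cup\bigcup_i[z_i,z_i']$ and applies Proposition~\ref{prop 6.7} to get
\[
\mathcal{H}^1(\Sigma\cap B_{2\kappa r}(x_0))\le 4\kappa r+\tfrac{\kappa r}{200}.
\]
Separately it shows $E_1:=\{t\in(0,2\kappa r]:n_t=1\}\subset(0,\kappa r/200)$ by the competitor $\Sigma\setminus B_t(x_0)$. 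Feeding both into the coarea inequality
\[
\mathcal{H}^1(E_1)+2\mathcal{H}^1(E_2)+3\mathcal{H}^1(E_3)\le \mathcal{H}^1(\Sigma\cap B_{2\kappa r}(x_0))
\]
forces $\mathcal{H}^1(E_3)<\kappa r/100$, hence $E_2\cap[\kappa r,2\kappa r]$ has positive measure. The point is that the measure argument against $n_t\ge 3$ is driven by a \emph{length} upper bound close to $4\kappa r$, not by any tree-pruning.

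\textbf{The error in (ii-2).} Your proposed competitor ``remove a short sub-arc of the exterior path and add $[z_1,z_2]$'' \emph{lengthens} $\Sigma$ (you are adding $\approx 2t$ and removing something short), and removing an interior sub-arc of an arc generically disconnects $\Sigma$. The paper instead invokes \cite[Lemma~5.13]{Opt}: if $\Sigma\cap\overline{B}_t(x_0)$ is not arcwise connected and $\mathcal{H}^0(\Sigma\cap\partial B_t)=2$, then $\Sigma\setminus B_t(x_0)$ \emph{is} arcwise connected, so one may take $\Sigma\setminus B_t(x_0)$ itself as a competitor and contradict minimality exactly as in the $E_1$ step. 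For (ii-1), your ``at least one point in each cap'' heuristic is not quite enough either (Hausdorff closeness to the diameter forces a nearby point of $\Sigma$, but not necessarily on $\partial B_t$); the paper argues by contradiction with the competitor $(\Sigma\setminus B_t)\cup\gamma_{z_1,P_2}\cup\gamma_{z_2,P_2}$ using short geodesic arcs on $\partial B_t$.
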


\begin{rem}\label{rem 6.9} If the situation of item $(ii$-1) occurs, we say that the two points lie ``on different sides''.
\end{rem}

\begin{proof} Let $\varepsilon_{0}, b, \overline{r} \in (0,1)$ be the constants of Lemma \ref{lem 4.6} and let $C=C(N,p,q_{0},q,\|f\|_{q},|\Omega|)>0$ be the constant of Proposition~\ref{prop 6.7}. We define \begin{equation}\label{6.17}
	\varepsilon=\frac{1}{\overline{\mu} C}\Bigl(\frac{\tau}{10}\Bigr)^{10}, \qquad k=\frac{\tau}{200}.
	\end{equation} 
	Fix $\delta \in (0, \overline{r})$ such that $\delta^{b}\leq \varepsilon$ and hence
	\begin{equation}\label{6.18}
	w^{\tau}_{\Sigma}(x_{0},r)+\delta^{b}\leq 2\varepsilon.
	\end{equation}
\textit{Step 1}. Let us first prove $(i)$. Thanks to (\ref{6.1}) and (\ref{6.13}), for all $s \in [\kappa r, r]$, it holds 
	\begin{equation}\label{6.19}
	\beta_{\Sigma}(x_{0}, s) \leq \frac{2}{\kappa}\beta_{\Sigma}(x_{0},r)\leq \frac{2\varepsilon}{\kappa}.
	\end{equation}
	On the other hand, for all $s \in [\kappa r, r]$,
	\begin{equation}\label{6.20}
	\theta_{\Sigma}(x_{0}, s)\leq \frac{r}{s} \theta_{\Sigma}(x_{0},r)\leq \frac{r}{\kappa r} \theta_{\Sigma}(x_{0},r)\leq \frac{10\overline{\mu}}{\kappa},
	\end{equation}
where the last estimate is due to (\ref{6.14}). Fix an arbitrary $s \in [\kappa r, 2\kappa r]$. By the coarea inequality (see, for instance, \cite[Theorem 2.1]{Paolini-Stepanov}),
\begin{equation}\label{6.21}
\mathcal{H}^{1}(\Sigma\cap B_{(1+\kappa)s}(x_{0}))\geq \int_{0}^{(1+\kappa)s}\mathcal{H}^{0}(\Sigma \cap \partial B_{\varrho}(x_{0}))\diff \varrho > \int^{(1+\kappa)s}_{s}\mathcal{H}^{0}(\Sigma \cap \partial B_{\varrho}(x_{0}))\diff \varrho,
\end{equation}
where the latter estimate comes from the fact that $\mathcal{H}^{0}(\Sigma \cap \partial B_{\varrho}(x_{0}))\geq 1$ for all $\varrho \in (0, r]$, since $x_{0}\in \Sigma$, $\Sigma$ is arcwise connected and $r<\diam(\Sigma)/2$. Then there exists $\varrho \in [s, (1+\kappa)s]$ such that
\begin{equation*}
\frac{1}{\kappa s}\mathcal{H}^{1}(\Sigma \cap B_{(1+\kappa)s}(x_{0}))\geq \mathcal{H}^{0}(\Sigma \cap \partial B_{\varrho}(x_{0})).
\end{equation*}
This, together with (\ref{6.20}) and the fact that $s\in [\kappa r, 2\kappa r]$, implies that
\begin{equation}\label{6.22}
\mathcal{H}^{0}(\Sigma \cap \partial B_{\varrho}(x_{0}))\leq \frac{1+\kappa}{\kappa}\theta_{\Sigma}(x_{0}, (1+\kappa)s) \leq \frac{10(1+\kappa)\overline{\mu}}{\kappa^{2}}.
\end{equation} 
Let $L$ realize the infimum in the definition of $\beta_{\Sigma}(x_{0}, \varrho)$ and let $\{\xi_{1},\xi_{2}\}=\partial B_{\varrho}(x_{0}) \cap L$. For each $z_{i} \in \Sigma \cap \partial B_{\varrho}(x_{0})$, let $z_{i}^{\prime}$ denote the projection of $z_{i}$ to $[\xi_{1}, \xi_{2}]$. Define $W$ and $\Sigma^{\prime}$ by
\[
W:=\bigcup_{i=1}^{\mathcal{H}^{0}(\Sigma \cap \partial B_{\varrho}(x_{0}))}[z_{i}, z_{i}^{\prime}], \qquad \Sigma^{\prime}:=W\cup [\xi_{1}, \xi_{2}] \cup (\Sigma \backslash B_{\varrho}(x_{0})).
\]
Then $\Sigma^{\prime} \in \mathcal{K}(\Omega)$, $\Sigma^{\prime}\Delta \Sigma \subset \overline{B}_{\varrho}(x_{0})$ and from (\ref{6.19}) it follows that $\beta_{\Sigma^{\prime}}(x_{0},\varrho) \leq 2\varepsilon/\kappa$. Furthermore, using (\ref{6.20}) and the facts that $\Sigma$ is arcwise connected and $r<\diam(\Sigma)/2$, it is easy to see that $\mathcal{H}^{1}(\Sigma^{\prime})\leq 100\mathcal{H}^{1}(\Sigma)$. Since $\Sigma^{\prime}$ is a competitor,
\[
\mathcal{H}^{1}(\Sigma)\leq \mathcal{H}^{1}(\Sigma^{\prime})+E_{f,\Omega}(u_{\Sigma})-E_{f,\Omega}(u_{\Sigma^{\prime}}),
\]
and then, using Proposition \ref{prop 6.7}, we get
\begin{align*}
&\mathcal{H}^{1}(\Sigma \cap B_{s}(x_{0})) \leq \mathcal{H}^{1}(\Sigma \cap B_{\varrho}(x_{0})) \leq 2 \varrho + \mathcal{H}^{1}(W)+ C\varrho\Bigl(\frac{\varrho}{r}\Bigr)^{b}w^{\tau}_{\Sigma}(x_{0},r)+C\varrho^{1+b}\\
& \qquad \leq 2(1+\kappa)s+\frac{10(1+\kappa)^{2}\overline{\mu}}{\kappa^{2}}\beta_{\Sigma}(x_{0},\varrho)s+ C(1+\kappa)s\biggl(\frac{(1+\kappa)s}{r}\biggr)^{b} w^{\tau}_{\Sigma}(x_{0}, r)+C((1+\kappa)s)^{1+b}, \numberthis \label{6.23}
\end{align*}
where we have used that $\mathcal{H}^{1}(W)\leq (\mathcal{H}^{0}(\Sigma \cap \partial B_{\varrho}(x_{0})))\beta_{\Sigma}(x_{0}, \varrho)\varrho$, (\ref{6.22}) and the fact that $\varrho \leq (1+\kappa)s$. 
Now we define the next three sets
\begin{align*}
&E_{1}:=\{t \in (0, 2\kappa r]: \mathcal{H}^{0}(\Sigma \cap \partial B_{t}(x_{0}))=1\}, \qquad E_{2}:=\{t \in (0, 2\kappa r]: \mathcal{H}^{0}(\Sigma \cap \partial B_{t}(x_{0}))=2\}, \\
&E_{3}:=\{t \in (0, 2\kappa r]: \mathcal{H}^{0}(\Sigma \cap \partial B_{t}(x_{0}))\geq 3\}. 
\end{align*}
We claim that  either $E_{1}=\emptyset$ or $E_{1}\subset  (0, \kappa r/200).$ Assume by contradiction that there exists some $t \in [\kappa r/200, 2\kappa r]$ such that $\mathcal{H}^{0}(\s \cap \partial B_{t}(x_{0})) = 1 $. Then  the set
\[
\Sigma^{\prime \prime}=\Sigma \backslash B_{t}(x_{0})
\]
would be arcwise connected, $\Sigma^{\prime \prime} \Delta \s \subset B_{t}(x_{0})$, $\mathcal{H}^{1}(\Sigma^{\prime \prime})<\mathcal{H}^{1}(\Sigma)$ and
\begin{equation}\label{6.24}
\beta_{\Sigma^{\prime \prime}}(x_{0},r)\leq 2\kappa + \varepsilon <\tau.
\end{equation}  
Since $\Sigma^{\prime \prime}$ is a competitor, $\mathcal{H}^{1}(\Sigma) \leq \mathcal{H}^{1}(\Sigma^{\prime \prime}) + E_{f,\Omega}(u_{\Sigma})-E_{f,\Omega}(u_{\Sigma^{\prime \prime}})$. On the other hand, we observe that $t \leq \mathcal{H}^{1}(\Sigma \cap B_{t}(x_{0}))$, because $t<\diam(\Sigma)/2$, $x_{0}\in \s$ and $\Sigma$ is arcwise connected. Thus
\begin{equation}\label{6.25}
t \leq \mathcal{H}^{1}(\Sigma\cap B_{t}(x_{0})) \leq E_{f, \Omega}(u_{\Sigma})-E_{ f, \Omega}(u_{\Sigma^{\prime \prime}}).
\end{equation}
Notice that, by assumption, the estimate (\ref{6.10}) holds with $C$, but looking at the proof of Proposition~\ref{6.7}, we observe that (\ref{2.7}) in Corollary~\ref{cor 2.15} also holds with $C$. Then, using (\ref{6.25}), Corollary~\ref{cor 2.15}, the fact that $t^{N+p^{\prime}-\frac{Np^{\prime}}{q}} < t^{1+b}$ (because $t\in (0,1)$ and $0<b<N-1+p^{\prime}-Np^{\prime}/q$) and (\ref{6.24}) together with the definition of $w^{\tau}_{\Sigma}(x_{0},r)$, we obtain the following chain of estimates
\begin{align*}
t \leq \mathcal{H}^{1}(\Sigma\cap B_{t}(x_{0})) \leq E_{f, \Omega}(u_{\Sigma})-E_{ f, \Omega}(u_{\Sigma^{\prime \prime}}) &\leq C \int_{B_{2t}(x_{0})}|\nabla u_{\Sigma^{\prime \prime}}|^{p}\diff x + C t^{N+p^{\prime}-\frac{Np^{\prime}}{q}}\\
& \leq C \int_{B_{r}(x_{0})}|\nabla u_{\Sigma^{\prime \prime}}|^{p}\diff x + C r^{1+b}\\
& \leq C r w^{\tau}_{\s}(x_{0}, r) + C r ^{1+b}, 
\end{align*}
leading to a contradiction with the fact that  $\kappa r/200\leq t$, since $Crw^{\tau}_{\Sigma}(x_{0},r)+Cr^{1+b}\leq 2Cr\varepsilon<\kappa r/200$ by (\ref{6.18}) and (\ref{6.17}). Thus, either $E_{1}=\emptyset$ or
\begin{equation}\label{6.26}
E_{1}\subset (0,\kappa r/200).
\end{equation}
Next, by the coarea inequality, 
\begin{equation}\label{6.27}
\mathcal{H}^{1}(\Sigma \cap B_{2\kappa r}(x_{0})) \geq \int^{2\kappa r}_{0}\mathcal{H}^{0}(\Sigma \cap \partial B_{t}(x_{0}))\diff t.
\end{equation}
Also, applying (\ref{6.23}) with $s=2\kappa r$ and using (\ref{6.17}), (\ref{6.18}) and the fact that $\beta_{\Sigma}(x_{0},\varrho) \leq 2\varepsilon/\kappa$, we get the following estimate
\begin{equation}\label{6.28}
\mathcal{H}^{1}(\Sigma \cap B_{2\kappa r}(x_{0})) \leq 4\kappa r + \frac{\kappa r}{200}.
\end{equation}
Then, (\ref{6.26}), (\ref{6.27}) and (\ref{6.28}) together imply 
\begin{align*}
4 \kappa r + \frac{\kappa r}{200} &\geq \mathcal{H}^{1}(E_{1})+ 2 \mathcal{H}^{1}(E_{2})+3\mathcal{H}^{1}(E_{3})\\
&\geq \mathcal{H}^{1}(E_{1})+ 2(2\kappa r - \mathcal{H}^{1}(E_{1})- \mathcal{H}^{1}(E_{3}))+ 3 \mathcal{H}^{1}(E_{3}) \\
&= 4\kappa r - \mathcal{H}^{1}(E_{1})+\mathcal{H}^{1}(E_{3})\\
&>4 \kappa r- \frac{\kappa r}{200} + \mathcal{H}^{1}(E_{3})
\end{align*}
and hence 
\begin{equation}\label{6.29}
\mathcal{H}^{1}(E_{3})<\frac{\kappa r}{100}.
\end{equation}
Notice that (\ref{6.26}) and (\ref{6.29}) yield the following estimate
\[
\mathcal{H}^{1}(E_{2}\cap [\kappa r, 2\kappa r]) > \frac{\kappa r}{2}.
\]
This completes the proof of ($i$).\\
\textit{Step 2.} We prove $(ii)$. Let $t \in E_{2} \cap [\kappa r, 2\kappa r]$. Assume that $(ii$-1) does not hold for $t$. Let $L$ be an affine line realizing the infimum in the definition of $\beta_{\Sigma}(x_{0},t)$, $\{P_{1}, P_{2}\}= L\cap \partial B_{t}(x_{0})$ and $\{z_{1}, z_{2}\}~=\Sigma \cap \partial B_{t}(x_{0})$. Assume that $\dist(z_{i}, \{P_{1}, P_{2}\})= \dist(z_{i}, P_{2}),\, i=1,2.$ Then we can take as a competitor the set 
\begin{equation*}
\Sigma^{\prime \prime \prime}=(\s \backslash B_{t}(x_{0})) \cup \gamma_{z_{1}, P_{2}} \cup \gamma_{z_{2}, P_{2}},
\end{equation*}
where $\gamma_{z_{i}, P_{2}}$ is the geodesic in $\partial B_{t}(x_{0})$ connecting $z_{i}$ and $P_{2}$ for $i=1,2$.
So 
\[
\mathcal{H}^{1}(\s \cap B_{t}(x_{0})) \leq \mathcal{H}^{1}(\gamma_{z_{1}, P_{2}})+\mathcal{H}^{1}(\gamma_{z_{2}, P_{2}}) + E_{f,\Omega}(u_{\s}) - E_{f,\Omega}(u_{\s^{\prime \prime \prime}}).
\]
Arguing as in the proof of the fact that $E_{1}\subset (0, \kappa r/200)$ in \textit{Step 1}, we obtain the estimate
\begin{equation*}
E_{f, \Omega }(u_{\s}) - E_{f, \Omega}(u_{\s^{\prime \prime \prime}}) < \frac{\kappa r}{200}. 
\end{equation*}
In addition, thanks to (\ref{6.19}) and to the fact that $\arcsin(s)\leq 2s$ for all $s \in [0, 1/10]$, 
\[
\mathcal{H}^{1}(\gamma_{z_{1}, P_{2}})+ \mathcal{H}^{1}(\gamma_{z_{2}, P_{2}})\leq 2t\arcsin(\beta_{\Sigma}(x_{0}, t)) \leq \frac{8\varepsilon t}{\kappa}.
\]
But then 
\[
\mathcal{H}^{1}(\Sigma \cap B_{t}(x_{0})) < \frac{\kappa r}{100}
\]
and this leads to a contradiction because $\mathcal{H}^{1}(\s \cap B_{t}(x_{0})) \geq t \geq \kappa r$. Therefore $(ii$-1) holds. Next, assume that $\s \cap \overline{B}_{t}(x_{0})$ is not arcwise connected. Then, from \cite[Lemma 5.13]{Opt}, it follows that $\s \backslash B_{t}(x_{0})$ is arcwise connected. Thus, taking the set $\s \backslash B_{t}(x_{0})$ as a competitor, by analogy with \textit{Step 1}, we get
\[
\mathcal{H}^{1}(\Sigma \cap B_{t}(x_{0})) < \frac{\kappa r}{200},
\]
which, as before, leads to a contradiction. Thus $(ii$-2) holds. Since $\s \cap \partial B_{t}(x_{0})= \{z_{1}, z_{2}\}$, where $z_{1}, z_{2}$ lie ``on different sides", the set $(\s \backslash B_{t}(x_{0})) \cup [z_{1}, z_{2}]$ is a competitor for $\s$, moreover, it fulfills the conditions of Proposition \ref{prop 6.7} and hence  (\ref{6.16}) holds. This proves $(ii)$ and completes the proof of Proposition~\ref{prop 6.8}.
\end{proof}

Now our purpose is to control the density $\theta_{\Sigma}$ from above on a smaller scale by its value on a larger scale, provided that on a larger scale $\beta_{\Sigma}$ and $w^{\tau}_{\Sigma}$ are small enough. Adapting some of the approaches of Stepanov and Paolini in \cite{Stepanov-Paolini}, we prove the following proposition.
\begin{prop}\label{prop 6.10} Let $p \in (N-1,+\infty)$, $f \in L^{q}(\Omega)$ with $q>q_{1}$, where $q_{1}$ is defined in (\ref{1.4}). Then there exists $\delta \in (0,1)$ and for each $a \in (0,1/20]$ there exists $\varepsilon \in (0,1)$ such that the following holds. Assume that $\s$ is a solution to Problem~\ref{P 1.1},  $x_{0} \in \Sigma$, $r\in (0, \min\{\delta, \diam(\Sigma)/2\}),$ $B_{r}(x_{0}) \subset \Omega$ and
	\begin{equation}\label{6.30}
	\beta_{\s}(x_{0}, r) + w^{\tau}_{\Sigma}(x_{0},r)\leq \varepsilon.
	\end{equation}
	Then the following estimate holds
    \begin{equation}\label{6.31}
    \theta_{\Sigma}(x_{0},ar)\leq 5+ \theta_{\Sigma}(x_{0},r)^{1-\frac{1}{N}}.
    \end{equation}
\end{prop}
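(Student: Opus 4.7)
The plan is to argue by constructing a suitable competitor at a scale just slightly larger than $ar$, then using the minimality of $\Sigma$ together with the defect-of-minimality estimate from Proposition~\ref{prop 6.7}. Given the exponent $1-1/N$ on the right-hand side, the main difficulty lies not in the competitor itself but in obtaining a sharp-enough control on $\mathcal{H}^0(\Sigma \cap \partial B_s(x_0))$ for a well-chosen $s$.

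First, I would fix $\delta\in (0,\overline{r})$ so small that all the preliminary results apply in $\overline{B}_r(x_0)$, and for each $a\in (0,1/20]$ fix $\varepsilon=\varepsilon(a)\in (0,1)$ small enough that inequality~\eqref{6.1} yields $\beta_\Sigma(x_0,s) \le (2/a)\varepsilon \le \tau/2$ uniformly on $[ar,r]$. In particular, Proposition~\ref{prop 6.7} becomes applicable at every intermediate scale $s\in[ar,r]$. I would then seek a good radius $s\in (ar,2ar]$ at which both the crossings and the flatness are controlled. A single use of the coarea inequality on the annulus $[ar,2ar]$ yields a radius with $\mathcal{H}^0(\Sigma\cap\partial B_s(x_0))\le C\,\theta_\Sigma(x_0,r)$; to sharpen this into the exponent $1-1/N$, I would argue on a carefully chosen family of concentric annuli and optimize the widths, balancing the number of bridges in the competitor against their transverse length. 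This is the step where the approach of Stepanov--Paolini has to be adapted to codimension $N-1$, and it is the main obstacle.

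Second, having selected $s$, I would build the competitor
$$\Sigma' = (\Sigma\setminus B_s(x_0))\cup\Biggl(\bigcup_{i=1}^{\mathcal{H}^0(\Sigma\cap\partial B_s(x_0))} [z_i,z_i']\Biggr)\cup(L\cap\overline{B}_s(x_0)),$$
where $L$ is an affine line through $x_0$ realizing $\beta_\Sigma(x_0,s)$, $\{z_i\}=\Sigma\cap\partial B_s(x_0)$, and $z_i'$ is the orthogonal projection of $z_i$ onto $L\cap\overline{B}_s(x_0)$. A routine check shows $\Sigma'\in \mathcal{K}(\Omega)$, $\Sigma'\Delta\Sigma\subset\overline{B}_s(x_0)$, $\beta_{\Sigma'}(x_0,s)\le \beta_\Sigma(x_0,s)\le\tau$, and, using $r<\diam(\Sigma)/2$ together with the smallness of the construction, that $\mathcal{H}^1(\Sigma')\le 100\,\mathcal{H}^1(\Sigma)$. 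The length contribution of the new part is bounded by $2s + \mathcal{H}^0(\Sigma\cap\partial B_s(x_0))\,\beta_\Sigma(x_0,s)\,s$.

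Third, minimality of $\Sigma$ combined with Proposition~\ref{prop 6.7} gives
$$\mathcal{H}^1(\Sigma\cap\overline{B}_s(x_0))\le \mathcal{H}^1(\Sigma'\cap\overline{B}_s(x_0))+Cs\Bigl(\tfrac{s}{r}\Bigr)^{b}w^{\tau}_\Sigma(x_0,r) + Cs^{1+b}.$$
Dividing by $ar$ and using $s\le 2ar$, the geometric leading term contributes $\le 2s/(ar)\le 4$, the bridge term contributes $\le \mathcal{H}^0(\Sigma\cap\partial B_s(x_0))\,\beta_\Sigma(x_0,s)$, and the error $Cs(s/r)^{b}w^\tau_\Sigma(x_0,r)+Cs^{1+b}$ is absorbed into a further unit by taking $\delta$ small enough (using $w^\tau_\Sigma(x_0,r)\le \varepsilon$ and $r\le \delta$). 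Feeding into the final inequality the sharp bound on $\mathcal{H}^0(\Sigma\cap\partial B_s(x_0))\,\beta_\Sigma(x_0,s)$ obtained in the first step yields $\theta_\Sigma(x_0,ar)\le 5 + \theta_\Sigma(x_0,r)^{1-1/N}$, which is~\eqref{6.31}.
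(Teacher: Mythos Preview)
Your overall architecture is correct---choose a good radius $\varrho\in[ar,2ar]$ via coarea, build a competitor that replaces $\Sigma\cap B_\varrho(x_0)$ by a diameter plus short bridges, and invoke minimality together with Proposition~\ref{prop 6.7}. The gap is in how you obtain the exponent $1-\tfrac{1}{N}$. With your competitor
\[
\Sigma'=(\Sigma\setminus B_s(x_0))\cup\bigcup_i[z_i,z_i']\cup(L\cap\overline{B}_s(x_0)),
\]
the bridge length is exactly $\mathcal{H}^0(\Sigma\cap\partial B_s(x_0))\cdot\beta_\Sigma(x_0,s)\,s$, i.e.\ \emph{linear} in the number of crossings; coarea only gives $\mathcal{H}^0\lesssim a^{-1}\theta_\Sigma(x_0,r)$, and no amount of optimizing over concentric annuli will turn a linear dependence into a sublinear one---the transverse length $\beta_\Sigma(x_0,s)\,s$ of each bridge is fixed by the geometry and has no free parameter to balance against.

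The missing idea is that the $1-\tfrac{1}{N}$ is a Steiner-type saving in the \emph{transverse} $(N-1)$-dimensional direction, not a longitudinal one. The $m=\mathcal{H}^0(\Sigma\cap\partial B_\varrho(x_0))$ exit points all lie in the two small $(N-1)$-dimensional caps $\partial B_\varrho(x_0)\cap\{\dist(\cdot,L)\le\beta_\Sigma(x_0,\varrho)\varrho\}$, each contained in a cube of side $\sim\beta_\Sigma(x_0,\varrho)\varrho$. Instead of joining each $z_i$ straight to $L$, the paper places in each cap a rescaled uniform $1$-dimensional grid $G_n\subset[-1,1]^N$ of step $1/n$: the grid has length $\lesssim n^{N-1}\beta\varrho$, and each $z_i$ is within $\lesssim n^{-1}\beta\varrho$ of it, so the total connecting length is $\lesssim(n^{N-1}+m/n)\beta\varrho$. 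Optimizing $n\sim m^{1/N}$ gives $\mathcal{H}^1(\widetilde{S}_n)\lesssim m^{1-1/N}\beta_\Sigma(x_0,\varrho)\varrho$, and \emph{this} is where the exponent comes from. Feeding $m\le a^{-1}\theta_\Sigma(x_0,r)$ into that bound and then absorbing the remaining factors of $a$ and $\beta_\Sigma$ into the choice of $\varepsilon=\varepsilon(a)$ yields~\eqref{6.31}. Your step ``adapt Stepanov--Paolini to codimension $N-1$'' is precisely this grid construction; without it the argument stalls at $\theta_\Sigma(x_0,ar)\lesssim 5+\theta_\Sigma(x_0,r)$, which is not self-improving.
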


\begin{proof} Let $\varepsilon_{0}, b, \overline{r} \in (0,1)$ be the constants of Lemma~\ref{lem 4.6} and let $C>0$ be the constant of Proposition~\ref{prop 6.7}. Recall that $\tau \in (0,\varepsilon_{0}/6]$. We define $\delta, \varepsilon \in (0,1)$ as follows
	\begin{equation}\label{6.32}
	\delta=\min\Biggl\{\overline{r}, \biggl(\frac{1}{4C}\biggr)^{\frac{1}{b}}\Biggr\}, \qquad \varepsilon =\frac{a^{2} c_{0}\tau}{10^{7}},
	\end{equation}
	where $c_{0}>0$ is a constant that will be fixed later for the proof to work. 
		It is worth noting that, according to (\ref{6.1}) and (\ref{6.30}), for all $s \in [ar, r]$, it holds
	\begin{equation}\label{6.33}
	\beta_{\Sigma}(x_{0},s)\leq \frac{2}{a}\beta_{\Sigma}(x_{0},r)\leq \frac{2\varepsilon}{a}.
	\end{equation}
	Applying the coarea inequality (see, for instance, \cite[Theorem 2.1]{Paolini-Stepanov}), we get
	\[ 
	\mathcal{H}^{1}(\Sigma \cap B_{r}(x_{0}))\geq \int^{r}_{0}\mathcal{H}^{0}(\Sigma \cap \partial B_{\varrho}(x_{0}))\diff \varrho > \int^{2ar}_{ar}\mathcal{H}^{0}(\Sigma \cap \partial B_{\varrho}(x_{0}))\diff \varrho,
	\]
	where the latter inequality comes from the fact that for all $\varrho \in (0, r]$, $\mathcal{H}^{0}(\Sigma \cap \partial B_{\varrho}(x_{0}))\geq 1$, since $\Sigma$ is arcwise connected (see Remark~\ref{arcwise connected}), $x_{0}\in \Sigma$ and $r<\diam(\Sigma)/2$. Then there exists $\varrho \in [ar, 2ar]$ such that 
	\begin{equation}\label{6.34}
	\mathcal{H}^{0}(\Sigma \cap \partial B_{\varrho}(x_{0}))\leq \frac{1}{a}\theta_{\Sigma}(x_{0}, r).
	\end{equation}
	Next, we construct  the competitor $\Sigma^{\prime}$ for $\Sigma$ such that $\Sigma^{\prime} \Delta \Sigma\subset \overline{B}_{\varrho}(x_{0})$, $\mathcal{H}^{1}(\Sigma^{\prime})\leq 100\mathcal{H}^{1}(\Sigma)$ and $\beta_{\Sigma^{\prime}}(x_{0},\varrho) \leq \beta_{\Sigma}(x_{0}, \varrho)$. Let $L\subset \mathbb{R}^{N}$ be an affine line realizing the infimum in the definition of $\beta_{\Sigma}(x_{0},\varrho)$. We denote by $A_{1}$ and $A_{2}$ the two points in $\partial B_{\varrho}(x_{0})\cap L$ and denote by $G_{n}$ the set of all points $(x^{\prime}, x_{N})$ in $[-1,1]^{N}$ such that $nx_{i} \in \mathbb{Z}$ for all $i=1,...,N$ except for at most one (i.e., $G_{n}$ is a uniform 1-dimensional grid of step $1/n$ in $[-1,1]^{N}$). Notice that $G_{n}$ is arcwise connected and 
	 \begin{equation}\label{6.35}
	 \mathcal{H}^{1}(G_{n}) \leq 2^{N}N(n+1)^{N-1},\,\ \dist (y, G_{n})\leq \frac{\sqrt{N}}{2n}
	 \end{equation}
	 for all $y \in [-1,1]^{N}$. Let $h:\mathbb{R}^{N} \to \mathbb{R}^{N}$ be the rotation around the origin such that $h(\mathbb{R}e_{N})=L-x_{0}$, where $\{e_{1},...,e_{N}\}$ is the canonical basis for $\mathbb{R}^{N}$. Next, we define $Q^{i}_{n}:=A_{i}+\beta_{\Sigma}(x_{0},\varrho)\varrho h(G_{n}),\, i=1,2$. In addition, we observe that \[\Sigma \cap \partial B_{\varrho}(x_{0}) \subset \partial B_{\varrho}(x_{0}) \cap \bigl\{x \in \mathbb{R}^{N}: \dist(x,L)\leq \beta_{\Sigma}(x_{0},\varrho)\varrho\bigr\} \subset \bigcup_{i=1}^{2}\bigl(A_{i}+\beta_{\Sigma}(x_{0},\varrho)\varrho h\bigl([-1,1]^{N}\bigr)\bigr).
	 \] 
	 For each point $z_{j} \in \Sigma \cap \partial B_{\varrho}(x_{0})$, we denote by $z^{n}_{j}$ an arbitrary projection of $z_{j}$ to $Q^{1}_{n}\cup Q^{2}_{n}$ and by $[z_{j}, z^{n}_{j}]$ the segment connecting these two points. Then the set 
	 \[
	 S_{n}=Q^{1}_{n}\cup Q^{2}_{n}\cup \Biggl(\bigcup^{\mathcal{H}^{0}(\Sigma \cap \partial B_{\varrho}(x_{0}))}_{j=1}[z_{j},z^{n}_{j}]\Biggr)
	 \]
	 contains all the points of $\Sigma \cap \partial B_{\varrho}(x_{0})$, $S_{n}\cup (L\cap B_{\varrho}(x_{0}))$ is arcwise connected, and, using (\ref{6.35}), we have that
	 \[
	 \mathcal{H}^{1}(S_{n})\leq 2^{N+1} N(n+1)^{N-1}\beta_{\Sigma}(x_{0},\varrho)\varrho+\frac{\sqrt{N}}{2n}\mathcal{H}^{0}(\Sigma\cap \partial B_{\varrho}(x_{0}))\beta_{\Sigma}(x_{0}, \varrho)\varrho.
	 \]
	 Let $\widetilde{S}_{n}$ be the projection of $S_{n}$ to $\{x \in \mathbb{R}^{N}: \dist(x,L)\leq \beta_{\Sigma}(x_{0},\varrho)\varrho\}\cap \overline{B}_{\varrho}(x_{0})$. Since the projection onto a nonempty closed convex set is a $1$-Lipschitz mapping, it follows that $\mathcal{H}^{1}(\widetilde{S}_{n})\leq \mathcal{H}^{1}(S_{n})$. Moreover, notice that $\widetilde{S}_{n}\cup (L\cap B_{\varrho}(x_{0}))$ is arcwise connected. Thus, defining
	 \[
	 \Sigma^{\prime}=(\Sigma \backslash B_{\varrho}(x_{0})) \cup \widetilde{S}_{n} \cup (L\cap B_{\varrho}(x_{0}))
	 \]
	 and choosing $n=\floor{(\mathcal{H}^{0}(\Sigma \cap \partial B_{\varrho}(x_{0})))^{\frac{1}{N}}}$, where $\floor{\cdot}$ denotes the integer part, we observe that 
	 \begin{equation}\label{6.36}
	 \mathcal{H}^{1}(\widetilde{S}_{n})\leq M_{0}(\mathcal{H}^{0}(\Sigma \cap \partial B_{\varrho}(x_{0})))^{1-\frac{1}{N}}\beta_{\Sigma}(x_{0},\varrho)\varrho,
	 \end{equation}
	 where $M_{0}=M_{0}(N)>0$. Now we can set 
      \begin{equation}\label{6.37}
      c_{0}=(M_{0}C)^{-1}. 
      \end{equation}
      Thanks to (\ref{6.34}) and (\ref{6.36}), we obtain
      \[ 
      \mathcal{H}^{1}(\widetilde{S}_{n})< M_{0}\biggl(\frac{1}{a}\theta_{\Sigma}(x_{0}, r)\biggr)^{1-\frac{1}{N}}\beta_{\Sigma}(x_{0},\varrho)\varrho.
      \]
      This, together with (\ref{6.33}), (\ref{6.32}), (\ref{6.37}) and the fact that $2\varrho\leq 4ar < \diam(\Sigma)\leq \mathcal{H}^{1}(\Sigma)$, implies the following \[\mathcal{H}^{1}(\Sigma^{\prime})<100 \mathcal{H}^{1}(\Sigma).\] Also notice that $\Sigma^{\prime}\subset \overline{\Omega}$ is closed, arcwise connected, $\Sigma^{\prime}\Delta \Sigma \subset \overline{B}_{\varrho}(x_{0})$, 
      \[
      \beta_{\Sigma^{\prime}}(x_{0}, \varrho)\leq \beta_{\Sigma}(x_{0}, \varrho)\leq \frac{2\varepsilon}{a}<\tau
     \] 
      (see (\ref{6.33}), (\ref{6.32})). So we can apply Proposition~\ref{prop 6.7} to $\Sigma$ and $\Sigma^{\prime}$.
	 Thus, by the optimality of $\Sigma$ and Proposition~\ref{prop 6.7},  
	 \begin{equation*}
	 \mathcal{H}^{1}(\Sigma)\leq E_{f, \Omega}(u_{\Sigma})-E_{f,\Omega}(u_{\Sigma^{\prime}})+ \mathcal{H}^{1}(\Sigma^{\prime}) \leq C \varrho\Bigl(\frac{\varrho}{r}\Bigr)^{b}w^{\tau}_{\Sigma}(x_{0}, r) +C\varrho^{1+b}+\mathcal{H}^{1}(\Sigma^{\prime}).
	 \end{equation*}
     Altogether we have
    \[
    \mathcal{H}^{1}(\Sigma \cap B_{ar}(x_{0}))\leq \mathcal{H}^{1}(\Sigma \cap B_{\varrho}(x_{0})) \leq C\varrho\Bigl(\frac{\varrho}{r}\Bigr)^{b} w^{\tau}_{\Sigma}(x_{0}, r)+C\varrho^{1+b} +2\varrho+ M_{0}\biggl(\frac{1}{a}\theta_{\Sigma}(x_{0},r)\biggr)^{1-\frac{1}{N}}\beta_{\Sigma}(x_{0},\varrho)\varrho.
    \]
    Next, recalling that $\varrho \in [ar, 2ar]$, $r<\delta$, $(2a)^{b} < 1$ and (\ref{6.33}), we obtain
    \begin{equation*}
    \theta_{\Sigma}(x_{0}, ar) \leq 2C\bigl(w^{\tau}_{\Sigma}(x_{0},r)+\delta^{b}\bigr)+4 + \frac{4 \varepsilon M_{0}}{a}\biggl(\frac{1}{a}\theta_{\Sigma}(x_{0},r)\biggr)^{1-\frac{1}{N}}.
    \end{equation*}
    However, this, together with (\ref{6.30}), (\ref{6.32}) and (\ref{6.37}), yields the estimate
   \[
    \theta_{\Sigma}(x_{0},ar)\leq 5 + \theta_{\Sigma}(x_{0},r)^{1-\frac{1}{N}}
   \]
   and completes the proof of Proposition~\ref{6.10}.
\end{proof}

\begin{subsection}{Control of the flatness}
The next proposition asserts that if $\beta_{\Sigma}(x,r)$ and $w^{\tau}_{\Sigma}(x,r)$ are pretty small and $\theta_{\Sigma}(x,r)$ is controlled from above by $10 \overline{\mu}$, where $\overline{\mu}$ is a unique positive solution to the equation $\mu=5+\mu^{1-\frac{1}{N}}$, then $\beta_{\Sigma}$, $w^{\tau}_{\Sigma}$ stay small and $\theta_{\Sigma}$ remains controlled from above by $10\overline{\mu}$ on smaller scales, and, in addition, in some sense $w^{\tau}_{\Sigma}$ controls the square of $\beta_{\Sigma}$.
\begin{prop} \label{prop 6.11} Let $p \in (N-1,+\infty)$, $f \in L^{q}(\Om)$  with $q> q_{1}$, where $q_{1}$ is defined in (\ref{1.4}). Then there exist constants $a, r_{0}\in (0,1/100)$, $b\in (0,1)$, $0<\delta_{1}<\delta_{2}<1/100$ and $C=C(N, p, q_{0},q,  \|f\|_{q}, |\Omega|)>0$ with $q_{0}$ defined in (\ref{Eq 1.1}) such that the following holds. Assume that $\s$ is a solution to Problem~\ref{P 1.1}, $x \in \s$, $0<r<\min\{r_{0},\diam(\Sigma)/2\}$, ${B}_{r}(x) \subset \Omega$, 
\begin{equation} \label{6.38}
w^{\tau}_{\Sigma}(x,r)\leq \delta_{1}, \,\ \beta_{\Sigma}(x,r)\leq \delta_{2} \,\ \text{and} \,\ \theta_{\Sigma}(x,r) \leq 10\overline{\mu},
\end{equation}
where $\overline{\mu}>0$ is a unique positive solution to the equation $\mu=5+\mu^{1-\frac{1}{N}}$. Then 
\begin{enumerate}[label=(\roman*)]
	\item 
	\begin{equation}\label{6.39}
	\beta_{\s}(x, a r) \leq C(w^{\tau}_{\s}(x, r))^{\frac{1}{2}} + C r^{\frac{b}{2}};
	\end{equation}
	
	\item 
	
	\begin{equation}\label{6.40}
	w^{\tau}_{\Sigma}(x, a r)\leq \frac{1}{2} w^{\tau}_{\Sigma}(x, r)+ C(a r)^{b};
	\end{equation}
	\item 
	\begin{equation}\label{6.41}
	 w^{\tau}_{\s}(x, a^{n}r) \leq \delta_{1}, \,\ \beta_{\s}(x, a^{n}r)\leq \delta_{2}, \,\ \theta_{\Sigma}(x,a^{n}r)\leq 10\overline{\mu} \,\ \text{for all} \,\ n\in \mathbb{N}.
	\end{equation}
\end{enumerate}
\end{prop}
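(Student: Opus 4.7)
The three items are proved in order: (ii) from Proposition~\ref{prop 6.6}, (i) from Proposition~\ref{prop 6.8} combined with a length--to--flatness conversion, and (iii) by induction, feeding (i)--(ii) into the density control of Proposition~\ref{prop 6.10}. The delicate point is the simultaneous calibration of $a,r_0,\delta_1,\delta_2$. For~(ii), requiring $\delta_2\le a\tau/4$ forces $\beta_\Sigma(x,s)\le\tau/2$ on the whole interval $[ar,r]$ by~(\ref{6.1}), so Proposition~\ref{prop 6.6} applied between these two scales yields $w^\tau_\Sigma(x,ar)\le Ca^{b}w^\tau_\Sigma(x,r)+C(ar)^{b}$; we then pick $a\in(0,1/100)$ with $Ca^{b}\le 1/2$.

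For~(i), arranging $\delta_1+\delta_2$ below the threshold of Proposition~\ref{prop 6.8} produces $t\in[\kappa r,2\kappa r]$ with $\Sigma\cap\partial B_t(x)=\{z_1,z_2\}$ on opposite sides, together with the sharp length bound
\[
L:=\mathcal{H}^{1}(\Sigma\cap B_t(x))\le|z_2-z_1|+\eta,\qquad \eta=Ct(t/r)^{b}w^\tau_\Sigma(x,r)+Ct^{1+b}.
\]
Since $\Sigma\cap\overline{B}_t(x)$ is arcwise connected and contains $\{x,z_1,z_2\}$, any $y$ in this set satisfies $|y-z_1|+|y-z_2|\le|z_2-z_1|+2\eta$: take a simple arc $\gamma\subset\Sigma\cap\overline{B}_t(x)$ from $z_1$ to $z_2$; if $y\in\gamma$, then $|y-z_1|+|y-z_2|\le\mathcal{H}^{1}(\gamma)\le L$, otherwise append a simple arc $\sigma$ from $y$ to $\gamma$ meeting $\gamma$ only at its endpoint to get $|y-z_1|+|y-z_2|\le\mathcal{H}^{1}(\gamma)+2\mathcal{H}^{1}(\sigma)\le 2L-|z_2-z_1|$. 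The classical ellipse inequality then yields $\dist(y,[z_1,z_2])\le C\sqrt{t\eta}$. A projection argument on the subarcs $\alpha_1,\alpha_2\subset\Sigma\cap\overline{B}_t(x)$ from $x$ to $z_1,z_2$ provides the reverse direction for the line $L^{\sharp}$ through $x$ parallel to $z_2-z_1$, so that $\beta_\Sigma(x,t)\le C\sqrt{w^\tau_\Sigma(x,r)}+Cr^{b/2}$; then~(\ref{6.1}) with $a$ comparable to $\kappa$ transports this bound to scale $ar$, yielding~(\ref{6.39}).

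For~(iii) we induct on $n$, the base case being~(\ref{6.38}). Granted~(\ref{6.41}) at level $n$, item~(ii) gives $w^\tau_\Sigma(x,a^{n+1}r)\le\tfrac12\delta_1+C(a^{n+1}r)^{b}\le\delta_1$ provided $r_0^{b}\le\delta_1/(2C)$; item~(i) gives $\beta_\Sigma(x,a^{n+1}r)\le C\sqrt{\delta_1}+C(a^nr)^{b/2}\le\delta_2$ once $\delta_1$ and $r_0$ are small enough relative to $\delta_2$; and Proposition~\ref{prop 6.10} yields
\[
\theta_\Sigma(x,a^{n+1}r)\le 5+\theta_\Sigma(x,a^nr)^{1-1/N}\le 5+(10\overline{\mu})^{1-1/N}\le 5+10(\overline{\mu}-5)\le 10\overline{\mu},
\]
where we used $(10\overline{\mu})^{1-1/N}\le 10\overline{\mu}^{1-1/N}=10(\overline{\mu}-5)$, coming from the defining equation $\overline{\mu}=5+\overline{\mu}^{1-1/N}$; this closes the induction.

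The main obstacle is the length--to--flatness passage in~(i): converting~(\ref{6.16}) into a quantitative bound on $\beta_\Sigma$ requires both directions of the Hausdorff-distance estimate between $\Sigma\cap\overline{B}_t(x)$ and $L^{\sharp}\cap\overline{B}_t(x)$ (the ellipse argument for one, a projection/interval-covering argument on the two arcs $\alpha_1,\alpha_2$ for the other), and care that the resulting line is taken through $x$ rather than through $z_1$. Everything else is careful bookkeeping of the smallness constants so that the three conditions reproduce at each dyadic scale, exactly as encoded in the factor $10$ and the threshold $\overline{\mu}$.
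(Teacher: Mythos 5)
Your proposal is correct and follows the paper's approach: (ii) from Proposition~\ref{prop 6.6} after pushing the flatness hypothesis down to all scales in $[ar,r]$ via (\ref{6.1}), (i) from Proposition~\ref{prop 6.8} together with an ellipse/length-to-flatness argument and a projection for the reverse Hausdorff direction, and (iii) by induction feeding (i)--(ii) into Proposition~\ref{prop 6.10} and closing the loop with the inequality $5+(10\overline{\mu})^{1-\frac{1}{N}}\le 10\overline{\mu}$ coming from the defining equation of $\overline{\mu}$. The only differences are organizational: you treat (ii) before (i) and replace the paper's two-stage estimate (Lemma~\ref{lem A.3} on an arc $\Gamma$, then a separate bound on $\Sigma\cap B_t(x)\setminus\Gamma$) by the single pointwise chord bound $|y-z_1|+|y-z_2|\le|z_2-z_1|+2\eta$ followed by the ellipse inequality, which is cosmetic bookkeeping.
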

\begin{proof} Let $C_{0}$ be the constant such that the estimate (\ref{4.31}) holds with $C_{0}$, and let $C_{1}$ be the constant such that the estimate (\ref{6.16}) holds with $C_{1}$. Without loss of generality, we can assume that $C_{0}<C_{1}$. Let $b \in (0,1)$ be the constant of Lemma~\ref{lem 4.6}, and let $a,\delta, \varepsilon, \kappa \in (0,1/100)$ be such that $\delta, \varepsilon, \kappa$ are the constants of Proposition~\ref{prop 6.8} and, at the same time, $a, \delta, \varepsilon$ are the constants of Proposition~\ref{prop 6.10} with
	\[ a=\min\Biggl\{\kappa ,\biggl(\frac{1}{2C_{0}}\biggr)^{\frac{1}{b}}\Biggr\}.\,\ 
	\]
	Now we can set 
	\begin{equation}\label{the constants}
	\delta_{2}:=\frac{a\varepsilon}{2} ,\,\ \delta_{1}:=\biggl(\frac{a \delta_{2}}{50 C_{1}}\biggr)^{2},\,\  C:=\frac{24C_{1}}{a} 
	\end{equation}
	and fix $r_{0} \in (0, \delta)$ such that 
	\begin{equation}\label{6.42}
	C r_{0}^{\frac{b}{2}} \leq \frac{\delta_{1}}{2}. 
	\end{equation}
	\textit{Step 1}. Let us first prove $(i)$. By Proposition~\ref{prop 6.8}, there exists $t \in [\kappa r, 2 \kappa r]$ such that $ \s \cap \partial B_{t}(x)=\{z_{1}, z_{2}\}$, $z_{1}$ and $z_{2}$ lie  ``on different sides" (see Remark \ref{rem 6.9}). According to Proposition \ref{prop 6.8} $(ii$-3), we get 
	\begin{align*}
	\mathcal{H}^{1}(\s \cap B_{t}(x))& \leq |z_{1}-z_{2}| + C_{1}t\Bigl(\frac{t}{r}\Bigr)^{b} w^{\tau}_{\s}(x, r) + C_{1}t^{1+b} := |z_{1} - z_{2}| + M. 
	\end{align*}
	Recall that, by Proposition~\ref{prop 6.8} ($ii$-2), $\Sigma \cap \overline{B}_{t}(x)$ is arcwise connected. Let $\Gamma \subset \s \cap \overline{B}_{t}(x)$ be an arc connecting $z_{1}$ with $z_{2}$. Then, using Lemma~\ref{lem A.3}, we obtain 
	\begin{align*}
	\max_{y \in \Gamma}\dist(y, [z_{1}, z_{2}]) & \leq (2t(\mathcal{H}^{1}(\Gamma)- |z_{1}-z_{2}|))^{\frac{1}{2}} \leq (4 \kappa rM)^{\frac{1}{2}}. 
	\end{align*}
	Since $\s \cap \overline{B}_{t}(x)$ is arcwise connected, $\Sigma \cap \partial B_{t}(x)=\{z_{1},z_{2}\}$ and $\mathcal{H}^{1}(\Gamma)\geq |z_{1}-z_{2}|$,
	\begin{align*}
	\sup_{y\in (\s \cap \overline{B}_{t}(x))\backslash(\Gamma \cap B_{t}(x))} \dist (y, \Gamma) \leq \mathcal{H}^{1}(\s \cap B_{t}(x) \backslash \Gamma) & \leq \mathcal{H}^{1}(\s \cap B_{t}(x)) - |z_{1} -z_{2}| \leq M.
	\end{align*}
	Thus
	\begin{equation*}
	\max_{y \in \s \cap \overline{B}_{t}(x)} \dist(y, [z_{1}, z_{2}]) \leq (4\kappa rM)^{\frac{1}{2}}+M
	\end{equation*}
	but this yields the following estimate
	\begin{equation}\label{6.43}
	d_{H}(\s \cap \overline{B}_{t}(x), [z_{1}, z_{2}]) \leq (4\kappa rM)^{\frac{1}{2}}+M,
	\end{equation}
	because $\Sigma\cap \overline{B}_{t}(x)$ is arcwise connected and $\Sigma$ escapes $\partial B_{t}(x)$ either through $z_{1}$ or through $z_{2}$. Without loss of generality, assume that $[z_{1},z_{2}]$ is not a diameter of $\overline{B}_{t}(x)$, otherwise we can pass directly to the estimate (\ref{6.45}). So let $\widetilde{L}$ be the line passing through $x$ and collinear to $[z_{1},z_{2}]$. Now observe that if $\Pi$ is the $2$-dimensional plane passing through $\widetilde{L}$ and $[z_{1},z_{2}]$, then the intersection of $\Pi$ with $\partial B_{t}(x)$ is the circle $S$ on $\Pi$ with center $x$ and radius $t$. Then, denoting by $\xi_{1}$ and $\xi_{2}$ the two points in $\widetilde{L} \cap \partial B_{t}(x)$ in such a way that $\dist(\xi_{i}, \{z_{1},z_{2}\})=\dist(\xi_{i}, z_{i})$ for $i=1,2$, we get
	\begin{equation}\label{estimate inside}
	d_{H}([z_{1},z_{2}], \widetilde{L} \cap \overline{B}_{t}(x)) \leq \mathcal{H}^{1}(\gamma_{z_{1}, \xi_{1}})=\mathcal{H}^{1}(\gamma_{z_{2}, \xi_{2}}),
	\end{equation}
	where $\gamma_{z_{i}, \xi_{i}}$ is the geodesic in $S$ connecting $z_{i}$ with $\xi_{i}$. Since $\dist(x, [z_{1},z_{2}])\leq (4\kappa rM)^{\frac{1}{2}}+M$ (see (\ref{6.43})), 
	\begin{equation}\label{6.44}
	\mathcal{H}^{1}(\gamma_{z_{1},\xi_{1}}) \leq \arcsin \biggl(\frac{(4\kappa rM)^{\frac{1}{2}}+M}{t}\biggr)t \leq 2((4\kappa rM)^{\frac{1}{2}}+M), 
	\end{equation}
	where the latter estimate holds because $((4 \kappa rM)^{\frac{1}{2}}+M)/t<1/10$ and $\arcsin(s)\leq 2s$ for all $s \in [0, 1/10]$. Using (\ref{6.43}) together with (\ref{estimate inside}) and (\ref{6.44}), we obtain that
	\begin{equation*}
	d_{H}(\s\cap \overline{B}_{t}(x), \widetilde{L} \cap \overline{B}_{t}(x)) \leq 3((4\kappa rM)^{\frac{1}{2}}+M)
	\end{equation*}
	and hence $\beta_{\s}(x,t) \leq 3((4\kappa rM)^{\frac{1}{2}}+M)/t.$ Next, since $t \in [\kappa r, 2\kappa r]$ and $a \in (0, \kappa]$, if $ar=\lambda t$ for some $\lambda \in (0,1]$, then $2/\lambda\leq 4\kappa/a$ and, thanks to (\ref{6.1}),
	\begin{align*}
	\beta_{\s}(x, ar)=\beta_{\Sigma}(x, \lambda t) & \leq \frac{4\kappa}{a} \beta_{\s}(x,t) \leq \frac{12}{a r}((4\kappa rM)^{\frac{1}{2}}+M). \numberthis\label{6.45}
	\end{align*}
	On the other hand, since $\kappa, w^{\tau}_{\Sigma}(x,r), r \in (0,1/100)$ and $b \in(0,1)$, we can conclude the following
	\begin{align*}
	(4\kappa r M)^{\frac{1}{2}} \leq \bigl(C_{1} r^{2}w^{\tau}_{\s}(x,r) + C_{1} r^{2+b}\bigr)^{\frac{1}{2}} \leq C_{1} r (w^{\tau}_{\s}(x, r))^{\frac{1}{2}} + C_{1} r^{1+\frac{b}{2}} \numberthis \label{9.40}
	\end{align*}
	and, moreover, 
	\begin{equation}\label{6.47}
	M= C_{1}t\Bigl(\frac{t}{r}\Bigr)^{b} w^{\tau}_{\s}(x, r)+ C_{1}t^{1+b} \leq C_{1} r (w^{\tau}_{\s}(x, r))^{\frac{1}{2}} + C_{1} r^{1+\frac{b}{2}}. 
	\end{equation}
	By (\ref{6.45})-(\ref{6.47}),
	\[
	\beta_{\s}(x,ar) \leq C(w^{\tau}_{\s}(x, r))^{\frac{1}{2}} + Cr^{\frac{b}{2}},
	\]
	with $C=24  C_{1}/a$. Using (\ref{6.38}), the above estimate, (\ref{the constants}) and (\ref{6.42}), we get 
	\[
	\beta_{\s}(x, ar) \leq C(\delta_{1})^{\frac{1}{2}}+ C r_{0}^{\frac{b}{2}} < \delta_{2}.
	\]  Next, observe that $a<1/100$ and $\beta_{\Sigma}(x, s)$ is fairly small for all $s \in [ar, r]$, so we can apply Proposition~\ref{prop 6.6} with $r_{0}=ar$ and $r_{1}=r$ to get the following
	\begin{equation*}
	w^{\tau}_{\s}(x, ar) \leq C_{0} a^{b} w^{\tau}_{\s}(x, r) + C_{0} (ar)^{b} \leq \frac{1}{2} w^{\tau}_{\s}(x, r) + C(ar)^{b} \leq \frac{\delta_{1}}{2} + \frac{\delta_{1}}{2} = \delta_{1},
	\end{equation*}
	where we have used that $a\leq (1/2C_{0})^{\frac{1}{b}}$, $C_{0}(ar)^{b}<C(ar)^{b}<Cr^{\frac{b}{2}}_{0}$, (\ref{6.38}) and (\ref{6.42}). We have proved the assertions $(i)$, $(ii)$ and that $w^{\tau}_{\Sigma}(x, ar)\leq \delta_{1}$, $\beta_{\Sigma}(x, ar)\leq \delta_{2}$.
     \\
    \textit{Step 2}. We prove $(iii)$. Recall that $a, \delta, \varepsilon \in (0,1/100)$ are the constants of Proposition~\ref{prop 6.10} and, by definition, $\delta_{1}<\delta_{2}=a\varepsilon/2$. Then, according to (\ref{6.38}), 
    \[\beta_{\Sigma}(x,r) +w^{\tau}_{\Sigma}(x,r)\leq \varepsilon.\] Thus, applying Proposition~\ref{prop 6.10} and using again (\ref{6.38}), we get
    \[\theta_{\Sigma}(x,ar)\leq 5+\theta_{\Sigma}(x,r)^{1-\frac{1}{N}}\leq 5+(10\overline{\mu})^{1-\frac{1}{N}}\leq  10\bigl(5+\overline{\mu}^{1-\frac{1}{N}}\bigr)=10\overline{\mu}.\]
      At this point, we have shown that (\ref{6.38}) holds with $r$ replaced by $ar$. So, repeating the arguments above, we observe that (\ref{6.38}) holds with $r$ replaced by $a^{2}r$. Therefore, iterating, we deduce $(iii)$. This completes the proof of Proposition~\ref{prop 6.11}.
\end{proof}

Now we prove that there exist a critical threshold $\delta_{0}\in (0,1/100)$ and an exponent $\alpha \in (0,1)$ such that if $\beta_{\s}(x, r) + w^{\tau}_{\s}(x,r)$ falls below $\delta_{0}$ and if $\theta_{\Sigma}(x ,r)$ is small enough for   $x \in \s \cap \Omega$ and fairly small $r>0$, then $\beta_{\s}(x,\varrho) \leq C\varrho^{\alpha}$ for all sufficiently small $\varrho>0$, where $C>0$ is a constant independent of $x$ but depending on $r$. This leads to the $C^{1,\alpha}$ regularity.

\begin{prop}\label{prop 6.12} Let $p \in (N-1,+\infty)$, $f \in L^{q}(\Om)$  with $q> q_{1}$, where $q_{1}$ is defined in (\ref{1.4}). Let $a \in (0,1/100)$ be the constant of Proposition~\ref{prop 6.11}. Then there exist constants $\delta_{0}, \overline{r}_{0} \in (0,1/100)$ and $\alpha\in (0,1)$ such that the following holds. Assume that $\Sigma$ is a solution to Problem~\ref{P 1.1}. If $x \in \s$ and $0<r<\min\{\overline{r}_{0}, \diam(\Sigma)/2\}$ satisfy $B_{r}(x) \subset \Omega$,
	\begin{equation}\label{6.48}
	\beta_{\s}(x,r) + w^{\tau}_{\s}(x,r) \leq \delta_{0} \,\ \text{and}\,\ \theta_{\Sigma}(x, r) \leq 10\overline{\mu}
	\end{equation}
	with $\overline{\mu}$ being a unique positive solution to the equation $\mu=5+\mu^{1-\frac{1}{N}}$, then
	\begin{equation}\label{6.49}
	\beta_{\s}(x, \varrho) \leq C \varrho^{\alpha} \,\ \text{for all}\,\ \varrho \in (0, ar)
	\end{equation}
	and for some constant $C=C(N, p,q_{0}, q,\|f\|_{q}, |\Omega|,r)>0$, where $q_{0}$ is defined in (\ref{Eq 1.1}).
\end{prop}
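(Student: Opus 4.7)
The strategy is to iterate Proposition~\ref{prop 6.11} and then interpolate between the discrete scales $a^{n}r$ via the elementary inequality \eqref{6.1}. Pick $\delta_{0}\leq \min\{\delta_{1},\delta_{2}\}$ with $\delta_{1},\delta_{2}$ from Proposition~\ref{prop 6.11}, and set $\overline{r}_{0}=r_{0}$ from the same proposition. Under the assumption \eqref{6.48} the hypothesis \eqref{6.38} holds at scale $r$, and item~(iii) of Proposition~\ref{prop 6.11} propagates all three smallness conditions to every scale $a^{n}r$, $n\geq 0$; items~(i)--(ii) then yield, for every $n\geq 0$,
\begin{align*}
w^{\tau}_{\Sigma}(x,a^{n+1}r) &\leq \tfrac{1}{2}\, w^{\tau}_{\Sigma}(x,a^{n}r)+C\,(a^{n+1}r)^{b},\\
\beta_{\Sigma}(x,a^{n+1}r) &\leq C\,\bigl(w^{\tau}_{\Sigma}(x,a^{n}r)\bigr)^{1/2}+C\,(a^{n}r)^{b/2},
\end{align*}
where $C=C(N,p,q_{0},q,\|f\|_{q},|\Omega|)$.

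The first line is a linear discrete Gronwall inequality $W_{n+1}\leq \tfrac{1}{2}W_{n}+C\rho^{n}$ with $\rho=a^{b}\in(0,1)$; a routine iteration, splitting the cases $2\rho<1$ and $2\rho\geq 1$, yields $W_{n}\leq C_{1}\mu^{n}$ for every $n\geq 0$, where $\mu$ is any fixed number in $(\max\{1/2,a^{b}\},1)$ and $C_{1}$ depends on the initial value $w^{\tau}_{\Sigma}(x,r)$, on $r$, and on $N,p,q_{0},q,\|f\|_{q},|\Omega|$. Writing $\mu=a^{c}$ with $c:=\log\mu/\log a>0$ and substituting this decay into the flatness recursion gives
\[
\beta_{\Sigma}(x,a^{n+1}r)\leq C_{2}\bigl(a^{nc/2}+(a^{n}r)^{b/2}\bigr)\leq C_{3}\,a^{n\alpha},\qquad \alpha:=\tfrac{1}{2}\min\{c,b\}\in(0,1),
\]
with $C_{3}=C_{3}(N,p,q_{0},q,\|f\|_{q},|\Omega|,r)>0$.

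To conclude, for $\varrho\in(0,ar)$ let $n\geq 1$ be such that $a^{n+1}r<\varrho\leq a^{n}r$, write $\varrho=\kappa(a^{n}r)$ with $\kappa\in[a,1]$, and apply \eqref{6.1} at scale $a^{n}r$ to obtain
\[
\beta_{\Sigma}(x,\varrho)\leq \tfrac{2}{\kappa}\,\beta_{\Sigma}(x,a^{n}r)\leq \tfrac{2C_{3}}{a}\,a^{(n-1)\alpha}.
\]
Combined with the lower bound $\varrho\geq a^{n+1}r$, which gives $a^{n\alpha}\leq a^{-\alpha}(\varrho/r)^{\alpha}$, this produces $\beta_{\Sigma}(x,\varrho)\leq C\varrho^{\alpha}$ with $C=C(N,p,q_{0},q,\|f\|_{q},|\Omega|,r)>0$, as required. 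The main obstacle I anticipate is purely bookkeeping: one must check that the threshold $\delta_{0}$ and the exponent $\alpha$ depend only on the structural constants and not on $x$ or on $\Sigma$. This is automatic once one observes that $a$, $b$, $\delta_{1}$, $\delta_{2}$ from Proposition~\ref{prop 6.11} depend only on $N,p,q_{0},q,\|f\|_{q},|\Omega|$, so the same is true of $\mu$, $c$, and $\alpha$; only the multiplicative constant $C$ inherits an $r$-dependence through $C_{1}$.
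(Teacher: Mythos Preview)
Your proposal is correct and follows essentially the same route as the paper: iterate Proposition~\ref{prop 6.11} at the scales $a^{n}r$, solve the resulting linear recursion for $w^{\tau}_{\Sigma}$, feed this into the flatness estimate, and interpolate with \eqref{6.1}. The only cosmetic difference is in how the recursion $W_{n+1}\leq \tfrac{1}{2}W_{n}+C(a^{n+1}r)^{b}$ is resolved: the paper fixes an explicit exponent $\gamma=\min\{b/2,\ln(3/4)/\ln a\}$ and shrinks $\overline{r}_{0}$ to $\min\{r_{0},(1/4)^{1/\gamma}\}$ so that the one-line inequality $\tfrac{1}{2}t^{\gamma}+t^{b}\leq (at)^{\gamma}$ holds, whereas you handle the geometric sum directly and pick any $\mu\in(\max\{1/2,a^{b}\},1)$; both lead to the same conclusion with $\alpha$ depending only on structural constants and $C$ depending additionally on $r$.
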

\begin{proof} Let $a, \delta_{1}, r_{0} \in (0,1/100),$ $b\in (0,1)$ and $C>0$ be as in Proposition~\ref{prop 6.11}. We define \[\delta_{0}:=\delta_{1},\,\ \gamma:=\min\biggl\{\frac{b}{2}, \frac{\ln(3/4)}{\ln(a)}\biggr\},\,\ \overline{r}_{0}:=\min\Biggl\{r_{0}, \biggl(\frac{1}{4}\biggr)^{\frac{1}{\gamma}}\Biggr\}.\]  It is easy to check that for all $t \in (0, \overline{r}_{0}]$,
	\begin{equation} \label{6.50}
	\frac{1}{2}t^{\gamma}+t^{b}\leq (at)^{\gamma}.
	\end{equation} 
	Indeed, since $0<2\gamma \leq b$, $\gamma\leq \ln(3/4)/\ln(a)$ and $a,\overline{r}_{0}\in (0,1)$, $t^{b}\leq t^{2\gamma}\leq \overline{r}_{0}^{\gamma}t^{\gamma}$ and $3/4\leq a^{\gamma}$, so
	\[
	\frac{1}{2}t^{\gamma}+t^{b}\leq \frac{1}{2}t^{\gamma}+\overline{r}_{0}^{\gamma}t^{\gamma} \leq \frac{3}{4}t^{\gamma} \leq (at)^{\gamma}.
	\]
	We prove by induction that for all $n \in \mathbb{N}$,
	\begin{equation}\label{6.51}
	w^{\tau}_{\s}(x,a^{n}r) \leq \frac{1}{2^{n}}w^{\tau}_{\s}(x,r) + C(a^{n+1}r)^{\gamma}.
	\end{equation}
	Clearly, (\ref{6.51}) holds for $n=0$. Suppose (\ref{6.51}) holds for some $n \in \mathbb{N}$. Then, applying (\ref{6.40}) with $r$ replaced by $a^{n}r$ and using the induction hypothesis, we get
	\begin{align*}
	w^{\tau}_{\s}(x,a^{n+1}r) & \leq \frac{1}{2}w^{\tau}_{\s}(x, a^{n}r) + C(a^{n+1}r)^{b}\\
	& \leq \frac{1}{2^{n+1}}w^{\tau}_{\s}(x, r) + \frac{C}{2}(a^{n+1}r)^{\gamma} + C(a^{n+1}r)^{b}                  \\
	& \leq \frac{1}{2^{n+1}}w^{\tau}_{\s}(x,r) + C(a^{n+2}r)^{\gamma},
	\end{align*}
	where the last estimate comes by using (\ref{6.50}). This proves (\ref{6.51}). Now let $\varrho \in (0, ar)$ and let $l\geq1$ be the integer such that $a^{l+1}r<\varrho \leq a^{l}r$. Then, using if necessary (\ref{6.1}), we see that $\beta_{\Sigma}(x, \varrho)\leq 2\beta_{\Sigma}(x, a^{l}r)/a$. Furthermore, Proposition~\ref{prop 6.11} $(i)$ says that 
	\[
	\beta_{\Sigma}(x, a^{l}r) \leq C(w^{\tau}_{\Sigma}(x,a^{l-1}r))^{\frac{1}{2}}+C(a^{l-1}r)^{\frac{b}{2}}. 
	\]
	On the other hand, using (\ref{6.51}) and the fact that $w^{\tau}_{\Sigma}(x,r) < 1$, we get  
	\begin{align*}
	w^{\tau}_{\Sigma}(x,a^{l-1}r)\leq \frac{1}{2^{l-1}}w^{\tau}_{\Sigma}(x,r)+C(a^{l}r)^{\gamma}
	\leq C^{\prime}\Bigl(\frac{3}{4}\Bigr)^{l+1}+C^{\prime}(a^{l+1}r)^{\gamma}
	&\leq C^{\prime}a^{\gamma (l+1)} + C^{\prime}\varrho^{\gamma}\\
	&\leq C^{\prime}\Bigl(\frac{\varrho}{r}\Bigr)^{\gamma}+C^{\prime}\varrho^{\gamma}
	\end{align*}
	for some $C^{\prime}=C^{\prime}(N,p,q_{0},q,\|f\|_{q},|\Omega|)>0$. So we can control $\beta_{\Sigma}(x,\varrho)$ as follows 
	\begin{align*}
	\beta_{\Sigma}(x, \varrho)\leq \frac{2}{a}\beta_{\Sigma}(x, a^{l}r) &\leq \frac{2C}{a} (w^{\tau}_{\s}(x, a^{l-1}r))^{\frac{1}{2}} + \frac{2C}{a} (a^{l-1}r)^{\frac{b}{2}} \\  & \leq C^{\prime \prime} \Bigl(\frac{\varrho}{r}\Bigr)^{\frac{\gamma}{2}} + C^{\prime \prime}\varrho^{\frac{\gamma}{2}} + C^{\prime \prime}\varrho^{\frac{b}{2}}\\
	&  \leq \widetilde{C} \varrho^{\frac{\gamma}{2}} \,\  (\gamma \leq b/2),
	\end{align*}
  where $\widetilde{C}=\widetilde{C}(N, p,  q_{0}, q,\|f\|_{q}, |\Omega|,r)>0$.  Setting $\alpha = \gamma/2$ and $C:=\widetilde{C}$, we complete the proof of Proposition~\ref{prop 6.12}.
\end{proof}

\begin{cor} \label{cor 6.13} \textit{Let $\s$ be a solution to Problem~\ref{P 1.1} and $a,\,\alpha,\,  \delta_{0},\, \overline{r}_{0},\,\overline{\mu}$ be the constants as in the statement of Proposition~\ref{prop 6.12}. Assume that $x \in \s$, $0<r<\min\{\overline{r}_{0}, \diam(\Sigma)/2\}$, $B_{r}(x) \subset \Omega$,
		\begin{equation*}
		\beta_{\s}(x, r) + w^{\tau}_{\s}(x, r) \leq \varepsilon \,\ \text{and} \,\ \theta_{\Sigma}(x, r) \leq \overline{\mu}
		\end{equation*}
		with $\varepsilon := \delta_{0}/200$. Then for any point $y \in \s \cap B_{a r/10}(x)$ and radius $\varrho \in (0, ar/10)$ the following estimate holds 
		\[
		\beta_{\s}(y,\varrho) \leq C \varrho^{\alpha},
		\]
		where $C=C(N,p,q_{0},q,\|f\|_{q},|\Omega|,r)>0$. In particular, there exists $t_{0}\in (0,1)$ such that $\s \cap \overline{B}_{t_{0}}(x)$ is a $C^{1,\alpha}$ regular curve.}
\end{cor}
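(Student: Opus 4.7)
The plan is to apply Proposition~\ref{prop 6.12} at every point $y\in\Sigma\cap B_{ar/10}(x)$ with the radius $r'=r/2$, so that $B_{r'}(y)\subset B_{r/2+ar/10}(x)\subset B_{r}(x)\subset\Omega$ and $ar'=ar/2\geq ar/10$. Once the three hypotheses of Proposition~\ref{prop 6.12} are verified at $(y,r/2)$, its conclusion yields $\beta_\Sigma(y,\varrho)\leq C\varrho^\alpha$ for all $\varrho\in(0,ar/2)$, which covers the stated range $(0,ar/10)$. The constant $C$ is independent of $y$ since the transferred hypotheses depend only on $(x,r)$.

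The bulk of the proof is the transfer of hypotheses from $(x,r)$ to $(y,r/2)$. For the density,
\[\theta_\Sigma(y,r/2)=\frac{2}{r}\mathcal{H}^1(\Sigma\cap B_{r/2}(y))\leq 2\theta_\Sigma(x,r)\leq 2\overline{\mu}\leq 10\overline{\mu},\]
which is precisely why Proposition~\ref{prop 6.12} was stated with the slacker density bound $10\overline{\mu}$ while the present hypothesis uses $\overline{\mu}$. For the flatness, let $L\ni x$ realize $\beta_\Sigma(x,r)\leq\varepsilon$ and let $L'$ be the line through $y$ parallel to $L$; since $y\in\Sigma$, $\dist(y,L)\leq\varepsilon r$, so $L$ and $L'$ lie within $\varepsilon r$ of each other. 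Every $z\in\Sigma\cap\overline{B}_{r/2}(y)$ lies within $\varepsilon r$ of $L$, hence within $2\varepsilon r$ of $L'$. The converse inclusion, namely that every point of $L'\cap\overline{B}_{r/2}(y)$ is close to $\Sigma$, follows from the arcwise connectedness of $\Sigma$ (Remark~\ref{arcwise connected}) combined with $\Sigma\setminus\overline{B}_r(x)\neq\emptyset$ (ensured by $r<\diam(\Sigma)/2$), which forces $\Sigma$ to pass within $\varepsilon r$ of every point of $L\cap\overline{B}_r(x)$. Together these give $\beta_\Sigma(y,r/2)\leq C_0\varepsilon$ for some absolute $C_0$.

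For the local energy transfer, any competitor $\Sigma'$ admissible in the definition of $w^\tau_\Sigma(y,r/2)$ satisfies $\Sigma'\Delta\Sigma\subset\overline{B}_{r/2}(y)\subset\overline{B}_{r}(x)$ and $\mathcal{H}^1(\Sigma')\leq 100\mathcal{H}^1(\Sigma)$. To verify the remaining admissibility condition $\beta_{\Sigma'}(x,r)\leq\tau$, note that $\Sigma'$ coincides with $\Sigma$ outside $\overline{B}_{r/2}(y)$, and that the arcwise connectedness of $\Sigma'$ forces it to cross $\partial B_{r/2}(y)$ at points belonging to $\Sigma\cap\overline{B}_{r}(x)$, hence lying within $\varepsilon r$ of $L$; being simultaneously within $\tau r/2$ of the optimal line $L_y'$ for $\beta_{\Sigma'}(y,r/2)$, these crossings pin $L_y'$ within $C_1\tau$ of $L$, so $\Sigma'\cap\overline{B}_r(x)$ is uniformly close to $L$. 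With $\varepsilon=\delta_0/200$ this gives $\beta_{\Sigma'}(x,r)\leq\tau$. Then $\Sigma'$ is admissible also for $w^\tau_\Sigma(x,r)$, so
\[\frac{1}{r/2}\int_{B_{r/2}(y)}|\nabla u_{\Sigma'}|^p\,dx\leq \frac{2}{r}\int_{B_r(x)}|\nabla u_{\Sigma'}|^p\,dx\leq 2w^\tau_\Sigma(x,r),\]
whence $w^\tau_\Sigma(y,r/2)\leq 2w^\tau_\Sigma(x,r)\leq 2\varepsilon$. Combining, $\beta_\Sigma(y,r/2)+w^\tau_\Sigma(y,r/2)\leq(C_0+2)\varepsilon\leq\delta_0$, the calibration $\varepsilon=\delta_0/200$ absorbing all geometric constants. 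The hypotheses of Proposition~\ref{prop 6.12} are met at $(y,r/2)$, which concludes the main estimate.

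For the final ``in particular'' assertion, the uniform Hölder control $\beta_\Sigma(y,\varrho)\leq C\varrho^\alpha$ over $y\in\Sigma\cap B_{ar/10}(x)$ implies, by extracting limits of the affine lines realizing $\beta_\Sigma(y,\varrho)$ as $\varrho\to 0+$, that each such $y$ admits a well-defined tangent line $T_y$ and that $y\mapsto T_y$ is $\alpha$-Hölder continuous. A standard Reifenberg/graphization argument then produces $t_0\in(0,1)$ for which $\Sigma\cap\overline{B}_{t_0}(x)$ is the graph of a $C^{1,\alpha}$ function over any fixed $T_y$, hence a $C^{1,\alpha}$ regular curve. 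The main obstacle in the proof is the $w^\tau_\Sigma$ transfer, since one must certify that a $\tau$-flat competitor at scale $r/2$ around $y$ remains $\tau$-flat at scale $r$ around $x$; this relies on the exact numerical calibration $\varepsilon=\delta_0/200$ and on the connectedness-driven matching of $\Sigma'$ and $\Sigma$ across $\partial B_{r/2}(y)$.
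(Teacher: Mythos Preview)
Your overall strategy is identical to the paper's: transfer the three hypotheses of Proposition~\ref{prop 6.12} from $(x,r)$ to $(y,r')$ for some smaller radius $r'$, then invoke that proposition. The paper takes $r'=r/10$; you take $r'=r/2$. The flatness and density transfers work at either scale. The problem is the $w^\tau_\Sigma$ transfer.

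To compare $w^\tau_\Sigma(y,r')$ with $w^\tau_\Sigma(x,r)$ you must check that every competitor $\Sigma'$ admissible at $(y,r')$ --- in particular with $\beta_{\Sigma'}(y,r')\leq\tau$ --- is also admissible at $(x,r)$, i.e.\ satisfies $\beta_{\Sigma'}(x,r)\leq\tau$. The paper appeals to the mechanism of Proposition~\ref{prop 6.5}: since $\Sigma'\Delta\Sigma\subset\overline{B}_{r'}(y)$ and a common point $\xi\in\Sigma\cap\Sigma'\cap\partial B_{r'}(y)$ pins the two optimal lines together, one obtains a bound of the form
\[
\beta_{\Sigma'}(x,r)\;\lesssim\;5\tau\,\frac{r'}{r}\;+\;O(\varepsilon).
\]
With $r'=r/10$ this gives $\tau/2+O(\varepsilon)<\tau$. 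With your $r'=r/2$ it gives $5\tau/2+O(\varepsilon)$, and even a sharper version of the argument (exploiting $\beta_\Sigma(y,r/2)\leq O(\varepsilon)\ll\tau$) still yields a bound of order $\tau$ plus lower-order terms, which need not lie below $\tau$.

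Your justification that ``$\varepsilon=\delta_0/200$ gives $\beta_{\Sigma'}(x,r)\leq\tau$'' is where the argument breaks: the competitor's flatness at the inner scale is bounded by the fixed threshold $\tau$, not by $\varepsilon$, so the dominant error term is a multiple of $\tau$, and shrinking $\varepsilon$ does not help. The paper's choice $r'=r/10$ is precisely what makes the ratio $r'/r$ small enough to absorb the constant $5$ from Proposition~\ref{prop 6.5}. Replacing $r/2$ by $r/10$ throughout your argument fixes the gap; the density transfer then yields exactly the factor $10$ (i.e.\ $\theta_\Sigma(y,r/10)\leq 10\,\theta_\Sigma(x,r)\leq 10\overline{\mu}$) that the paper flags as ``rather important'' in the introduction.
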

\begin{proof}[Proof of Corollary \ref{cor 6.13}] Recall that $a \in (0,1/100)$. Let $y \in \s \cap B_{a r/10}(x)$ and $L_{x}$ realize the infimum in the definition of $\beta_{\s}(x, r)$. Notice that $d_{H}(\s \cap \overline{B}_{r/10}(y), L_{x} \cap \overline{B}_{r/10}(y)) \leq 5\varepsilon r$. Let $L$ be the affine line passing through $y$ and collinear to $L_{x}$. It is easy to see that $d_{H}(L_{x} \cap \overline{B}_{r/10}(y), L\cap \overline{B}_{r/10}(y))\leq 5\varepsilon r$ and hence
	\[
	d_{H}\bigl(\Sigma \cap \overline{B}_{\frac{r}{10}}(y), L\cap \overline{B}_{\frac{r}{10}}(y)\bigr) \leq d_{H}\bigl(\Sigma \cap \overline{B}_{\frac{r}{10}}(y), L_{x}\cap \overline{B}_{\frac{r}{10}}(y)\bigr)+ d_{H}\bigl(L_{x}\cap \overline{B}_{\frac{r}{10}}(y), L \cap \overline{B}_{\frac{r}{10}}(y)\bigr)\leq 10\varepsilon r.
	\] 
	Thus $\beta_{\s}(y,r/10) \leq \delta_{0}/2.$ Next, let $\s^{\prime}$ realize the supremum in the definition of $w^{\tau}_{\s}(y, r/10)$. Such $\Sigma^{\prime}$ exists due to the condition $\beta_{\Sigma}(y,r/10)\leq \delta_{0}/2\leq \tau$ (see Remark~\ref{rem 6.4}). Then we have that
	\begin{align*}
	w^{\tau}_{\s}\biggl(y, \frac{r}{10}\biggr)  = \frac{10}{ r} \int_{B_{\frac{r}{10}}(y)} |\nabla u_{\s^{\prime}}|^{p} \diff z \leq \frac{10}{ r} \int_{B_{r}(x)} |\nabla u_{\s^{\prime}}|^{p} \diff z \leq 10 w^{\tau}_{\s}(x, r) < \frac{\delta_{0}}{2},
	\end{align*}
	where we have used the facts that $B_{r/10}(y) \subset B_{(1+a)r/10}(x)$, $\beta_{\Sigma}(y,r/10)$ and $\beta_{\Sigma}(x, r)$ are pretty small, namely, proceeding as in the proof of Proposition~\ref{prop 6.5}, we can show that $\beta_{\Sigma^{\prime}}(x, r) \leq \tau$. Thus
	\begin{equation*}
	\beta_{\s}\biggl(y, \frac{r}{10}\biggr) + w^{\tau}_{\s}\biggl(y, \frac{r}{10}\biggr) < \delta_{0}.
	\end{equation*}
	On the other hand, $\theta_{\Sigma}(y, r/10)\leq 10 \theta_{\Sigma}(x,r)\leq 10 \overline{\mu}$. Then, according to ~Proposition~\ref{prop 6.12}, $\beta_{\s}(y, \varrho) \leq C\varrho^{\alpha}$ for all $\varrho \in (0, a r/10)$. Since the point $y$ was arbitrarily chosen in $\s \cap B_{a r/10}(x)$, there exists $t_{0} \in (0, a r/10)$ such that $\s \cap \overline{B}_{t_{0}}(x)$ is a $C^{1,\alpha}$ regular curve (see, for instance, \cite[Proposition 9.1]{David-Kenig-Toro}).
\end{proof}

\begin{proof}[Proof of Theorem \ref{thm 1.2}] Let $\varepsilon_{0},b,\overline{r} \in (0,1),\, C>0$ be the constants of Lemma~\ref{lem 4.6}. Since closed connected sets with finite $\mathcal{H}^{1}$-measure ~are $\mathcal{H}^{1}$-rectifiable (see \cite[Proposition 30.1, p. 186]{David}), then (see Lemma~\ref{lem 5.2}) for $\mathcal{H}^{1}$-a.e. point $x$ in $\s$ there exists the affine line $T_{x}$ passing through $x$ such that
	\begin{equation}\label{6.52}
	\frac{1}{r}d_{H}(\s \cap \overline{B}_{r}(x), T_{x} \cap \overline{B}_{r}(x)) \underset{r \to 0+}{\to} 0. 
	\end{equation}
	On the other hand,
	\begin{equation}\label{6.53}
	\theta_{\Sigma}(x,r) \underset{r \to 0+}{\to} 2
	\end{equation}
	for $\mathcal{H}^{1}$-a.e. $x \in \Sigma$, in view of Besicovitch-Marstrand-Mattila Theorem (see \cite[Theorem 2.63]{APD}). Let $x \in \s \cap \Om$ be such a point that (\ref{6.52}) and (\ref{6.53}) hold with $x$. According to (\ref{6.52}),
	\begin{equation}\label{6.54}
	\beta_{\s}(x, r) \underset{r \to 0+}{\to 0}. 
	\end{equation}
	We claim that $w^{\tau}_{\s}(x, r)\to 0$ as $r\to 0+$. Indeed, by (\ref{6.54}), for any $\varepsilon \in (0, \varepsilon_{0})$ there is $t_{\varepsilon} \in (0, \overline{r})$ such that 
	\begin{equation}\label{6.55}
	\beta_{\s}(x, r) \leq \varepsilon \,\ \text{for all} \,\ r \in (0, t_{\varepsilon}]. 
	\end{equation}
	We assume that $B_{t_{\varepsilon}}(x) \subset \Omega$, $t_{\varepsilon}<\diam(\Sigma)/2$ and $\varepsilon<\tau/2$. Recall that $\tau \in (0, \varepsilon_{0}/6]$. Then, by Proposition~\ref{prop 6.6}, for all  $r\in (0, t_{\varepsilon}/10]$,
	\begin{equation}\label{6.56}
	w^{\tau}_{\Sigma}(x, r) \leq C\Bigl(\frac{r}{t_{\varepsilon}}\Bigr)^{b}w^{\tau}_{\Sigma}(x,t_{
		\varepsilon}) + Cr^{b}. 
	\end{equation}
	On the other hand, by Remark~\ref{rem 6.4} and Proposition~\ref{prop 2.16}, $w^{\tau}_{\Sigma}(x,t_{\varepsilon})< +\infty$. Thus, letting $r$ tend to $0+$ in (\ref{6.56}), we get
	\begin{equation}\label{6.57}
	w^{\tau}_{\Sigma}(x,r) \underset{r \to 0+}{\to 0}. 
	\end{equation}
	By (\ref{6.54}) and (\ref{6.57}),
	\[
	\beta_{\s}(x,r) + w^{\tau}_{\s}(x,r) \underset{r \to 0+}{\to} 0.
	\]
	This, together with (\ref{6.53}), Corollary \ref{cor 6.13} and the fact that for each integer $N\geq2$, the unique positive solution $\overline{\mu}$ to the equation $\mu=5+\mu^{1-\frac{1}{N}}$ is strictly greater than 5, completes the proof of Theorem~\ref{thm 1.2}.
	
\end{proof}

\end{subsection}
\section{Acknowledgments}
I thank Antoine Lemenant for his valuable comments and suggestions which helped me to improve the paper. I wish to thank Guy David for several useful comments and suggestions on the regularity theory. I am also grateful to Guy Bouchitté and Antonin Chambolle for useful discussions on the optimal $p$-compliance problem. This work was partially supported by the project ANR-18-CE40-0013 SHAPO financed by the French Agence Nationale de la Recherche (ANR).
\appendix
\section{Auxiliary results}
Recall that we write points of $\mathbb{R}^{N}$ as $x=(x^{\prime}, x_{N})$ with $x^{\prime} \in \mathbb{R}^{N-1}$ and $x_{N}\in \mathbb{R}$.
\begin{lemma} \label{lem A.1} Let $N\geq 2$, $p \in (N-1,+\infty)$, $\beta=(p-N+1)/(p-1)$ and $\gamma \in (0, \beta)$. There exists $\delta \in (0,1)$, depending only on $N,p$ and $\gamma$, such that $\hat{u}(x)=|x^{\prime}|^{\gamma}+x_{N}^{2}$ is a supersolution to the $p$-Laplace equation in $\{0<|x^{\prime}|<\delta\}\cap \{|x_{N}|<1\}$.
\end{lemma}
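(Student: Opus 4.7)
\noindent\textbf{Proof proposal for Lemma~\ref{lem A.1}.}

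Since $\hat u$ is $C^{2}$ on $\{x':\, x'\neq 0\}$, on the open set $\{0<|x'|<\delta\}\cap\{|x_N|<1\}$ we may compute $\Delta_p\hat u$ classically via
\[
|\nabla\hat u|^{2}\,\Delta_p\hat u \;=\; |\nabla\hat u|^{p-2}\Bigl(|\nabla\hat u|^{2}\Delta\hat u+(p-2)\langle D^{2}\hat u\,\nabla\hat u,\nabla\hat u\rangle\Bigr),
\]
so it suffices to show that the bracket is $\leq 0$ once $|x'|$ is small enough (with the smallness depending only on $N,p,\gamma$). The plan is to carry out this derivative computation in closed form, identify the dominant singular term as $|x'|\to 0$, and then check that its coefficient is strictly negative precisely under the hypothesis $\gamma<\beta$.

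Writing $r=|x'|$, I would first record the formulas $\nabla\hat u=(\gamma r^{\gamma-2}x',\,2x_N)$, $|\nabla\hat u|^{2}=\gamma^{2}r^{2\gamma-2}+4x_N^{2}$, and $\Delta\hat u=\gamma(\gamma+N-3)r^{\gamma-2}+2$ (using that for radial functions in $\mathbb{R}^{N-1}$, $\Delta r^{\gamma}=\gamma(\gamma+N-3)r^{\gamma-2}$). A direct computation of the Hessian contribution gives
\[
\langle D^{2}\hat u\,\nabla\hat u,\nabla\hat u\rangle \;=\; \gamma^{3}(\gamma-1)\,r^{3\gamma-4}+8x_N^{2}.
\]
Substituting into the bracket yields
\[
|\nabla\hat u|^{2}\Delta\hat u+(p-2)\langle D^{2}\hat u\,\nabla\hat u,\nabla\hat u\rangle
\;=\; A\,r^{3\gamma-4}+R(r,x_N),
\]
with leading coefficient
\[
A \;=\; \gamma^{3}\bigl[(\gamma+N-3)+(p-2)(\gamma-1)\bigr]\;=\;\gamma^{3}(p-1)(\gamma-\beta),
\]
after regrouping (the key algebraic identity of the argument). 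Since $\gamma\in(0,\beta)$ and $p>1$, one has $A<0$.

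The remainder $R$ consists of terms of the form $c_{1}r^{2\gamma-2}$, $c_{2}(x_N)\,r^{\gamma-2}$, and $c_{3}(x_N)$, all of which are either bounded or blow up strictly slower than $r^{3\gamma-4}$: indeed $3\gamma-4<2\gamma-2\iff\gamma<2$ and $3\gamma-4<\gamma-2\iff\gamma<1$, both of which hold since $\gamma<\beta\leq 1$ (the case $N=2$ giving $\beta=1$, so strict inequality is preserved). Using $|x_N|<1$, I will bound $|R|\leq C(N,p,\gamma)(1+r^{2\gamma-2}+r^{\gamma-2})$. Choosing $\delta=\delta(N,p,\gamma)\in(0,1)$ so small that $|A|r^{3\gamma-4}/2$ dominates each of these terms for $0<r<\delta$ then forces the bracket to be nonpositive on $\{0<|x'|<\delta\}\cap\{|x_N|<1\}$, which gives $\Delta_p\hat u\leq 0$ classically, hence $\hat u$ is a $p$-superharmonic (supersolution) on that set.

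The only mildly delicate point is the algebraic identification $A=\gamma^{3}(p-1)(\gamma-\beta)$, which is what pins down the sharp threshold $\beta=(p-N+1)/(p-1)$; everything else is a routine dominated-term comparison. No compactness, maximum principle, or capacity tool is needed at this stage, since the statement only concerns the pointwise equation on a region where $\hat u$ is smooth.
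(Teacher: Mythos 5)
Your proof is correct and follows essentially the same route as the paper's: compute $\Delta\hat u$, $|\nabla\hat u|^2$, and $\Delta_\infty\hat u=\langle D^2\hat u\,\nabla\hat u,\nabla\hat u\rangle$ explicitly, form $|\nabla\hat u|^2\Delta\hat u+(p-2)\Delta_\infty\hat u$, and observe that the singular leading term $A|x'|^{3\gamma-4}$ with $A=\gamma^3(\gamma p-p-\gamma+N-1)=\gamma^3(p-1)(\gamma-\beta)<0$ dominates the remaining terms of orders $|x'|^{2\gamma-2}$, $|x'|^{\gamma-2}$, and $1$ for small $|x'|$. Your factorization of $A$ is a slightly tidier way to record the sign condition, but the computation and conclusion are the same as in the paper.
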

\begin{proof} To simplify the notation, we denote $\{0<|x^{\prime}|<\delta\}\cap \{|x_{N}|<1\}$ by $C^{o}_{\delta,1}$. We need to prove that there exists $\delta=\delta(N,p,\gamma) \in (0,1)$ such that 
	\begin{equation} \label{A.1}
	\Delta_{p}\hat{u}=\Delta \hat{u}|\nabla \hat{u}|^{p-2}+(p-2)|\nabla \hat{u}|^{p-4}\Delta_{\infty} \hat{u} \leq 0 \,\ \text{in} \,\ C^{o}_{\delta,1},
	\end{equation}
where $\Delta \hat{u}=\Delta_{2} \hat{u}:=\sum^{N}_{i=1}\hat{u}_{x_{i},x_{i}}$ and $\Delta_{\infty} \hat{u}:=\sum_{i,j=1}^{N}\hat{u}_{x_{i}}\hat{u}_{x_{j}}\hat{u}_{x_{i},x_{j}}$. Since $|\nabla \hat{u}|\neq 0$ in $C^{o}_{\delta,1}$, (\ref{A.1}) is equivalent to the following
\begin{equation} 
\hat{\Delta}_{p} \hat{u}:=\Delta \hat{u} |\nabla \hat{u}|^{2}+(p-2)\Delta_{\infty} \hat{u}\leq 0 \,\ \text{in} \,\ C^{o}_{\delta,1}. \notag
\end{equation}
Calculating the partial derivatives of $\hat{u}$ in $C^{o}_{\delta,1}$, we have: $\hat{u}_{x_{i}}=\gamma x_{i}|x^{\prime}|^{\gamma-2}$, $i \in \{1,...,N-1\}$; $\hat{u}_{x_{N}}=2x_{N}$; $\hat{u}_{x_{i},x_{j}}=\gamma(\gamma-2)x_{i}x_{j}|x^{\prime}|^{\gamma-4}+\delta_{ij}\gamma|x^{\prime}|^{\gamma-2}$, where $i,j \in \{1,...,N-1\}$ and $\delta_{i,j}$ is the Kronecker delta; $\hat{u}_{x_{N},x_{N}}=2$. Next, we deduce that 
\[
\Delta \hat{u}=\gamma(\gamma+N-3)|x^{\prime}|^{\gamma-2}+2,\,\ |\nabla \hat{u}|^{2}=\gamma^{2}|x^{\prime}|^{2\gamma-2}+4x_{N}^{2}\,\ \text{and}\,\ \Delta_{\infty} \hat{u}=\gamma^{3}(\gamma-1)|x^{\prime}|^{3\gamma-4}+8x_{N}^{2}
\]
in $C^{o}_{\delta,1}$. This yields the following
\begin{equation} \label{A.2}
\hat{\Delta}_{p}\hat{u}=\gamma^{3}(\gamma p -p-\gamma+N-1)|x^{\prime}|^{3\gamma-4}+4\gamma(\gamma+N-3)|x^{\prime}|^{\gamma-2}x^{2}_{N}+2\gamma^{2}|x^{\prime}|^{2\gamma-2}+(p-1)8x^{2}_{N}
\end{equation}
in $C^{o}_{\delta,1}$. Since $0<\gamma<\beta$, $\gamma^{3}(\gamma p-p-\gamma+N-1)<0$ and $3\gamma -4<\gamma -2<0$. Thus, analyzing (\ref{A.2}), we deduce that there exists $\delta=\delta(N,p,\gamma)\in (0,1)$ such that $\hat{\Delta}_{p} \hat{u} \leq 0$ in $C^{o}_{\delta,1}$. This completes the proof.
\end{proof}

The next lemma is classical, however, we could not find a precise reference in the exact following form, thus we provide a proof for the reader’s convenience.

\begin{lemma} \label{lem A.2} Let $N\geq 2$, $p\in (1,+\infty)$, $\Sigma \subset \mathbb{R}^{N}$ be a closed set and $u \in W^{1,p}(B_{1})$ be a $p$-harmonic function in $B_{1}\backslash \Sigma$, continuous in $B_{1}$ with $u=0$ on $\Sigma \cap B_{1}$. Then $u^{+}=\max\{u,0\}$ and $u^{-}=-\min\{u,0\}$ are $p$-subharmonic in $B_{1}$. 
\end{lemma}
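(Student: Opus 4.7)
\textbf{Proof plan for Lemma~\ref{lem A.2}.} The plan is to show that $u^{+}$ is a weak subsolution of the $p$-Laplace equation in $B_{1}$; since $u^{+}$ is continuous (as $u$ is), upper semicontinuity comes for free and this gives $p$-subharmonicity. The analogous statement for $u^{-}=(-u)^{+}$ then follows by replacing $u$ with $-u$, which is again $p$-harmonic in $B_{1}\backslash \Sigma$, continuous in $B_{1}$, and vanishes on $\Sigma\cap B_{1}$.

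First I would fix a nonnegative $\varphi \in C^{\infty}_{0}(B_{1})$ and construct a family of Lipschitz cutoffs $\eta_{\varepsilon}:\mathbb{R}\to [0,1]$ with $\eta_{\varepsilon}\equiv 0$ on $(-\infty,\varepsilon]$, $\eta_{\varepsilon}\equiv 1$ on $[2\varepsilon,+\infty)$, and linear between, so that $\eta_{\varepsilon}'\geq 0$ a.e. The key geometric point is that, because $u$ is continuous in $B_{1}$ and $u=0$ on $\Sigma\cap B_{1}$, the set $\{u\geq \varepsilon\}$ is a relatively closed subset of $B_{1}$ that stays at positive distance from $\Sigma\cap B_{1}$. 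Hence $\mathrm{supp}(\eta_{\varepsilon}(u)\varphi)\subset \mathrm{supp}(\varphi)\cap \{u\geq \varepsilon\}$ is a compact subset of $B_{1}\backslash \Sigma$, which together with $\eta_{\varepsilon}(u)\varphi\in W^{1,p}(B_{1})$ gives $\eta_{\varepsilon}(u)\varphi \in W^{1,p}_{0}(B_{1}\backslash \Sigma)$, so it is an admissible test function in the $p$-harmonicity identity for $u$.

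Plugging this test function into $\int_{B_{1}\backslash \Sigma}|\nabla u|^{p-2}\nabla u\cdot\nabla \zeta\diff x=0$ and expanding the product rule, I would obtain
\[
\int_{B_{1}}\eta_{\varepsilon}(u)\,|\nabla u|^{p-2}\nabla u\cdot \nabla \varphi \diff x \;=\; -\int_{B_{1}}\eta_{\varepsilon}'(u)\,|\nabla u|^{p}\,\varphi \diff x \;\leq\; 0,
\]
since $\eta_{\varepsilon}'\geq 0$ and $\varphi\geq 0$. Letting $\varepsilon\downarrow 0$, pointwise convergence $\eta_{\varepsilon}(u)\to \mathds{1}_{\{u>0\}}$ together with dominated convergence (the integrand is bounded in $L^{1}$ by $|\nabla u|^{p-1}|\nabla \varphi|$ on $\mathrm{supp}(\varphi)$) yields $\int_{\{u>0\}}|\nabla u|^{p-2}\nabla u\cdot \nabla \varphi \diff x\leq 0$. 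Combining with the standard Sobolev identity $\nabla u^{+}=\mathds{1}_{\{u>0\}}\nabla u$ a.e. gives $\int_{B_{1}}|\nabla u^{+}|^{p-2}\nabla u^{+}\cdot \nabla \varphi \diff x \leq 0$, which is exactly the weak subsolution property.

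The only slightly delicate point I anticipate is the justification that $\eta_{\varepsilon}(u)\varphi\in W^{1,p}_{0}(B_{1}\backslash \Sigma)$ when $\Sigma$ may be a wild closed set (not necessarily of vanishing $p$-capacity in $B_{1}$); this is where continuity of $u$ and the condition $u=0$ on $\Sigma\cap B_{1}$ do all the work, since they produce the positive gap $\mathrm{dist}(\mathrm{supp}(\eta_{\varepsilon}(u)\varphi),\Sigma)>0$ that turns $\eta_{\varepsilon}(u)\varphi$ into a legitimate test function by a routine mollification. Everything else is a direct computation followed by a dominated convergence argument, and once $u^{+}$ has been handled the same proof applied to $-u$ (whose positive part is $u^{-}$) closes the statement.
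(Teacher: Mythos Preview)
Your proof is correct and follows essentially the same approach as the paper: both construct a test function supported on $\{u\geq \varepsilon\}$ (hence compactly away from $\Sigma$ by the continuity of $u$ and $u=0$ on $\Sigma\cap B_{1}$), use the nonnegativity of the derivative term to obtain the one-sided inequality, and then pass to the limit by dominated convergence. The only cosmetic difference is that the paper employs the two-parameter test function $((\eta+(u-\varepsilon)^{+})^{\varepsilon}-\eta^{\varepsilon})\varphi$ and lets $\eta\to 0$ before $\varepsilon\to 0$, whereas your single piecewise-linear cutoff $\eta_{\varepsilon}(u)\varphi$ is a bit cleaner.
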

\begin{proof} Since $u^{-}=(-u)^{+}$ and $(-u)$ is $p$-harmonic in $B_{1}\backslash \Sigma$, it is enough to prove that the function $u^{+}$ is $p$-subharmonic in $B_{1}$. Let us fix an arbitrary nonnegative function $\varphi \in C^{\infty}_{0}(B_{1})$ and for all $\varepsilon, \eta \in (0,1)$ define $\varphi_{\eta,\varepsilon}=((\eta+(u-\varepsilon)^{+})^{\varepsilon}-\eta^{\varepsilon})\varphi$. Since $u \in W^{1,p}(B_{1})$ is $p$-harmonic in $B_{1}\backslash \Sigma$ and $\varphi_{\eta,\varepsilon}\in W^{1,p}_{0}(B_{1}\backslash \Sigma)$,
	\[
	\int_{B_{1}}|\nabla u|^{p-2}\nabla u \nabla \varphi_{\eta,\varepsilon} \diff x=0.
	\]
This implies that 
\[
\int_{B_{1}}((\eta+(u-\varepsilon)^{+})^{\varepsilon}-\eta^{\varepsilon})|\nabla u^{+}|^{p-2}\nabla u^{+}\nabla \varphi\diff x + \varepsilon \int_{B_{1}\cap \{u>\varepsilon\}}|\nabla u^{+}|^{p}(\eta+(u-\varepsilon)^{+})^{\varepsilon-1}\varphi\diff x=0
\]
and hence
\begin{equation} \label{A.3}
\int_{B_{1}}((\eta+(u-\varepsilon)^{+})^{\varepsilon}-\eta^{\varepsilon})|\nabla u^{+}|^{p-2}\nabla u^{+}\nabla \varphi \diff x \leq 0. 
\end{equation}
Letting $\eta$ and then $\varepsilon$ tend to $0+$ in (\ref{A.3}), by Lebesgue's dominated convergence theorem, we get
\[
\int_{B_{1}}|\nabla u^{+}|^{p-2}\nabla u^{+}\nabla \varphi \diff x \leq 0,
\]
which concludes the proof.
\end{proof}

The next lemma is a refined version of \cite[Lemma 5.14]{Opt}.
\begin{lemma}\label{lem A.3} Let  $N\geq 2$ and let $\gamma: [0,1] \to \mathbb{R}^{N}$ be a curve such that $\Gamma:=\gamma([0,1])\subset \overline{B}_{r}(x_{0})$. Assume that $\xi_{1}=\gamma(0)\in \partial B_{r}(x_{0})$ and $\xi_{2}=\gamma(1) \in \partial B_{r}(x_{0})$. Then
		\begin{equation*}
		\max_{y \in \Gamma}\dist(y, [\xi_{1}, \xi_{2}]) \leq (2r (\mathcal{H}^{1}(\Gamma) - |\xi_{2}-\xi_{1}|))^{\frac{1}{2}}.
		\end{equation*}
	\end{lemma}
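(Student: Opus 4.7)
\textbf{Proof proposal for Lemma \ref{lem A.3}.}

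My plan is to reduce everything to a purely metric inequality, avoiding any direct parametrization of the curve. Fix an arbitrary point $y\in\Gamma$ and write $D=\dist(y,[\xi_1,\xi_2])$. The curve $\gamma$ goes from $\xi_1$ to $\xi_2$ passing through $y$, so by the basic length-versus-distance inequality applied to the two sub-arcs determined by $y$,
\[
\mathcal{H}^1(\Gamma)\;\geq\;|y-\xi_1|+|y-\xi_2|.
\]
Hence it is enough to prove the geometric claim: for any three points $\xi_1,\xi_2,y\in\overline B_r(x_0)$,
\begin{equation}\label{eq:key-geom}
D^2 \;\leq\; 2r\bigl(|y-\xi_1|+|y-\xi_2|-|\xi_1-\xi_2|\bigr).
\end{equation}

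To establish \eqref{eq:key-geom} I would drop the foot $p$ of the perpendicular from $y$ to the affine line through $\xi_1$ and $\xi_2$, and distinguish two cases. In the principal case $p\in[\xi_1,\xi_2]$ we have $D=|y-p|$, and writing $a=|p-\xi_1|$, $b=|p-\xi_2|$ with $a+b=|\xi_1-\xi_2|$, the identity $\sqrt{t^2+D^2}-t=D^2/(\sqrt{t^2+D^2}+t)$ applied to $t=a$ and $t=b$ gives
\[
|y-\xi_1|+|y-\xi_2|-|\xi_1-\xi_2|
= D^2\!\left(\frac{1}{|y-\xi_1|+a}+\frac{1}{|y-\xi_2|+b}\right).
\]
Since $a\leq |y-\xi_1|\leq 2r$ and $b\leq |y-\xi_2|\leq 2r$ (all three points lie in $\overline B_r(x_0)$), each denominator is bounded by $4r$, whence the right-hand side is at least $D^2/(2r)$, proving \eqref{eq:key-geom}. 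In the degenerate case $p\notin[\xi_1,\xi_2]$, say on the $\xi_1$-side at signed distance $s>0$ from $\xi_1$, the projection onto the segment is $\xi_1$, so $D=|y-\xi_1|\leq 2r$; moreover $|y-\xi_2|\geq s+|\xi_1-\xi_2|$, whence $|y-\xi_1|+|y-\xi_2|-|\xi_1-\xi_2|\geq D+s\geq D$, and the inequality $D^2\leq 2r\cdot D$ suffices.

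Combining \eqref{eq:key-geom} with the length estimate above yields $D^2\leq 2r(\mathcal{H}^1(\Gamma)-|\xi_2-\xi_1|)$ for each $y\in\Gamma$; taking the supremum over $y$ concludes the proof. There is no genuine obstacle here — the only mild subtlety is keeping track of the case where the perpendicular foot falls outside the chord, but that case is handled trivially by the diameter bound $D\leq 2r$. The essential ingredients are just the triangle/length inequality for the curve and the elementary algebraic identity $\sqrt{t^2+D^2}-t=D^2/(\sqrt{t^2+D^2}+t)$, together with the ambient ball constraint which gives the factor $2r$.
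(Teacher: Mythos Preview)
Your argument is correct and complete. Both you and the paper start from the same key observation, namely that since $y$ lies on the trace of a curve joining $\xi_1$ to $\xi_2$, one has $\mathcal{H}^{1}(\Gamma)\geq |y-\xi_1|+|y-\xi_2|$, and then reduce to a purely triangle inequality. The routes diverge from there.

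The paper symmetrises first: it replaces the actual point $z$ by the apex $z'$ of the isosceles triangle over $[\xi_1,\xi_2]$ with height $h=\dist(z,[\xi_1,\xi_2])$, uses the (implicit) fact that $|z-\xi_1|+|z-\xi_2|\geq 2|z'-\xi_2|$, and then exploits the Pythagorean factorisation $h^{2}=(|z'-\xi_2|-|\xi_1-\xi_2|/2)(|z'-\xi_2|+|\xi_1-\xi_2|/2)$ together with the crude bounds $|z'-\xi_2|\leq 3r$ and $|\xi_1-\xi_2|/2\leq r$. Your approach instead works directly with the foot $p$ of the perpendicular from $y$ to the line through $\xi_1,\xi_2$ and uses the identity $\sqrt{t^2+D^2}-t=D^2/(\sqrt{t^2+D^2}+t)$ to express the excess $|y-\xi_1|+|y-\xi_2|-|\xi_1-\xi_2|$ exactly, bounding each denominator by $4r$ via the diameter of the ball. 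What you gain is an explicit and transparent treatment of the degenerate case where $p$ falls outside the chord, which the paper's isosceles reduction absorbs silently; what the paper's version gains is a one-line algebraic endgame once the symmetrisation is accepted. Both are equally elementary and yield the same constant.
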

\begin{proof} Let $z\in \argmax_{y\in \Gamma} \dist(y,[\xi_{1},\xi_{2}])$. Assume that $\dist(z,[\xi_{1},\xi_{2}]):=h>0$ and $|\xi_{1}-\xi_{2}|>0$, otherwise the proof follows. Let $z^{\prime}\in \mathbb{R}^{N}$ be a point making $(\xi_{1},z^{\prime},\xi_{2})$ an isosceles triangle such that $\dist(z^{\prime}, [\xi_{1},\xi_{2}])=h$. Notice that $h\leq 2r$, $|\xi_{1}-\xi_{2}|/2\leq r$ and hence 
	\[
     |z^{\prime}-\xi_{2}|\leq h + \frac{|\xi_{1}-\xi_{2}|}{2}\leq 3r.
	\]
	On the other hand, $\mathcal{H}^{1}(\Gamma)\geq 2|z^{\prime}-\xi_{2}|$. Then, using the Pythagorean theorem, we get
	\begin{align*}
	h^{2}=|z^{\prime}-\xi_{2}|^{2}-\frac{|\xi_{1}-\xi_{2}|^{2}}{4}&=\biggl(|z^{\prime}-\xi_{2}|-\frac{|\xi_{1}-\xi_{2}|}{2}\biggr)\biggl(|z^{\prime}-\xi_{2}|+\frac{|\xi_{1}-\xi_{2}|}{2}\biggr)\\
	&\leq \biggl(\frac{\mathcal{H}^{1}(\Gamma)}{2}-\frac{|\xi_{1}-\xi_{2}|}{2}\biggr)(3r+r)\\
	&=2r(\mathcal{H}^{1}(\Gamma)-|\xi_{1}-\xi_{2}|).
	\end{align*}
	This completes the proof of Lemma \ref{lem A.3}.
\end{proof}

\bibliography{bib1}
\bibliographystyle{plain}

\end{document}